\documentclass[11pt,reqno,english]{amsart}
\usepackage[T1]{fontenc}
\usepackage[latin9]{inputenc}
\usepackage{babel}
\usepackage{mathrsfs}
\usepackage{mathtools}
\usepackage{bm}
\usepackage{amstext}
\usepackage{amsthm}
\usepackage{amssymb}
\usepackage{stmaryrd}
\usepackage{geometry}
\usepackage{setspace}
\usepackage{esint}
\usepackage[pdfusetitle,
 bookmarks=true,bookmarksnumbered=false,bookmarksopen=false,
 breaklinks=false,pdfborder={0 0 1},backref=false,colorlinks=false]
 {hyperref}

\makeatletter
\numberwithin{equation}{section}
\numberwithin{figure}{section}
\theoremstyle{remark}
\newtheorem*{notation*}{\protect\notationname}
\theoremstyle{plain}
\newtheorem{thm}{\protect\theoremname}[section]
\theoremstyle{remark}
\newtheorem{notation}[thm]{\protect\notationname}
\theoremstyle{definition}
\newtheorem{defn}[thm]{\protect\definitionname}
\theoremstyle{remark}
\newtheorem{rem}[thm]{\protect\remarkname}
\theoremstyle{plain}
\newtheorem{cor}[thm]{\protect\corollaryname}
\theoremstyle{plain}
\newtheorem{prop}[thm]{\protect\propositionname}
\theoremstyle{plain}
\newtheorem{lem}[thm]{\protect\lemmaname}
\theoremstyle{plain}
\newtheorem{assumption}[thm]{\protect\assumptionname}
\theoremstyle{remark}
\newtheorem*{acknowledgement*}{\protect\acknowledgementname}

\usepackage{cite}
\usepackage{bbm}

\providecommand{\leftsquigarrow}{%
  \mathrel{\mathpalette\reflect@squig\relax}%
}
\newcommand{\reflect@squig}[2]{%
  \reflectbox{$\m@th#1\rightsquigarrow$}%
}

\usepackage{enumitem}
	\setlist[itemize]{leftmargin=*}
	\setlist[enumerate]{leftmargin=*}

\usepackage{tikz}
\usetikzlibrary[patterns]

\makeatother

\providecommand{\acknowledgementname}{Acknowledgement}
\providecommand{\assumptionname}{Assumption}
\providecommand{\corollaryname}{Corollary}
\providecommand{\definitionname}{Definition}
\providecommand{\lemmaname}{Lemma}
\providecommand{\notationname}{Notation}
\providecommand{\propositionname}{Proposition}
\providecommand{\remarkname}{Remark}
\providecommand{\theoremname}{Theorem}

\begin{document}
\title[Mixing Time of the Curie--Weiss--Potts Model at Low Temperatures]{Sharp Mixing Time Asymptotics of Glauber Dynamics for the Curie--Weiss--Potts
Model at Low Temperatures}
\author{Seonwoo Kim and Jungkyoung Lee}
\begin{abstract}
In this article, we derive a sharp mixing time estimate of the Glauber
dynamics for the Curie--Weiss--Potts model in the low-temperature
regime. In contrast to the high-temperature regime studied by Cuff
et al. (J. Stat. Phys. 149: 432--477, 2012), in which the Gibbs measure
is concentrated around the equiproportional distribution of spins,
the Gibbs measure in the low-temperature regime is concentrated on
multiple states, each with a dominant number of a single spin. Consequently,
global mixing of the system requires sufficiently many transitions
between these states. Since these transitions are well explained by
the phenomenon of metastability, the theory of metastability plays
a central role in the analysis of slow mixing. In particular, the
sharp asymptotics for the mixing time is given by the mixing time
of the limit Markov chain, which describes the metastable behavior
of the dynamics, multiplied by the metastable transition time-scale.
As a byproduct, we verify that it does not exhibit a cutoff phenomenon.
\end{abstract}

\address{S. Kim. Department of Mathematics, Yonsei University, Republic of
Korea.}
\email{seonwookim@yonsei.ac.kr}
\address{J. Lee. Department of Mathematics Education, Inha University, Republic
of Korea.}
\email{jungkyoung@inha.ac.kr}

\maketitle
\tableofcontents{}

\section{\label{sec1}Introduction and Main Result}
\begin{notation*}
We gather a few notations that are repeatedly used in this article.
\begin{itemize}
\item For $a,b\in\mathbb{R}$, we write $\llbracket a,b\rrbracket:=[a,b]\cap\mathbb{Z}$,
$a\wedge b:=\min\{a,b\}$, and $a\vee b:=\max\{a,b\}$.
\item For $a\in\mathbb{R}$, define $[a]_{+}:=a\vee0=\max\{a,0\}$.
\item For any set $A$, its indicator function is written as $\bm{1}_{A}$.
\item By writing $f(N)=O(g(N))$ it means $|f(N)|\le Cg(N)$ where $C>0$
does not depend on $N$.
\item Write $f(N)\ll g(N)$ if $\lim_{N\to\infty}\frac{f(N)}{g(N)}=0$.
\item Write $f(N)\simeq g(N)$ if $\lim_{N\to\infty}\frac{f(N)}{g(N)}=1$.
\item For a function $f$ on $\mathbb{R}$ and $a\in\mathbb{R}$, we write
$f(a+):=\lim_{t\downarrow a}f(t)$ and $f(a-):=\lim_{t\uparrow a}f(t)$.
\item For a set $A$, denote by $\mathcal{H}_{A}$ the hitting time of $A$
with respect to the specified dynamics.
\end{itemize}
\end{notation*}

\subsection{\label{sec1.1}Mixing Time}

Consider a finite space $S$ and a Markov chain $\{X(t)\}_{t\ge0}$
therein. Denote by $X(t;\mu)$ the law of the process at time $t\ge0$
starting from initial distribution $\mu$. If $\mu=\delta_{x}$ for
some $x\in S$, we write $X(t;x):=X(t;\delta_{x})$. Suppose that
$X$ is ergodic, thus has a unique invariant distribution, say, $\pi$.
By a standard ergodicity argument, the distribution $X(t;x)$ converges
to $\pi$ as $t\to\infty$ for any given $x\in S$. The most standard
way to formulate this convergence is via their \emph{total variation
distance}: for any two probability measures $\mu,\nu$ on $S$, define
\[
d_{{\rm TV}}(\mu,\nu):=\sup_{A\subset S}|\mu(A)-\nu(A)|=\frac{1}{2}\sum_{x\in S}|\mu(x)-\nu(x)|.
\]
The convergence to equilibrium is then mathematically represented
as $\lim_{t\to\infty}d_{{\rm TV}}(X(t;x),\pi)=0$ for all $x\in S$.
This phenomenon is referred to as the \emph{mixing} property in the
literature.

Beyond the convergence itself to the equilibrium $\pi$, one can further
try to quantify this convergence as follows. For $\delta>0$, the
\emph{$\delta$-mixing time} of the process $\{X(t)\}_{t\ge0}$ is
defined as
\[
T_{\delta}^{{\rm mix}}(X):=\inf\left\{ t\ge0:\sup_{x\in S}d_{{\rm TV}}(X(t;x),\pi)\le\delta\right\} .
\]
This quantity measures the time required for the distribution of the
process to become closer than or equal to $\delta$ to the equilibrium
distribution $\pi$. There exists a vast literature of the study of
mixing time and its applications; we refer to the recent monograph
\cite{LPW17} for a comprehensive review.

\subsection{\label{sec1.2}Curie--Weiss--Potts Model}

This article primarily focuses on investigating the mixing time of
the \emph{Curie--Weiss--Potts} (or CWP) model. This is an interacting
spin system on the complete graph which serves as a mean-field approximation
of the standard Ising model \cite{Isi25} or Potts model \cite{Pot52}
on lattices. We rigorously describe the model as follows. For a positive
integer $N$, denote by $K_{N}=\llbracket1,N\rrbracket$ the set of
sites in the system. Let $\Omega_{N}:=\llbracket1,q\rrbracket^{K_{N}}$
be the configuration space of $q\ge2$ spins on $K_{N}$. In the case
of $q=2$, we simply refer to the model as the Curie--Weiss model
\cite{Wei07}.

Each configuration is represented as an element $\sigma=(\sigma_{1},\dots,\sigma_{N})\in\Omega_{N}$
where $\sigma_{u}\in\llbracket1,q\rrbracket$ denotes the spin at
site $u\in K_{N}$. Its Hamiltonian is given by
\begin{equation}
\mathbb{H}_{N}(\sigma):=-\frac{1}{2N}\sum_{u,v\in\llbracket1,N\rrbracket}\bm{1}_{\{\sigma_{u}=\sigma_{v}\}}\qquad\text{for}\quad\sigma\in\Omega_{N}.\label{eq:Hamiltonian}
\end{equation}
Then, the Curie--Weiss--Potts \emph{Gibbs measure} associated to
the Hamiltonian at inverse temperature $\beta>0$ is given as
\begin{equation}
\mu_{N}^{\beta}(\sigma):=\frac{1}{Z_{N}^{\beta}}e^{-\beta\mathbb{H}_{N}(\sigma)},\qquad\sigma\in\Omega_{N},\label{eq:muNbeta}
\end{equation}
where $Z_{N}^{\beta}=\sum_{\sigma\in\Omega_{N}}e^{-\beta\mathbb{H}_{N}(\sigma)}$
is the partition function which makes $\mu_{N}^{\beta}$ a probability
measure on $\Omega_{N}$.

\subsection{\label{sec1.3}Magnetization}

As the complete graph $K_{N}$ has no geometric structure, we may
study the CWP model solely in terms of its \emph{magnetization}. Define
a $(q-1)$-dimensional space $\Xi$ as
\begin{equation}
\Xi:=\left\{ \bm{x}=(x_{1},\dots,x_{q})\in\mathbb{R}^{q}:x_{1},\dots,x_{q}\ge0,\enspace\sum_{i=1}^{q}x_{i}=1\right\} .\label{eq:Xi-def}
\end{equation}
Let $\Xi^{\circ}$ be the interior of $\Xi$. Denote by $\Xi_{N}:=\Xi\cap(N^{-1}\mathbb{Z})^{q}$
its discretization. The magnetization vector of each $\sigma\in\Omega_{N}$
is defined via a projection function $\Pi_{N}:\Omega_{N}\to\Xi_{N}$
given as
\begin{equation}
\Pi_{N}(\sigma):=\left(\Pi_{N}^{1}(\sigma),\dots,\Pi_{N}^{q}(\sigma)\right),\label{eq:PiN}
\end{equation}
where $\Pi_{N}^{k}(\sigma)$ denotes the proportion of spins of type
$k$:
\[
\Pi_{N}^{k}(\sigma):=\frac{1}{N}\sum_{v=1}^{N}\bm{1}_{\{\sigma_{v}=k\}}.
\]
For each $\bm{x}\in\Xi$ define
\[
H(\bm{x}):=-\frac{1}{2}|\bm{x}|^{2}\qquad\text{and}\qquad S(\bm{x})=\sum_{k=1}^{q}x_{k}\log x_{k},
\]
where we adopt the convention $0\log0:=0$. Let $\pi_{N}^{\beta}$
be the pushforward measure of $\mu_{N}^{\beta}$ by $\Pi_{N}$:
\begin{equation}
\pi_{N}^{\beta}:=\mu_{N}^{\beta}\circ\Pi_{N}^{-1}.\label{eq:piNbeta}
\end{equation}
Since 
\begin{equation}
\mathbb{H}_{N}(\sigma)=NH(\bm{x})\qquad\text{if}\quad\Pi_{N}(\sigma)=\bm{x},\label{eq:HN-NH}
\end{equation}
via Stirling's formula, $\pi_{N}^{\beta}$ can be rewritten as
\begin{equation}
\begin{aligned}\pi_{N}^{\beta}(\bm{x})=\sum_{\sigma\in\Omega_{N}:\,\Pi_{N}(\sigma)=\bm{x}}\frac{1}{Z_{N}^{\beta}}e^{-\beta\mathbb{H}_{N}(\sigma)} & =\frac{N!}{(Nx_{1})!\cdots(Nx_{q})!}\frac{1}{Z_{N}^{\beta}}e^{-\beta NH(\bm{x})}\\
 & =:\frac{1}{(2\pi N)^{\frac{q-1}{2}}Z_{N}^{\beta}}e^{-\beta NF_{\beta,N}(\bm{x})},
\end{aligned}
\label{eq:piNbeta-formula}
\end{equation}
where $F_{\beta,N}(\bm{x}):=F_{\beta}(\bm{x})+\frac{1}{N}G_{\beta,N}(\bm{x})$
and
\begin{equation}
F_{\beta}(\bm{x}):=H(\bm{x})+\frac{1}{\beta}S(\bm{x}),\qquad G_{\beta,N}(\bm{x}):=\frac{1}{2\beta}\log\left(\prod_{k\in\llbracket1,q\rrbracket:\,x_{k}>0}x_{k}\right)+O\left(\frac{1}{N}\right).\label{eq:FbetaN-def}
\end{equation}
Here, $G_{\beta,N}$ converges uniformly to $G_{\beta}$ as $N\to\infty$
on every compact subset of $\Xi^{\circ}$, where
\begin{equation}
G_{\beta}(\bm{x}):=\frac{\log(x_{1}\cdots x_{q})}{2\beta}.\label{eq:Gbeta}
\end{equation}

\subsection{\label{sec1.4}Energy Landscape}

We review the results on the energy landscape described by $F_{\beta}$.
\begin{notation}
\label{nota:q-to-q-1}We may identify the $(q-1)$-dimensional space
$\Xi$ by using only the first $q-1$ coordinates. With a slight abuse
of notation, we may regard
\[
\Xi=\left\{ \bm{x}=(x_{1},\dots,x_{q-1})\in\mathbb{R}^{q-1}:x_{1},\dots,x_{q-1}\ge0,\enspace\sum_{k=1}^{q-1}x_{k}\le1\right\} .
\]
In this way, $F_{\beta}$ becomes an analytic function in $\Xi^{\circ}$
which extends continuously to $\Xi$. For each $k\in\llbracket1,q-1\rrbracket$,
denote by $\partial_{k}$ the $k$-th partial derivative and write
$\nabla:=(\partial_{1},\dots,\partial_{q-1})$.
\end{notation}

\begin{defn}
\label{def:C-critical}In the terminology of Notation \ref{nota:q-to-q-1},
denote by $\mathcal{C}$ the set of critical points of $F_{\beta}$
in $\Xi^{\circ}$.
\end{defn}

The set $\mathcal{C}$ is fully characterized in \cite[Sections 6 and 7]{Lee22}.
We summarize them in Appendix \ref{secA} mostly without proofs.

We say that $\varphi:[0,1]\to\Xi$ is a \emph{trajectory}\footnote{We use the term \emph{trajectory} to refer to a curve in continuum
space, and later in Section \ref{sec4} use the term \emph{path} to
denote a sequence in discrete space.} from $\bm{x}\in\Xi$ to $\bm{y}\in\Xi$ if it is continuous, $\varphi(0)=\bm{x}$,
and $\varphi(1)=\bm{y}$. Define
\[
\Phi_{\beta}(\bm{x},\bm{y}):=\inf_{\varphi}\max_{t\in[0,1]}F_{\beta}(\varphi(t)),
\]
the \emph{communication height} between $\bm{x}$ and $\bm{y}$, where
the infimum runs over all trajectories $\varphi$ from $\bm{x}$ to
$\bm{y}$. One may similarly define $\Phi_{\beta}(\mathcal{X},\mathcal{Y})$
for any two sets $\mathcal{X},\mathcal{Y}\subset\Xi$.

Let us write
\begin{equation}
{\bf e}:=\left(\frac{1}{q},\dots,\frac{1}{q}\right)\in\Xi,\label{eq:e-def}
\end{equation}
which represents the equiproportional vector. Recall from \eqref{eq:ukvk-def}
the definition of critical points ${\bf u}_{k}\in\mathcal{C}$ for
$k\in\llbracket1,q\rrbracket$. It is verified in \cite[Proposition 3.2-(1)]{Lee22}
that $\mathcal{U}:=\{{\bf u}_{1},\dots,{\bf u}_{q},{\bf e}\}$ contains
all possible local minima of $F_{\beta}$ in $\Xi$.
\begin{defn}
For each $k\in\llbracket1,q\rrbracket$, denote by $\mathcal{W}_{k}$
the connected component of $\{F_{\beta}<\Phi_{\beta}({\bf u}_{k},\mathcal{U}\setminus\{{\bf u}_{k}\})\}$
containing ${\bf u}_{k}$ and by $\mathcal{W}_{0}$ the connected
component of $\{F_{\beta}<\Phi_{\beta}({\bf e},\mathcal{U}\setminus\{{\bf e}\})\}$
containing ${\bf e}$. For $k,\ell\in\llbracket0,q\rrbracket$ with
$k\ne\ell$, define $\Sigma_{k,\ell}:=\overline{\mathcal{W}_{k}}\cap\overline{\mathcal{W}_{\ell}}$
which represents the set of saddle points connecting $\mathcal{W}_{k}$
and $\mathcal{W}_{\ell}$.
\end{defn}

\subsubsection*{Case 1: $q=2$}

The case of $q=2$ is elementary and represents the classical Curie--Weiss
model. In this case, there exists exactly one critical temperature
$\beta_{1}:=2$ at which a sharp phase transition occurs as follows:
\begin{itemize}
\item If $\beta\in(0,2]$, then $F_{\beta}$ has only one local minimum
${\bf e}=(\frac{1}{2},\frac{1}{2})$ which is the global minimum.
In particular, ${\bf e}$ is degenerate if and only if $\beta=2$.
\item If $\beta\in(2,\infty)$, then $F_{\beta}$ has two local minima ${\bf u}_{1},{\bf u}_{2}$
such that ${\bf u}_{2}=(1,1)-{\bf u}_{1}$. Here, $\Sigma_{1,2}=\{{\bf e}\}$.
\end{itemize}
See Figure \ref{fig1.1} for an illustration of the graph of $F_{\beta}$
if $q=2$.
\begin{center}
\begin{figure}
\begin{centering}
\begin{tikzpicture}
\begin{scope}[scale=1.2]
\draw[white] (-1.35,-0.5) rectangle (1.35,2.5);
\draw[thick,domain=-0.999:0.999,smooth,variable=\x] plot(\x,{2*(ln(1-\x)*(1-\x)+ln(1+\x)*(1+\x)-0.8*(\x)^2)/(ln(0.001)*(0.001)+ln(1.999)*(1.999)-0.8*0.999^2)});
\draw[densely dashed,->] (-1.2,0)--(1.2,0); \draw[densely dashed,->] (-1,-0.2)--(-1,2.2); \draw[densely dashed] (1,0)--(1,2.2);
\fill[] (0,0) circle (0.04);
\draw (0,-0.05) node[below]{${\bf e}$};
\end{scope}

\begin{scope}[scale=1.2,shift={(4,0)}]
\draw[white] (-1.35,-0.5) rectangle (1.35,2.5);
\draw[thick,domain=-0.999:0.999,smooth,variable=\x] plot(\x,{2*(ln(1-\x)*(1-\x)+ln(1+\x)*(1+\x)-(\x)^2)/(ln(0.001)*(0.001)+ln(1.999)*(1.999)-0.999^2)});
\draw[densely dashed,->] (-1.2,0)--(1.2,0); \draw[densely dashed,->] (-1,-0.2)--(-1,2.2); \draw[densely dashed] (1,0)--(1,2.2);
\fill[] (0,0) circle (0.04);
\draw (0,-0.05) node[below]{${\bf e}$};
\end{scope}

\begin{scope}[scale=1.2,shift={(8,0)}]
\fill[red!50!white] (-0.999,0) rectangle (0.999,0.3);
\fill[white,domain=-0.999:0.999,variable=\x] (-0.999,-0.1)--(-0.999,0)--(-0.999,{2*(ln(1+0.999)*(1+0.999)+ln(1-0.999)*(1-0.999)-1.2*(-0.999)^2-(ln(1-0.659)*(1-0.659)+ln(1+0.659)*(1+0.659)-1.2*(0.659)^2))/(ln(0.001)*(0.001)+ln(1.999)*(1.999)-1.2*0.999^2-(ln(1-0.659)*(1-0.659)+ln(1+0.659)*(1+0.659)-1.2*(0.659)^2))})--plot({\x},{2*(ln(1-\x)*(1-\x)+ln(1+\x)*(1+\x)-1.2*(\x)^2-(ln(1-0.659)*(1-0.659)+ln(1+0.659)*(1+0.659)-1.2*(0.659)^2))/(ln(0.001)*(0.001)+ln(1.999)*(1.999)-1.2*0.999^2-(ln(1-0.659)*(1-0.659)+ln(1+0.659)*(1+0.659)-1.2*(0.659)^2))})--(0.999,0)--(0.999,-0.1)--cycle;
\draw[white] (-1.35,-0.5) rectangle (1.35,2.5);
\draw[thick,domain=-0.999:0.999,smooth,variable=\x] plot(\x,{2*(ln(1-\x)*(1-\x)+ln(1+\x)*(1+\x)-1.2*(\x)^2-(ln(1-0.659)*(1-0.659)+ln(1+0.659)*(1+0.659)-1.2*(0.659)^2))/(ln(0.001)*(0.001)+ln(1.999)*(1.999)-1.2*0.999^2-(ln(1-0.659)*(1-0.659)+ln(1+0.659)*(1+0.659)-1.2*(0.659)^2))});
\draw[densely dashed,->] (-1.2,0)--(1.2,0); \draw[densely dashed,->] (-1,-0.2)--(-1,2.2); \draw[densely dashed] (1,0)--(1,2.2);
\fill[] (-0.659,0) circle (0.04); \fill[] (0.659,0) circle (0.04);
\draw (-0.659,-0.05) node[below]{${\bf u}_1$}; \draw (0.659,-0.05) node[below]{${\bf u}_2$};
\draw[thick] (0,0.422-0.05)--(0,0.422+0.05);
\draw (0,0.422+0.05) node[above]{${\bf e}$};
\end{scope}
\end{tikzpicture}
\par\end{centering}
\caption{\label{fig1.1}Graph of $F_{\beta}$ in the Curie--Weiss model where
the horizontal axis reads the first coordinate of the elements in
$\Xi=\{(x_{1},1-x_{1}):0\le x_{1}\le1\}$. If $\beta<2$ (left), then
${\bf e}$ is the only local (thus global) minimum which is non-degenerate.
If $\beta=2$ (middle), the graph shape remains to be the same but
in this case ${\bf e}$ is degenerate. Finally, if $\beta>2$ (right)
then there exist two symmetric local minima ${\bf u}_{1}$ and ${\bf u}_{2}$,
and their saddle point is the midpoint ${\bf e}$. In particular,
$H_{\beta}=F_{\beta}({\bf e})$. The metastable valleys $\mathcal{E}_{N}^{1}$
and $\mathcal{E}_{N}^{2}$ (cf. \eqref{eq:ENk-def}) are colored red.}
\end{figure}
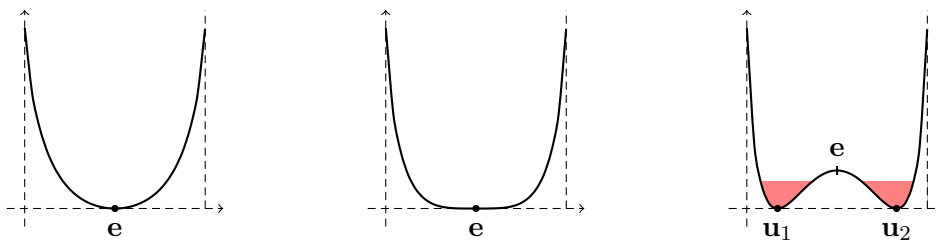
\par\end{center}

\subsubsection*{Case 2: $q\in\{3,4\}$}

Next, we consider the case when $q\in\{3,4\}$. Recall the critical
points ${\bf v}_{k},{\bf u}_{k,\ell}\in\mathcal{C}$ for $k,\ell\in\llbracket1,q\rrbracket$
from \eqref{eq:ukvk-def} and \eqref{eq:ukl-def}. In this case, the
model has three critical temperatures 
\[
0<\beta_{1}<\beta_{2}<q
\]
at which phase transitions occur. The following results are from \cite[Theorem 3.5]{Lee22}.
\begin{itemize}
\item If $\beta\in(0,\beta_{1}]$, then $F_{\beta}$ has only one local
minimum ${\bf e}$ which is the global minimum. If $\beta=\beta_{1}$,
${\bf e}$ is a degenerate minimum.
\item If $\beta\in(\beta_{1},q)$, then $F_{\beta}$ has $q+1$ local minima
${\bf u}_{1},\dots,{\bf u}_{q},{\bf e}$ such that
\[
\begin{aligned}F_{\beta}({\bf u}_{1})=\cdots=F_{\beta}({\bf u}_{q})>F_{\beta}({\bf e}) & \qquad\text{if}\quad\beta\in(\beta_{1},\beta_{2}),\\
F_{\beta}({\bf u}_{1})=\cdots=F_{\beta}({\bf u}_{q})=F_{\beta}({\bf e}) & \qquad\text{if}\quad\beta=\beta_{2},\\
F_{\beta}({\bf u}_{1})=\cdots=F_{\beta}({\bf u}_{q})<F_{\beta}({\bf e}) & \qquad\text{if}\quad\beta\in(\beta_{2},q).
\end{aligned}
\]
Moreover, $\Sigma_{0,k}=\{{\bf v}_{k}\}$ and $\Sigma_{k,\ell}=\varnothing$
for $k,\ell\in\llbracket1,q\rrbracket$.
\item If $\beta=q$, then $F_{\beta}$ has $q$ local minima ${\bf u}_{1},\dots,{\bf u}_{q}$
and $\Sigma_{k,\ell}=\{{\bf e}\}$ for any $k\ne\ell$. Here, ${\bf e}={\bf v}_{k}={\bf u}_{k,\ell}$
for any $k,\ell$, and this is a degenerate critical point. This is
the reason why this specific case will be excluded; it is impossible
to analyze the metastable transition near the degenerate critical
point ${\bf e}$ via our method.
\item If $\beta\in(q,\infty)$, then $F_{\beta}$ has $q$ local minima
${\bf u}_{1},\dots,{\bf u}_{q}$ and, for $k\ne\ell$,
\[
\begin{cases}
\text{if}\quad q=3\quad\text{then}\quad\Sigma_{k,\ell}=\{{\bf v}_{m}\}\quad\text{where}\quad\{k,\ell,m\}=\llbracket1,3\rrbracket,\\
\text{if}\quad q=4\quad\text{then}\quad\Sigma_{k,\ell}=\{{\bf u}_{k,\ell}\}.
\end{cases}
\]
\end{itemize}
Refer to Figure \ref{fig1.2} for illustrations for each case.

\subsubsection*{Case 3: $q\ge5$}

Finally, assume that $q\ge5$. As a contrary to the previous cases,
we have four critical temperatures 
\[
0<\beta_{1}<\beta_{2}<\beta_{3}<q,
\]
an additional critical temperature $\beta_{3}\in(\beta_{2},q)$ arising
here. The following results summarize \cite[Theorem 3.6]{Lee22}.
\begin{itemize}
\item If $\beta\in(0,\beta_{1}]$, then $F_{\beta}$ has only one local
minimum ${\bf e}$ which is thus global. If $\beta=\beta_{1}$, ${\bf e}$
is degenerate.
\item If $\beta\in(\beta_{1},\beta_{3})$, then $F_{\beta}$ has $q+1$
local minima ${\bf u}_{1},\dots,{\bf u}_{q},{\bf e}$ such that
\[
\begin{aligned}F_{\beta}({\bf u}_{1})=\cdots=F_{\beta}({\bf u}_{q})>F_{\beta}({\bf e}) & \qquad\text{if}\quad\beta\in(\beta_{1},\beta_{2}),\\
F_{\beta}({\bf u}_{1})=\cdots=F_{\beta}({\bf u}_{q})=F_{\beta}({\bf e}) & \qquad\text{if}\quad\beta=\beta_{2},\\
F_{\beta}({\bf u}_{1})=\cdots=F_{\beta}({\bf u}_{q})<F_{\beta}({\bf e}) & \qquad\text{if}\quad\beta\in(\beta_{2},\beta_{3}).
\end{aligned}
\]
Moreover, $\Sigma_{0,k}=\{{\bf v}_{k}\}$ and $\Sigma_{k,\ell}=\varnothing$
for $k,\ell\in\llbracket1,q\rrbracket$.
\item If $\beta=\beta_{3}$, then $F_{\beta}$ has $q+1$ local minima ${\bf u}_{1},\dots,{\bf u}_{q},{\bf e}$
such that
\[
F_{\beta}({\bf u}_{1})=\cdots=F_{\beta}({\bf u}_{q})<F_{\beta}({\bf e})<F_{\beta}({\bf u}_{1,2})=\cdots=F_{\beta}({\bf u}_{q-1,q})=F_{\beta}({\bf v}_{1})=\cdots=F_{\beta}({\bf v}_{q}).
\]
Moreover, $\Sigma_{0,k}=\{{\bf v}_{k}\}$ and $\Sigma_{k,\ell}=\{{\bf u}_{k,\ell}\}$
for $k,\ell\in\llbracket1,q\rrbracket$.
\item If $\beta\in(\beta_{3},q)$, then $F_{\beta}$ has $q+1$ local minima
${\bf u}_{1},\dots,{\bf u}_{q},{\bf e}$ such that
\[
F_{\beta}({\bf u}_{1})=\cdots=F_{\beta}({\bf u}_{q})<F_{\beta}({\bf e}),\qquad F_{\beta}({\bf u}_{1,2})=\cdots=F_{\beta}({\bf u}_{q-1,q})<F_{\beta}({\bf v}_{1})=\cdots=F_{\beta}({\bf v}_{q}).
\]
In addition, $\Sigma_{0,k}=\varnothing$ and $\Sigma_{k,\ell}=\{{\bf u}_{k,\ell}\}$
for $k,\ell\in\llbracket1,q\rrbracket$. The set $\{F_{\beta}<F_{\beta}({\bf v}_{1})\}$
has only two connected components, the well $\mathcal{W}_{0}$ and
the other which contains ${\bf u}_{1},\dots,{\bf u}_{q}$, say $\widehat{\mathcal{W}}_{1}$,
such that $\overline{\mathcal{W}_{0}}\cap\overline{\widehat{\mathcal{W}}_{1}}=\{{\bf v}_{1},\dots,{\bf v}_{q}\}$.
Moreover, the depth of each $\mathcal{W}_{k}$, $k\in\llbracket1,q\rrbracket$
is strictly bigger than the depth of $\mathcal{W}_{0}$; i.e.,
\begin{equation}
F_{\beta}({\bf u}_{1,2})-F_{\beta}({\bf u}_{1})>F_{\beta}({\bf v}_{1})-F_{\beta}({\bf e}).\label{eq:depth-bigger}
\end{equation}
\item If $\beta=[q,\infty)$, then $F_{\beta}$ has $q$ local minima ${\bf u}_{1},\dots,{\bf u}_{q}$
and $\Sigma_{k,\ell}=\{{\bf u}_{k,\ell}\}$ for $k,\ell\in\llbracket1,q\rrbracket$.
\end{itemize}
Refer again to Figure \ref{fig1.2} for illustrations. We remark here
that the proof of \eqref{eq:depth-bigger}, alone, is missing from
\cite{Lee22}. We present its proof in Lemma \ref{lem:depth-bigger}.
\begin{center}
\begin{figure}
\begin{centering}
\begin{tikzpicture}
\draw (0,1.5) node{\scriptsize $\beta \in (\beta_1,\beta_2)$};
\draw (2.4,1.5) node{\scriptsize $\beta = \beta_2$};
\draw (7.2,1.5) node{\scriptsize $\beta \in (\beta_2,q)$};
\draw (12,1.5) node{\scriptsize $\beta \in (q,\infty)$};

\draw (0,-4.5) node{\scriptsize $\beta \in (\beta_1,\beta_2)$};
\draw (2.4,-4.5) node{\scriptsize $\beta = \beta_2$};
\draw (4.8,-4.5) node{\scriptsize $\beta \in (\beta_2,\beta_3)$};
\draw (7.2,-4.5) node{\scriptsize $\beta = \beta_3$};
\draw (9.6,-4.5) node{\scriptsize $\beta \in (\beta_3,q)$};
\draw (12,-4.5) node{\scriptsize $\beta \in [q,\infty)$};

\draw (-1.3,0) node[left]{\scriptsize $q=3$};
\draw (-1.3,-2.4) node[left]{\scriptsize $q=4$};
\draw (-1.3,-6) node[left]{\scriptsize $q \ge 5$};

\begin{scope}[scale=0.6]
\draw[] (-2,-2) rectangle (2,2);
\fill[black!40!white] (0,0) circle (1); \draw (0,0) circle (1); \draw[thick] (-0.06,-0.06)--(0.06,0.06); \draw[thick] (0.06,-0.06)--(-0.06,0.06);
\draw[red,densely dashed] (0,0) circle (0.6);
\foreach \i in {60,180,300} {
\fill[black!15!white] ({1.4*cos(\i)},{1.4*sin(\i)}) circle (0.4); \draw ({1.4*cos(\i)},{1.4*sin(\i)}) circle (0.4); \fill ({1.4*cos(\i)},{1.4*sin(\i)}) circle (0.07);
\draw[red,densely dashed] ({1.4*cos(\i)},{1.4*sin(\i)}) circle (0.24);
\draw[thick] ({0.925*cos(\i)},{0.925*sin(\i)})--({1.075*cos(\i)},{1.075*sin(\i)});
}
\draw ({1.4*cos(60)},{1.4*sin(60)-0.1}) node[above]{\scriptsize ${\bf u}_k$};
\draw ({1*cos(60)},{1*sin(60)+0.1}) node[below]{\scriptsize ${\bf v}_k$};
\draw (0,0+0.1) node[below]{\scriptsize $\bf e$};
\end{scope}

\begin{scope}[scale=0.6,shift={(4,0)}]
\draw[] (-2,-2) rectangle (2,2);
\fill[black!40!white] (0,0) circle (0.6); \draw (0,0) circle (0.6); \draw[thick] (-0.06,-0.06)--(0.06,0.06); \draw[thick] (0.06,-0.06)--(-0.06,0.06);
\draw[red,densely dashed] (0,0) circle (0.36);
\foreach \i in {60,180,300} {
\fill[black!40!white] ({1.2*cos(\i)},{1.2*sin(\i)}) circle (0.6); \draw ({1.2*cos(\i)},{1.2*sin(\i)}) circle (0.6); \fill ({1.2*cos(\i)},{1.2*sin(\i)}) circle (0.07);
\draw[red,densely dashed] ({1.2*cos(\i)},{1.2*sin(\i)}) circle (0.36);
\draw[thick] ({0.525*cos(\i)},{0.525*sin(\i)})--({0.675*cos(\i)},{0.675*sin(\i)});
}
\end{scope}

\begin{scope}[scale=0.6,shift={(12,0)}]
\draw[] (-6,-2) rectangle (6,2);
\fill[black!15!white] (0,0) circle (0.55); \draw (0,0) circle (0.55); \draw[thick] (-0.06,-0.06)--(0.06,0.06); \draw[thick] (0.06,-0.06)--(-0.06,0.06);
\draw[red,densely dashed] (0,0) circle (0.33);
\foreach \i in {60,180,300} {
\fill[black!40!white] ({1.175*cos(\i)},{1.175*sin(\i)}) circle (0.625); \draw ({1.175*cos(\i)},{1.175*sin(\i)}) circle (0.625); \fill ({1.175*cos(\i)},{1.175*sin(\i)}) circle (0.07);
\draw[red,densely dashed] ({1.175*cos(\i)},{1.175*sin(\i)}) circle (0.375);
\draw[thick] ({0.475*cos(\i)},{0.475*sin(\i)})--({0.625*cos(\i)},{0.625*sin(\i)});
}
\end{scope}

\begin{scope}[scale=0.6,shift={(20,0)}]
\draw[] (-2,-2) rectangle (2,2);
\draw[thick] (-0.06,-0.06)--(0.06,0.06); \draw[thick] (0.06,-0.06)--(-0.06,0.06);
\foreach \i in {60,180,300} {
\fill[black!40!white] ({0.7/cos(30)*cos(\i)},{0.7/cos(30)*sin(\i)}) circle (0.7); \draw ({0.7/cos(30)*cos(\i)},{0.7/cos(30)*sin(\i)}) circle (0.7); \fill ({0.7/cos(30)*cos(\i)},{0.7/cos(30)*sin(\i)}) circle (0.07);
\draw[red,densely dashed] ({0.7/cos(30)*cos(\i)},{0.7/cos(30)*sin(\i)}) circle (0.42);
}
\foreach \i in {0,120,240} {
\draw[thick] ({0.7/sqrt(3)*cos(\i)+0.075*cos(\i+90)},{0.7/sqrt(3)*sin(\i)+0.075*sin(\i+90)})--({0.7/sqrt(3)*cos(\i)-0.075*cos(\i+90)},{0.7/sqrt(3)*sin(\i)-0.075*sin(\i+90)});
}
\draw ({0.7/cos(30)*cos(60)},{0.7/cos(30)*sin(60)-0.1}) node[above]{\scriptsize ${\bf u}_k$};
\draw ({0.35/cos(30)*cos(120)},{0.35/cos(30)*sin(120)-0.1}) node[above]{\scriptsize ${\bf v}_m$};
\draw ({0.7/cos(30)*cos(180)},{0.7/cos(30)*sin(180)-0.1}) node[above]{\scriptsize ${\bf u}_\ell$};
\end{scope}

\begin{scope}[scale=0.6,shift={(0,-4)}]
\draw[] (-2,-2) rectangle (2,2);
\fill[black!40!white] (0,0) circle (1); \draw (0,0) circle (1); \draw[thick] (-0.06,-0.06)--(0.06,0.06); \draw[thick] (0.06,-0.06)--(-0.06,0.06);
\foreach \i in {45,135,225,315} {
\fill[black!15!white] ({1.4*cos(\i)},{1.4*sin(\i)}) circle (0.4); \draw ({1.4*cos(\i)},{1.4*sin(\i)}) circle (0.4); \fill ({1.4*cos(\i)},{1.4*sin(\i)}) circle (0.07);
\draw[thick] ({0.925*cos(\i)},{0.925*sin(\i)})--({1.075*cos(\i)},{1.075*sin(\i)});
}
\end{scope}

\begin{scope}[scale=0.6,shift={(4,-4)}]
\draw[] (-2,-2) rectangle (2,2);
\fill[black!40!white] (0,0) circle (0.6); \draw (0,0) circle (0.6); \draw[thick] (-0.06,-0.06)--(0.06,0.06); \draw[thick] (0.06,-0.06)--(-0.06,0.06);
\foreach \i in {45,135,225,315} {
\fill[black!40!white] ({1.2*cos(\i)},{1.2*sin(\i)}) circle (0.6); \draw ({1.2*cos(\i)},{1.2*sin(\i)}) circle (0.6); \fill ({1.2*cos(\i)},{1.2*sin(\i)}) circle (0.07);
\draw[thick] ({0.525*cos(\i)},{0.525*sin(\i)})--({0.675*cos(\i)},{0.675*sin(\i)});
}
\end{scope}

\begin{scope}[scale=0.6,shift={(12,-4)}]
\draw[] (-6,-2) rectangle (6,2);
\fill[black!15!white] (0,0) circle (0.55); \draw (0,0) circle (0.55); \draw[thick] (-0.06,-0.06)--(0.06,0.06); \draw[thick] (0.06,-0.06)--(-0.06,0.06);
\foreach \i in {45,135,225,315} {
\fill[black!40!white] ({1.175*cos(\i)},{1.175*sin(\i)}) circle (0.625); \draw ({1.175*cos(\i)},{1.175*sin(\i)}) circle (0.625); \fill ({1.175*cos(\i)},{1.175*sin(\i)}) circle (0.07);
\draw[thick] ({0.475*cos(\i)},{0.475*sin(\i)})--({0.625*cos(\i)},{0.625*sin(\i)});
}
\end{scope}

\begin{scope}[scale=0.6,shift={(20,-4)}]
\draw[] (-2,-2) rectangle (2,2);
\draw[thick] (-0.06,-0.06)--(0.06,0.06); \draw[thick] (0.06,-0.06)--(-0.06,0.06);
\foreach \i in {45,135,225,315} {
\fill[black!40!white] ({0.7/cos(45)*cos(\i)},{0.7/cos(45)*sin(\i)}) circle (0.7); \draw ({0.7/cos(45)*cos(\i)},{0.7/cos(45)*sin(\i)}) circle (0.7); \fill ({0.7/cos(45)*cos(\i)},{0.7/cos(45)*sin(\i)}) circle (0.07);
}
\foreach \i in {0,90,180,270} {
\draw[thick] ({0.7*cos(\i)+0.075*cos(\i+90)},{0.7*sin(\i)+0.075*sin(\i+90)})--({0.7*cos(\i)-0.075*cos(\i+90)},{0.7*sin(\i)-0.075*sin(\i+90)});
}
\draw ({0.7/cos(45)*cos(45)},{0.7/cos(45)*sin(45)+0.1}) node[below]{\scriptsize ${\bf u}_k$};
\draw[thick] ({0.7*cos(90)},{0.7*sin(90)-0.2}) node[above]{\scriptsize ${\bf u}_{k,\ell}$};
\draw ({0.7/cos(45)*cos(135)},{0.7/cos(45)*sin(135)+0.1}) node[below]{\scriptsize ${\bf u}_\ell$};
\end{scope}

\begin{scope}[scale=0.6,shift={(0,-10)}]
\draw (-2,-2) rectangle (2,2);
\fill[black!40!white] (0,0) circle (1); \draw (0,0) circle (1); \draw[thick] (-0.06,-0.06)--(0.06,0.06); \draw[thick] (0.06,-0.06)--(-0.06,0.06);
\foreach \i in {36,108,180,252,324} {
\fill[black!15!white] ({1.4*cos(\i)},{1.4*sin(\i)}) circle (0.4); \draw ({1.4*cos(\i)},{1.4*sin(\i)}) circle (0.4); \fill ({1.4*cos(\i)},{1.4*sin(\i)}) circle (0.07);
\draw[thick] ({0.925*cos(\i)},{0.925*sin(\i)})--({1.075*cos(\i)},{1.075*sin(\i)});
}
\end{scope}

\begin{scope}[scale=0.6,shift={(4,-10)}]
\draw (-2,-2) rectangle (2,2);
\fill[black!40!white] (0,0) circle (0.6); \draw (0,0) circle (0.6); \draw[thick] (-0.06,-0.06)--(0.06,0.06); \draw[thick] (0.06,-0.06)--(-0.06,0.06);
\foreach \i in {36,108,180,252,324} {
\fill[black!40!white] ({1.2*cos(\i)},{1.2*sin(\i)}) circle (0.6); \draw ({1.2*cos(\i)},{1.2*sin(\i)}) circle (0.6); \fill ({1.2*cos(\i)},{1.2*sin(\i)}) circle (0.07);
\draw[thick] ({0.525*cos(\i)},{0.525*sin(\i)})--({0.675*cos(\i)},{0.675*sin(\i)});
}
\end{scope}

\begin{scope}[scale=0.6,shift={(8,-10)}]
\draw (-2,-2) rectangle (2,2);
\fill[black!15!white] (0,0) circle (0.55); \draw (0,0) circle (0.55); \draw[thick] (-0.06,-0.06)--(0.06,0.06); \draw[thick] (0.06,-0.06)--(-0.06,0.06);
\foreach \i in {36,108,180,252,324} {
\fill[black!40!white] ({1.175*cos(\i)},{1.175*sin(\i)}) circle (0.625); \draw ({1.175*cos(\i)},{1.175*sin(\i)}) circle (0.625); \fill ({1.175*cos(\i)},{1.175*sin(\i)}) circle (0.07);
\draw[thick] ({0.475*cos(\i)},{0.475*sin(\i)})--({0.625*cos(\i)},{0.625*sin(\i)});
}
\end{scope}

\begin{scope}[scale=0.6,shift={(12,-10)}]
\draw (-2,-2) rectangle (2,2);
\fill[black!15!white] (0,0) circle ({0.65/cos(54)-0.65}); \draw (0,0) circle ({0.65/cos(54)-0.65}); \draw[thick] (-0.06,-0.06)--(0.06,0.06); \draw[thick] (0.06,-0.06)--(-0.06,0.06);
\foreach \i in {36,108,180,252,324} {
\fill[black!40!white] ({0.65/cos(54)*cos(\i)},{0.65/cos(54)*sin(\i)}) circle (0.65); \draw ({0.65/cos(54)*cos(\i)},{0.65/cos(54)*sin(\i)}) circle (0.65); \fill ({0.65/cos(54)*cos(\i)},{0.65/cos(54)*sin(\i)}) circle (0.07);
\draw[thick] ({(0.65/cos(54)-0.65-0.075)*cos(\i)},{(0.65/cos(54)-0.65-0.075)*sin(\i)})--({(0.65/cos(54)-0.65+0.075)*cos(\i)},{(0.65/cos(54)-0.65+0.075)*sin(\i)});
}
\foreach \i in {0,72,144,216,288} {
\draw[thick] ({0.65/tan(36)*cos(\i)+0.075*cos(\i+90)},{0.65/tan(36)*sin(\i)+0.075*sin(\i+90)})--({0.65/tan(36)*cos(\i)-0.075*cos(\i+90)},{0.65/tan(36)*sin(\i)-0.075*sin(\i+90)});
}
\end{scope}

\begin{scope}[scale=0.6,shift={(16,-10)}]
\draw (-2,-2) rectangle (2,2);
\fill[black!15!white] (0,0) circle (0.3); \draw (0,0) circle (0.3); \draw[thick] (-0.06,-0.06)--(0.06,0.06); \draw[thick] (0.06,-0.06)--(-0.06,0.06);
\foreach \i in {36,108,180,252,324} {
\fill[black!40!white] ({0.675/cos(54)*cos(\i)},{0.675/cos(54)*sin(\i)}) circle (0.675); \draw ({0.675/cos(54)*cos(\i)},{0.675/cos(54)*sin(\i)}) circle (0.675); \fill ({0.675/cos(54)*cos(\i)},{0.675/cos(54)*sin(\i)}) circle (0.07);
\fill[blue] ({0.3*cos(\i)-0.05},{0.3*sin(\i)-0.05}) rectangle ({0.3*cos(\i)+0.05},{0.3*sin(\i)+0.05});
}
\foreach \i in {0,72,144,216,288} {
\draw[thick] ({0.675/tan(36)*cos(\i)+0.075*cos(\i+90)},{0.675/tan(36)*sin(\i)+0.075*sin(\i+90)})--({0.675/tan(36)*cos(\i)-0.075*cos(\i+90)},{0.675/tan(36)*sin(\i)-0.075*sin(\i+90)});
}
\end{scope}

\begin{scope}[scale=0.6,shift={(20,-10)}]
\draw (-2,-2) rectangle (2,2);
\draw[thick] (-0.06,-0.06)--(0.06,0.06); \draw[thick] (0.06,-0.06)--(-0.06,0.06);
\foreach \i in {36,108,180,252,324} {
\fill[black!40!white] ({0.7/cos(54)*cos(\i)},{0.7/cos(54)*sin(\i)}) circle (0.7); \draw ({0.7/cos(54)*cos(\i)},{0.7/cos(54)*sin(\i)}) circle (0.7); \fill ({0.7/cos(54)*cos(\i)},{0.7/cos(54)*sin(\i)}) circle (0.07);
}
\foreach \i in {0,72,144,216,288} {
\draw[thick] ({0.7/tan(36)*cos(\i)+0.075*cos(\i+90)},{0.7/tan(36)*sin(\i)+0.075*sin(\i+90)})--({0.7/tan(36)*cos(\i)-0.075*cos(\i+90)},{0.7/tan(36)*sin(\i)-0.075*sin(\i+90)});
}
\end{scope}
\end{tikzpicture}
\par\end{centering}
\caption{\label{fig1.2}Energy landscape of the CWP model if $\beta>\beta_{1}$.
The (cross-marked) middle point represents the equiproportional vector
${\bf e}$ and the $q$ outside (dot-marked) circle centers represent
the local minima ${\bf u}_{1},\dots,{\bf u}_{q}$. Each gray-colored
circle represents an energetic well $\mathcal{W}_{k}$, $k\in\llbracket0,q\rrbracket$,
such that dark color indicates the deepest wells and light color indicates
shallower wells. The saddle points with height $H_{\beta}$ are marked
between the wells. In the case of $q\ge5$ and $\beta\in(\beta_{3},q)$,
the $q$ saddle points ${\bf v}_{1},\dots,{\bf v}_{q}$ between $\mathcal{W}_{0}$
and $\widehat{\mathcal{W}}_{1}$ are marked as blue squares. The metastable
valleys $\mathcal{E}_{N}^{k}$, $k\in\llbracket0,q\rrbracket$ (cf.
\eqref{eq:ENk-def} and \eqref{eq:EN0-def}) are marked by red dashed
lines for $q=3$. If $\beta\ge\beta_{3}$ and $q\ge4$, one should
interpret that $\overline{\mathcal{W}}_{k}\cap\overline{\mathcal{W}}_{\ell}=\{{\bf u}_{k,\ell}\}\protect\ne\emptyset$
for any $k\protect\ne\ell$, even though ${\bf u}_{k,\ell}$ is illustrated
only if $k,\ell$ are nearest neighbors (due to dimensional restriction).}
\end{figure}
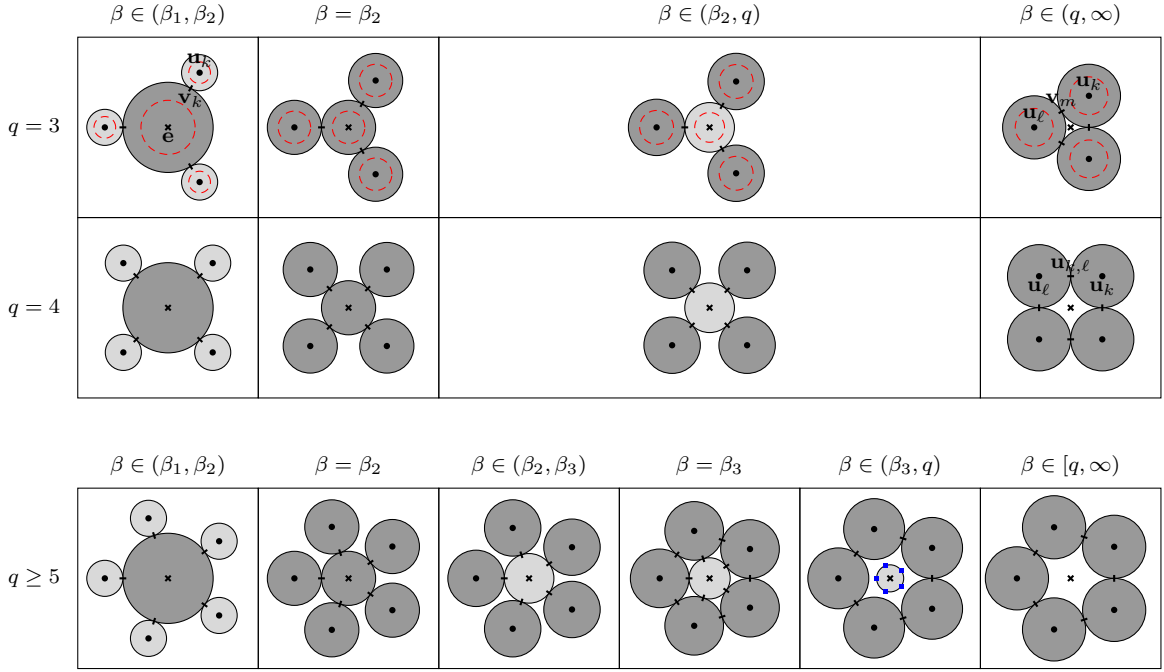
\par\end{center}

\subsection{\label{sec1.5}Glauber Dynamics}

For $\sigma\in\Omega_{N}$, $v\in K_{N}$, and $k\in\llbracket1,q\rrbracket$,
denote by $\sigma^{v,k}$ the configuration whose spin $\sigma_{v}$
at site $v$ is flipped to $k$, i.e.,
\[
\left(\sigma^{v,k}\right)_{u}:=\begin{cases}
\sigma_{u} & \text{if}\quad u\ne v,\\
k & \text{if}\quad u=v.
\end{cases}
\]
Then, consider an infinitesimal generator $\mathcal{L}_{N}$ which
acts on each $f:\Omega_{N}\to\mathbb{R}$ as
\[
(\mathcal{L}_{N}f)(\sigma):=\frac{1}{N}\sum_{v=1}^{N}\sum_{k=1}^{q}c_{v,k}(\sigma)\left(f\left(\sigma^{v,k}\right)-f(\sigma)\right),
\]
where (cf. \eqref{eq:Hamiltonian})
\begin{equation}
c_{v,k}(\sigma):=\sqrt{\frac{\mu_{N}^{\beta}(\sigma^{v,k})}{\mu_{N}^{\beta}(\sigma)}}=\exp\left\{ -\frac{\beta}{2}\left[\mathbb{H}_{N}\left(\sigma^{v,k}\right)-\mathbb{H}_{N}(\sigma)\right]\right\} .\label{eq:rate-def}
\end{equation}
It can be observed that this dynamics is reversible with respect to
the CWP measure $\mu_{N}^{\beta}$ (cf. \eqref{eq:muNbeta}). Henceforth,
denote by $\{\bm{\sigma}_{N}^{\beta}(t)\}_{t\ge0}$ the continuous-time
Markov chain in $\Omega_{N}$ associated with the generator $\mathcal{L}_{N}$,
which is a type of\emph{ Glauber dynamics} in $\Omega_{N}$. Denote
by $\mathbb{P}_{\sigma}^{N,\beta}$ and $\mathbb{E}_{\sigma}^{N,\beta}$
the law and the corresponding expectation of the process starting
from $\sigma\in\Omega_{N}$.

\subsection{\label{sec1.6}Main Result}

Assume $\beta\in(\beta_{1},\infty)$ such that $\beta\ne q$ if $q\in\{3,4\}$.
Let $H_{\beta}$ denote the height of the lowest saddle points between
${\bf u}_{k}$, $k\in\llbracket1,q\rrbracket$ (cf. Figures \ref{fig1.1}
and \ref{fig1.2}). More specifically:
\begin{itemize}
\item if $q=2$ then $H_{\beta}:=F_{\beta}({\bf e})$;
\item if $q=3$ then $H_{\beta}:=F_{\beta}({\bf v}_{1})$;
\item if $q=4$ then $H_{\beta}:=F_{\beta}({\bf v}_{1})$ if $\beta<q$
and $H_{\beta}:=F_{\beta}({\bf u}_{1,2})$ if $\beta>q$;
\item if $q\ge5$ then $H_{\beta}:=F_{\beta}({\bf v}_{1})$ if $\beta<\beta_{3}$
and $H_{\beta}:=F_{\beta}({\bf u}_{1,2})$ if $\beta\ge\beta_{3}$.
\end{itemize}
Denote by $D_{\beta}$ the depth of each $\mathcal{W}_{k}$, $k\in\llbracket1,q\rrbracket$:
\begin{equation}
D_{\beta}:=H_{\beta}-F_{\beta}({\bf u}_{1}).\label{eq:Dbeta-def}
\end{equation}
The following main result of this article presents a sharp estimate
of the mixing time of the dynamics $\bm{\sigma}_{N}^{\beta}$ as $N\to\infty$.
Recall that $T_{\delta}^{{\rm mix}}(\bm{\sigma}_{N}^{\beta})$ denotes
the $\delta$-mixing time of $\bm{\sigma}_{N}^{\beta}$.
\begin{thm}
\label{thm:main}Fix $\delta>0$. For all $\beta>\beta_{1}$ such
that $\beta\ne q$ if $q\in\{3,4\}$,
\[
\lim_{N\to\infty}\frac{T_{\delta}^{{\rm mix}}\left(\bm{\sigma}_{N}^{\beta}\right)}{2\pi Ne^{ND_{\beta}}}=\mathfrak{T}(\delta),
\]
where $\mathfrak{T}(\delta)=\mathfrak{T}(\beta,q,\delta)$ is the
$\delta$-mixing time of another dynamics $\{\mathfrak{X}_{\beta}(t)\}_{t\ge0}$
which is rigorously defined in \eqref{eq:Tdelta}.
\end{thm}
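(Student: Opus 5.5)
We split the mixing-time estimate into a metastable part and a local part, and show that the metastable part governs the asymptotics. First we reduce to the magnetization chain. By the symmetry of $\mu_N^\beta$ and of the rates under permutations of sites, the law of $\bm{\sigma}_N^\beta(t;\sigma)$ conditional on $\Pi_N(\bm{\sigma}_N^\beta(t))=\bm{x}$ is not uniform in general. However, the worst-case TV distance can be compared with that of the projected process. More precisely, we use the standard coupling fact for mean-field Glauber dynamics: two copies with equal magnetization can be coupled to agree after $O(N\log N)$ time. Since this is $\ll 2\pi Ne^{ND_\beta}$, it suffices, up to negligible errors, to study the Markov chain $\bm{x}_N(t)=\Pi_N(\bm{\sigma}_N^\beta(t))$ on $\Xi_N$, which is reversible with respect to $\pi_N^\beta$.

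Next we establish the metastable picture. We define valleys $\mathcal{E}_N^k$, small neighbourhoods of ${\bf u}_k$ and, when relevant, of ${\bf e}$ (as in \eqref{eq:ENk-def}, \eqref{eq:EN0-def}). Using the potential-theoretic method we compute capacities between valleys: the Eyring--Kramers formula with the prefactor from $G_\beta$ and the Hessians at the saddles $\Sigma_{k,\ell}$, ${\bf v}_k$, ${\bf u}_{k,\ell}$. The test functions come from the Dirichlet principle and the flows from the Thomson principle, built near the non-degenerate saddles.

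This yields convergence of the trace process on $\bigcup_k\mathcal{E}_N^k$, time-rescaled by $\theta_N=2\pi Ne^{ND_\beta}$, to a limit chain $\mathfrak{X}_\beta$ on $\llbracket1,q\rrbracket$. It also yields negligibility of the time spent outside the valleys. In the regimes where $\mathcal{W}_0$ is shallower, in particular where \eqref{eq:depth-bigger} holds, ${\bf e}$ is visited only for negligible time. When $F_\beta({\bf e})\le F_\beta({\bf u}_1)$, i.e.\ $\beta\le\beta_2$, the valley at ${\bf e}$ must instead be included in the limit chain. We then strengthen this to convergence of finite-dimensional marginals from every starting point: $\bm{x}_N(\theta_N t;\bm{x})$ is close in TV to $\sum_k \mathbb{P}(\mathfrak{X}_\beta(t)=k)\,\pi_N^\beta(\cdot\mid\mathcal{E}_N^k)$.

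The lower and upper bounds then follow. For the lower bound at $t<\mathfrak{T}(\delta)$, we start in ${\bf u}_1$ and test TV on the sets $\mathcal{E}_N^k$. Since $\pi_N^\beta(\mathcal{E}_N^k)$ converges to the invariant law of $\mathfrak{X}_\beta$, the distance stays above $\delta$. For the upper bound at $t>\mathfrak{T}(\delta)$, we need two inputs. First, from any starting point the process enters $\bigcup_k\mathcal{E}_N^k$ within time $o(\theta_N)$; this follows from a drift argument along the gradient flow of $F_\beta$ and a hitting-time bound via capacities. Second, within a valley the process mixes locally to $\pi_N^\beta(\cdot\mid\mathcal{E}_N^k)$ in time $o(\theta_N)$; this follows from a spectral-gap bound for the process reflected on $\mathcal{E}_N^k$, where $F_\beta$ is essentially convex, giving gap $\gtrsim N^{-1}$. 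Combining these with the Markov property and the limit-chain convergence yields TV $\le\delta+o(1)$.

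The main obstacle is the uniform control, over all initial configurations (including those near saddles or on $\partial\Xi$), of the hitting times of the valleys and of the local mixing. We would handle it with a quantitative local-equilibration estimate, for instance the $L^2$ or resolvent approach of Landim--Loulakis--Mourragui, giving valley-wise mixing in time $N^{C}\ll e^{ND_\beta}$, together with a careful treatment of the boundary via the explicit rates \eqref{eq:rate-def}.
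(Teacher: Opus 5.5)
Your outline is correct in substance, but it is organized differently from the paper. The paper stays on $\Omega_N$ throughout. It imports $\mathfrak{C}$ and $\mathfrak{D}$ (Theorem~\ref{thm:CWP-CD}) and proves $\mathfrak{M}$ and $\mathfrak{Rec}$ for $\bm{\sigma}_N^{\beta}$ itself (Theorems~\ref{thm:CWP-M} and \ref{thm:CWP-Rec}). It then runs the abstract chain: Proposition~\ref{prop:LLM} gives $\mathfrak{C}_{{\rm TV}}$; Proposition~\ref{prop:TV-TV2} with Lemma~\ref{lem:stat-dist-conv} gives $\mathfrak{C}_{{\rm TV2}}(\pi_{\beta})$; Proposition~\ref{prop:main} concludes. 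The proportions chain appears only inside capacity estimates.

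You instead collapse everything to the proportions chain at the outset. Since $d_{{\rm TV}}$ can only decrease under $\Pi_N$, and since the rates depend only on the magnetization and the current spin and are bounded below, two configurations with equal magnetization can be coupled to coalesce within $O(N\log N)\ll\theta_N$. The configuration mixing time is therefore sandwiched between proportions-chain mixing times, up to an additive $o(\theta_N)$ and an $o(1)$ shift of $\delta$. All metastability, recurrence and local-mixing estimates then live on the polynomial-size space $\Xi_N$.

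The paper's route is modular and model-independent. Yours exploits exchangeability and buys a cleaner local-mixing step. Theorem~\ref{thm:CWP-M}-(3) must handle the reflected dynamics on $\mathcal{E}_N^k\subset\Omega_N$. There, passing from $\overline{\mathbb{E}}_\sigma^{N,\beta}[\mathcal{H}_\xi]$ for a single configuration $\xi$ to $\overline{{\rm E}}_{\bm{x}}^{N,\beta}[\mathcal{H}_{\bm{y}}]$ is not an equivalence. All configurations with a given magnetization have equal weight, so one configuration near ${\bf u}_k$ has exponentially small conditional mass and is exponentially slow to hit. A symmetrization like your coupling is what legitimately moves that estimate to $\Xi_N$. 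Likewise, escaping $\mathcal{W}_0$ for $\beta\in(\beta_2,q)$ by a direct capacity bound on the mean hitting time, using $\widehat{D}_\beta<D_\beta$ (Lemma~\ref{lem:depth-bigger}), is a valid shortcut for the paper's second-level metastability argument (Theorem~\ref{thm:meta-0} and Lemma~\ref{lem:descend-to-min}).

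A few points need repair, none fatal.
\begin{enumerate}
\item The valleys \eqref{eq:ENk-def} are sublevel sets reaching up to height $H_\beta-\eta$, not small neighbourhoods, and $F_\beta$ is not convex on them. So ``essentially convex, hence gap $\gtrsim N^{-1}$'' does not justify anything. Either shrink the valleys to convex neighbourhoods of the minima (the model reduction is insensitive to this), or bound the relaxation time of the reflected proportions chain by the capacity and path arguments of Section~\ref{sec4}. Any bound of order $e^{cN}$ with $c<D_\beta$, up to polynomial factors, suffices.
\item Transferring equilibration from the reflected chain to the original one needs deep subsets $\mathcal{B}_N^k$ that are reached quickly, plus a non-exit estimate from them, i.e.\ $\mathfrak{M}$-(1),(2). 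Your ``strengthen to convergence of finite-dimensional marginals'' silently relies on this through Proposition~\ref{prop:LLM}.
\item In the upper bound, ``TV $\le\delta+o(1)$'' at time $\theta_Nt$ does not give $T_\delta^{{\rm mix}}\le\theta_Nt$. You need $\limsup_N\sup_\sigma d_{{\rm TV}}\le\max_k d_{{\rm TV}}(\mathfrak{X}_\beta(t;k),\pi_\beta)$ together with strict decay of the limit profile (Lemma~\ref{lem:TV-strict-decay} via Lemma~\ref{lem:TV-3}). This also absorbs the $o(1)$ loss from your coupling.
\item Starting the lower bound at ${\bf u}_1$ is legitimate but deserves a line. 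Index $1$ is a worst initial state of $\mathfrak{X}_\beta$ by symmetry, except at $\beta=\beta_2$. There it follows from $\nu/\nu'=\bigl(\frac{q(q-1)-\beta_2}{(q-1)(q-\beta_2)}\bigr)^{(q-2)/2}>1$.
\end{enumerate}
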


Theorem \ref{thm:main} is proved at the end of Section \ref{sec3}.
\begin{rem}
\label{rem:other}We may consider other types of Glauber dynamics
for the CWP model as well. Two important examples are as follows:
\begin{itemize}
\item \emph{heat-bath} Glauber dynamics studied in \cite{CDLLPS12,LLP10}:
\begin{equation}
c_{v,k}^{{\rm HB}}(\sigma):=\frac{\mu_{N}^{\beta}\left(\sigma^{v,k}\right)}{\sum_{\ell=1}^{q}\mu_{N}^{\beta}\left(\sigma^{v,\ell}\right)}=\frac{e^{-\beta\mathbb{H}_{N}\left(\sigma^{v,k}\right)}}{\sum_{\ell=1}^{q}e^{-\beta\mathbb{H}_{N}\left(\sigma^{v,\ell}\right)}};\label{eq:rate-HB-def}
\end{equation}
\item \emph{Metropolis} dynamics studied in \cite{KS25,NS91}:
\begin{equation}
c_{v,k}^{{\rm MP}}(\sigma):=\frac{\mu_{N}^{\beta}\left(\sigma^{v,k}\right)}{\mu_{N}^{\beta}(\sigma)}\wedge1=e^{-\beta\left[\mathbb{H}_{N}\left(\sigma^{v,k}\right)-\mathbb{H}_{N}(\sigma)\right]_{+}}.\label{eq:rate-MP-def}
\end{equation}
\end{itemize}
For these models, our main result Theorem \ref{thm:main} remains
valid. The only difference for these alternative models is the limit
dynamics $\mathfrak{X}_{\beta}$; we discuss this issue briefly in
Section \ref{secC}.
\end{rem}

\subsection{\label{sec1.7}History and Discussions}

\subsubsection*{High-Temperature Regime: Fast Mixing and Cutoff Phenomenon}

In the high-temperature regime of $\beta\in(0,\beta_{1})$, precise
mixing time estimates were derived in \cite{LLP10} and \cite{CDLLPS12},
for the $q=2$ (Curie--Weiss) and $q\ge3$ (Curie--Weiss--Potts)
cases, respectively. As summarized in Section \ref{sec1.4}, if $\beta\in(0,\beta_{1})$,
the equiproportional vector ${\bf e}$ is the unique local (thus global)
minimum of $F_{\beta}$, and thus the Gibbs measure $\mu_{N}^{\beta}$
is concentrated on the disordered configurations. Due to this uniqueness,
following the Glauber dynamics, the magnetization converges rapidly
to near this minimum ${\bf e}$ and, consequently, it exhibits a logarithmic
\emph{fast} mixing; for any $\delta>0$, $T_{\delta}^{{\rm mix}}(\bm{\sigma}_{N}^{\beta})\simeq C(\beta,q)N\log N$
as $N\to\infty$ for some explicit constant $C(\beta,q)$.

One may understand this mixing mechanism in a more precise manner
near the typical scale $T_{N}:=C(\beta,q)N\log N$ in the following
sense: for any $\delta\in(0,1)$,
\begin{equation}
\lim_{N\to\infty}\frac{T_{\delta}^{{\rm mix}}\left(\bm{\sigma}_{N}^{\beta}\right)}{T_{1-\delta}^{{\rm mix}}\left(\bm{\sigma}_{N}^{\beta}\right)}=1.\label{eq:cutoff-def}
\end{equation}
In other words, the process exhibits a sharp mixing time phase transition
near $T_{N}$; in this regard, we call this \emph{cutoff} phenomenon
of mixing times. Further, it is shown \cite{CDLLPS12,LLP10} that
the process exhibits a cutoff with a \emph{window} of size $\alpha_{N}=N$,
i.e.,
\[
\alpha_{N}\ll T_{N}\qquad\text{and}\qquad\begin{cases}
\lim_{c\to-\infty}\lim_{N\to\infty}d_{{\rm TV}}\left(\bm{\sigma}_{N}^{\beta}(T_{N}+c\alpha_{N}),\mu_{N}^{\beta}\right)=1,\\
\lim_{c\to\infty}\lim_{N\to\infty}d_{{\rm TV}}\left(\bm{\sigma}_{N}^{\beta}(T_{N}+c\alpha_{N}),\mu_{N}^{\beta}\right)=0.
\end{cases}
\]
In this regard, we observe as a byproduct that the process does not
exhibit a cutoff phenomenon in the low-temperature regime:
\begin{cor}
\label{cor:no-cutoff}If $\beta\in(\beta_{1},\infty)$ such that $\beta\ne q$
if $q\in\{3,4\}$, the Glauber dynamics for the CWP model does not
exhibit a cutoff phenomenon in the sense of \eqref{eq:cutoff-def}.
\end{cor}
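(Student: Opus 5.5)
By Theorem \ref{thm:main}, for every fixed $\delta\in(0,1)$ we have
\[
T_{\delta}^{{\rm mix}}(\bm{\sigma}_{N}^{\beta})\simeq2\pi Ne^{ND_{\beta}}\,\mathfrak{T}(\delta).
\]
Hence
\[
\lim_{N\to\infty}\frac{T_{\delta}^{{\rm mix}}(\bm{\sigma}_{N}^{\beta})}{T_{1-\delta}^{{\rm mix}}(\bm{\sigma}_{N}^{\beta})}=\frac{\mathfrak{T}(\delta)}{\mathfrak{T}(1-\delta)},
\]
provided $\mathfrak{T}(1-\delta)\in(0,\infty)$. It therefore suffices to exhibit one $\delta\in(0,1/2)$ for which $\mathfrak{T}(\delta)\neq\mathfrak{T}(1-\delta)$. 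This uses only the structure of the limit chain $\mathfrak{X}_{\beta}$.

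Recall that $\mathfrak{X}_{\beta}$ is an irreducible continuous-time Markov chain on a finite state space. This state space is $\llbracket1,q\rrbracket$, possibly enlarged by a state for the valley around ${\bf e}$ when that valley carries invariant mass or is visited at the same time scale. Let $\pi_{\mathfrak{X}}$ be its invariant law and $d(t):=\max_{x}d_{{\rm TV}}(\mathfrak{X}_{\beta}(t;x),\pi_{\mathfrak{X}})$. For a finite irreducible chain, $d$ is continuous and nonincreasing, with $d(0)=1-\min_{x}\pi_{\mathfrak{X}}(x)$ and $d(t)\to0$ as $t\to\infty$.

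The key step is to show that $d$ is strictly decreasing on $[0,\infty)$, and more precisely that $d(t)>0$ for all $t$. Since $\mathfrak{X}_{\beta}$ is irreducible with finitely many states, $P_{t}=e^{tL}$ has all entries strictly positive for $t>0$. This gives $d(t)<d(0)$ for every $t>0$. For the lower bound, the spectral decomposition of the reversible generator gives
\[
d(t)\ge c\,e^{-\lambda_{\max}t}>0
\]
for some $c>0$, where $\lambda_{\max}$ is the largest nonzero eigenvalue of $-L$.

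Fix $\delta\in(0,1/2)$ with $1-\delta<d(0)$; this is possible because $d(0)\ge1-1/q>1/2$ by symmetry of the $q$ metastable states. Then $\mathfrak{T}(1-\delta)\in(0,\infty)$ and $\mathfrak{T}(\delta)<\infty$. Since $d$ is continuous and nonincreasing, $d(\mathfrak{T}(1-\delta))=1-\delta>\delta$, so by monotonicity $\mathfrak{T}(\delta)\ge\mathfrak{T}(1-\delta)$. To get strict inequality, note that $d$ is real-analytic in $t$, being a finite maximum of sums of absolute values of exponential polynomials. It therefore cannot be constant on any interval unless it is constant everywhere, which is impossible since $d(0)>0=\lim_{t\to\infty}d(t)$. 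Hence $d$ is strictly decreasing and $\mathfrak{T}(\delta)>\mathfrak{T}(1-\delta)$, so the ratio above is strictly greater than $1$ and \eqref{eq:cutoff-def} fails.

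The main obstacle is the step asserting $\mathfrak{T}(1-\delta)>0$, i.e.\ $d(0)>1-\delta$, which requires knowing that $\mathfrak{X}_{\beta}$ has at least two states with nontrivial invariant mass. This holds for $\beta>\beta_1$ because the $q\ge2$ symmetric wells ${\bf u}_{k}$ each carry mass $1/q$ (or comparable mass in the regime where $\mathcal{W}_0$ competes). It also requires that Theorem \ref{thm:main} applies to small $\delta$ as well as to $1-\delta$, which it does since it holds for every fixed $\delta$.
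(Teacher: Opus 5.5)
You follow the paper's route: Theorem \ref{thm:main} turns $T_{\delta}^{{\rm mix}}/T_{1-\delta}^{{\rm mix}}$ into $\mathfrak{T}(\delta)/\mathfrak{T}(1-\delta)$, and one then shows $\mathfrak{T}(\delta)\neq\mathfrak{T}(1-\delta)$. However, the step that produces an admissible $\delta$ fails in a case the corollary covers.

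For $q=2$ you have $\pi_{\beta}=\frac{1}{2}(\delta_{1}+\delta_{2})$ and $d(t)=\frac{1}{2}e^{-2\mathfrak{r}_{\beta}(1,2)t}$. So $d(0)=1-\frac{1}{q}=\frac{1}{2}$, not $>\frac{1}{2}$. No $\delta\in(0,\frac{1}{2})$ satisfies $1-\delta<d(0)$; in fact $\mathfrak{T}(1-\delta)=0$ for every such $\delta$, so your limit-ratio formula is unavailable. This case needs a separate, easy argument. For $\delta<\frac{1}{2}$ one has $\mathfrak{T}(\delta)>0=\mathfrak{T}(1-\delta)$. Hence $T_{1-\delta}^{{\rm mix}}(\bm{\sigma}_{N}^{\beta})\ll\theta_{N}$, and it is positive for large $N$ because $\min_{\sigma}\mu_{N}^{\beta}(\sigma)\to0$. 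Meanwhile $T_{\delta}^{{\rm mix}}(\bm{\sigma}_{N}^{\beta})\simeq\mathfrak{T}(\delta)\theta_{N}$, so the ratio tends to $+\infty$. The paper's one-line proof implicitly reads $\mathfrak{T}(\delta)/\mathfrak{T}(1-\delta)$ as $+\infty$ here as well.

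Your description of $\mathfrak{X}_{\beta}$ is also wrong for $q\ge3$, $\beta\in(\beta_{1},\beta_{2})$. There the state $0$ is absorbing and $\pi_{\beta}=\delta_{0}$. So the chain is not irreducible, $P_{t}$ does not have positive entries, and the wells ${\bf u}_{k}$ carry no limiting mass. The conclusion survives, since $d(t)=e^{-\omega t/\nu'}$ and $d(0)=1$, but not for the reasons you give.

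Your justification of the strict inequality $\mathfrak{T}(\delta)>\mathfrak{T}(1-\delta)$ is also incorrect. $d$ is not real-analytic: taking absolute values and a maximum over starting points creates kinks, so analytic continuation cannot be invoked. The step is unnecessary, though, which is exactly the paper's observation that continuity of $t\mapsto d(t)$ suffices. If $\mathfrak{T}(1-\delta)\in(0,\infty)$, continuity forces $d(\mathfrak{T}(1-\delta))=1-\delta$, while $d(\mathfrak{T}(\delta))\le\delta$. If $\mathfrak{T}(\delta)\le\mathfrak{T}(1-\delta)$, monotonicity would give $1-\delta=d(\mathfrak{T}(1-\delta))\le d(\mathfrak{T}(\delta))\le\delta$, contradicting $\delta<\frac{1}{2}$.

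So you can drop your whole ``key step'' paragraph (positivity of $d$, the spectral lower bound, analyticity). What must be added is the $q=2$ case distinction above.
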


It is natural to expect that the corollary holds true since, by Theorem
\ref{thm:main}, the mixing behavior of our dynamics is described
asymptotically by a single limit Markov chain $\mathfrak{X}_{\beta}$,
thus the total variation distance to stationarity would undergo a
continuous transition from (near) $1$ to $0$ as time runs from $0$
to infinity. We provide a proof of Corollary \ref{cor:no-cutoff}
at the end of Section \ref{sec3}.

\subsubsection*{At Critical Temperature}

The critical behavior at $\beta=\beta_{1}$ was also studied in \cite{CDLLPS12,LLP10},
which requires a much more refined analysis near the typical mixing
time:
\[
\begin{cases}
c_{1}N^{\frac{3}{2}}\le T_{\delta}^{{\rm mix}}\left(\bm{\sigma}_{N}^{\beta}\right)\le c_{2}N^{\frac{3}{2}} & \text{if}\quad q=2,\\
c_{1}N^{\frac{4}{3}}\le T_{\delta}^{{\rm mix}}\left(\bm{\sigma}_{N}^{\beta}\right)\le c_{2}N^{\frac{4}{3}} & \text{if}\quad q\ge3,
\end{cases}
\]
where $c_{1}=c_{1}(\beta,q,\delta)>0$ and $c_{2}=c_{2}(\beta,q,\delta)>0$
are constants. In particular, the process does not exhibit cutoff
and alternatively has a scaling window of $N^{-\frac{1}{2}}$ (if
$q=2$) or $N^{-\frac{2}{3}}$ (if $q\ge3$).

\subsubsection*{Low-Temperature Regime: Slow Mixing and Metastability}

In contrast, in the low-temperature regime, that is, when $\beta>\beta_{1}$,
the energy landscape possesses multiple local minima. As the Gibbs
measure is concentrated around two or more configurations, the Glauber
dynamics exhibits metastability. Since transitions between these configurations
occur on exponentially long time-scales, the dynamics would have (exponentially)
slow mixing. It was shown \cite{CDLLPS12} that there exist constants
$c_{1}=c_{1}(\beta,q,\delta)>0$ and $c_{2}=c_{2}(\beta,q,\delta)>0$
such that
\[
T_{\delta}^{{\rm mix}}\left(\bm{\sigma}_{N}^{\beta}\right)\ge c_{1}e^{c_{2}N}\qquad\text{if}\quad\beta>\beta_{1}.
\]
However, it was unable to pinpoint the exact exponential scale nor
its subexponential prefactor in the limit $N\to\infty$. To achieve
this, a comprehensive metastability framework is required. We explain
this in a full detail in Section \ref{sec2}.

\subsubsection*{Idea of Proof}

Since the Gibbs measure is concentrated near multiple number of metastable
states, in order to approach stationarity, the dynamics must undergo
a sufficient number of transitions between these states. To analyze
this behavior, we rely on the theory of metastability, in particular
the framework of Markov chain model reduction developed by Claudio
Landim and his collaborators \cite{BL10,BL12b,BL15,Lan15,Lan19,LLM18,LMS25}.
The proof is based on the following three key ingredients.
\begin{itemize}
\item Recurrence property (cf. Theorem \ref{thm:CWP-Rec}): Independently
of the initial configuration, the dynamics quickly enters one of the
metastable sets on which the Gibbs measure is essentially concentrated.
\item Local mixing property (cf. Theorem \ref{thm:CWP-M}): Starting from
a configuration in a metastable set, the distribution of the dynamics
relaxes to the Gibbs measure conditioned on that set before any transition
to another metastable set.
\item Markov chain model reduction (cf. Theorem \ref{thm:CWP-CD}): In the
time-scale of transitions between metastable sets, the original dynamics
can be approximated by a reduced Markov chain on the set of indices
of metastable states. Since local equilibration occurs prior to each
transition, the global mixing of the dynamics is governed by such
successive transitions. As a consequence, the mixing time of the original
dynamics is asymptotically given by the mixing time of the reduced
Markov chain multiplied by the transition time-scale.
\end{itemize}
For a detailed discussion on the relation between metastability and
slow mixing, we refer to \cite{LLM18,Lee25}.

\subsubsection*{Organization of the Article}

In Section \ref{sec2}, we present general strategy to study slow
mixing via theory of metastability. In Section \ref{sec3}, we review
metastability result for the CWP model, and prove Theorem \ref{thm:main}
and Corollary \ref{cor:no-cutoff} by assuming two properties: recurrence
property (Theorem \ref{thm:CWP-Rec}) and local mixing property (Theorem
\ref{thm:CWP-M}). Finally, in Sections \ref{sec4} and \ref{sec5},
we prove two theorems respectively.

\section{\label{sec2}General Strategy for Slow-Mixing Systems}

In this section, we present our general strategy to prove a sharp
mixing time estimate, as in Theorem \ref{thm:main}, for a general
class of slowly mixing metastable processes.

\subsection{\label{sec2.1}Review on Metastability}

As explained in the previous section, our starting point is the idea
of \emph{Markov chain model reduction} applied to metastable systems,
initiated in \cite{BL10} and widely studied during the past two decades
by Claudio Landim and his many collaborators. We refer to \cite{Lan19}
for an extensive literature and a detailed description of this approach.

In this subsection, we review previous works on the Markov chain model
reduction. To emphasize that the results of this section are universal
and applicable to a large class of models, we implement the notation
from the previous section but without the subscript (or superscript)
$\beta$.

Fix a continuous-time Markov chain $\{\bm{\sigma}_{N}(t)\}_{t\ge0}$
in a finite set $\Omega_{N}$, indexed by $N\ge1$, and denote by
$\mathbb{P}_{\sigma}^{N}$ and $\mathbb{E}_{\sigma}^{N}$, $\sigma\in\Omega_{N}$,
the law of $\bm{\sigma}_{N}$ starting from $\sigma$ and its corresponding
expectation, respectively. Suppose that there exist a finite index
set $\mathfrak{S}$ and a collection of disjoint sets $\mathcal{E}_{N}^{k}\subset\Omega_{N}$
for $k\in\mathfrak{S}$. Write $\mathcal{E}_{N}=\bigcup_{k\in\mathfrak{S}}\mathcal{E}_{N}^{k}$
and $\Delta_{N}=\Omega_{N}\setminus\mathcal{E}_{N}$. Define a projection
function $\Psi_{N}:\mathcal{E}_{N}\to\mathfrak{S}$ by declaring $\Psi_{N}(\sigma):=k$
for all $\sigma\in\mathcal{E}_{N}^{k}$. In addition, suppose that
a Markov chain $\{\mathfrak{X}(t)\}_{t\ge0}$ in $\mathfrak{S}$ is
given.
\begin{defn}[Description of Metastability]
 We say that the \emph{metastability of the collection of processes
$\bm{\sigma}_{N}$, $N\ge1$ is described by the limit Markov chain
$\mathfrak{X}$ in the time-scale of $\theta_{N}$} if the following
three conditions $\mathfrak{M}$, $\mathfrak{C}$, and $\mathfrak{D}$
are valid.
\end{defn}

The first condition $\mathfrak{M}$, the local mixing condition, states
that the distribution of the dynamics is well mixed inside each metastable
well before the exit. Suppose that $\bm{\sigma}_{N}$ has a unique
stationary distribution denoted by $\mu_{N}$. For $\mathcal{A}\subset\Omega_{N}$,
let $\{\bm{\sigma}_{N}^{\mathcal{A}}(t)\}_{t\ge0}$ be the Markov
process obtained from $\bm{\sigma}_{N}$ by forbidding any escapes
from $\mathcal{A}$, i.e., the process obtained by setting all jump
rates from $\mathcal{A}$ to $\Omega_{N}\setminus\mathcal{A}$ to
zero. We call this the \emph{reflected dynamics} of $\bm{\sigma}_{N}$
in $\mathcal{A}$. Let $\mu_{N}^{\mathcal{A}}$ denote the probability
measure $\mu_{N}$ conditioned on $\mathcal{A}$, i.e.,
\[
\mu_{N}^{\mathcal{A}}(\sigma):=\frac{\mu_{N}(\sigma)}{\mu_{N}(\mathcal{A})}\qquad\text{for}\quad\sigma\in\mathcal{A}.
\]
Assume that $\mu_{N}^{\mathcal{E}_{N}^{k}}$ is the unique stationary
distribution of the reflected dynamics $\bm{\sigma}_{N}^{\mathcal{E}_{N}^{k}}$
for each $k\in\mathfrak{S}$. This is obvious if, e.g., the original
dynamics $\bm{\sigma}_{N}$ is reversible and each set $\mathcal{E}_{N}^{k}$
is connected, i.e., if any point in $\mathcal{E}_{N}^{k}$ is reachable
via $\bm{\sigma}_{N}$ from any other point in $\mathcal{E}_{N}^{k}$
without leaving $\mathcal{E}_{N}^{k}$.
\begin{itemize}
\item \textbf{{[}$\mathfrak{M}$: (Local) Mixing{]}} There exists a subset
$\mathcal{B}_{N}^{k}\subset\mathcal{E}_{N}^{k}$ for each $k\in\mathfrak{S}$
such that:
\begin{enumerate}
\item for every $\delta>0$,
\[
\lim_{N\to\infty}\sup_{k\in\mathfrak{S}}\sup_{\sigma\in\mathcal{E}_{N}^{k}}\mathbb{P}_{\sigma}^{N}\left[\mathcal{H}_{\mathcal{B}_{N}^{k}}>\delta\theta_{N}\right]=0;
\]
\item there exists a time-scale $(\varrho_{N})_{N\ge1}$ with $\varrho_{N}\ll\theta_{N}$
such that
\[
\lim_{N\to\infty}\sup_{k\in\mathfrak{S}}\sup_{\sigma\in\mathcal{B}_{N}^{k}}\mathbb{P}_{\sigma}^{N}\left[\mathcal{H}_{\Omega_{N}\setminus\mathcal{E}_{N}^{k}}\le2\varrho_{N}\right]=0;
\]
\item and
\[
\lim_{N\to\infty}\sup_{k\in\mathfrak{S}}\sup_{\sigma\in\mathcal{B}_{N}^{k}}d_{{\rm TV}}\left(\bm{\sigma}_{N}^{\mathcal{E}_{N}^{k}}(\varrho_{N};\sigma),\mu_{N}^{\mathcal{E}_{N}^{k}}\right)=0.
\]
\end{enumerate}
\end{itemize}
The first display states that for each $k\in\mathfrak{S}$ there exists
a \emph{deeper} subset $\mathcal{B}_{N}^{k}\subset\mathcal{E}_{N}^{k}$
which is reached asymptotically instantaneously in the time-scale
of $\theta_{N}$. The second display states that, starting from $\mathcal{B}_{N}^{k}$,
the probability to escape $\mathcal{E}_{N}^{k}$ in the time-scale
of some $\varrho_{N}\ll\theta_{N}$ is asymptotically zero, thus the
trajectories of the original dynamics $\bm{\sigma}_{N}$ and the reflected
dynamics $\bm{\sigma}_{N}^{\mathcal{E}_{N}^{k}}$ can be identified
(via the canonical coupling) until time $2\varrho_{N}$ with high
probability. The third display states that the reflected dynamics
$\bm{\sigma}_{N}^{\mathcal{E}_{N}^{k}}$ mixes well inside $\mathcal{E}_{N}^{k}$
in the time-scale of $\varrho_{N}$. Thus, along with the second display,
$\bm{\sigma}_{N}$ also mixes well inside $\mathcal{E}_{N}^{k}$ before
exiting. Thus, combining all three statements, each exit from $\mathcal{E}_{N}^{k}$
is asymptotically Markovian.

The second and third conditions $\mathfrak{C}$ and $\mathfrak{D}$
constitute the model reduction part. Consider the \emph{trace} process
$\{\widetilde{\bm{\sigma}}_{N}(t)\}_{t\ge0}$ in $\mathcal{E}_{N}$,
which is defined by turning off the clock outside the set $\mathcal{E}_{N}$
(see \cite[Section 6.1]{BL10} for a rigorous definition).
\begin{itemize}
\item \textbf{{[}$\mathfrak{C}$: Convergence{]}} For every $k\in\mathfrak{S}$
and any sequence $(\sigma_{N})_{N}$ in $\mathcal{E}_{N}^{k}$, the
law of the accelerated \emph{order} process $\{\Psi_{N}(\widetilde{\bm{\sigma}}_{N}(\theta_{N}t))\}_{t\ge0}$
in $\mathfrak{S}$ starting from $\sigma_{N}$ converges weakly, as
$N\to\infty$, to the law of the limit process $\{\mathfrak{X}(t)\}_{t\ge0}$
starting from $k$. 
\item \textbf{{[}$\mathfrak{D}$: (Delta-)Negligibility{]}} The time spent
outside $\mathcal{E}_{N}$ is negligible: for all $T>0$ and $k\in\mathfrak{S}$,
\[
\lim_{N\to\infty}\sup_{\sigma\in\mathcal{E}_{N}^{k}}\mathbb{E}_{\sigma}^{N}\left[\int_{0}^{T}\bm{1}_{\left\{ \bm{\sigma}_{N}(\theta_{N}t)\in\Delta_{N}\right\} }\,{\rm d}t\right]=0.
\]
\end{itemize}
In words, condition $\mathfrak{C}$ indicates that the metastable
jumps between the sets $\mathcal{E}_{N}^{k}$, $k\in\mathfrak{S}$
are described by the limit Markov chain $\mathfrak{X}$. In addition,
condition $\mathfrak{D}$ means that, asymptotically, the process
does not spend time in the remainder set $\Delta_{N}$, so that $\mathcal{E}_{N}$
is indeed the collection of all possible metastable states in the
time-scale of $\theta_{N}$.
\begin{rem}
Quantitative study of metastability has been widely investigated especially
in the past two decades. We refer to the monographs \cite{BdH15,Lan19}
for a comprehensive review of metastability, and remark a few of the
important milestones below.

Markov chain model reduction provides a systematic way to describe
successive metastable transitions via a reduced Markov chain, which
is significantly simpler than the original system. By accelerating
the dynamics on an appropriate metastable time-scale, the law of the
trace process converges, in the Skorokhod topology, to the law of
this reduced Markov chain. This approach was developed for the supercritical
condensed zero-range process by Beltr\'an and Landim \cite{BL12a},
and subsequently extended to a general framework in \cite{BL10,BL12b,BL15}
of establishing the Markov chain convergence via uniqueness of the
martingale problem. A more general hierarchical structure of metastability
for metastable Markov chains was later studied in \cite{BL11,LX16}.

Potential-theoretic approach to metastability, initiated in a series
of papers \cite{BEGK02,BEGK04,BGK05}, enabled numerous breakthroughs
in the study of metastability, especially the quantitative results.
Refer to \cite{BdH15} for an intensive list of publications. Gaudilli\`ere
and Landim \cite{GL14} developed a non-reversible potential theory
based on a generalized Dirichlet principle, and this development enabled
to study genuinely non-reversible interacting particle systems \cite{KS21,Lan14,Seo19}.

As pointed out in condition $\mathfrak{M}$, an \emph{a priori} local
mixing property is mandatory to discuss any types of metastable transitions.
To address a large class of models in which strong local mixing condition
fails, Landim, Marcondes and Seo \cite{LMS23,LMS25} developed the
so-called \emph{resolvent approach} to metastability, which extracts
model reduction directly from asymptotic properties of solutions to
resolvent equations.

Overall, the theory has been successfully applied to a wide range
of systems including random walks on potential fields \cite{LMT15,LS18},
interacting particle systems \cite{BDG17,Kim21,Kim25,KS21,Seo19},
spin systems \cite{KS24,KS25,LS16,Lee22}, diffusion processes and
the associated parabolic equations \cite{LLS24a,LLS24b,LMS19,LM25,LS19}.

Recently, a remarkable relation between the model reduction theory
and the classical large deviations principle---namely, that metastable
time-scales and reduced Markov dynamics are encoded in a $\Gamma$-expansion
of the Donsker--Varadhan rate functionals for Markov chains---has
been developed and intensively studied in \cite{BGL22,KL25,Lan23,LLM25,LMS24}.
\end{rem}

According to these metastability descriptions $\mathfrak{M}$, $\mathfrak{C}$,
and $\mathfrak{D}$, the mixing behavior of the accelerated dynamics
$\{\bm{\sigma}_{N}(\theta_{N}t)\}_{t\ge0}$ would be close to that
of the limit process $\{\mathfrak{X}(t)\}_{t\ge0}$ for large $N$.
This suggests that the mixing time of the original dynamics $\bm{\sigma}_{N}$,
which would be very slow due to the presence of two or more metastable
sets $\mathcal{E}_{N}^{k}$, $k\in\mathfrak{S}$, should be described
by the corresponding mixing time of the limit process $\{\mathfrak{X}(t)\}_{t\ge0}$
multiplied by the time-scale of $\theta_{N}$. This is exactly the
content of Theorem \ref{thm:main}.

\subsection{\label{sec2.2}From Metastability to Mixing Time}

In order to prove the convergence of mixing times as presented in
Theorem \ref{thm:main}, we need to introduce an alternative mode
of convergence regarding the total variation distance between probability
measures. Denote by ${\bf P}_{k}$, $k\in\mathfrak{S}$, the law of
$\mathfrak{X}$ starting from $k$.
\begin{itemize}
\item \textbf{{[}$\mathfrak{C}_{{\rm TV}}$: Convergence in TV Distance{]}}
For any $k\in\mathfrak{S}$, $0\le t_{1}<t_{2}<\cdots<t_{n}$, sequences
$(\sigma_{N})_{N\ge1}$ and $(t_{j,N})_{N\ge1}$, $j\in\llbracket1,n\rrbracket$,
such that $\sigma_{N}\in\mathcal{E}_{N}^{k}$ and $t_{j,N}\to t_{j}$,
\[
\lim_{N\to\infty}d_{{\rm TV}}\left(\mathbb{P}_{\sigma_{N}}^{t_{1,N},\dots,t_{n,N}},\sum_{\ell_{1},\dots,\ell_{n}\in\mathfrak{S}}{\bf P}_{k}\left[\bigcap_{j=1}^{n}\{\mathfrak{X}(t_{j})=\ell_{j}\}\right]\bigotimes_{j=1}^{n}\mu_{N}^{\mathcal{E}_{N}^{\ell_{j}}}\right)=0,
\]
where $\mathbb{P}_{\sigma_{N}}^{t_{1,N},\dots,t_{n,N}}$ denotes the
\emph{accelerated} joint law of $(\bm{\sigma}_{N}(\theta_{N}t_{1,N}),\dots,\bm{\sigma}_{N}(\theta_{N}t_{n,N}))$
starting from $\sigma_{N}$.
\end{itemize}
According to \cite[Proposition 2.2]{LLM18}, conditions $\mathfrak{M}$,
$\mathfrak{C}$, and $\mathfrak{D}$ imply $\mathfrak{C}_{{\rm TV}}$:
\begin{prop}[{\cite[Proposition 2.2]{LLM18}}]
\label{prop:LLM}Suppose that conditions $\mathfrak{M}$, $\mathfrak{C}$,
and $\mathfrak{D}$ are in force. Furthermore, assume that
\begin{equation}
\lim_{N\to\infty}\frac{\mu_{N}(\Delta_{N})}{\mu_{N}\left(\mathcal{E}_{N}^{k}\right)}=0\qquad\text{for each}\quad k\in\mathfrak{S},\label{eq:LLM-muN}
\end{equation}
and that $\bm{\sigma}_{N}$ is reversible. Then, condition $\mathfrak{C}_{{\rm TV}}$
holds.
\end{prop}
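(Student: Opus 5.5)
We prove the one-time statement ($n=1$) first and then obtain the general case by induction on $n$ using the Markov property. Fix $k$, a sequence $\sigma_N\in\mathcal{E}_N^k$, and $t_N\to t$. For $t=0$ the claim cannot hold literally, since the initial law $\delta_{\sigma_N}$ is not close to $\mu_N^{\mathcal{E}_N^k}$. We therefore take $t_1>0$ throughout, or else read the $t_1=0$ case as being handled by the $n-1$ remaining times. Fix a small $\varepsilon>0$ and split the time $\theta_Nt_N$ as $s_N:=\theta_N(t_N-\varepsilon)$ followed by a window of length $\varepsilon\theta_N$.

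\emph{Location at time $s_N$.} By condition $\mathfrak{C}$, the order process of the trace $\widetilde{\bm{\sigma}}_N(\theta_N\cdot)$ converges to $\mathfrak{X}$. By condition $\mathfrak{D}$, the time change relating the trace to $\bm{\sigma}_N$ is negligible on the scale $\theta_N$. Together these give
\[
\mathbb{P}_{\sigma_N}^N\big[\bm{\sigma}_N(s_N')\in\mathcal{E}_N^{\ell}\big]\to{\bf P}_k[\mathfrak{X}(t-\varepsilon)=\ell]
\]
for $s_N'$ in a set of times of almost full Lebesgue measure in $[s_N,s_N+\varepsilon\theta_N/2]$. If $s_N$ itself happens to be a bad time, we shift it slightly within this window; this shift costs nothing once $\varepsilon\to0$. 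Since $\mathfrak{X}$ is a continuous-time chain on the finite set $\mathfrak{S}$, the probability that it jumps during an interval of length $\varepsilon$ is $O(\varepsilon)$. We also use the standard consequence of $\mathfrak{C}$ that the order process has no jumps in short intervals with asymptotically vanishing probability, i.e.\ it is tight in the Skorokhod topology. Hence, with probability $1-O(\varepsilon)-o(1)$, the process sits in the same well $\mathcal{E}_N^\ell$ at time $s_N$ and performs no metastable transition until $\theta_Nt_N$.

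\emph{Local equilibration.} On this event we apply the strong Markov property at $\tau:=\inf\{u\ge s_N:\bm{\sigma}_N(u)\in\mathcal{B}_N^\ell\}$. By $\mathfrak{M}$(1), $\tau-s_N\le\varepsilon\theta_N/2$ with high probability. By $\mathfrak{M}$(2), starting from $\mathcal{B}_N^\ell$ the process coincides with the reflected dynamics $\bm{\sigma}_N^{\mathcal{E}_N^\ell}$ up to time $2\varrho_N$ under the canonical coupling. By $\mathfrak{M}$(3), its law at time $\tau+\varrho_N$ is then $o(1)$-close in total variation to $\mu_N^{\mathcal{E}_N^\ell}$.

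\emph{Main obstacle: the remaining random time.} The leftover time $r:=\theta_Nt_N-\tau-\varrho_N$ is random and of order up to $\varepsilon\theta_N$, far longer than $\varrho_N$. We must show that the law of $\bm{\sigma}_N$ at time $r$, started from (approximately) $\mu_N^{\mathcal{E}_N^\ell}$, stays close to $\mu_N^{\mathcal{E}_N^\ell}$ uniformly over $r\le\varepsilon\theta_N$.

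To do this, we couple $\bm{\sigma}_N$ with the reflected chain $\bm{\sigma}_N^{\mathcal{E}_N^\ell}$ until the first exit from $\mathcal{E}_N^\ell$. Reversibility of $\bm{\sigma}_N$ ensures that $\mu_N^{\mathcal{E}_N^\ell}$ is stationary for the reflected chain, so before the exit the law stays exactly $\mu_N^{\mathcal{E}_N^\ell}$. The total variation error is therefore at most $\mathbb{P}_{\mu_N^{\mathcal{E}_N^\ell}}[\mathcal{H}_{\Omega_N\setminus\mathcal{E}_N^\ell}\le\varepsilon\theta_N]$.

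To bound this exit probability, we split $\mu_N^{\mathcal{E}_N^\ell}$ according to whether the starting point lies in $\mathcal{B}_N^\ell$ or not. For points that do not, we first move to $\mathcal{B}_N^\ell$ via $\mathfrak{M}$(1). Once in $\mathcal{B}_N^\ell$, an excursion into $\Delta_N$ either produces a transition of the order process within time $\varepsilon$, which has probability $O(\varepsilon)$ by $\mathfrak{C}$, or it returns to $\mathcal{E}_N^\ell$. Such returns can break the coupling, but only on an event of vanishing probability. To see this we use stationarity of $\mu_N$ under the reversible dynamics together with \eqref{eq:LLM-muN}: the $\mu_N$-mass of $\Delta_N$ is negligible compared with $\mu_N(\mathcal{E}_N^\ell)$, so the expected time spent in $\Delta_N$ starting from $\mu_N^{\mathcal{E}_N^\ell}$ is $o(\theta_N)$. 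A visit to $\Delta_N$ that is followed by a return can then only distort the law by $o(1)+O(\varepsilon)$. Combining the three steps and letting $N\to\infty$ and then $\varepsilon\to0$ gives the $n=1$ case of $\mathfrak{C}_{\rm TV}$.

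\emph{Induction on $n$.} Every estimate above is uniform over the starting point in $\mathcal{E}_N^k$; this follows because $\mathfrak{C}$ is stated for arbitrary sequences $\sigma_N\in\mathcal{E}_N^k$, and a standard subsequence argument upgrades it to uniformity. We then condition on $\bm{\sigma}_N(\theta_Nt_{1,N})$ and apply the Markov property. The $(n-1)$-time statement holds for every initial point in $\mathcal{E}_N^{\ell_1}$, hence also for the initial law $\mu_N^{\mathcal{E}_N^{\ell_1}}$, which is supported in $\mathcal{E}_N^{\ell_1}$. This yields the product form $\bigotimes_j\mu_N^{\mathcal{E}_N^{\ell_j}}$, weighted by the finite-dimensional distributions of $\mathfrak{X}$.
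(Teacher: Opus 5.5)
The paper gives no proof of this proposition; it imports it from \cite[Proposition 2.2]{LLM18}. Your architecture is reasonable: Markov property $\varepsilon\theta_N$ before the target time, reaching $\mathcal{B}_N^\ell$ by $\mathfrak{M}$(1), equilibrating by $\mathfrak{M}$(2)--(3), then induction on $n$. Your remark that the case $t_1=0$ cannot hold literally is also correct. The gap is in the step you flag as the main obstacle. Coupling with the reflected chain until the exit from $\mathcal{E}_N^\ell$ only gives the bound $\mathbb{P}^N_{\mu_N^{\mathcal{E}_N^\ell}}[\mathcal{H}_{\Omega_N\setminus\mathcal{E}_N^\ell}\le\varepsilon\theta_N]$, and nothing in the hypotheses makes this small: $\mathfrak{M}$(2) rules out exits only up to time $2\varrho_N\ll\theta_N$.

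In the CWP application this probability actually tends to $1$. The set $\mathcal{E}_N^\ell$ is cut at level $H_\beta-\eta$, so the chain enters $\Delta_N$ on a time scale exponentially shorter than $\theta_N$, and does so many times before $\varepsilon\theta_N$. Excursions into $\Delta_N$ followed by a return are therefore the typical behaviour, not an event of vanishing probability, and each one destroys the coupling. Knowing that the expected time spent in $\Delta_N$ is $o(\theta_N)$ does not repair this. After a return the chain sits near the boundary of $\mathcal{E}_N^\ell$ rather than at a $\mu_N^{\mathcal{E}_N^\ell}$-distributed point. You also cannot re-invoke $\mathfrak{M}$ after the last return before the terminal time, since that time is not a stopping time and may be arbitrarily close to the terminal time.

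The missing idea is to use reversibility on the law at a fixed time, not through the reflected chain. For any $A\subset\Omega_N$ and $s\ge0$, reversibility gives
\[
\bm{\sigma}_N(s;\mu_N^{A})(\xi)=\frac{\mu_N(\xi)}{\mu_N(A)}\,\mathbb{P}_{\xi}^{N}[\bm{\sigma}_N(s)\in A].
\]
Hence $\bm{\sigma}_N(s;\mu_N^{A})\le\mu_N^{A}$ on $A$, and therefore $d_{{\rm TV}}(\bm{\sigma}_N(s;\mu_N^{A}),\mu_N^{A})=\mathbb{P}_{\mu_N^{A}}^{N}[\bm{\sigma}_N(s)\notin A]$. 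Only the position at time $s$ matters, not whether the chain left $A$ in between.

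Take $A=\mathcal{E}_N^\ell$ and $s\le\varepsilon\theta_N$, and split the event $\{\bm{\sigma}_N(s)\notin A\}$ in two.
\begin{itemize}
\item The event $\{\bm{\sigma}_N(s)\in\Delta_N\}$ has probability at most $\mu_N(\Delta_N)/\mu_N(\mathcal{E}_N^\ell)$ by stationarity of $\mu_N$. This fixed-time bound, not the integrated one you use, is where \eqref{eq:LLM-muN} in its ratio form enters.
\item The event $\{\bm{\sigma}_N(s)\in\mathcal{E}_N\setminus\mathcal{E}_N^\ell\}$ forces the order process of the trace to leave $\ell$ before time $\varepsilon$. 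By $\mathfrak{C}$ this has probability $O(\varepsilon)+o(1)$, uniformly in the starting point.
\end{itemize}
Together with the contraction of Lemma \ref{lem:TV-2}, this controls the random leftover time uniformly over $[0,\varepsilon\theta_N]$. It also makes your separate ``no metastable transition until $\theta_Nt_N$'' event unnecessary. As written, you condition on that future event before applying the strong Markov property at $\tau$, which is not legitimate.
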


\begin{rem}
The reversibility is not necessary, and one may check alternative
conditions such that the proposition still holds true. See \cite[eqs. (2.11) or (2.12)]{LLM18}.
\end{rem}

Now, we present a slightly different condition which states that the
convergence in $N\to\infty$ occurs for the total variation distance
between the dynamics and its stationary distribution. Suppose that
the limit chain $\{\mathfrak{X}(t)\}_{t\ge0}$ admits a stationary
distribution denoted by $\pi$.
\begin{itemize}
\item \textbf{{[}$\mathfrak{C}_{{\rm TV2}}(\pi)$: Convergence in TV Distance
2{]}} For all $k\in\mathfrak{S}$, $t>0$, sequences $(\sigma_{N})_{N\ge1}$
and $(t_{N})_{N\ge1}$ such that $\sigma_{N}\in\mathcal{E}_{N}^{k}$
and $t_{N}\to t$,
\[
\lim_{N\to\infty}d_{{\rm TV}}(\bm{\sigma}_{N}(\theta_{N}t_{N};\sigma_{N}),\mu_{N})=d_{{\rm TV}}(\mathfrak{X}(t;k),\pi).
\]
\end{itemize}
\begin{prop}
\label{prop:TV-TV2}Suppose that condition $\mathfrak{C}_{{\rm TV}}$
holds and that
\begin{equation}
\lim_{N\to\infty}\mu_{N}\left(\mathcal{E}_{N}^{k}\right)=\pi(k)\qquad\text{for all}\quad k\in\mathfrak{S}.\label{eq:TV-TV2-1}
\end{equation}
Then, condition $\mathfrak{C}_{{\rm TV2}}(\pi)$ holds.
\end{prop}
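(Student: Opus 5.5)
We apply condition $\mathfrak{C}_{{\rm TV}}$ with $n=1$: for $\sigma_{N}\in\mathcal{E}_{N}^{k}$ and $t_{N}\to t$, the law $\bm{\sigma}_{N}(\theta_{N}t_{N};\sigma_{N})$ is close in total variation to the mixture
\[
\nu_{N}:=\sum_{\ell\in\mathfrak{S}}{\bf P}_{k}\left[\mathfrak{X}(t)=\ell\right]\mu_{N}^{\mathcal{E}_{N}^{\ell}}.
\]
By the triangle inequality for $d_{{\rm TV}}$,
\[
\left|d_{{\rm TV}}\left(\bm{\sigma}_{N}(\theta_{N}t_{N};\sigma_{N}),\mu_{N}\right)-d_{{\rm TV}}(\nu_{N},\mu_{N})\right|\le d_{{\rm TV}}\left(\bm{\sigma}_{N}(\theta_{N}t_{N};\sigma_{N}),\nu_{N}\right)\to0.
\]
It therefore suffices to show that $d_{{\rm TV}}(\nu_{N},\mu_{N})\to d_{{\rm TV}}(\mathfrak{X}(t;k),\pi)$.

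To this end, we compare $\mu_{N}$ with the analogous mixture
\[
\widetilde{\mu}_{N}:=\sum_{\ell\in\mathfrak{S}}\pi(\ell)\mu_{N}^{\mathcal{E}_{N}^{\ell}}.
\]
Write $\mu_{N}=\sum_{\ell}\mu_{N}(\mathcal{E}_{N}^{\ell})\mu_{N}^{\mathcal{E}_{N}^{\ell}}+\mu_{N}|_{\Delta_{N}}$. Then
\[
2d_{{\rm TV}}(\mu_{N},\widetilde{\mu}_{N})\le\sum_{\ell}\left|\mu_{N}(\mathcal{E}_{N}^{\ell})-\pi(\ell)\right|+\mu_{N}(\Delta_{N}).
\]
The first sum vanishes by \eqref{eq:TV-TV2-1}. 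For the second term, since $\pi$ is a probability measure on $\mathfrak{S}$, summing \eqref{eq:TV-TV2-1} over $\ell$ gives $\mu_{N}(\mathcal{E}_{N})\to1$, hence $\mu_{N}(\Delta_{N})\to0$. Thus $d_{{\rm TV}}(\mu_{N},\widetilde{\mu}_{N})\to0$, and by another triangle inequality it suffices to compute $d_{{\rm TV}}(\nu_{N},\widetilde{\mu}_{N})$.

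The key observation is that the sets $\mathcal{E}_{N}^{\ell}$ are disjoint and each $\mu_{N}^{\mathcal{E}_{N}^{\ell}}$ is supported on $\mathcal{E}_{N}^{\ell}$. Summing $|\nu_{N}(\sigma)-\widetilde{\mu}_{N}(\sigma)|$ over $\sigma\in\mathcal{E}_{N}^{\ell}$ therefore gives exactly $|{\bf P}_{k}[\mathfrak{X}(t)=\ell]-\pi(\ell)|$. Summing over $\ell$ yields the exact identity
\[
d_{{\rm TV}}(\nu_{N},\widetilde{\mu}_{N})=d_{{\rm TV}}(\mathfrak{X}(t;k),\pi)\qquad\text{for every }N.
\]
Combining the three approximations completes the argument.

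None of these steps is a real obstacle. The only points needing care are that $\mathfrak{C}_{{\rm TV}}$ is applied with $n=1$ and the single time sequence $t_{1,N}=t_{N}$, and that the mass of $\Delta_{N}$ is controlled solely via \eqref{eq:TV-TV2-1} together with $\sum_{\ell}\pi(\ell)=1$. Condition \eqref{eq:LLM-muN} is not needed here.
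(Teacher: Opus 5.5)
Your proof is correct and follows the paper's argument: apply $\mathfrak{C}_{{\rm TV}}$ with $n=1$, use the triangle inequality, and then identify the limit of $d_{{\rm TV}}(\nu_{N},\mu_{N})$. The only difference is in that last step: the paper cites Lemma \ref{lem:TV-1}, while you prove it inline via the auxiliary mixture $\widetilde{\mu}_{N}$, which also covers limits $\pi(\ell)=1$ (e.g.\ $\pi_{\beta}=\delta_{0}$ for $\beta\in(\beta_{1},\beta_{2})$) that the stated hypothesis $\nu_{i}\in[0,1)$ of that lemma excludes.
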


\begin{proof}
Fix $k\in\mathfrak{S}$, $t>0$, sequences $(\sigma_{N})_{N\ge1}$
and $(t_{N})_{N\ge1}$ such that $\sigma_{N}\in\mathcal{E}_{N}^{k}$
and $t_{N}\to t$. By condition $\mathfrak{C}_{{\rm TV}}$,
\begin{equation}
\lim_{N\to\infty}d_{{\rm TV}}\left(\bm{\sigma}_{N}(\theta_{N}t_{N};\sigma_{N}),\sum_{\ell\in\mathfrak{S}}{\bf P}_{k}[\mathfrak{X}(t)=\ell]\mu_{N}^{\mathcal{E}_{N}^{\ell}}\right)=0.\label{eq:TV-TV2-2}
\end{equation}
By the triangle inequality,
\begin{align*}
 & \left|d_{{\rm TV}}(\bm{\sigma}_{N}(\theta_{N}t_{N};\sigma_{N}),\mu_{N})-d_{{\rm TV}}\left(\sum_{\ell\in\mathfrak{S}}{\bf P}_{k}[\mathfrak{X}(t)=\ell]\mu_{N}^{\mathcal{E}_{N}^{\ell}},\mu_{N}\right)\right|\\
 & \le d_{{\rm TV}}\left(\bm{\sigma}_{N}(\theta_{N}t_{N};\sigma_{N}),\sum_{\ell\in\mathfrak{S}}{\bf P}_{k}[\mathfrak{X}(t)=\ell]\mu_{N}^{\mathcal{E}_{N}^{\ell}}\right).
\end{align*}
Thus by \eqref{eq:TV-TV2-2},
\[
\lim_{N\to\infty}d_{{\rm TV}}(\bm{\sigma}_{N}(\theta_{N}t_{N};\bm{x}_{N}),\mu_{N})=d_{{\rm TV}}\left(\sum_{\ell\in\mathfrak{S}}{\bf P}_{k}[\mathfrak{X}(t)=\ell]\mu_{N}^{\mathcal{E}_{N}^{\ell}},\mu_{N}\right).
\]
Combining with \eqref{eq:TV-TV2-1} and Lemma \ref{lem:TV-1},
\[
\lim_{N\to\infty}d_{{\rm TV}}(\bm{\sigma}_{N}(\theta_{N}t_{N};\bm{x}_{N}),\mu_{N})=\frac{1}{2}\sum_{\ell\in\mathfrak{S}}\left|{\bf P}_{k}[\mathfrak{X}(t)=\ell]-\pi(\ell)\right|=d_{{\rm TV}}(\mathfrak{X}(t;k),\pi).
\]
This concludes the proof of the proposition.
\end{proof}
\begin{rem}
In fact, the additional condition \eqref{eq:TV-TV2-1} readily follows
from condition $\mathfrak{C}$ if $\pi$ is the unique stationary
state. In a nutshell, this holds since $\mathfrak{C}$ implies that
the trace jump rates converge to the limit jump rates, and since $\pi$
is the unique stationary state, the stationary profile $\mu_{N}(\mathcal{E}_{N}^{k})$
also converges to $\pi(k)$ (cf. \cite[Lemma 6.1]{KS21}).
\end{rem}

The following property states that, starting from a configuration
in $\Delta_{N}=\Omega_{N}\setminus\mathcal{E}_{N}$, the process visits
$\mathcal{E}_{N}$ in a time-scale much smaller than the metastable
time-scale $\theta_{N}$.
\begin{itemize}
\item \textbf{{[}$\mathfrak{Rec}$: Recurrence Property{]}} There exists
a sequence $(\varrho_{N})_{N\ge1}$ with $\varrho_{N}\ll\theta_{N}$
such that
\[
\lim_{N\to\infty}\sup_{\sigma\in\Omega_{N}}\mathbb{P}_{\sigma}^{N}[\mathcal{H}_{\mathcal{E}_{N}}>\varrho_{N}]=0.
\]
\end{itemize}
Our main claim of this section is the following statement.
\begin{prop}
\label{prop:main}Suppose that $\pi$ is the unique stationary distribution
of the limit chain $\{\mathfrak{X}(t)\}_{t\ge0}$, and conditions
$\mathfrak{C}_{{\rm TV2}}(\pi)$ and $\mathfrak{Rec}$ hold. Then
for all fixed $\delta>0$,
\[
\lim_{N\to\infty}\frac{T_{\delta}^{{\rm mix}}(\bm{\sigma}_{N})}{\theta_{N}}=T_{\delta}^{{\rm mix}}(\mathfrak{X}).
\]
\end{prop}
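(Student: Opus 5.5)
The plan is to prove matching upper and lower bounds on $T_{\delta}^{\rm mix}(\bm{\sigma}_N)/\theta_N$. We work with the limit distance function $d(t):=\max_{k\in\mathfrak{S}}d_{\rm TV}(\mathfrak{X}(t;k),\pi)$. Since $\mathfrak{X}$ is a finite chain with unique stationary distribution $\pi$, this function is continuous and nonincreasing in $t$, and it tends to $0$ as $t\to\infty$. Set $T:=T_{\delta}^{\rm mix}(\mathfrak{X})=\inf\{t:d(t)\le\delta\}$.

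\emph{Lower bound.} Fix $t<T$, so that $d(t)>\delta$, and pick $k$ attaining the maximum. Choose any $\sigma_N\in\mathcal{E}_N^k$; this set is nonempty for large $N$, as implicit in $\mathfrak{C}_{\rm TV2}(\pi)$. Applying $\mathfrak{C}_{\rm TV2}(\pi)$ with $t_N\equiv t$ gives $d_{\rm TV}(\bm{\sigma}_N(\theta_Nt;\sigma_N),\mu_N)\to d(t)>\delta$. Hence $T_\delta^{\rm mix}(\bm{\sigma}_N)>\theta_N t$ for large $N$, and therefore $\liminf_N T_\delta^{\rm mix}(\bm{\sigma}_N)/\theta_N\ge t$ for every $t<T$. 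If $T=0$ this step is vacuous.

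\emph{Upper bound.} Fix $t>T$, so that $d(t)<\delta$ by monotonicity and continuity; if $d$ is constant near $T$ we may still take $t$ slightly larger. The task is to control $\sup_{\sigma\in\Omega_N}d_{\rm TV}(\bm{\sigma}_N(\theta_Nt;\sigma),\mu_N)$. First we show a uniform version over $\mathcal{E}_N$. We claim
\[
\limsup_{N\to\infty}\sup_{s\in[t-\epsilon,t]}\sup_{\sigma\in\mathcal{E}_N}d_{\rm TV}(\bm{\sigma}_N(\theta_Ns;\sigma),\mu_N)\le d(t-\epsilon).
\]
The proof is by contradiction via sequences. If the claim failed, there would be $\sigma_N$ and $s_N$ along a subsequence violating it. Because $\mathfrak{S}$ is finite, we may pass to a further subsequence on which $\sigma_N\in\mathcal{E}_N^k$ for a fixed $k$ and $s_N\to s\in[t-\epsilon,t]$. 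Then $\mathfrak{C}_{\rm TV2}(\pi)$ gives the limit $d_{\rm TV}(\mathfrak{X}(s;k),\pi)\le d(s)\le d(t-\epsilon)$. The condition is stated for full sequences, but it applies along subsequences since we can extend the subsequence arbitrarily to a full sequence. This is exactly why $\mathfrak{C}_{\rm TV2}$ was formulated with moving times $t_N$.

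Now take an arbitrary $\sigma\in\Omega_N$ and use $\mathfrak{Rec}$ together with the strong Markov property at $\tau:=\mathcal{H}_{\mathcal{E}_N}$. On the event $\{\tau\le\varrho_N\}$ we have $\theta_Nt-\tau\in[\theta_Nt-\varrho_N,\theta_Nt]\subset[\theta_N(t-\epsilon),\theta_Nt]$ for large $N$, because $\varrho_N\ll\theta_N$. Convexity of $d_{\rm TV}$ in its first argument then gives
\[
d_{\rm TV}(\bm{\sigma}_N(\theta_Nt;\sigma),\mu_N)\le\mathbb{P}_\sigma^N[\tau>\varrho_N]+\sup_{s\in[t-\epsilon,t]}\sup_{\eta\in\mathcal{E}_N}d_{\rm TV}(\bm{\sigma}_N(\theta_Ns;\eta),\mu_N).
\]
By $\mathfrak{Rec}$ and the uniform claim, the $\limsup$ of the supremum over $\sigma$ is at most $d(t-\epsilon)$. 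Choosing $\epsilon$ small so that $t-\epsilon>T$, this bound is $<\delta$. Hence $T_\delta^{\rm mix}(\bm{\sigma}_N)\le\theta_Nt$ for large $N$, and letting $t\downarrow T$ gives the upper bound.

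The main obstacle is the uniformity step: upgrading the pointwise limits of $\mathfrak{C}_{\rm TV2}(\pi)$ to a supremum over starting points and over a time window. The subsequence/compactness argument above is the tool for this. A secondary subtlety is the boundary case where $d$ is flat at level exactly $\delta$ on an interval. There the limit of $T_\delta^{\rm mix}$ could fail to equal $T$. I would handle it by noting that $d$ is real-analytic in $t$ on $(0,\infty)$ for each $k$, so $d$ cannot equal $\delta$ on an interval unless $d$ is constant there. A constant $d$ on an interval would force $d\equiv$ const forever, contradicting $d\to0$ when $\delta>0$. Thus $d(t)<\delta$ for every $t>T$, and the strict inequality used in the upper bound holds.
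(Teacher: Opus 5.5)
Your proposal is correct and has the same architecture as the paper. The lower bound comes from $\mathfrak{C}_{{\rm TV2}}(\pi)$ at a worst starting well. The upper bound combines $\mathfrak{Rec}$, the strong Markov property at $\mathcal{H}_{\mathcal{E}_N}$, $\mathfrak{C}_{{\rm TV2}}(\pi)$, and a strict inequality $d(t)<\delta$ for $t>T$; the last three ingredients together are exactly the content of Lemma \ref{lem:TV-strict-decay}.

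\textbf{Uniformity.} Your subsequence argument makes explicit the uniformity over the random entrance state and the random time $\theta_Nt-\mathcal{H}_{\mathcal{E}_N}$. The paper's proof of Lemma \ref{lem:TV-strict-decay} uses this uniformity tacitly, so your write-up is more careful here. Since $\varrho_N\ll\theta_N$, the relevant times actually lie in a shrinking window, so the same argument gives the bound $d(t)$ rather than $d(t-\epsilon)$. Your weaker bound suffices.

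\textbf{Strictness.} The paper invokes Lemma \ref{lem:TV-3}: with one irreducible class, $d_{{\rm TV}}(\mu\mathbf{Q}_a,\pi)<d_{{\rm TV}}(\mu,\pi)$ for $\mu\neq\pi$ and $a>0$. This gives $d(T+a)<d(T)\le\delta$ immediately. Your analyticity route also works, but as stated it is inaccurate. With $P_t(k,\ell):={\bf P}_k[\mathfrak{X}(t)=\ell]$, neither $d_k(t)=\frac{1}{2}\sum_\ell|P_t(k,\ell)-\pi(\ell)|$ nor $d=\max_kd_k$ is real-analytic, because of the absolute values and the maximum. The repair runs as follows.
\begin{enumerate}
\item Suppose $d\equiv\delta$ on an interval. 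Some $d_k$ then equals $\delta$ on a subinterval, since a finite union of closed sets covering an interval has a member with nonempty interior.
\item On a further subinterval, each entire function $P_t(k,\ell)-\pi(\ell)$ has a constant sign $\epsilon_\ell$.
\item The entire function $g(t)=\frac{1}{2}\sum_\ell\epsilon_\ell(P_t(k,\ell)-\pi(\ell))$ therefore equals $\delta$ on an interval, hence everywhere.
\item This contradicts $g(t)\to0$ as $t\to\infty$, because $\delta>0$.
\end{enumerate}
Lemma \ref{lem:TV-3} is the cleaner tool and avoids this bookkeeping.

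\textbf{Lower bound.} Passing from $\sup_\sigma d_{{\rm TV}}(\bm{\sigma}_N(\theta_Nt;\sigma),\mu_N)>\delta$ to $T_\delta^{{\rm mix}}(\bm{\sigma}_N)>\theta_Nt$ uses that this worst-case distance is nonincreasing in time. That follows from Lemma \ref{lem:TV-2} plus convexity, and you should state it explicitly.
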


To prove Proposition \ref{prop:main}, we need the following strict
decay of the total variation distance.
\begin{lem}
\label{lem:TV-strict-decay}Under the same hypotheses of Lemma \ref{thm:main},
for any $a>0$,
\[
\limsup_{N\to\infty}\sup_{\sigma\in\Omega_{N}}d_{{\rm TV}}\left(\bm{\sigma}_{N}\left(\theta_{N}\left(T_{\delta}^{{\rm mix}}(\mathfrak{X})+a\right);\sigma\right),\mu_{N}\right)<\delta.
\]
\end{lem}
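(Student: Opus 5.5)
Note that ``Lemma \ref{thm:main}'' in the statement refers to the hypotheses of Proposition \ref{prop:main}: $\pi$ is the unique stationary distribution of $\mathfrak{X}$, and $\mathfrak{C}_{{\rm TV2}}(\pi)$ and $\mathfrak{Rec}$ hold.

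The proof has three steps.

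\textbf{Step 1: strict decay for the limit chain.} Write $T:=T_{\delta}^{{\rm mix}}(\mathfrak{X})$ and $d(t):=\max_{k\in\mathfrak{S}}d_{{\rm TV}}(\mathfrak{X}(t;k),\pi)$. The chain is irreducible on the finite set $\mathfrak{S}$, since $\pi$ is its unique stationary distribution (restricted, if necessary, to the support of $\pi$). Then $t\mapsto d(t)$ is continuous and nonincreasing, and $d(T)\le\delta$. I claim $d(T+a)<\delta$ for every $a>0$. Fix $k$ and write $\nu=\mathfrak{X}(T;k)$. By the semigroup property, $\mathfrak{X}(T+a;k)-\pi=(\nu-\pi)P_{a}$. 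The signed measure $\nu-\pi$ has positive part $\alpha$ and negative part $\beta$, with equal masses $m=d_{{\rm TV}}(\nu,\pi)\le\delta$. Hence
\[
2\,d_{{\rm TV}}(\mathfrak{X}(T+a;k),\pi)=\|\alpha P_{a}-\beta P_{a}\|_{1}.
\]
If $m>0$, irreducibility gives $P_{a}(i,j)>0$ for all $i,j$. So $\alpha P_{a}$ and $\beta P_{a}$ are both strictly positive measures of mass $m$, and therefore $\|\alpha P_{a}-\beta P_{a}\|_{1}<2m$. In either case $d_{{\rm TV}}(\mathfrak{X}(T+a;k),\pi)<\delta$. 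Taking the maximum over the finitely many $k$ gives $d(T+a)<\delta$. Set $\eta:=\delta-d(T+a/2)>0$.

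\textbf{Step 2: passing to $N$ with a supremum over starting points.} Suppose the claim fails. Then along a subsequence there are $\sigma_{N}\in\Omega_{N}$ with
\[
d_{{\rm TV}}\bigl(\bm{\sigma}_{N}(\theta_{N}(T+a);\sigma_{N}),\mu_{N}\bigr)\ge\delta-\eta/2 .
\]
Apply the strong Markov property at $\tau:=\mathcal{H}_{\mathcal{E}_{N}}$. By $\mathfrak{Rec}$, $\mathbb{P}_{\sigma_{N}}[\tau>\varrho_{N}]\to0$, and $\varrho_{N}\ll\theta_{N}$. Using the convexity of $d_{{\rm TV}}(\cdot,\mu_{N})$ in its first argument,
\[
d_{{\rm TV}}\bigl(\bm{\sigma}_{N}(\theta_{N}(T+a);\sigma_{N}),\mu_{N}\bigr)\le o(1)+\sup\Bigl\{ d_{{\rm TV}}\bigl(\bm{\sigma}_{N}(\theta_{N}(T+a)-s;\xi),\mu_{N}\bigr):\ \xi\in\mathcal{E}_{N},\ s\le\varrho_{N}\Bigr\} .
\]
Choose near-maximizers $\xi_{N}\in\mathcal{E}_{N}^{k_{N}}$ and $s_{N}\le\varrho_{N}$. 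Passing to a further subsequence, we may take $k_{N}\equiv k$ constant, because $\mathfrak{S}$ is finite. Set $t_{N}:=T+a-s_{N}/\theta_{N}$, so $t_{N}\to T+a$.

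\textbf{Step 3: conclusion.} Condition $\mathfrak{C}_{{\rm TV2}}(\pi)$ along this sequence gives a limit of $d_{{\rm TV}}(\mathfrak{X}(T+a;k),\pi)\le d(T+a)\le d(T+a/2)=\delta-\eta$. This contradicts the lower bound $\delta-\eta/2$ from Step 2, which proves the lemma.

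The subsequence use of $\mathfrak{C}_{{\rm TV2}}$ is legitimate: we can extend $(\xi_{N})$ and $(t_{N})$ arbitrarily off the subsequence, keeping $\xi_{N}\in\mathcal{E}_{N}^{k}$ and $t_{N}\to T+a$.

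The main obstacle is Step 1, the strict decrease of the limit distance past $T$. It needs irreducibility to get positivity of $P_{a}$. The rest is a standard combination of recurrence and the TV convergence.
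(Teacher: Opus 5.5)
Your argument takes the same route as the paper. You apply the strong Markov property at $\mathcal{H}_{\mathcal{E}_N}$ together with $\mathfrak{Rec}$, then use $\mathfrak{C}_{{\rm TV2}}(\pi)$ to get $\limsup_{N}\sup_{\sigma}d_{{\rm TV}}(\bm{\sigma}_N(\theta_N t;\sigma),\mu_N)\le\max_{k}d_{{\rm TV}}(\mathfrak{X}(t;k),\pi)$, and finish with strict contraction for the limit chain, which the paper takes from Lemma \ref{lem:TV-3}. Your subsequence/contradiction argument in Steps 2--3 is a welcome addition. It supplies the uniformity in $\sigma$, and in the hitting position and time, that the paper asserts for $R_N^{(2)}$ without deriving it from the sequential condition $\mathfrak{C}_{{\rm TV2}}(\pi)$.

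Step 1, however, contains a false claim, though it is easy to repair. Uniqueness of the stationary distribution does not imply irreducibility. It only gives a single closed class $R={\rm supp}\,\pi$, and there may be transient states outside $R$. This happens in the paper itself: for $q\ge3$ and $\beta\in(\beta_1,\beta_2)$, the chain $\mathfrak{X}_\beta$ has $0$ as an absorbing state and $\pi_\beta=\delta_0$. So $P_a(0,k)=0$ for $k\neq0$, and your assertion ``$P_a(i,j)>0$ for all $i,j$'' fails. Restricting to the support of $\pi$ does not rescue it, because the maximum defining $d(t)$ runs over all starting states $k\in\mathfrak{S}$, including the transient ones.

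The fix is that the strict inequality $\|\alpha P_a-\beta P_a\|_1<2m$ only requires one state charged by both measures. Every state leads to $R$, and in continuous time this gives $P_a(i,j)>0$ for all $i\in\mathfrak{S}$, $j\in R$, and $a>0$. Hence $\alpha P_a$ and $\beta P_a$ are both strictly positive on $R$ whenever $m>0$, and the strict inequality follows. This is precisely what the ``only one irreducible class'' hypothesis in Lemma \ref{lem:TV-3} encodes. With this correction, your proof is complete.
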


\begin{proof}
Fix $t>0$. By the strong Markov property, for all $\sigma\in\Omega_{N}$
and $\mathcal{A}\subset\Omega_{N}$,
\[
\begin{aligned} & \mathbb{P}_{\sigma}^{N}[\bm{\sigma}_{N}(\theta_{N}t)\in\mathcal{A}]-\mu_{N}(\mathcal{A})\\
 & =\mathbb{E}_{\sigma}^{N}\left[\left\{ \mathbb{P}_{\bm{\sigma}_{N}(\mathcal{H}_{\mathcal{E}_{N}})}^{N}[\bm{\sigma}_{N}(\theta_{N}t-\mathcal{H}_{\mathcal{E}_{N}})\in\mathcal{A}]-\mu_{N}(\mathcal{A})\right\} \bm{1}_{\{\mathcal{H}_{\mathcal{E}_{N}}\le\varrho_{N}\}}\right]+R_{N}^{(1)}(\sigma,\mathcal{A}),
\end{aligned}
\]
where by condition $\mathfrak{Rec}$, 
\[
\limsup_{N\to\infty}\sup_{\sigma\in\Omega_{N}}\sup_{\mathcal{A}\subset\Omega_{N}}\left|R_{N}^{(1)}(\sigma,\mathcal{A})\right|=0.
\]
By decomposing the event $\{\mathcal{H}_{\mathcal{E}_{N}}<\varrho_{N}\}$
into $\bigcup_{k\in\mathfrak{S}}\{\mathcal{H}_{\mathcal{E}_{N}^{k}}<\varrho_{N}\}$,
the expectation in the penultimate display can be written as
\[
\begin{aligned} & \sum_{k\in\mathfrak{S}}\mathbb{E}_{\sigma}^{N}\left[\left\{ \mathbb{P}_{\bm{\sigma}_{N}\left(\mathcal{H}_{\mathcal{E}_{N}^{k}}\right)}^{N}\left[\bm{\sigma}_{N}\left(\theta_{N}t-\mathcal{H}_{\mathcal{E}_{N}^{k}}\right)\in\mathcal{A}\right]-\mu_{N}(\mathcal{A})\right\} \bm{1}_{\left\{ \mathcal{H}_{\mathcal{E}_{N}^{k}}\le\varrho_{N}\right\} }\right]\\
 & \le\sum_{k\in\mathfrak{S}}\mathbb{E}_{\sigma}^{N}\left[d_{{\rm TV}}\left(\bm{\sigma}_{N}\left(\theta_{N}t-\mathcal{H}_{\mathcal{E}_{N}^{k}};\bm{\sigma}_{N}\left(\mathcal{H}_{\mathcal{E}_{N}^{k}}\right)\right),\mu_{N}\right)\bm{1}_{\left\{ \mathcal{H}_{\mathcal{E}_{N}^{k}}\le\varrho_{N}\right\} }\right].
\end{aligned}
\]
By condition $\mathfrak{C}_{{\rm TV2}}(\pi)$ and the fact that $\varrho_{N}\ll\theta_{N}$,
the right-hand side equals
\[
\sum_{k\in\mathfrak{S}}\mathbb{P}_{\sigma}^{N}\left[\mathcal{H}_{\mathcal{E}_{N}^{k}}<\alpha_{N}\right]d_{{\rm TV}}(\mathfrak{X}(t;k),\pi)+R_{N}^{(2)}(\sigma),
\]
where 
\[
\lim_{N\to\infty}\sup_{\sigma\in\Omega_{N}}\left|R_{N}^{(2)}(\sigma)\right|=0.
\]
Since $\sum_{k\in\mathfrak{S}}\mathbb{P}_{\sigma}^{N}[\mathcal{H}_{\mathcal{E}_{N}^{k}}=\mathcal{H}_{\mathcal{E}_{N}}]=1$,
the last summation in the penultimate display is bounded above by
\[
\max_{k\in\mathfrak{S}}d_{{\rm TV}}(\mathfrak{X}(t;k),\pi).
\]

So far, we have proved that
\[
\begin{aligned}\limsup_{N\to\infty}\sup_{\sigma\in\Omega_{N}}d_{{\rm TV}}(\bm{\sigma}_{N}(\theta_{N}t;\sigma),\mu_{N})=\limsup_{N\to\infty} & \sup_{\sigma\in\Omega_{N}}\sup_{\mathcal{A}\subset\Omega_{N}}\left(\mathbb{P}_{\sigma}^{N}[\bm{\sigma}_{N}(\theta_{N}t)\in\mathcal{A}]-\mu_{N}(\mathcal{A})\right)\\
 & \le\max_{k\in\mathfrak{S}}d_{{\rm TV}}(\mathfrak{X}(t;k),\pi).
\end{aligned}
\]
Therefore, by Lemma \ref{lem:TV-3}, since $\pi$ is the unique stationary
distribution,
\begin{align*}
\limsup_{N\to\infty}\sup_{\sigma\in\Omega_{N}}d_{{\rm TV}}\left(\bm{\sigma}_{N}\left(\theta_{N}\left(T_{\delta}^{{\rm mix}}(\mathfrak{X})+a\right);\sigma\right),\mu_{N}\right) & \le\max_{k\in\mathfrak{S}}d_{{\rm TV}}\left(\mathfrak{X}\left(T_{\delta}^{{\rm mix}}(\mathfrak{X})+a;k\right),\pi\right)\\
 & <\max_{k\in\mathfrak{S}}d_{{\rm TV}}\left(\mathfrak{X}\left(T_{\delta}^{{\rm mix}}(\mathfrak{X});k\right),\pi\right)\le\delta.
\end{align*}
This proves Lemma \ref{lem:TV-strict-decay}.
\end{proof}
We are now ready to prove Proposition \ref{prop:main}.
\begin{proof}[Proof of Proposition \ref{prop:main}]
 Following the logic in \cite[Proposition 7.1]{Lee25}, we obtain
the lower bound part
\[
\liminf_{N\to\infty}\frac{T_{\delta}^{{\rm mix}}(\bm{\sigma}_{N})}{\theta_{N}}\ge T_{\delta}^{{\rm mix}}(\mathfrak{X}).
\]
In addition, following \cite[Proposition 7.3]{Lee25}, along with
Lemma \ref{lem:TV-strict-decay}, we have the upper bound part
\[
\limsup_{N\to\infty}\frac{T_{\delta}^{{\rm mix}}(\bm{\sigma}_{N})}{\theta_{N}}\le T_{\delta}^{{\rm mix}}(\mathfrak{X}).
\]
The two displayed inequalities complete the proof of Proposition \ref{prop:main}.
\end{proof}
\begin{rem}
We should remark that the strict inequality in Lemma \ref{lem:TV-strict-decay}
is crucial in the upper bound part.
\end{rem}

The strategy to prove our main theorem (Theorem \ref{thm:main}) is
to check all conditions regarding the metastability description presented
in this section, and then apply Propositions \ref{prop:LLM}, \ref{prop:TV-TV2},
and \ref{prop:main}. This procedure will be explained in Section
\ref{sec3}.

\section{\label{sec3}Description of Metastability of the CWP Model}

In this section, we present a detailed description of the metastable
behavior of the CWP model in terms of the general methodology explained
in Section \ref{sec2}.
\begin{assumption}
In the remainder of the article, we always assume that $\beta>\beta_{1}$
and that $\beta\ne q$ if $q\in\{3,4\}$.
\end{assumption}

\subsubsection*{Metastable Valleys}

We first introduce the metastable valleys. Define $\mathfrak{S}_{\beta}:=\{1,2\}$
if $q=2$ and, if $q\ge3$,
\begin{equation}
\mathfrak{S}_{\beta}:=\begin{cases}
\llbracket0,q\rrbracket & \text{if}\quad\beta\in(\beta_{1},\beta_{2}],\\
\llbracket1,q\rrbracket & \text{if}\quad\beta\in(\beta_{2},\infty).
\end{cases}\label{eq:Sbeta-def}
\end{equation}
Fix a sufficiently small number $\eta=\eta(\beta)>0$ such that there
is no critical point of $F_{\beta}$ in the domain
\begin{equation}
\{F_{\beta}({\bf v}_{1})-2\eta<F_{\beta}<F_{\beta}({\bf v}_{1})\}\cup\{F_{\beta}({\bf u}_{1,2})-2\eta<F_{\beta}<F_{\beta}({\bf u}_{1,2})\}.\label{eq:eta-def}
\end{equation}
Moreover, assume that
\begin{equation}
\eta<D_{\beta}\qquad\text{and}\qquad\eta<F_{\beta}({\bf v}_{1})-F_{\beta}({\bf e})\quad\text{if}\quad\beta<q.\label{eq:eta-small}
\end{equation}
Indeed, it is possible to take such $\eta$ since $\mathcal{C}$ is
finite by Lemma \ref{lem:critical-pts}. Then, the metastable valleys
$\mathcal{E}_{N}^{k}\subset\Omega_{N}$, $k\in\llbracket0,q\rrbracket$,
are defined as follows. First for $k\in\llbracket1,q\rrbracket$,
define (cf. \eqref{eq:PiN})
\begin{equation}
\mathcal{E}_{N}^{k}:=\Pi_{N}^{-1}(\mathcal{W}_{k}\cap\{F_{\beta}<H_{\beta}-\eta\}\cap\Xi_{N}).\label{eq:ENk-def}
\end{equation}
Then if $q\ge3$ and $\beta\in(\beta_{1},q)$, define
\begin{equation}
\mathcal{E}_{N}^{0}:=\Pi_{N}^{-1}(\mathcal{W}_{0}\cap\{F_{\beta}<F_{\beta}({\bf v}_{1})-\eta\}\cap\Xi_{N}),\label{eq:EN0-def}
\end{equation}
and for all other cases define $\mathcal{E}_{N}^{0}:=\varnothing$.
Let $\mathcal{E}_{N}:=\bigcup_{k\in\mathfrak{S}_{\beta}}\mathcal{E}_{N}^{k}$.
See Figures \ref{fig1.1} and \ref{fig1.2}.

\subsubsection*{Limit Markov Chain}

Let $\mathfrak{e}_{k}$, $k\in\llbracket1,q-1\rrbracket$, denote
the $k$-th unit vector in $\mathbb{R}^{q-1}$ and let $\mathfrak{e}_{q}:=(0,\dots,0)\in\mathbb{R}^{q-1}$.
In the terminology of Notation \ref{nota:q-to-q-1}, these are exactly
the $q$ unit vectors in $\mathbb{R}^{q}$. Define $(q-1)\times(q-1)$
matrices $\mathbb{A}^{k,\ell}$, $k,\ell\in\llbracket1,q\rrbracket$,
and $\mathbb{A}(\bm{x})$, $\bm{x}=(x_{1},\dots,x_{q})\in\Xi$, as
\begin{equation}
\mathbb{A}^{k,\ell}:=(\mathfrak{e}_{\ell}-\mathfrak{e}_{k})(\mathfrak{e}_{\ell}-\mathfrak{e}_{k})^{\dagger}\qquad\text{and }\qquad\mathbb{A}(\bm{x}):=\sum_{1\le k<\ell\le q}\sqrt{x_{k}x_{\ell}}\mathbb{A}^{k,\ell}.\label{eq:Akl-def}
\end{equation}
Since each $\mathbb{A}^{k,\ell}$ is positive definite, $\mathbb{A}$
satisfies \cite[display (A.1)]{LS18} and hence, by \cite[Lemma A.1]{LS18},
the matrices $(\nabla^{2}F_{\beta})({\bf v}_{1})\mathbb{A}({\bf v}_{1})^{\dagger}$
and $(\nabla^{2}F_{\beta})({\bf u}_{1,2})\mathbb{A}({\bf u}_{1,2})^{\dagger}$
have unique negative eigenvalues, which will be denoted respectively
by $-\mu$ and $-\mu'$.

Recall \eqref{eq:Gbeta}. Define the constants as
\begin{equation}
\begin{aligned}\omega & :=\frac{\mu}{\sqrt{-\det\left[\left(\nabla^{2}F_{\beta}\right)({\bf v}_{1})\right]}}e^{-\beta G_{\beta}({\bf v}_{1})},\\
\omega' & :=\frac{\mu'}{\sqrt{-\det\left[\left(\nabla^{2}F_{\beta}\right)({\bf u}_{1,2})\right]}}e^{-\beta G_{\beta}({\bf u}_{1,2})},
\end{aligned}
\label{eq:omega-def}
\end{equation}
and
\begin{equation}
\begin{aligned}\nu & :=\frac{1}{\beta\sqrt{\det\left[\left(\nabla^{2}F_{\beta}\right)({\bf e})\right]}}e^{-\beta G_{\beta}({\bf e})},\\
\nu' & :=\frac{1}{\beta\sqrt{\det\left[\left(\nabla^{2}F_{\beta}\right)({\bf u}_{1})\right]}}e^{-\beta G_{\beta}({\bf u}_{1})}.
\end{aligned}
\label{eq:nu-def}
\end{equation}

\begin{defn}
\label{def:limit-chain}Define $\{\mathfrak{X}_{\beta}(t)\}_{t\ge0}$
as the Markov chain in $\mathfrak{S}_{\beta}$ with jump rate $\mathfrak{r}_{\beta}(\cdot,\cdot)$
given as follows. For $q=2$,
\[
\mathfrak{r}_{\beta}(1,2)=\mathfrak{r}_{\beta}(2,1):=\frac{\beta\sqrt{F_{\beta}''({\bf e})F_{\beta}''({\bf u}_{1})}}{2}e^{-\beta(G_{\beta}({\bf e})-G_{\beta}({\bf u}_{1}))}.
\]
For $q\in\{3,4\}$,
\[
\mathfrak{r}_{\beta}(k,\ell):=\begin{cases}
\frac{\omega}{\nu'}\bm{1}_{\{\ell=0\}} & \text{if}\quad\beta\in(\beta_{1},\beta_{2}),\\
\frac{\omega}{\nu'}\bm{1}_{\{\ell=0\}}+\frac{\omega}{\nu}\bm{1}_{\{k=0\}} & \text{if}\quad\beta=\beta_{2},\\
\frac{\omega}{q\nu'} & \text{if}\quad\beta\in(\beta_{2},q),\\
\frac{\omega}{\nu'} & \text{if}\quad\beta\in(q,\infty),\quad q=3,\\
\frac{\omega'}{\nu'} & \text{if}\quad\beta\in(q,\infty),\quad q=4.
\end{cases}
\]
For $q\ge5$,
\[
\mathfrak{r}_{\beta}(k,\ell):=\begin{cases}
\frac{\omega}{\nu'}\bm{1}_{\{\ell=0\}} & \text{if}\quad\beta\in(\beta_{1},\beta_{2}),\\
\frac{\omega}{\nu'}\bm{1}_{\{\ell=0\}}+\frac{\omega}{\nu}\bm{1}_{\{k=0\}} & \text{if}\quad\beta=\beta_{2},\\
\frac{\omega}{q\nu'} & \text{if}\quad\beta\in(\beta_{2},\beta_{3}),\\
\frac{1}{\nu'}\left(\frac{\omega}{q}+\omega'\right) & \text{if}\quad\beta=\beta_{3},\\
\frac{\omega'}{\nu'} & \text{if}\quad\beta\in(\beta_{3},\infty).
\end{cases}
\]
\end{defn}

The dynamics $\mathfrak{X}_{\beta}$ always have a unique stationary
distribution $\pi_{\beta}$ on $\mathfrak{S}_{\beta}$, which is given
as $\pi_{\beta}=\frac{1}{2}(\delta_{1}+\delta_{2})$ for $q=2$ and,
for $q\ge3$,
\[
\pi_{\beta}=\begin{cases}
\delta_{0} & \text{if}\quad\beta\in(\beta_{1},\beta_{2}),\\
\frac{\nu}{\nu+\nu'q}\delta_{0}+\frac{\nu'}{\nu+\nu'q}\sum_{k=1}^{q}\delta_{k} & \text{if}\quad\beta=\beta_{2},\\
\frac{1}{q}\sum_{k=1}^{q}\delta_{k} & \text{if}\quad\beta\in(\beta_{2},\infty).
\end{cases}
\]
Denote by $\mathfrak{T}(\delta)$ the $\delta$-mixing time of $\mathfrak{X}_{\beta}$:
\begin{equation}
\mathfrak{T}(\delta)=\mathfrak{T}(\beta,q,\delta):=T_{\delta}^{{\rm mix}}(\mathfrak{X}_{\beta}).\label{eq:Tdelta}
\end{equation}
The metastable time-scale is defined as
\begin{equation}
\theta_{N}=\theta_{N}^{\beta}:=2\pi Ne^{ND_{\beta}}.\label{eq:thetaN-CWP}
\end{equation}
The following theorem describes the model reduction subject to the
CWP model. Recall that $\{\widetilde{\bm{\sigma}}_{N}^{\beta}(t)\}_{t\ge0}$
denotes the trace process in $\mathcal{E}_{N}=\bigcup_{k\in\mathfrak{S}_{\beta}}\mathcal{E}_{N}^{k}$
and $\Psi_{N}:\mathcal{E}_{N}\to\mathfrak{S}_{\beta}$ is the projection
function, i.e., $\Psi_{N}(\sigma)=k$ for all $\sigma\in\mathcal{E}_{N}^{k}$.
\begin{thm}[Model Reduction $\mathfrak{C}+\mathfrak{D}$ of the CWP Model]
\label{thm:CWP-CD}Fix $k\in\mathfrak{S}_{\beta}$.
\begin{enumerate}
\item For any sequence $(\sigma_{N})_{N}$ in $\mathcal{E}_{N}^{k}$, the
law of $\{\Psi_{N}(\widetilde{\bm{\sigma}}_{N}^{\beta}(\theta_{N}t))\}_{t\ge0}$
starting from $\sigma_{N}$ converges weakly to the law of $\{\mathfrak{X}_{\beta}(t)\}_{t\ge0}$
starting from $k$ as $N\to\infty$.
\item For any $T>0$,
\[
\lim_{N\to\infty}\sup_{\sigma\in\mathcal{E}_{N}^{k}}\mathbb{E}_{\sigma}^{N,\beta}\left[\int_{0}^{T}\bm{1}_{\left\{ \bm{\sigma}_{N}^{\beta}(\theta_{N}t)\in\Omega_{N}\setminus\mathcal{E}_{N}\right\} }\,{\rm d}t\right]=0.
\]
\end{enumerate}
\end{thm}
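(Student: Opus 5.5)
The plan is to reduce the statement to the known metastability result for the magnetization chain. For the Glauber dynamics with rates \eqref{eq:rate-def}, the rate $c_{v,k}(\sigma)$ depends on $\sigma$ only through $\Pi_N(\sigma)$ and on the current spin $\sigma_v$. Consequently $\Pi_N(\bm{\sigma}_N^\beta(t))$ is itself a continuous-time Markov chain on $\Xi_N$ (lumpability). It is reversible with respect to $\pi_N^\beta$, and its jump from $\bm{x}$ to $\bm{x}+N^{-1}(\mathfrak{e}_\ell-\mathfrak{e}_k)$ has rate
\[
x_k\exp\{-\tfrac{\beta}{2}N[H(\bm{x}+N^{-1}(\mathfrak{e}_\ell-\mathfrak{e}_k))-H(\bm{x})]\}.
\]
Since each $\mathcal{E}_N^k$ is a full preimage $\Pi_N^{-1}(\cdot)$, we have $\Psi_N(\widetilde{\bm{\sigma}}_N^\beta(t))=\widehat\Psi_N(\widetilde{\bm{x}}_N(t))$, where $\widetilde{\bm{x}}_N$ is the trace of the magnetization chain on $\Pi_N(\mathcal{E}_N)$. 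Likewise, the occupation-time functional in part (2) depends only on the projected process. Both claims therefore transfer verbatim from the corresponding statements for the magnetization chain started at $\Pi_N(\sigma_N)$.

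For the magnetization chain, I would follow the Beltr\'an--Landim martingale approach as carried out for this model in \cite{Lee22}.

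\textbf{Capacities.} First compute sharp asymptotics of the capacities $\mathrm{cap}_N(\mathcal{E}_N^k,\mathcal{E}_N^\ell)$, or more conveniently of $\mathrm{cap}_N(\mathcal{E}_N^k,\bigcup_{m\ne k}\mathcal{E}_N^m)$ and its variants. The tools are the Dirichlet principle for the upper bound and the Thomson principle for the lower bound. The test function (respectively the test flow) is built near each relevant saddle (${\bf v}_m$ or ${\bf u}_{k,\ell}$) from the one-dimensional Eyring--Kramers profile along the negative eigendirection of $(\nabla^2F_\beta)\mathbb{A}^\dagger$. This produces the prefactors $\omega,\omega'$ together with the factor $(2\pi N)^{-1}$ relative to $e^{-N\beta H_\beta}$, up to normalization. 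Second, obtain $\mu_N(\mathcal{E}_N^k)$ by Laplace asymptotics from \eqref{eq:piNbeta-formula}, giving the constants $\nu,\nu'$. The mean jump rates of the trace chain are then
\[
r_N(k,\ell)=\tfrac12\big[\mathrm{cap}(\mathcal{E}^k,\breve{\mathcal{E}}^{\ell\cup\cdot})\ \text{combinations}\big]/\mu_N(\mathcal{E}_N^k),
\]
and one checks $\theta_N r_N(k,\ell)\to\mathfrak{r}_\beta(k,\ell)$ in every regime of Definition \ref{def:limit-chain}.

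\textbf{Convergence and negligibility.} With the rate convergence in hand, part (1) follows from \cite[Theorem 2.7]{BL10}. Besides the rates, that theorem needs only the condition
\[
\mathrm{cap}_N(\mathcal{E}_N^k,\ldots)\ll\inf_{\bm{x},\bm{y}\in\mathcal{E}_N^k}\mathrm{cap}_N(\bm{x},\bm{y}).
\]
This holds because each valley sits strictly below $H_\beta-\eta$ and is connected, so the internal capacities are at most polynomially small while the exit capacities are exponentially small by the choice of $\eta$. For part (2), I would use the standard bound
\[
\mathbb{E}_{\bm{x}}\!\left[\int_0^T\bm{1}_{\Delta_N}\right]\le \frac{\mu_N(\Delta_N)}{\mu_N(\mathcal{E}_N^k)\,\mathrm{cap}\text{-type}}\cdots,
\]
for instance via \cite[Proposition 6.10]{BL10} or by averaging over $\mu_N^{\mathcal{E}_N^k}$ and using the local ergodic estimate. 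Its hypothesis is $\mu_N(\Delta_N)/\mu_N(\mathcal{E}_N^k)\to0$ uniformly in the starting point. This ratio is exponentially small when $\beta\ne\beta_2$, but one has to be careful at the critical temperatures and for the shallow well $\mathcal{E}_N^0$ when $\beta\le\beta_2$.

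\textbf{Main obstacle.} I expect the capacity lower bound at the degenerate-looking regimes $\beta=\beta_2$ and $\beta=\beta_3$ to be the hardest part. At $\beta=\beta_3$, two kinds of saddles at the same height contribute, and the flows must route through both ${\bf v}_m$ (via $\mathcal{W}_0$) and ${\bf u}_{k,\ell}$. This is what yields $\frac{\omega}{q}+\omega'$; the factor $1/q$ arises because a particle passing into $\mathcal{W}_0$ exits uniformly to one of the $q$ wells. I would treat this via a two-step flow construction with an intermediate harmonic-measure computation inside $\mathcal{W}_0$. This presumably mirrors \cite{Lee22}, and I would cite that paper as far as possible.
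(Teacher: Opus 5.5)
Your proposal follows the paper's route. The paper also observes that $\Pi_N(\bm{\sigma}_N^\beta)$ is the reversible proportions chain and that every $\mathcal{E}_N^k$ is a full preimage under $\Pi_N$, so $\mathfrak{C}$ and $\mathfrak{D}$ transfer verbatim from the results for that chain in \cite{LS16,Lee22}, which the paper simply cites rather than re-deriving.

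Two side remarks in your sketch are inaccurate, though neither affects the argument. First, the internal capacities in $\mathcal{F}_N^k$ are not polynomially small in absolute terms; the real point is that paths inside $\mathcal{F}_N^k$ stay below height $H_\beta-\eta$, so by the Thomson principle these capacities exceed the exit capacity by a factor $e^{\beta N\eta}$ up to polynomial corrections. Second, $\mu_N^\beta(\Delta_N)/\mu_N^\beta(\mathcal{E}_N^k)$ is exponentially small in every regime, including $\beta=\beta_2$. The well $\mathcal{W}_0$ is shallow only for $\beta\in(\beta_2,q)$, not for $\beta\le\beta_2$; in that regime $\mathcal{E}_N^0\subset\Delta_N$ and its mass is still negligible because $F_\beta({\bf e})>F_\beta({\bf u}_1)$.
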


This result is a combination of \cite[Section 4.3]{LS16} and \cite[Theorems 4.2 and 4.5]{Lee22}.
\begin{rem}
In fact, the two conditions $\mathfrak{C}$ and $\mathfrak{D}$ were
verified in the literature \cite{LS16,Lee22} for the so-called \emph{proportions
chain} (cf. \eqref{eq:proportions-chain}), which is obtained by projecting
the original dynamics $\bm{\sigma}_{N}$ in $\Omega_{N}$ to $\Xi_{N}$
via $\Pi_{N}$ (cf. \eqref{eq:PiN}). However, the two conditions
are equivalent for these two processes since the metastable sets in
$\mathcal{E}_{N}$ are exactly the inverse images of the metastable
sets in $\Xi_{N}$ via $\Pi_{N}$; see \eqref{eq:FNk} and \eqref{eq:FN0}.
\end{rem}

\begin{rem}
Even though in \cite{LS16,Lee22} the results are proved only for
$q\ge3$, the result for the Curie--Weiss model ($q=2$) follows
directly from the ideas and computations presented therein. Actually,
the computations are much easier than the Potts case since the energy
landscape is one dimensional and everything is explicit. We chose
not to present the tedious verification of this case and just record
the results.
\end{rem}

\begin{rem}
In fact, the modes of convergence of $\{\bm{\sigma}_{N}^{\beta}(t)\}_{t\ge0}$
to the limit Markov chain $\{\mathfrak{X}_{\beta}(t)\}_{t\ge0}$ are
a slightly different in the previous two articles \cite{LS16,Lee22}.
In \cite{LS16}, the convergence is stated in terms of the so-called
\emph{soft} topology, which was introduced in \cite{Lan15} and is
adequate to describe the convergence of metastable process trajectories.
In \cite{Lee22}, the convergence is stated in the language of finite-dimensional
marginal distributions, a concept first formulated in \cite{LLM18}.
However, both modes of convergence require \emph{a priori} the trace
process convergence ($\mathfrak{C}$) and negligibility $(\mathfrak{D}$),
along with a few additional verifications. Thus, Theorem \ref{thm:CWP-CD}
is completely verified in \cite{LS16,Lee22}.
\end{rem}

Thus, the missing conditions to be checked are summarized as follows.
\begin{thm}[Local Mixing $\mathfrak{M}$ of the CWP Model]
\label{thm:CWP-M}There exists $\mathcal{B}_{N}^{k}\subset\mathcal{E}_{N}^{k}$
for each $k\in\mathfrak{S}_{\beta}$ such that:
\begin{enumerate}
\item for every $\delta>0$,
\begin{equation}
\lim_{N\to\infty}\sup_{k\in\mathfrak{S}_{\beta}}\sup_{\sigma\in\mathcal{E}_{N}^{k}}\mathbb{P}_{\sigma}^{N,\beta}\left[\mathcal{H}_{\mathcal{B}_{N}^{k}}>\delta\theta_{N}\right]=0;\label{eq:M1}
\end{equation}
\item there exists a time-scale $(\varrho_{N})_{N\ge1}$ with $\varrho_{N}\ll\theta_{N}$
such that
\begin{equation}
\lim_{N\to\infty}\sup_{k\in\mathfrak{S}_{\beta}}\sup_{\sigma\in\mathcal{B}_{N}^{k}}\mathbb{P}_{\sigma}^{N,\beta}\left[\mathcal{H}_{\Omega_{N}\setminus\mathcal{E}_{N}^{k}}\le2\varrho_{N}\right]=0;\label{eq:M2}
\end{equation}
\item and
\begin{equation}
\lim_{N\to\infty}\sup_{k\in\mathfrak{S}_{\beta}}\sup_{\sigma\in\mathcal{B}_{N}^{k}}d_{{\rm TV}}\left(\bm{\sigma}_{N}^{\mathcal{E}_{N}^{k}}(\varrho_{N};\sigma),\mu_{N}^{\mathcal{E}_{N}^{k}}\right)=0.\label{eq:M3}
\end{equation}
\end{enumerate}
\end{thm}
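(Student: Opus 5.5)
We will take $\mathcal{B}_{N}^{k}$ to be the preimage under $\Pi_{N}$ of a small neighborhood of the bottom of the well. Concretely, for $k\in\llbracket1,q\rrbracket$ set $\mathcal{B}_{N}^{k}:=\Pi_{N}^{-1}(\{\bm{x}\in\Xi_{N}:|\bm{x}-{\bf u}_{k}|\le N^{-1/2}\log N\})$, and similarly around ${\bf e}$ for $k=0$. The time-scale will be $\varrho_{N}:=N^{C}$ for a large constant $C$; any polynomial scale satisfies $\varrho_{N}\ll\theta_{N}=2\pi Ne^{ND_{\beta}}$. The three items are then verified separately, mostly at the level of the proportions chain $\Pi_{N}(\bm{\sigma}_{N}^{\beta})$. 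This reduction is legitimate because the projected process is itself Markov, by permutation symmetry of the complete graph.

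For \eqref{eq:M1}, the plan is a drift argument. Inside $\mathcal{W}_{k}\cap\{F_{\beta}<H_{\beta}-\eta\}$, the mean-field drift of the proportions chain has order $N^{-1}\cdot N=O(1)$ per unit time in macroscopic variables. It equals a positive-definite matrix multiple of $-\nabla F_{\beta}$, up to $O(1/N)$ corrections. By the choice of $\eta$ in \eqref{eq:eta-def}--\eqref{eq:eta-small}, the only critical point in this sublevel component is the minimum. Hence the deterministic flow reaches an $\epsilon$-neighborhood of ${\bf u}_{k}$ in time $O(1)$. Two standard facts then give the entrance into $\mathcal{B}_{N}^{k}$ in time $O(\log N)$ with high probability:
\begin{itemize}
\item a martingale (Doob) concentration estimate, which keeps the stochastic path within $o(1)$ of the flow;
\item a Lyapunov-function computation with $V(\bm{x})=|\bm{x}-{\bf u}_{k}|^{2}$, which contracts exponentially near the non-degenerate minimum.
\end{itemize}
A simple restart argument, using that the escape probability from the well over times $O(\log N)$ is exponentially small, upgrades this to the bound $\delta\theta_{N}$ uniformly in the starting point. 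For \eqref{eq:M2}, the capacity/renewal estimate gives
\[
\mathbb{P}_{\sigma}[\mathcal{H}_{\Omega_{N}\setminus\mathcal{E}_{N}^{k}}\le t]\lesssim t\,e^{-N(\eta'-o(1))}\cdot\mathrm{poly}(N),
\]
where $\eta'>0$ is the energy gap between the bottom and the boundary of $\mathcal{E}_{N}^{k}$. Concretely, one bounds this probability by $t$ times the equilibrium flux through the boundary divided by $\mu_{N}(\mathcal{B}_{N}^{k})$. With $t=2N^{C}$, this gives \eqref{eq:M2}.

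The main obstacle is \eqref{eq:M3}: mixing of the reflected dynamics on $\Omega_{N}$, not just of its magnetization, within polynomial time. The approach is a path-coupling argument, carried out in two stages.
\begin{enumerate}
\item Magnetizations. Couple two copies started in $\mathcal{B}_{N}^{k}$ so that their magnetizations coalesce. Inside the well, $F_{\beta}$ is convex near ${\bf u}_{k}$, so the standard contraction of the Glauber rates in the Hamming/magnetization metric holds. A one-dimensional-type coupling of the proportions chain, as in \cite{CDLLPS12} for the high-temperature regime, should give coalescence in time $O(N\log N)$. The localization near ${\bf u}_{k}$ plays the role that uniqueness of the minimum plays at high temperature.
\item Configurations. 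Once the magnetizations agree, use the permutation symmetry. Conditioned on the proportion vector, $\mu_{N}^{\mathcal{E}_{N}^{k}}$ is uniform on the fiber. A coupon-collector coupling, which matches sites updated at the same clock ring, equalizes the configurations in time $O(N\log N)$ while keeping the magnetizations equal.
\end{enumerate}
Where convexity fails away from ${\bf u}_{k}$ inside $\mathcal{E}_{N}^{k}$, we rely on \eqref{eq:M2} and the concentration from step 1: with high probability the coupled chains never leave a small convex neighborhood during $[0,\varrho_{N}]$. One must check that the reflection at $\partial\mathcal{E}_{N}^{k}$ is never felt in this window. This holds because it coincides with the event controlled in \eqref{eq:M2}, applied to the reflected chain. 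I expect the delicate point to be establishing contraction uniformly in the non-convex region for $k=0$ and near $\beta_{2}$. If direct contraction fails there, I would fall back on a spectral-gap bound for the reflected chain via comparison with a product chain on the convex neighborhood. This would be combined with the $L^{2}$-to-TV bound from a start already spread over $\mathcal{B}_{N}^{k}$.
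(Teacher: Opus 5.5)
Your route differs from the paper's throughout, and it has one genuine gap: Stage~1 of (3) for $q\ge3$.

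\textbf{How the paper argues.} The paper takes $\mathcal{B}_N^k$ macroscopic, namely $\Pi_N^{-1}(\mathcal{W}_k\cap\{F_\beta<F_\beta({\bf u}_k)+\epsilon\}\cap\Xi_N)$, and sets $\varrho_N=e^{3\beta N\epsilon}$. It then uses only potential theory for the proportions chain:
\begin{itemize}
\item capacity and renewal bounds for (1);
\item the pointwise estimate of Lemma~\ref{lem:prob-cap}, with a Dirichlet-principle bound, for (2);
\item for (3), the inequality $T^{\rm mix}\lesssim T^{\rm rel}\log(1/\mu_{\min})$ together with Hermon's hitting-time bound on $T^{\rm rel}$.
\end{itemize}

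\textbf{Your items (1) and (2).} These are sound in outline, with two corrections.
\begin{itemize}
\item Because of the prefactor $\frac{1}{N}$ in $\mathcal{L}_N$, the total jump rate is $O(1)$ and the macroscopic drift is $O(1/N)$. Your times $O(1)$ and $O(\log N)$ should therefore be $O(N)$ and $O(N\log N)$. This is harmless for $\varrho_N=N^C$.
\item Your stationary flux bound controls a start distributed as $\mu_N^{\mathcal{E}_N^k}$, not each $\sigma\in\mathcal{B}_N^k$. To get the supremum you must either divide by the conditional weight of $\Pi_N\sigma$, which is $e^{-O((\log N)^2)}$ on your mesoscopic ball, or use Lemma~\ref{lem:prob-cap}.
\end{itemize}

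\textbf{The gap in Stage~1.} You assert, by analogy with \cite{CDLLPS12}, that two $(q-1)$-dimensional magnetization chains can be made to coalesce exactly. This is not proved.
\begin{itemize}
\item Near ${\bf u}_k$ the linearized drift is $-\frac{\beta}{N}\mathbb{A}({\bf u}_k)\nabla^2F_\beta({\bf u}_k)$. Stability of ${\bf u}_k$ concerns its spectrum. It gives contraction in the norm induced by $\nabla^2F_\beta({\bf u}_k)$, not automatically in the Hamming metric.
\item Turning $O(N^{-1/2})$-closeness into exact equality is a separate argument. \cite{CDLLPS12} carry it out around ${\bf e}$, and it would have to be redone around ${\bf u}_k$.
\item For $q=2$ this step is free, since two birth--death chains with independent clocks cannot cross without meeting.
\end{itemize}
Your spectral-comparison fallback is too vague to close this.

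\textbf{How to close it.} You do not need pathwise coalescence.
\begin{itemize}
\item Couple the chain started at $\sigma\in\mathcal{B}_N^k$ with a stationary copy. At some time $t$, take a maximal coupling of their magnetizations and conditionally independent configurations given them. From then on, run your Stage~2 coupling.
\item Both copies stay near ${\bf u}_k$ with high probability. So, for $s\gg N\log N$, $d_{\rm TV}(\bm{\sigma}_N^{\mathcal{E}_N^k}(t+s;\sigma),\mu_N^{\mathcal{E}_N^k})$ is at most the total-variation distance at time $t$ between the reflected proportions chain started at $\Pi_N\sigma$ and $\pi_N^\beta$ conditioned on $\mathcal{F}_N^k$, plus $o(1)$.
\item That chain lives on a set of polynomial size. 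There the paper's relaxation-time and capacity argument is legitimate and gives mixing within $e^{2\beta N\epsilon}\,\mathrm{poly}(N)$.
\item So either take $\varrho_N=e^{3\beta N\epsilon}$, for which your (2) still holds when $\epsilon$ is small, or prove a polynomial spectral gap for that chain.
\end{itemize}

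For $q\ge3$, make Stage~2 explicit. Choose a bijection $\pi$ of the disagreement set with $\tau_{\pi(v)}=\sigma_v$, and update $v$ in $\sigma$ and $\pi(v)$ in $\tau$ to the same spin. The rates agree because $c_{v,k}$ depends only on the magnetization and the old spin. Reflection acts on both copies at once, and the number of disagreements never increases.

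\textbf{Why Stage~2 matters.} It is exactly what the paper's own proof of (3) lacks. That proof treats $\max_{\xi\in\mathcal{B}_N^k}\overline{\mathbb{E}}_\sigma^{N,\beta}[\mathcal{H}_\xi]$, the hitting time of a single configuration, as equivalent to the proportions-chain quantity $\overline{\mathrm{E}}_{\bm x}^{N,\beta}[\mathcal{H}_{\bm y}]$. They are not equivalent. By Kac's formula, $\max_\sigma\overline{\mathbb{E}}_\sigma^{N,\beta}[\mathcal{H}_\xi]$ is at least of order $1/\mu_N^{\mathcal{E}_N^k}(\xi)\ge|\Pi_N^{-1}(\Pi_N\xi)|=e^{-NS(\Pi_N\xi)+O(\log N)}$. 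This is exponential at a rate near $-S({\bf u}_k)>0$, which does not shrink with $\epsilon$. So some argument for mixing inside the fibres is genuinely required.
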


\begin{thm}[Recurrence Property $\mathfrak{Rec}$ of the CWP Model]
\label{thm:CWP-Rec}There exists $(\varrho_{N})_{N\ge1}$ with $\varrho_{N}\ll\theta_{N}$
such that
\[
\lim_{N\to\infty}\sup_{\sigma\in\Omega_{N}}\mathbb{P}_{\sigma}^{N,\beta}[\mathcal{H}_{\mathcal{E}_{N}}>\varrho_{N}]=0.
\]
\end{thm}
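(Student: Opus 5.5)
The recurrence statement (Theorem \ref{thm:CWP-Rec}) concerns only the magnetization, since $\mathcal{E}_{N}=\Pi_{N}^{-1}(\cdot)$ is a union of fibers of $\Pi_{N}$. The plan is therefore to work with the proportions chain $\bm{x}_{N}(t):=\Pi_{N}(\bm{\sigma}_{N}^{\beta}(t))$, which is a Markov chain on $\Xi_{N}$ with jumps $\bm{x}\to\bm{x}+N^{-1}(\mathfrak{e}_{\ell}-\mathfrak{e}_{k})$. Its rates are $Nx_{k}\cdot\frac{1}{N}\exp\{-\frac{\beta}{2}[\cdots]\}$, that is, of order one per unit time. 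Hence on macroscopic time $t$ the chain moves by $O(t/N)$, and a law of large numbers (Kurtz) shows that on time scale $Nt$ the rescaled process $\bm{x}_{N}(Nt)$ converges uniformly on compacts, uniformly in the starting point, to the ODE
\[
\dot{\bm{x}}=b(\bm{x}),
\]
with drift
\[
b(\bm{x})=\sum_{k,\ell}x_{k}e^{-\frac{\beta}{2}(x_{k}-x_{\ell})}(\mathfrak{e}_{\ell}-\mathfrak{e}_{k})+O(N^{-1}).
\]
One checks that $F_{\beta}$ is a Lyapunov function for this flow: $\nabla F_{\beta}\cdot b\le0$, with equality exactly on $\mathcal{C}$ (and points of $\partial\Xi$ are pushed inward). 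I plan to choose $\varrho_{N}:=N^{3}$ or any polynomial scale, which is certainly $\ll\theta_{N}=2\pi Ne^{ND_{\beta}}$.

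The argument then has three steps. First, a deterministic step: every ODE trajectory either converges to a local minimum in $\mathcal{U}$, and hence enters the open set $\Pi_{N}(\mathcal{E}_{N})$'s continuum counterpart $\mathcal{V}:=\bigcup_{k\in\mathfrak{S}_{\beta}}\mathcal{W}_{k}\cap\{F_{\beta}<\cdot-\eta\}$ in finite time, or converges to a saddle or other critical point. Minima not in $\mathfrak{S}_{\beta}$ need care: for $\beta>\beta_{2}$ with $\beta<\beta_{3}$ (or $\beta<q$), the point ${\bf e}$ is a local minimum whose well is not among the $\mathcal{E}_{N}^{k}$. There the process escapes over ${\bf v}_{k}$ in time $e^{N(F_{\beta}({\bf v}_{1})-F_{\beta}({\bf e}))}$, which is $\ll\theta_{N}$ by strict shallowness ($F_{\beta}({\bf e})>F_{\beta}({\bf u}_{1})$ together with \eqref{eq:depth-bigger} in the $\beta>\beta_{3}$ case). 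So in that regime I would instead take $\varrho_{N}=e^{N(D_{\beta}-\epsilon)}$ and invoke the known metastable escape estimate from the well $\mathcal{W}_{0}$ (capacity/Eyring--Kramers bounds from \cite{Lee22}). Second, a compactness step: by continuity of the flow and compactness of $\Xi$, there is $T_{0}$ such that from every $\bm{x}$ outside a small neighborhood $\mathcal{N}_{r}$ of the finite set of non-minimal critical points, the flow reaches the interior of $\mathcal{V}$ (or of $\mathcal{W}_{0}$'s bottom, in the shallow case) by time $T_{0}$ with margin. The LLN then gives that the chain started there hits $\mathcal{E}_{N}$ (resp. the bottom of $\mathcal{W}_{0}$) within $NT_{0}$ with probability $\to1$ uniformly.

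The third step, which I expect to be the main obstacle, is escaping neighborhoods of saddles and of degenerate or unstable critical points, where the drift vanishes and the LLN is useless. I would show that from $\mathcal{N}_{r}$ the chain exits $\mathcal{N}_{r}$ within time $N^{2}$ with high probability, by a diffusive/central-limit argument. Near a non-degenerate saddle, the linearized unstable direction amplifies the $N^{-1/2}$ fluctuations, and the exit takes time $O(N\log N)$. Uniformly, one can use a crude bound: the chain restricted to the $O(r)$-ball is an irreducible chain with $O(N^{q-1})$ states and bounded-below rates, so its expected exit time is at most polynomial in $N$ by a commute-time/effective-resistance bound along a monotone path. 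Combining this with the strong Markov property, one must make sure the exit lands outside $\mathcal{N}_{r/2}$ and that the flow started from there does not return to $\mathcal{N}_{r}$. Since $F_{\beta}$ decreases strictly along the flow away from $\mathcal{C}$, returning to the same saddle is impossible; a finite number of passes suffices, because there are finitely many critical points ordered by height. Iterating at most $|\mathcal{C}|$ times yields $\sup_{\sigma}\mathbb{P}_{\sigma}[\mathcal{H}_{\mathcal{E}_{N}}>\varrho_{N}]\to0$.
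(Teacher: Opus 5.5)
Your Steps 1--2 take a different route from the paper's and are sound in outline. The proportions-chain rates $x_k e^{-\frac{\beta}{2}(x_k-x_\ell-\frac1N)}$ are Lipschitz up to $\partial\Xi$, so Kurtz's fluid limit holds uniformly in the initial point. Pairing the $(k,\ell)$ and $(\ell,k)$ terms gives $\nabla F_\beta\cdot b=-2\sum_{k<\ell}\sqrt{x_kx_\ell}\,\Delta_{k\ell}\sinh(\frac{\beta}{2}\Delta_{k\ell})\le0$ with $\Delta_{k\ell}=\phi(x_k)-\phi(x_\ell)$, so $F_\beta$ is indeed a strict Lyapunov function off $\mathcal{C}$. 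Step 2 needs a correction: it must read ``reaches $\mathcal{V}$ or $\mathcal{N}_{r/2}$'', because points outside $\mathcal{N}_r$ lying on a stable manifold never reach $\mathcal{V}$.

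The gap is in Step 3. First, the crude bound is not polynomial. Bounded-below rates on $O(N^{q-1})$ states give nothing when the reversible measure varies exponentially, and on $\mathcal{N}_r(\bm c)$ the measure $\pi_N^\beta$ varies by factors up to $e^{cNr^2}$. Commute-time or effective-resistance arguments therefore only give ${\rm poly}(N)e^{cNr^2}$. This part is repairable, since it is still $\ll\theta_N$ for small $r$, although $\varrho_N=N^3$ is then unjustified. Second, and decisively, nothing in your argument controls \emph{where} the chain leaves $\mathcal{N}_r(\bm c)$. If it exits at $\bm y$ with $F_\beta(\bm y)\ge F_\beta(\bm c)$, for instance near the stable manifold of $\bm c$, the flow carries it straight back into $\mathcal{N}_r(\bm c)$. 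Monotonicity of $F_\beta$ along the flow rules out a return only when $F_\beta(\bm y)<F_\beta(\bm c)-\kappa$ for a fixed margin $\kappa>0$. So the height-ordering argument that bounds the number of passes by $|\mathcal{C}|$ does not go through, and you have no bound on the number of re-entries.

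To close the gap you would have to show the following, uniformly over starting points in $\mathcal{N}_r(\bm c)$: the chain reaches $\{F_\beta<F_\beta(\bm c)-\kappa\}$ within subexponential time with probability tending to one (or at least bounded below uniformly, followed by a geometric-trials argument). The linearization heuristic is not a proof even at non-degenerate saddles. It is also unavailable at degenerate critical points, which do occur in the admissible range. For $q\ge5$ and $\beta=q$ one has ${\bf e}={\bf v}_k$ with $\nabla^2F_\beta({\bf e})=0$. At $\beta=\beta_{s,2}\in(\beta_2,q)$ the two-coordinate critical points with $u_2=v_2=m_2$ are degenerate.

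The paper avoids local dynamics entirely. Lemmas~\ref{l3}--\ref{l5}, via Lemmas~\ref{lem:two-coordinates} and \ref{lem:u1v1q}, give an explicit descending linear trajectory out of each critical point. Lemma~\ref{lem:almost-desc-path} then gives, from every $\bm x$, a lattice path into $\mathcal{F}_N^{\llbracket0,q\rrbracket}$ of energy at most $F_\beta(\bm x)+\delta$. The magic formula \eqref{eq:magic-formula}, the Thomson principle \eqref{eq:TP} and the renewal bound \eqref{eq:renewal} turn this into $\sup_{\bm x}{\rm E}_{\bm x}^{N,\beta}[\mathcal{H}_{\mathcal{F}_N^{\llbracket0,q\rrbracket}}]\le{\rm poly}(N)e^{2\beta N\delta}$. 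Degeneracy and exit location never enter.

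Your handling of the shallow well through the escape estimate of \cite{Lee22} matches the paper's use of Theorem~\ref{thm:meta-0}. For $\beta\in(\beta_3,q)$, however, you also need that the chain does not fall back into $\mathcal{E}_N^0$ before reaching $\mathcal{E}_N$ (Lemma~\ref{lem:descend-to-min}). In your scheme that again rests on the missing Step 3.
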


Provided that Theorems \ref{thm:CWP-M} and \ref{thm:CWP-Rec} hold
true, let us prove Theorem \ref{thm:main}. First, we have the following
relation between the invariant distributions $\mu_{N}^{\beta}$ and
$\pi_{\beta}$.
\begin{lem}
\label{lem:stat-dist-conv}We have
\begin{equation}
\lim_{N\to\infty}\mu_{N}^{\beta}\left(\mathcal{E}_{N}^{k}\right)=\pi_{\beta}(k)\qquad\text{for each}\quad k\in\mathfrak{S}_{\beta}.\label{eq:inv}
\end{equation}
Moreover, for each $k\in\mathfrak{S}_{\beta}$,
\begin{equation}
\lim_{N\to\infty}\frac{\mu_{N}^{\beta}(\Delta_{N})}{\mu_{N}^{\beta}\left(\mathcal{E}_{N}^{k}\right)}=0.\label{eq:inv-DeltaN}
\end{equation}
\end{lem}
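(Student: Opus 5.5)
We argue at the level of magnetizations through the pushforward $\pi_{N}^{\beta}$ and the representation \eqref{eq:piNbeta-formula}. Since each $\mathcal{E}_{N}^{k}$ is the preimage under $\Pi_{N}$ of a set of magnetizations, we have
\[
\mu_{N}^{\beta}(\mathcal{E}_{N}^{k})=\sum_{\bm{x}\in\Xi_{N}\cap\mathcal{W}_{k}\cap\{F_{\beta}<H_{\beta}-\eta\}}\frac{e^{-\beta NF_{\beta,N}(\bm{x})}}{(2\pi N)^{\frac{q-1}{2}}Z_{N}^{\beta}}.
\]
The same holds for $k=0$, with threshold $F_{\beta}({\bf v}_{1})-\eta$. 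The plan has three steps. First, compute the sharp asymptotics of the unnormalized sums $\sum_{\bm{x}}e^{-\beta NF_{\beta,N}(\bm{x})}$ over each valley by a discrete Laplace method. Second, show that the contribution of $\Delta_{N}$ is exponentially smaller. Third, normalize.

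\textbf{Laplace asymptotics on each valley.} The sum over a valley is concentrated in an $N^{-1/2+\epsilon}$-neighbourhood of the unique minimum ${\bf m}_{k}$ of $F_{\beta}$ in that valley (${\bf m}_{k}={\bf u}_{k}$ or ${\bf m}_{0}={\bf e}$). This minimum is non-degenerate because $\beta>\beta_{1}$ and $\beta\ne q$, and it lies in $\Xi^{\circ}$, so $G_{\beta,N}\to G_{\beta}$ uniformly near it. Expanding $F_{\beta}$ to second order there and replacing the lattice sum by an integral (lattice spacing $N^{-1}$ in dimension $q-1$) gives
\[
\sum_{\bm{x}}e^{-\beta NF_{\beta,N}(\bm{x})}\simeq N^{q-1}\Big(\frac{2\pi}{\beta N}\Big)^{\frac{q-1}{2}}\frac{e^{-\beta G_{\beta}({\bf m}_{k})}}{\sqrt{\det\nabla^{2}F_{\beta}({\bf m}_{k})}}e^{-\beta NF_{\beta}({\bf m}_{k})}.
\]
Up to a common factor $C_{N}$, this equals $\nu'e^{-\beta NF_{\beta}({\bf u}_{1})}$ for $k\ge1$ and $\nu e^{-\beta NF_{\beta}({\bf e})}$ for $k=0$. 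The Jacobian factor from Notation \ref{nota:q-to-q-1} is common to all valleys, so it drops out. Points of the valley away from ${\bf m}_{k}$ contribute at most $e^{-\beta N(F_{\beta}({\bf m}_{k})+c)}$, since $F_{\beta}$ is continuous and has no other minimum in the valley. These points need care near $\partial\Xi$, where $G_{\beta,N}$ is only bounded by $O(\log N)$; the bound $F_{\beta,N}\ge F_{\beta}-C\log N/N$ is enough there, because it costs only a polynomial factor against an exponential gain.

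\textbf{The remainder set $\Delta_{N}$, the main obstacle.} We must show that $\Pi_{N}(\Delta_{N})$ stays a positive distance $c>0$ above $\min F_{\beta}$ in energy, and more precisely above $F_{\beta}({\bf m}_{k})$ for every $k\in\mathfrak{S}_{\beta}$. This relies on $\mathcal{U}$ containing all local minima (cf. \cite[Proposition 3.2]{Lee22}).

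We first argue by contradiction that every point $\bm{x}\in\Xi\setminus\bigcup_{k\in\mathfrak{S}_{\beta}}(\text{valley }k)$ satisfies $F_{\beta}(\bm{x})\ge\min_{k}F_{\beta}({\bf m}_{k})+c$. Take the closure of the complement and minimize $F_{\beta}$ on it. The minimizer either lies on the boundary of a valley, where $F_{\beta}=H_{\beta}-\eta$ or $F_{\beta}({\bf v}_{1})-\eta$; both exceed the valley minima by \eqref{eq:eta-small}. Otherwise it is an interior local minimum of $F_{\beta}$ outside every valley, so it is ${\bf e}$ in the case $k=0\notin\mathfrak{S}_{\beta}$, that is, $\beta>\beta_{2}$. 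In that case $F_{\beta}({\bf e})>F_{\beta}({\bf u}_{1})$ strictly. The boundary of $\Xi$ is excluded because $\nabla S$ blows up there, so the minimizer cannot sit on $\partial\Xi$.

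Given this separation, the Laplace bound above shows that $\mu_{N}^{\beta}(\Delta_{N})/\mu_{N}^{\beta}(\mathcal{E}_{N}^{k})\le N^{C}e^{-\beta Nc'}\to0$ for each $k\in\mathfrak{S}_{\beta}$. When $\beta\in(\beta_{1},\beta_{2})$ and $k\ge1$, the ratio against the valley of ${\bf u}_{k}$ is needed, so we need $\Delta_{N}$ to be strictly higher than $F_{\beta}({\bf u}_{1})$ itself, not just higher than the global minimum. This follows from the same argument, since the valleys of ${\bf u}_{k}$ and ${\bf e}$ have all been removed. This proves \eqref{eq:inv-DeltaN}.

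\textbf{Conclusion.} Because of \eqref{eq:inv-DeltaN}, $\sum_{k\in\mathfrak{S}_{\beta}}\mu_{N}^{\beta}(\mathcal{E}_{N}^{k})\to1$. Dividing the valley asymptotics gives the limits:
\begin{itemize}
\item for $\beta<\beta_{2}$, the factor $e^{-\beta N(F_{\beta}({\bf u}_{1})-F_{\beta}({\bf e}))}\to0$ sends all mass to $0$;
\item for $\beta>\beta_{2}$, the $q$ equal weights give $\frac{1}{q}$ each, by symmetry;
\item for $\beta=\beta_{2}$, the energies coincide, and the ratios $\nu:\nu':\cdots:\nu'$ yield $\pi_{\beta}$;
\item for $q=2$, symmetry gives $\frac12$ each.
\end{itemize}
This gives \eqref{eq:inv}.
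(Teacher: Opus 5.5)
Your proof is correct, but it runs in the opposite logical order from the paper's.

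\textbf{The paper's route.} The paper first obtains \eqref{eq:inv} from the concentration of $\mu_{N}^{\beta}$ near the global minima of $F_{\beta}$. It uses symmetry for $q=2$ and for $\beta>\beta_{2}$, and cites \cite[display (2.8)]{LS18} for the weights $\nu:\nu'$ at $\beta=\beta_{2}$. It then gets \eqref{eq:inv-DeltaN} for free whenever $\pi_{\beta}(k)>0$, since $\mu_{N}^{\beta}(\Delta_{N})=1-\sum_{k\in\mathfrak{S}_{\beta}}\mu_{N}^{\beta}(\mathcal{E}_{N}^{k})\to0$. The only case needing an energy comparison is $\beta\in(\beta_{1},\beta_{2})$ with $k\ge1$, where $\pi_{\beta}(k)=0$. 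There all of $\mathcal{U}$ lies in valleys, so $\Delta_{N}=\Pi_{N}^{-1}(\{F_{\beta}\ge H_{\beta}-\eta\}\cap\Xi_{N})$ at once.

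\textbf{Your route.} You instead prove an energy gap for $\Delta_{N}$ in every regime and deduce \eqref{eq:inv-DeltaN} directly. This forces you to handle the uncovered local minimum ${\bf e}$ when $\beta\in(\beta_{2},q)$, and to rule out minimizers on $\partial\Xi$. You then read off \eqref{eq:inv} by normalizing explicit discrete Laplace asymptotics of each valley mass.

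\textbf{What each buys.} Your route is longer but self-contained. It reproves the \cite{LS18} computation and shows where the weights $\nu,\nu'$ in $\pi_{\beta_{2}}$ come from. It also yields exponential rates. The paper's route is shorter because it needs the energy gap in only one regime and otherwise leans on \eqref{eq:inv}.

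\textbf{One point to make explicit.} In your separation step, each valley threshold must exceed $\max_{k\in\mathfrak{S}_{\beta}}F_{\beta}({\bf m}_{k})$, not merely the minimum of its own valley. Your phrase ``the valleys have all been removed'' does not by itself give this. The actual reason is that whenever $0\in\mathfrak{S}_{\beta}$ (so $\beta\le\beta_{2}<\beta_{3}$), one has $H_{\beta}=F_{\beta}({\bf v}_{1})$. Hence both thresholds equal $H_{\beta}-\eta$, which is strictly above $F_{\beta}({\bf u}_{1})\ge F_{\beta}({\bf e})$ by \eqref{eq:eta-small}.
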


\begin{proof}
First, we prove \eqref{eq:inv}. From \eqref{eq:piNbeta} and \eqref{eq:piNbeta-formula},
the measure $\mu_{N}^{\beta}$ concentrates on any macroscopic neighborhood
of the global minima. The case of $q=2$ is clear since ${\bf u}_{1}$
and ${\bf u}_{2}$ are the two global minima and the system is symmetric,
thus $\mu_{N}^{\beta}(\mathcal{E}_{N}^{1})=\mu_{N}^{\beta}(\mathcal{E}_{N}^{2})\simeq\frac{1}{2}$.

Now, assume that $q\ge3$. First, let $\beta\in(\beta_{1},\beta_{2})$.
Since ${\bf e}$ is the only global minimum of $F_{\beta}$, the measure
$\mu_{N}^{\beta}$ concentrates on $\mathcal{E}_{N}^{0}$, i.e.,
\[
\lim_{N\to\infty}\mu_{N}^{\beta}\left(\mathcal{E}_{N}^{0}\right)=1.
\]
Let $\beta=\beta_{2}$. By \cite[display (2.8)]{LS18},
\[
\lim_{N\to\infty}\mu_{N}^{\beta}\left(\mathcal{E}_{N}^{k}\right)=\begin{cases}
\frac{\nu}{\nu+q\nu'} & \text{if}\quad k=0,\\
\frac{\nu'}{\nu+q\nu'} & \text{if}\quad k\in\llbracket1,q\rrbracket.
\end{cases}
\]
For $\beta\in(\beta_{2},\infty)$, since ${\bf u}_{1},\dots,{\bf u}_{q}$
are the global minima, due to symmetry,
\[
\lim_{N\to\infty}\mu_{N}^{\beta}\left(\mathcal{E}_{N}^{k}\right)=\frac{1}{q}\qquad\text{for}\quad k\in\llbracket1,q\rrbracket.
\]
Finally, we consider \eqref{eq:inv-DeltaN}. According to the energy
landscape analysis in Section \ref{sec1.4} (cf. Figure \ref{fig1.2}),
with the aid of \eqref{eq:inv}, it only remains to verify that
\[
\lim_{N\to\infty}\frac{\mu_{N}^{\beta}(\Delta_{N})}{\mu_{N}^{\beta}\left(\mathcal{E}_{N}^{k}\right)}=0\qquad\text{if}\quad k\in\llbracket1,q\rrbracket,\quad q\ge3,\quad\beta\in(\beta_{1},\beta_{2}).
\]
This follows from \eqref{eq:piNbeta} and \eqref{eq:piNbeta-formula}
since each $\mathcal{E}_{N}^{k}$, $k\in\llbracket1,q\rrbracket$
is a macroscopic neighborhood of ${\bf u}_{k}$ and
\[
\Delta_{N}=\Pi_{N}^{-1}(\{F_{\beta}\ge H_{\beta}-\eta\}\cap\Xi_{N}),
\]
where $F_{\beta}({\bf u}_{k})<H_{\beta}-\eta$ by \eqref{eq:eta-small}.
This concludes the proof of Lemma \ref{lem:stat-dist-conv}.
\end{proof}
Now, we are in position to prove Theorem \ref{thm:main} and Corollary
\ref{cor:no-cutoff}.
\begin{proof}[Proof of Theorem \ref{thm:main}]
 By Theorems \ref{thm:CWP-CD} and \ref{thm:CWP-M}, the dynamics
$\bm{\sigma}_{N}^{\beta}$ satisfies the conditions $\mathfrak{C}$,
$\mathfrak{D}$, and $\mathfrak{M}$ with its limit chain $\mathfrak{X}_{\beta}$
and time-scale $\theta_{N}$. Since $\bm{\sigma}_{N}^{\beta}$ is
reversible, condition $\mathfrak{C}_{{\rm TV}}$ holds true by Proposition
\ref{prop:LLM} and \eqref{eq:inv-DeltaN}. Moreover, by Proposition
\ref{prop:TV-TV2} and \eqref{eq:inv}, condition $\mathfrak{C}_{{\rm TV2}}(\pi_{\beta})$
is satisfied. Finally, Proposition \ref{prop:main} and Theorem \ref{thm:CWP-Rec}
complete the proof.
\end{proof}
\begin{proof}[Proof of Corollary \ref{cor:no-cutoff}]
 By Theorem \ref{thm:main}, for any $\delta\in(0,1)$,
\[
\lim_{N\to\infty}\frac{T_{\delta}^{{\rm mix}}\left(\bm{\sigma}_{N}^{\beta}\right)}{T_{1-\delta}^{{\rm mix}}\left(\bm{\sigma}_{N}^{\beta}\right)}=\frac{\mathfrak{T}(\delta)}{\mathfrak{T}(1-\delta)}=\frac{T_{\delta}^{{\rm mix}}(\mathfrak{X}_{\beta})}{T_{1-\delta}^{{\rm mix}}(\mathfrak{X}_{\beta})}.
\]
Note that $\mathfrak{X}_{\beta}$ is a Markov chain in a finite set
$\mathfrak{S}_{\beta}$ with $|\mathfrak{S}_{\beta}|\ge2$ with a
unique stationary distribution $\pi_{\beta}$. Since $t\mapsto d_{{\rm TV}}(\mathfrak{X}_{\beta}(t;k),\pi_{\beta})$
is continuous for any $k\in\mathfrak{S}_{\beta}$, we have
\[
T_{\delta}^{{\rm mix}}(\mathfrak{X}_{\beta})<T_{\delta'}^{{\rm mix}}(\mathfrak{X}_{\beta})\qquad\text{if}\quad\delta>\delta'.
\]
The two displayed relations imply that \eqref{eq:cutoff-def} cannot
hold, which proves the corollary.
\end{proof}
The remainder of the article is devoted to proving Theorems \ref{thm:CWP-M}
and \ref{thm:CWP-Rec}.

\section{\label{sec4}Proof of Theorem \ref{thm:CWP-Rec}}

In this section, we prove Theorem \ref{thm:CWP-Rec}. For $A\subset\llbracket0,q\rrbracket$,
define
\[
\mathcal{E}_{N}^{A}:=\bigcup_{k\in A}\mathcal{E}_{N}^{k}.
\]
The proof relies on the following two lemmas, whose proofs are postponed
to Sections \ref{sec4.1} and \ref{sec4.3}, respectively.
\begin{lem}
\label{lem:rec1}There exists a sequence $(\varrho_{N})_{N\ge1}$
with $\varrho_{N}\ll\theta_{N}$ such that
\[
\lim_{N\to\infty}\sup_{\sigma\in\Omega_{N}}\mathbb{P}_{\sigma}^{N,\beta}\left[\mathcal{H}_{\mathcal{E}_{N}^{\llbracket0,q\rrbracket}}>\varrho_{N}\right]=0.
\]
\end{lem}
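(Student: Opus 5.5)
Since every set $\mathcal{E}_{N}^{k}$ is of the form $\Pi_{N}^{-1}(\cdot)$, the hitting time $\mathcal{H}_{\mathcal{E}_{N}^{\llbracket0,q\rrbracket}}$ depends only on the magnetization. So we work with the proportions chain $\bm{x}_{N}(t):=\Pi_{N}(\bm{\sigma}_{N}^{\beta}(t))$ on $\Xi_{N}$. This is a Markov chain: a spin of type $k$ flips to type $\ell$ at total rate $x_{k}\exp\{-\frac{\beta}{2}[\mathbb{H}_{N}(\sigma^{v,\ell})-\mathbb{H}_{N}(\sigma)]\}$, which equals $x_{k}e^{\frac{\beta}{2}(x_{\ell}-x_{k})}+O(N^{-1})$, and each flip moves $\bm{x}$ by $N^{-1}(\mathfrak{e}_{\ell}-\mathfrak{e}_{k})$. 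Because of the factor $\frac{1}{N}$ in $\mathcal{L}_{N}$, the process sped up by $N$, namely $\bm{x}_{N}(Nt)$, has drift
\[
b(\bm{x}):=\sum_{k,\ell}x_{k}e^{\frac{\beta}{2}(x_{\ell}-x_{k})}(\mathfrak{e}_{\ell}-\mathfrak{e}_{k})
\]
and fluctuations of order $N^{-1/2}$. The drift is a gradient-like field for $F_{\beta}$. Detailed balance with $e^{-\beta NF_{\beta}}$ gives $\frac{\rm d}{{\rm d}t}F_{\beta}(\bm{y}(t))=\nabla F_{\beta}\cdot b\le0$ along the ODE $\dot{\bm{y}}=b(\bm{y})$. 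Equality holds only at critical points, and $b$ points inward at $\partial\Xi$, since the rate of creating a spin type with $x_{\ell}=0$ is positive.

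\textbf{Step 1: deterministic flow.} Every ODE trajectory converges to $\mathcal{C}$, and the basin of attraction of the non-minimal critical points has empty interior. Using the finiteness of $\mathcal{C}$ (Lemma \ref{lem:critical-pts}) and a compactness argument, we fix $r>0$, $T_{0}<\infty$ and a neighborhood $\mathcal{N}$ of the saddle set $\mathcal{C}\setminus\mathcal{U}$. Then every $\bm{y}(0)\in\Xi\setminus\mathcal{N}$ enters, within time $T_{0}$, the open set $\mathcal{V}:=\bigcup_{k}(\mathcal{W}_{k}\cap\{F_{\beta}<\text{level}-\eta\})$ and stays at distance $\ge r$ from its complement. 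Here "level" is $H_{\beta}$ for $k\ge1$ and $F_{\beta}({\bf v}_{1})$ for $k=0$. This uses that $\mathcal{U}$ contains all local minima and that $\eta$ was chosen by \eqref{eq:eta-def} so that there are no critical points in the relevant energy bands. Points in $\{F_{\beta}<H_{\beta}-\eta\}$ near ${\bf u}_{k}$ are already in $\mathcal{E}_{N}^{k}$, and lower sublevel sets are forward invariant.

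\textbf{Step 2: stochastic approximation.} A Kurtz-type law of large numbers (Doob's martingale inequality for the compensated jump martingale, plus Gronwall for the Lipschitz field $b$) gives
\[
\sup_{\bm{x}}\mathbb{P}\Bigl[\sup_{t\le T_{0}}|\bm{x}_{N}(Nt)-\bm{y}(t)|>r/2\Bigr]\to0,
\]
uniformly in the starting point. This handles all starts outside $\mathcal{N}$ within time $NT_{0}$.

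\textbf{Step 3: escape from saddles (main obstacle).} Near a non-degenerate saddle, or ${\bf e}$ when it is a local maximum, the chain must first be pushed out by noise. The plan is to linearize around the critical point $\bm{c}$. Noise of size $N^{-1/2}$ moves $\bm{x}_{N}$ to distance $\gtrsim N^{-1/2}$ along an unstable direction within time $O(N)$, with probability bounded below uniformly. After that, exponential growth at rate $\ge\lambda>0$ carries it distance $r$ away in time $O(N\log N)$. A cleaner route is a Lyapunov-type bound: near a non-degenerate critical point, $\nabla F_{\beta}\cdot b\le-c|\bm{x}-\bm{c}|^{2}$, and a second-moment computation for the projection onto the unstable eigenvector shows that the exit time from $\mathcal{N}$ is $O_{\mathbb{P}}(N\log N)$. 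Repeating the Step 2/Step 3 alternation a bounded number of times (each cycle strictly lowers $F_{\beta}$ past a critical level, and there are finitely many critical values) gives
\[
\sup_{\sigma}\mathbb{P}[\mathcal{H}_{\mathcal{E}_{N}^{\llbracket0,q\rrbracket}}>CN\log N]\le\epsilon
\]
for large $N$. Iterating via the Markov property and letting $C\to\infty$ slowly, we take $\varrho_{N}=N(\log N)^{2}\ll\theta_{N}=2\pi Ne^{ND_{\beta}}$. A cruder fallback, if the saddle analysis is delicate, is to accept any subexponential bound such as $\varrho_{N}=Ne^{\sqrt{N}}$. This only requires showing that saddle neighborhoods of radius $N^{-1/4}$ are exited in time $e^{o(N)}$, which follows from a simple comparison with a one-dimensional birth–death chain along the unstable direction.
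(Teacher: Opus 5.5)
\textbf{There is a genuine gap, in Step 3.} Your fluid-limit route differs genuinely from the paper's. Steps 1--2 are sound up to a fixable misstatement. As written, Step 1 is false: a point on the stable manifold of a saddle but outside $\mathcal{N}$ flows into that saddle and never enters $\mathcal{V}$. Compactness only gives entry into $\mathcal{V}$ \emph{or} into a smaller neighborhood of $\mathcal{C}\setminus\mathcal{U}$. Your ``empty interior'' claim is neither needed nor justified.

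Step 3 carries the entire difficulty and is only sketched. The linearization, the growth rate $\lambda>0$, and the bound $\nabla F_\beta\cdot b\le-c|\bm x-\bm c|^2$ all require non-degenerate critical points. For $q\ge5$, however, the lemma covers two degenerate cases:
\begin{itemize}
\item $\beta=q$. Here $\nabla^2F_\beta({\bf e})$ equals $(\frac{q}{\beta}-1)$ times the identity, which vanishes. Moreover ${\bf e}$ lies at positive distance from $\Pi_N(\mathcal{E}_N)$, since $\mathcal{E}_N^0=\varnothing$ and $F_\beta({\bf e})\ge H_\beta$.
\item $\beta=\beta_{s,2}$. Here the type-$2$ critical points are born as saddle-nodes: $u_2=v_2=m_2$, and $F_\beta$ is degenerate along the $t$-direction.
\end{itemize}
At $\beta=q$ the drift vanishes to second order at ${\bf e}$, while the noise after sped-up time $t$ has size $\sqrt{t/N}$. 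The natural escape scale is therefore $N^{-1/3}$ in space and $N^{4/3}$ in real time, so your claimed $\varrho_N=N(\log N)^2$ fails there.

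The fallback $Ne^{\sqrt N}$ would suffice for the lemma. However, its justification presupposes a linearly unstable direction, which is absent at ${\bf e}$ when $\beta=q$. It also presupposes a Markovian one-dimensional projection, which the projected coordinate is not.

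Even at hyperbolic saddles, the claim that ``each cycle lowers $F_\beta$ past a critical level'' needs an exit-location estimate. You must show the chain leaves near the unstable manifold rather than the stable one, and this is not addressed.

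\textbf{The missing idea is that no dynamics near critical points is needed.} Since $\theta_N=2\pi Ne^{ND_\beta}$, a loss of $e^{\beta\epsilon N}$ with small $\epsilon$ is affordable. The paper proceeds as follows.
\begin{itemize}
\item It writes ${\rm E}_{\bm x}^{N,\beta}[\mathcal{H}_{\mathcal{F}_N^{\llbracket0,q\rrbracket}}]$ via the magic formula \eqref{eq:magic-formula}.
\item It bounds the capacity from below by the Thomson principle \eqref{eq:TP}, along a path from $\bm x$ whose energy never exceeds $F_\beta(\bm x)+\epsilon$ (Lemma \ref{lem:almost-desc-path}).
\item It bounds the equilibrium potential by the renewal estimate \eqref{eq:renewal}, together with ${\rm cap}_N(\bm y,\bm x)\le c_2\pi_N^\beta(\bm x)$ from the Dirichlet principle.
\end{itemize}
This gives $\sup_{\bm x}{\rm E}_{\bm x}^{N,\beta}[\mathcal{H}_{\mathcal{F}_N^{\llbracket0,q\rrbracket}}]\le CN^{5q+2}e^{2\beta\epsilon N}$. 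Markov's inequality then concludes once $2\beta\epsilon<D_\beta$.

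The only landscape input is static: from every critical point, degenerate or not, some straight segment descends. This is Lemmas \ref{l3}--\ref{l5}, with Lemma \ref{lem:u1v1q} handling $\beta=q$ explicitly. Your approach would buy the sharper $O(N\log N)$ bound at non-degenerate parameters. It would do so only after a real saddle-exit analysis plus a separate argument at degenerate points, which the crude capacity bound avoids entirely.
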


When $q\ge3$ and $\beta\in(\beta_{2},q)$, since $0\notin\mathfrak{S}_{\beta}$
but $\mathcal{E}_{N}^{0}\ne\varnothing$, we should deal with the
additional case when $\bm{\sigma}_{N}^{\beta}$ starts from the valley
$\mathcal{E}_{N}^{0}$.
\begin{lem}
\label{lem:rec2}Let $q\ge3$ and $\beta\in(\beta_{2},q)$. There
exists a sequence $(\varrho_{N})_{N\ge1}$ with $\varrho_{N}\ll\theta_{N}$
such that
\[
\lim_{N\to\infty}\sup_{\sigma\in\mathcal{E}_{N}^{0}}\mathbb{P}_{\sigma}^{N,\beta}[\mathcal{H}_{\mathcal{E}_{N}}>\varrho_{N}]=0.
\]
\end{lem}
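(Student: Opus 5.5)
We need Lemma \ref{lem:rec2}: for $q\ge3$ and $\beta\in(\beta_{2},q)$, starting from $\mathcal{E}_{N}^{0}$, the process reaches $\mathcal{E}_{N}=\bigcup_{k=1}^{q}\mathcal{E}_{N}^{k}$ within a time $\varrho_{N}\ll\theta_{N}$. The idea is that ${\bf e}$ is a shallower well than the ${\bf u}_{k}$'s. Its depth is $F_{\beta}({\bf v}_{1})-F_{\beta}({\bf e})$ when $\beta<\beta_{3}$, and the exit height from $\mathcal{W}_{0}$ is $\Phi_{\beta}({\bf e},\mathcal{U}\setminus\{{\bf e}\})-F_{\beta}({\bf e})$ in general. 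This depth is strictly smaller than $D_{\beta}$: for $\beta\in(\beta_{2},\beta_{3}]$ because $F_{\beta}({\bf e})>F_{\beta}({\bf u}_{1})$ and $H_{\beta}\ge F_{\beta}({\bf v}_{1})$ there, and for $\beta\in(\beta_{3},q)$ by \eqref{eq:depth-bigger}. The escape from $\mathcal{E}_{N}^{0}$ should therefore occur on the time-scale $e^{N d_{0}}$ with $d_{0}<D_{\beta}$, and the descent to some ${\bf u}_{k}$ after that should be fast.

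The first step is to work with the proportions chain on $\Xi_{N}$, which is legitimate because all sets involved are preimages under $\Pi_{N}$. I would then bound the mean exit time from the well via potential theory. For a reversible chain, $\mathbb{E}_{\sigma}[\mathcal{H}_{\mathcal{B}}]\le \mu_{N}(\mathcal{A}^{c}\cup\cdots)/\mathrm{cap}$, and more robustly I would use the uniform bound $\sup_{\bm{x}}\mathbb{E}_{\bm{x}}[\mathcal{H}_{\mathcal{E}_{N}}]\le \mu_{N}(\{\text{points not in }\mathcal{E}_{N}\text{ reachable below level }h\})/\min \mathrm{cap}$. Concretely, take $\mathcal{A}=\Pi_{N}^{-1}(\{F_{\beta}\le F_{\beta}({\bf v}_{1})+\epsilon\}\cap\overline{\mathcal{W}_{0}}\text{-component})$. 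For the target, the Dirichlet principle with a test path gives $\mathrm{cap}(\sigma,\mathcal{E}_{N})\ge e^{-\beta N(F_{\beta}({\bf v}_{1})+\epsilon)}/(Z\cdot\mathrm{poly}(N))$; in the regime $\beta\in(\beta_{3},q)$ the same holds at height $F_{\beta}({\bf v}_{1})$, since ${\bf v}_{k}$ connects $\mathcal{W}_{0}$ to $\widehat{\mathcal{W}}_{1}$. Concretely, I would build a discrete path in $\Xi_{N}$ from any point of $\mathcal{E}_{N}^{0}$, through a lattice neighborhood of ${\bf v}_{1}$, down to $\mathcal{E}_{N}^{1}$, along which $F_{\beta,N}\le F_{\beta}({\bf v}_{1})+\epsilon$. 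The flow/Thomson lower bound for the capacity along a single path then gives $\mathrm{cap}\ge \min_{\text{edges}}\mu_{N}\cdot c(\text{edge})/\mathrm{length}\ge e^{-\beta N(F_{\beta}({\bf v}_{1})+\epsilon)}N^{-C}/Z$.

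Next I would bound the equilibrium-potential numerator. By the standard formula $\mathbb{E}_{\sigma}[\mathcal{H}_{\mathcal{E}_N}]=\mathrm{cap}(\sigma,\mathcal{E}_N)^{-1}\sum_{\eta}\mu_{N}(\eta)h_{\sigma,\mathcal{E}_N}(\eta)$, the sum is at most $\mu_{N}(\{\eta:h(\eta)>0\})$. Points with $F_{\beta}$ far above ${\bf v}_{1}$ can be controlled by noting that $h_{\sigma,\mathcal{E}_N}(\eta)\le \mathrm{cap}(\eta,\sigma)/\mathrm{cap}(\eta,\mathcal{E}_N)$, or more simply by bounding $\mu_{N}(\Xi\setminus\mathcal{E}_N)$ by $\mu_{N}(\{F_{\beta}\ge \min(H_{\beta}-\eta,\ldots)\})\cup\mathcal{E}^{0}_N$. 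The dominant mass comes from $\mathcal{E}_{N}^{0}$ and equals $e^{-\beta NF_{\beta}({\bf e})}\mathrm{poly}(N)/Z$. This gives $\sup_{\sigma\in\mathcal{E}_{N}^{0}}\mathbb{E}_{\sigma}[\mathcal{H}_{\mathcal{E}_N}]\le N^{C}e^{\beta N(F_{\beta}({\bf v}_1)-F_{\beta}({\bf e})+\epsilon)}$. A caveat: the remaining points of $\Delta_{N}$ carry mass at most $e^{-\beta N(H_{\beta}-\eta)}$ or mass near ${\bf u}_{k}$ outside $\mathcal{E}_{N}$, but any positive-mass contribution from regions lower than ${\bf e}$ lies in $\mathcal{E}_{N}$ and drops out. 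I would choose $\epsilon$ small so that $d_{0}+\epsilon<D_{\beta}$ (matching the normalization of $D_{\beta}$ in $\theta_{N}$, which absorbs the $\beta$ factor), and set $\varrho_{N}:=N^{C+1}e^{N\beta(d_{0}+\epsilon)}\ll\theta_{N}$. Markov's inequality then finishes the argument.

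The main obstacle is the careful control of $\mu_{N}(h>0)$. Points in $\Delta_{N}$ near the ${\bf u}_{k}$-wells but outside $\mathcal{E}_{N}^{k}$ have energy at least $H_{\beta}-\eta$, which could be lower than $F_{\beta}({\bf e})$ only if $\eta$ is large. Condition \eqref{eq:eta-small} and the choice of $\eta$ should ensure that $H_{\beta}-\eta>F_{\beta}({\bf e})$, possibly after shrinking $\eta$ further, and I would verify this first. Should this fail, the fallback is to bound the hitting time of the lower set $\{F_{\beta}<F_{\beta}({\bf e})\}$ first and then apply Lemma \ref{lem:rec1}-type fast descent.
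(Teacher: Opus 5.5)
Your route differs from the paper's. The paper does not estimate the exit time from $\mathcal{E}_N^0$ by hand. It imports the second-level model reduction (Theorem~\ref{thm:meta-0}) on the scale $\widehat{\theta}_N$ and uses transience of $0$ for $\widehat{\mathfrak{X}}_\beta$ to leave $\widehat{\mathcal{E}}_N^0$ within any $\varrho_N\gg\widehat{\theta}_N$. For $\beta\in(\beta_3,q)$ it then adds Lemma~\ref{lem:descend-to-min} to reach $\mathcal{E}_N$ without falling back into $\mathcal{E}_N^0$.

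A direct magic-formula bound is a legitimate, more self-contained alternative, and your Thomson lower bound on ${\rm cap}_N(\bm{x},\mathcal{F}_N)$ (with $\mathcal{F}_N:=\Pi_N(\mathcal{E}_N)$) through ${\bf v}_1$ is fine.

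\textbf{The gap is in the numerator.} You need $\sum_{\bm y}\pi_N^\beta(\bm y)\mathfrak{h}_{\bm x,\mathcal{F}_N}(\bm y)\lesssim N^{C}e^{-\beta NF_\beta({\bf e})}/Z_N^\beta$. You get it from the crude bound $\mathfrak{h}\le 1$, by claiming that every point of $\Delta_N$ outside $\mathcal{W}_0$ lies above $F_\beta({\bf e})$, i.e.\ $H_\beta-\eta>F_\beta({\bf e})$ after shrinking $\eta$.
\begin{itemize}
\item This requires $H_\beta>F_\beta({\bf e})$. It holds on $(\beta_2,\beta_3]$, where $H_\beta=F_\beta({\bf v}_1)$.
\item It fails on part of $(\beta_3,q)$ for $q\ge 5$, where $H_\beta=F_\beta({\bf u}_{1,2})$. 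For $q=5$ at $\beta=q$ one computes $F_\beta({\bf u}_{1,2})\approx-0.4226<F_\beta({\bf e})=-\tfrac{1}{10}-\tfrac{\log 5}{5}\approx-0.4219$. By continuity $H_\beta<F_\beta({\bf e})$ for $\beta$ slightly below $q$, and Figure~\ref{fig4.2} is drawn this way.
\item In that regime the rims $\mathcal{W}_k\cap\{H_\beta-\eta\le F_\beta<H_\beta\}$, and the part of $\widehat{\mathcal{W}}_1$ between the $\mathcal{W}_k$, lie in $\Delta_N$ below $F_\beta({\bf e})$. They dominate $\pi_N^\beta(\Xi_N\setminus\mathcal{F}_N)$.
\item The crude bound then produces the exponent $F_\beta({\bf v}_1)-F_\beta({\bf u}_{1,2})+\eta+\epsilon$. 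Comparing this with $D_\beta$ is not supplied by \eqref{eq:depth-bigger} or by any other established fact.
\end{itemize}
Your fallback does not close this either. Lemma~\ref{lem:rec1} may deliver the chain into $\mathcal{E}_N^0$, so after reaching $\{F_\beta<F_\beta({\bf e})\}$ you still need a no-return estimate.

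\textbf{The repair.} Use the renewal bound \eqref{eq:renewal} exactly on the \emph{low} points outside $\mathcal{W}_0$. The points far above ${\bf v}_1$, where you proposed to use it, already have negligible mass. Take $\bm y\notin\mathcal{W}_0$ with $F_\beta(\bm y)<F_\beta({\bf v}_1)-2\epsilon$.
\begin{itemize}
\item The Dirichlet principle with $1-\mathbf{1}_{\mathcal{W}_0\cap\Xi_N}$ gives ${\rm cap}_N(\bm y,\bm x)\lesssim N^{C}e^{-\beta NF_\beta({\bf v}_1)}/Z_N^\beta$, as in \eqref{eq:DP-appl-1}--\eqref{eq:DP-appl-2}.
\item Lemma~\ref{lem:almost-desc-path} with $\delta=\epsilon$ gives a path of energy below $F_\beta({\bf v}_1)-\epsilon$. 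Such a path cannot enter $\mathcal{W}_0$, so it ends in $\mathcal{F}_N$, and the Thomson principle gives ${\rm cap}_N(\bm y,\mathcal{F}_N)\gtrsim N^{-C}e^{-\beta N\epsilon}\pi_N^\beta(\bm y)$.
\item Hence $\pi_N^\beta(\bm y)\mathfrak{h}_{\bm x,\mathcal{F}_N}(\bm y)\lesssim N^{C}e^{-\beta N(F_\beta({\bf v}_1)-\epsilon)}/Z_N^\beta$.
\item The remaining points lie either in $\mathcal{W}_0$ or above $F_\beta({\bf v}_1)-2\epsilon$, and are handled crudely.
\end{itemize}
This yields $\sup_{\bm x\in\mathcal{F}_N^0}{\rm E}_{\bm x}^{N,\beta}[\mathcal{H}_{\mathcal{F}_N}]\lesssim N^{C}e^{\beta N(\widehat{D}_\beta+\epsilon)}$. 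For small $\epsilon$ this is $\ll\theta_N$ by \eqref{eq:depth-bigger}, and by $F_\beta({\bf u}_1)<F_\beta({\bf e})$ on $(\beta_2,\beta_3]$.

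This is the same capacity-ratio mechanism the paper uses in Lemma~\ref{lem:descend-to-min}. With it, your one-step argument needs only rough Arrhenius bounds and gives an explicit $\varrho_N$. It avoids the sharp second-level convergence of Theorem~\ref{thm:meta-0} and the separate descent step.
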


First, we prove Theorem \ref{thm:CWP-Rec} with the aid of these two
lemmas.
\begin{proof}[Proof of Theorem \ref{thm:CWP-Rec}]
 Since $\mathfrak{S}_{\beta}=\llbracket0,q\rrbracket$ when $q\ge3$
and $\beta\in(\beta_{1},\beta_{2}]$, Theorem \ref{thm:CWP-Rec} follows
directly from Lemma \ref{lem:rec1}. In addition, since $\mathcal{E}_{N}^{0}=\varnothing$
when $q=2$ or $\beta\ge q$, it remains to prove the theorem when
$q\ge3$ and $\beta\in(\beta_{2},q)$. Let $(\varrho_{N}^{(1)})_{N\ge1}$
and $(\varrho_{N}^{(2)})_{N\ge1}$ be the sequences given by Lemmas
\ref{lem:rec1} and \ref{lem:rec2}, respectively, and let $\varrho_{N}^{(3)}:=\varrho_{N}^{(1)}\vee\varrho_{N}^{(2)}$.
Define $\varrho_{N}:=\sqrt{\varrho_{N}^{(3)}\theta_{N}}$. Note that
$\varrho_{N}^{(1)},\varrho_{N}^{(2)}\ll\varrho_{N}\ll\theta_{N}$.
For any $\sigma\in\Omega_{N}$, by Lemma \ref{lem:rec1},
\begin{equation}
\begin{aligned}\mathbb{P}_{\sigma}^{N,\beta}\left[\mathcal{H}_{\mathcal{E}_{N}}>\varrho_{N}\right]=\mathbb{P}_{\sigma}^{N,\beta} & \left[\mathcal{H}_{\mathcal{E}_{N}}>\varrho_{N},\enspace\mathcal{H}_{\mathcal{E}_{N}^{\llbracket0,q\rrbracket}}=\mathcal{H}_{\mathcal{E}_{N}}\le\varrho_{N}^{(1)}\right]\\
 & +\mathbb{P}_{\sigma}^{N,\beta}\left[\mathcal{H}_{\mathcal{E}_{N}}>\varrho_{N},\enspace\mathcal{H}_{\mathcal{E}_{N}^{\llbracket0,q\rrbracket}}=\mathcal{H}_{\mathcal{E}_{N}^{0}}\le\varrho_{N}^{(1)}\right]+R_{N}(\sigma),
\end{aligned}
\label{eq:CWP-Rec-pf}
\end{equation}
for some $R_{N}(\sigma)$ such that
\[
\limsup_{N\to\infty}\sup_{\sigma\in\Omega_{N}}|R_{N}(\sigma)|=0.
\]
Since $\varrho_{N}\gg\varrho_{N}^{(1)}$, the first probability in
the right-hand side of \eqref{eq:CWP-Rec-pf} is zero for large $N$.
By the strong Markov property at time $\mathcal{H}_{\mathcal{E}_{N}^{0}}$
and Lemma \ref{lem:rec2}, since $\varrho_{N}-\varrho_{N}^{(1)}\gg\varrho_{N}^{(2)}$,
the second probability vanishes uniformly over $\sigma\in\Omega_{N}$
as $N\to\infty$, concluding the proof.
\end{proof}

\subsection{\label{sec4.1}Proof of Lemma \ref{lem:rec1}}

In this subsection, we prove Lemma \ref{lem:rec1}. The main tool
is potential theory applied to metastable systems, a long-term project
initiated in the early 21st century that has resulted in numerous
breakthrough results in the metastability community. See the monograph
\cite{BdH15} for an extensive overview and literature. However, we
cannot apply the theory directly to the dynamics $\bm{\sigma}_{N}^{\beta}$
since the CWP configuration space $\Omega_{N}$ is exponentially big
in $N$, so that certain crude stationary estimates regarding the
stationary profile $\mu_{N}^{\beta}$ breaks down. The idea to overcome
this drawback is to consider instead the \emph{proportions chain},
which is the projected process of the original $\bm{\sigma}_{N}^{\beta}$
via the projection $\Pi_{N}:\Omega_{N}\to\Xi_{N}$ (cf. \eqref{eq:PiN}).
For all time $t\ge0$, define
\begin{equation}
\bm{S}_{N}^{\beta}(t):=\Pi_{N}\left(\bm{\sigma}_{N}^{\beta}(t)\right).\label{eq:proportions-chain}
\end{equation}
Since the CWP model has no geometry, the projected dynamics is again
an irreducible, reversible Markov chain in $\Xi_{N}$ whose unique
stationary state is $\pi_{N}^{\beta}$ (cf. \eqref{eq:piNbeta}).
The proof of this fact is elementary and we refer the readers to \cite[Proposition 2.1]{Lee22}.
Starting from each $\bm{x}=(x_{1},\dots,x_{q})\in\Xi_{N}$, the transition
rates of $\bm{S}_{N}^{\beta}$ are given as
\begin{equation}
\begin{aligned}r_{N}\left(\bm{x},\bm{x}-\frac{\mathfrak{e}_{k}}{N}+\frac{\mathfrak{e}_{\ell}}{N}\right) & =\frac{Nx_{k}}{N}\exp\left\{ -\frac{N\beta}{2}\left[H\left(\bm{x}-\frac{\mathfrak{e}_{k}}{N}+\frac{\mathfrak{e}_{\ell}}{N}\right)-H(\bm{x})\right]\right\} \\
 & =x_{k}\exp\left\{ -\frac{\beta}{2}\left(x_{k}-x_{\ell}-\frac{1}{N}\right)\right\} \qquad\text{for}\quad k,\ell\in\llbracket1,q\rrbracket,
\end{aligned}
\label{eq:rN-value}
\end{equation}
where $\mathfrak{e}_{k}$, $k\in\llbracket1,q\rrbracket$, denotes
the $k$-th unit vector in $\mathbb{R}^{q}$. Denote by ${\rm P}_{\bm{x}}^{N,\beta}$
and ${\rm E}_{\bm{x}}^{N,\beta}$ the law and the corresponding expectation,
respectively, of the proportions chain $\bm{S}_{N}^{\beta}$ starting
from $\bm{x}\in\Xi_{N}$.

Now recall \eqref{eq:ENk-def} and \eqref{eq:EN0-def}, and define
$\mathcal{F}_{N}^{k}:=\Pi_{N}(\mathcal{E}_{N}^{k})$ for each $k\in\llbracket0,q\rrbracket$.
Clearly, for $k\in\llbracket1,q\rrbracket$,
\begin{equation}
\mathcal{F}_{N}^{k}=\mathcal{W}_{k}\cap\{F_{\beta}<H_{\beta}-\eta\}\cap\Xi_{N}.\label{eq:FNk}
\end{equation}
Moreover, if $q\ge3$ and $\beta\in(\beta_{1},q)$ then
\begin{equation}
\mathcal{F}_{N}^{0}=\mathcal{W}_{0}\cap\{F_{\beta}<F_{\beta}({\bf v}_{1})-\eta\}\cap\Xi_{N},\label{eq:FN0}
\end{equation}
and otherwise $\mathcal{F}_{N}^{0}=\varnothing$. Write
\[
\mathcal{F}_{N}^{A}:=\bigcup_{x\in A}\mathcal{F}_{N}^{x}\qquad\text{for}\quad A\subset\llbracket0,q\rrbracket.
\]
Then, Lemma \ref{lem:rec1} is equivalent to the following statement:
\begin{lem}
\label{lem:rec1-1}There exists a sequence $(\varrho_{N})_{N\ge1}$
with $\varrho_{N}\ll\theta_{N}$ such that
\[
\lim_{N\to\infty}\sup_{\bm{x}\in\Xi_{N}}{\rm P}_{\bm{x}}^{N,\beta}\left[\mathcal{H}_{\mathcal{F}_{N}^{\llbracket0,q\rrbracket}}>\varrho_{N}\right]=0.
\]
\end{lem}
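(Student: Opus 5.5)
The plan is to use the potential-theoretic approach for the reversible proportions chain $\bm{S}_{N}^{\beta}$ on $\Xi_{N}$, whose state space has only polynomial size $|\Xi_{N}|=O(N^{q-1})$. The key tool is the standard bound on mean hitting times:
\[
{\rm E}_{\bm{x}}^{N,\beta}\left[\mathcal{H}_{\mathcal{F}}\right]=\frac{1}{{\rm cap}_{N}(\bm{x},\mathcal{F})}\sum_{\bm{y}\in\Xi_{N}}\pi_{N}^{\beta}(\bm{y})\,h_{\bm{x},\mathcal{F}}(\bm{y})\le\frac{\pi_{N}^{\beta}(\mathcal{R}_{\bm{x}})}{{\rm cap}_{N}(\bm{x},\mathcal{F})}.
\]
Here $\mathcal{F}=\mathcal{F}_{N}^{\llbracket0,q\rrbracket}$, $h_{\bm{x},\mathcal{F}}$ is the equilibrium potential, and $\mathcal{R}_{\bm{x}}$ is any set outside which $h_{\bm{x},\mathcal{F}}$ is exponentially small. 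By Markov's inequality, it suffices to show $\sup_{\bm{x}\in\Xi_{N}}{\rm E}_{\bm{x}}^{N,\beta}[\mathcal{H}_{\mathcal{F}}]\le e^{N(D_{\beta}-c)}$ for some $c>0$. Then $\varrho_{N}:=e^{N(D_{\beta}-c/2)}\ll\theta_{N}$ works.

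\textbf{Step 1: lower bound on the capacity.} Fix $\bm{x}\notin\mathcal{F}$. Using the energy landscape of Section~\ref{sec1.4}, I construct a discrete path $\bm{x}=\bm{z}_{0},\bm{z}_{1},\dots,\bm{z}_{L}\in\mathcal{F}$ with $L=O(N)$, moving by single $\pm\mathfrak{e}_{k}/N\mp\mathfrak{e}_{\ell}/N$ steps. The path should satisfy
\[
\max_{i}F_{\beta}(\bm{z}_{i})\le F_{\beta}(\bm{x})\vee(H_{\beta}-\eta)+o(1).
\]
This is a discretized gradient-flow descent from $\bm{x}$. Since the only local minima lie in $\mathcal{U}$ and each valley $\mathcal{W}_{k}\cap\{F_{\beta}<H_{\beta}-\eta\}$ contains a neighbourhood of its minimum, the flow enters some $\mathcal{F}_{N}^{k}$ without rising. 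Starting on the boundary or at critical points requires perturbing first, which costs only $o(1)$ in $F_{\beta}$. The rates \eqref{eq:rN-value} are bounded below by $N^{-1}e^{-C}$ along steps where the relevant $x_{k}\ge1/N$. By the Thomson principle, or simply restricting the Dirichlet form to the path,
\[
{\rm cap}_{N}(\bm{x},\mathcal{F})\ge\Big(\sum_{i}\big[\pi_{N}^{\beta}(\bm{z}_{i})r_{N}(\bm{z}_{i},\bm{z}_{i+1})\big]^{-1}\Big)^{-1}\ge e^{-o(N)}\min_{i}\pi_{N}^{\beta}(\bm{z}_{i}).
\]
By \eqref{eq:piNbeta-formula}, this is at least $e^{-\beta N[F_{\beta}(\bm{x})\vee(H_{\beta}-\eta)]-o(N)}/Z'_{N}$, where $Z'_{N}$ is the normalisation.

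\textbf{Step 2: upper bound on the numerator.} Crudely bound the numerator by $\pi_{N}^{\beta}(\Xi_{N})=1$. Since $Z'_{N}\approx e^{-\beta N\min F_{\beta}}$, this gives
\[
{\rm E}_{\bm{x}}[\mathcal{H}_{\mathcal{F}}]\le e^{\beta N[F_{\beta}(\bm{x})\vee(H_{\beta}-\eta)-\min F_{\beta}]+o(N)}.
\]
This is too crude for high $\bm{x}$, and also because $\min F_{\beta}$ may be $F_{\beta}({\bf e})$ rather than $F_{\beta}({\bf u}_{1})$. The fix is to use the renewal bound $h_{\bm{x},\mathcal{F}}(\bm{y})\le{\rm cap}(\bm{y},\bm{x})/{\rm cap}(\bm{y},\mathcal{F})$ and to split the sum over $\bm{y}$ according to valleys.
\begin{itemize}
\item Points $\bm{y}$ lying in a deep valley whose bottom belongs to $\mathcal{F}$ have ${\rm cap}(\bm{y},\mathcal{F})$ comparable to $\pi_{N}^{\beta}(\bm{y})$ by Step 1's path.
\item For the remaining $\bm{y}$, the ratio $\pi_{N}^{\beta}(\bm{y})h(\bm{y})/{\rm cap}$ is controlled by $e^{\beta N(\text{barrier from }\bm{y}\text{ to }\mathcal{F})}$.
\end{itemize}
The upshot is the standard estimate ${\rm E}_{\bm{x}}[\mathcal{H}_{\mathcal{F}}]\le e^{\beta N\Gamma+o(N)}$. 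Here $\Gamma$ is the maximal depth, over starting points $\bm{y}\notin\mathcal{F}$, of the barrier $\Phi_{\beta}(\bm{y},\mathcal{F})-F_{\beta}(\bm{y})$. This is the usual "critical depth" bound, proved either via the renewal inequality above or via a union over the $O(N^{q-1})$ states.

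\textbf{Step 3: the depth comparison (main obstacle).} I must show $\beta\Gamma<D_{\beta}$; note the normalisation of the exponent must be matched with $\theta_{N}=2\pi Ne^{ND_{\beta}}$. Since $\Xi\setminus\mathcal{F}$ contains no local minimum of $F_{\beta}$, from any $\bm{y}\notin\mathcal{F}$ one can descend, so $\Phi_{\beta}(\bm{y},\mathcal{F})\le F_{\beta}(\bm{y})\vee(H_{\beta}-\eta)$. The barrier is then $\le[H_{\beta}-\eta-F_{\beta}(\bm{y})]_{+}$. By the choice \eqref{eq:eta-def}--\eqref{eq:eta-small} and the landscape, points with $F_{\beta}(\bm{y})<H_{\beta}-\eta$ outside $\mathcal{F}$ have $F_{\beta}(\bm{y})$ bounded below by (roughly) $F_{\beta}({\bf v}_{1})-\eta$ or $H_{\beta}-2\eta$, so their barrier is $O(\eta)$, strictly below $D_{\beta}$. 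This uses the no-critical-point band. The delicate part is the case $q\ge5$, $\beta\in(\beta_{3},q)$, where $H_{\beta}=F_{\beta}({\bf u}_{1,2})<F_{\beta}({\bf v}_{1})$. There $\mathcal{F}^{0}$ sits below ${\bf v}_{1}$, and the region between is handled by \eqref{eq:depth-bigger}. I expect rigorously checking that every non-$\mathcal{F}$ point descends to $\mathcal{F}$ with only $O(\eta)$ climbing, including near $\partial\Xi$ where the proportions chain rates degenerate, to be the main technical work. I would handle the boundary by noting that the entropy term pushes $F_{\beta}$ strictly downward when moving inward from $\partial\Xi$.
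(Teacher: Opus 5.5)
Your Steps 1 and 2 are the paper's proof. The ingredients match: Markov's inequality, the magic formula \eqref{eq:magic-formula}, a Thomson-principle lower bound \eqref{eq:TP} on ${\rm cap}_N(\bm{x},\mathcal{F}_N^{\llbracket0,q\rrbracket})$ along an almost-descending path (Lemma \ref{lem:almost-desc-path}), and, for the numerator, the renewal bound \eqref{eq:renewal} together with ${\rm cap}_N(\bm{y},\bm{x})\le c_2\pi_N^\beta(\bm{x})$ obtained from \eqref{eq:DP}.

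Step 3, however, does not close as written. Carried out through the renewal bound as you describe, the estimate is ${\rm E}_{\bm{x}}^{N,\beta}[\mathcal{H}_{\mathcal{F}_N^{\llbracket0,q\rrbracket}}]\le e^{2\beta N\Gamma+o(N)}$. One factor $e^{\beta N\Gamma}$ comes from ${\rm cap}_N(\bm{x},\cdot)$ in the denominator, and a second from ${\rm cap}_N(\bm{y},\cdot)$ inside the equilibrium potential. With $\theta_N=2\pi Ne^{ND_\beta}$, a barrier $\Gamma$ of order $\eta$ would therefore require $2\beta\eta<D_\beta$. This is not implied by \eqref{eq:eta-small}, which only gives $\eta<D_\beta$; even your own criterion $\beta\Gamma<D_\beta$ does not follow. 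The ingredients you invoke there are also beside the point. Since $\mathcal{F}_N^0$ belongs to the target, neither the band in \eqref{eq:eta-def} nor \eqref{eq:depth-bigger} is needed here; the latter is used only in Lemma \ref{lem:rec2}, where $\mathcal{E}_N^0$ is removed from the target.

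The missing observation is that the maximum with $H_\beta-\eta$ in Step 1 is vacuous, so $\Gamma$ is in fact arbitrarily small. Take a connected component $C$ of $\{F_\beta<H_\beta-\eta\}$ and let $p$ be a minimizer of $F_\beta$ over $\overline{C}$.
\begin{enumerate}
\item Since $F_\beta(p)<H_\beta-\eta$, we have $p\in C$.
\item The point $p$ is not on $\partial\Xi$, because the entropy term makes $F_\beta$ decrease when moving inward. Hence $p$ is an interior local minimum, so $p\in\mathcal{U}$.
\item Therefore $C\subset\mathcal{W}_k$ for some $k\in\llbracket0,q\rrbracket$. For $p={\bf u}_k$ this uses $H_\beta=\Phi_\beta({\bf u}_k,\mathcal{U}\setminus\{{\bf u}_k\})$; for $p={\bf e}$ it uses $H_\beta\le F_\beta({\bf v}_1)$.
\end{enumerate}
Consequently $\Xi_N\setminus\mathcal{F}_N^{\llbracket0,q\rrbracket}\subset\{F_\beta\ge H_\beta-\eta\}$. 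Step 1 then gives paths of energy at most $F_\beta(\bm{x})+\delta$ for any fixed $\delta>0$, and the bound becomes $CN^{5q+2}e^{2\beta N\delta}$. Choosing $2\beta\delta<D_\beta$ finishes the proof, exactly as in the paper.

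Finally, in Step 1 a discretized gradient flow can converge to a saddle from a non-critical starting point, not only start at one. The paper handles this in three ways:
\begin{itemize}
\item it works in fixed neighbourhoods of every non-minimal critical point and exhibits explicit descending segments there (Lemmas \ref{l3}--\ref{l5}), which also cover the degenerate ${\bf e}$ at $\beta=q$ when $q\ge5$;
\item it uses a one-step descent near $\partial\Xi$ (Lemma \ref{l1});
\item it uses a one-step descent where the gradient is bounded away from zero (Lemma \ref{l2}).
\end{itemize}
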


In the rest of Section \ref{sec4.1}, we prove Lemma \ref{lem:rec1-1}.

Fix $\bm{x}\in\Xi_{N}\setminus\mathcal{F}_{N}^{\llbracket0,q\rrbracket}$.
By the Markov inequality,
\begin{equation}
{\rm P}_{\bm{x}}^{N,\beta}\left[\mathcal{H}_{\mathcal{F}_{N}^{\llbracket0,q\rrbracket}}>\varrho_{N}\right]\le\frac{1}{\varrho_{N}}{\rm E}_{\bm{x}}^{N,\beta}\left[\mathcal{H}_{\mathcal{F}_{N}^{\llbracket0,q\rrbracket}}\right].\label{eq:pf1}
\end{equation}
According to \eqref{eq:magic-formula},
\begin{equation}
{\rm E}_{\bm{x}}^{N,\beta}\left[\mathcal{H}_{\mathcal{F}_{N}^{\llbracket0,q\rrbracket}}\right]=\frac{\sum_{\bm{y}\in\Xi_{N}}\pi_{N}^{\beta}(\bm{y})\mathfrak{h}_{\bm{x},\mathcal{F}_{N}^{\llbracket0,q\rrbracket}}(\bm{y})}{{\rm cap}_{N}\left(\bm{x},\mathcal{F}_{N}^{\llbracket0,q\rrbracket}\right)}.\label{eq:pf2}
\end{equation}
For the denominator of \eqref{eq:pf2}, we use the following lemma
which will be proved in Section \ref{sec4.2}. Given a path $\phi:\llbracket0,n\rrbracket\to\Xi_{N}$
(i.e. $r_{N}(\phi(i),\phi(i+1))>0$ for all $i\in\llbracket0,n-1\rrbracket$),
we call $\max_{i\in\llbracket0,n\rrbracket}F_{\beta}(\phi(i))$ the
\emph{energy} of $\phi$.
\begin{lem}
\label{lem:almost-desc-path}For any given $\delta>0$, there exists
a path from each $\bm{x}\in\Xi_{N}\setminus\mathcal{F}_{N}^{\llbracket0,q\rrbracket}$
to $\mathcal{F}_{N}^{\llbracket0,q\rrbracket}$ with energy at most
$F_{\beta}(\bm{x})+\delta$ for all sufficiently large $N$.
\end{lem}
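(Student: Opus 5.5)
We follow a continuous gradient-type flow of $F_{\beta}$ starting at $\bm{x}$ and discretize it. The flow produces a trajectory in $\Xi$ along which $F_{\beta}$ does not increase and which ends in the basin of a local minimum in $\mathcal{U}$. A lattice path in $\Xi_{N}$ that shadows this trajectory within distance $O(1/N)$ then has energy at most $F_{\beta}(\bm{x})+\delta$, because $F_{\beta}$ is uniformly continuous on the compact set $\Xi$. We use uniform continuity of $F_{\beta}$ itself, not of $F_{\beta,N}$, since the energy of a path is defined through $F_{\beta}$. The delicate point is that the bound must be uniform in $\bm{x}$ and in $N$.

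\textbf{Step 1 (boundary).} If $\bm{x}$ lies on or near $\partial\Xi$, some coordinate $x_{k}$ is small. There $\partial_{x_{k}}S=1+\log x_{k}\to-\infty$ while $\nabla H$ stays bounded, so moving mass into the small coordinate strictly decreases $F_{\beta}$. We first move a single spin from a large coordinate into each small one; every such move is an allowed transition of rate $r_{N}>0$. This brings us into the region $\{x_{k}\ge c\ \forall k\}$ for a fixed $c=c(\beta)>0$, and $F_{\beta}$ has strictly decreased, up to an $O(1/N)$ error, along the way.

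\textbf{Step 2 (interior descent).} On the compact set $K_{c}=\{x_{k}\ge c\}$ we use that $\mathcal{C}$ is finite (Lemma \ref{lem:critical-pts}). Take the finite set of critical values and a small $\epsilon>0$. Every point of $K_{c}\setminus\mathcal{F}^{\llbracket0,q\rrbracket}$ (the continuum version) that is at distance $\ge\epsilon$ from $\mathcal{C}$ satisfies $|\nabla F_{\beta}|\ge g(\epsilon)>0$. From such a point, one lattice step in a direction $\mathfrak{e}_{\ell}-\mathfrak{e}_{k}$ with $\langle\nabla F_{\beta},\mathfrak{e}_{\ell}-\mathfrak{e}_{k}\rangle\le-g'/N$ decreases $F_{\beta}$ by at least $g'/N-O(1/N^{2})$; such a direction exists because these vectors span the tangent space. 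Iterating, the energy strictly decreases until the path enters the $\epsilon$-neighborhood of some critical point ${\bf c}$.

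\textbf{Step 3 (near critical points).} There are two cases for the critical point ${\bf c}$ reached in Step 2.
\begin{itemize}
\item If ${\bf c}\in\mathcal{U}$ is a local minimum, its $\epsilon$-neighborhood is contained in the corresponding $\mathcal{W}_{k}\cap\{F_{\beta}<H_{\beta}-\eta\}$ (or the $\mathcal{W}_{0}$ analogue) for small $\epsilon$. This uses \eqref{eq:eta-small} and the fact that the level of the valley exceeds $F_{\beta}({\bf c})$. So we have reached $\mathcal{F}_{N}^{\llbracket0,q\rrbracket}$.
\item If ${\bf c}$ is a saddle or another non-minimal critical point, the Hessian at ${\bf c}$ has a negative direction, since the minima are exactly $\mathcal{U}$. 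We walk along this direction for distance about $2\epsilon$. This raises the energy by at most $\omega_{F}(2\epsilon)$, the modulus of continuity of $F_{\beta}$ at scale $2\epsilon$, and in fact lowers it to strictly below $F_{\beta}({\bf c})-c'\epsilon^{2}$. We then resume Step 2.
\end{itemize}
Because each pass through a critical point strictly lowers the level below a critical value, and there are finitely many critical values, the procedure terminates after boundedly many passes. We choose $\epsilon$ so that the accumulated overshoot is at most $\delta$. The energy excursions near ${\bf c}$ are bounded by $F_{\beta}(\bm{x})+\omega_{F}(2\epsilon)+O(1/N)$, because the path entered the neighborhood from a level not exceeding $F_{\beta}(\bm{x})$.

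\textbf{Main obstacle.} The hard part is Step 3, in particular making the escape from degenerate or non-isolated saddle situations uniform. Nondegeneracy is known only at the relevant critical points, and the case $\beta=q$ is excluded for exactly this reason. If some critical point in $\mathcal{C}$ is degenerate, the plan is to replace the Hessian direction by a continuity argument: since ${\bf c}$ is not a local minimum, points arbitrarily close to it have strictly lower $F_{\beta}$. We move to such a point and then use the positive gap to the next lower critical value to continue the descent. A secondary technical point is to ensure the lattice path can follow the chosen direction with integer steps, which is handled by the $O(1/N)$ shadowing in Step 2.
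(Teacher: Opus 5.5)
\textbf{Genuine gap: Step~3 at degenerate critical points.} Your Steps~1 and~2 are fine; they are the paper's Lemmas~\ref{l1} and~\ref{l2}, i.e.\ single energy-lowering jumps near $\partial\Xi$ and away from $\mathcal{C}$. The gap is in Step~3.

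The Hessian-based escape assumes every non-minimal critical point has a direction of negative curvature. You set aside the degenerate case by saying $\beta=q$ is excluded. That is true only for $q\in\{3,4\}$. For $q\ge5$ the value $\beta=q$ is in the range of the lemma, and there ${\bf e}$ is not a local minimum (the only minima are ${\bf u}_1,\dots,{\bf u}_q$). Moreover $\nabla^{2}F_{\beta}({\bf e})$ is a multiple of $\phi'(\frac{1}{q})=1-\frac{q}{\beta}$, so it vanishes identically. This is exactly why Lemma~\ref{lem:u1v1q} treats $\beta=q$ separately.

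Your fallback does not close this case.
\begin{itemize}
\item Moving from ${\bf c}={\bf e}$ to a nearby ${\bf c}'$ with $F_{\beta}({\bf c}')<F_{\beta}({\bf c})$ leaves you inside the $\epsilon$-ball $B_{\epsilon}({\bf c})$. Step~2 gives you nothing there.
\item $F_{\beta}({\bf c}')$ need not lie below $\min_{B_{\epsilon}({\bf c})}F_{\beta}$, nor even below the level at which the path entered the ball.
\item Consequently the levels at your checkpoints are not shown to decrease, the path may re-enter the same ball, and your count of ``one pass per critical value'' breaks down.
\end{itemize}
What you need is a quantitative escape: from ${\bf c}$ itself (not from the entry point), a path of energy at most $\max_{B_{\epsilon}({\bf c})}F_{\beta}+O(1/N)$ ending strictly below $\min_{B_{\epsilon}({\bf c})}F_{\beta}$. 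The same care is needed in your nondegenerate case: the walk must start at ${\bf c}$ and go far enough that the endpoint beats the minimum over the ball, which in $L^{\infty}$ may require more than $2\epsilon$.

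\textbf{How it can be repaired.} The paper supplies the escape with explicit descending segments.
\begin{itemize}
\item From ${\bf e}$ toward ${\bf u}_1$ when $\beta\ge q$ (Lemma~\ref{l3}). Monotonicity at $\beta=q$ needs the extra derivative check in Lemma~\ref{lem:u1v1q}(2).
\item Along $\mathfrak{e}_{\ell}-\mathfrak{e}_{k}$ for ${\bf v}_k$ with $\beta>q$, and for $\mathcal{C}_4$ (Lemmas~\ref{l4} and~\ref{l5}).
\end{itemize}

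Alternatively, your ``continuum then discretize'' idea can be made rigorous without any local analysis. Let $U$ be the connected component of $\{F_{\beta}<F_{\beta}(\bm{x})+\delta/2\}$ containing $\bm{x}$. Then:
\begin{itemize}
\item $F_{\beta}\ge F_{\beta}(\bm{x})+\delta/2$ on $\overline{U}\setminus U$, so $\min_{\overline{U}}F_{\beta}$ is attained inside $U$.
\item That minimizer is a local minimum on $\Xi$, hence some ${\bf u}_k$, or ${\bf e}$ if $\beta<q$. It therefore lies in the open continuum version of $\mathcal{F}^{\llbracket0,q\rrbracket}$.
\item Join $\bm{x}$ to it by a curve in $U$, which is open and path-connected.
\item Shadow the curve by a lattice path at $L^{\infty}$-distance $O(1/N)$. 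Uniform continuity of $F_{\beta}$ on $\Xi$ then gives energy at most $F_{\beta}(\bm{x})+\delta$, uniformly in $\bm{x}$.
\end{itemize}

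Your ``positive gap'' idea also works if you run the continuum gradient flow from ${\bf c}'$. It converges to a critical point whose value is at most $F_{\beta}({\bf c})$ minus the minimal gap between distinct critical values, which lies below $\min_{B_{\epsilon}({\bf c})}F_{\beta}$ once $\epsilon$ is small. That is a genuinely different step from your lattice Step~2, and you would need to state it explicitly.
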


We also need the following elementary (very) rough bound for $\pi_{N}^{\beta}$:
\begin{lem}
\label{lem:piNbeta-rough}For any $\bm{z},\bm{w}\in\Xi_{N}$, we have
$\pi_{N}^{\beta}(\bm{z})\le c_{1}N^{q}e^{\beta N(F_{\beta}(\bm{w})-F_{\beta}(\bm{z}))}\pi_{N}^{\beta}(\bm{w})$
where $c_{1}>0$ does not depend on $N$.
\end{lem}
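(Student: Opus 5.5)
The plan is to compare $\pi_{N}^{\beta}(\bm{z})$ and $\pi_{N}^{\beta}(\bm{w})$ directly through the explicit formula \eqref{eq:piNbeta-formula}, taking the ratio so that the partition function $Z_{N}^{\beta}$ cancels. From the first line of \eqref{eq:piNbeta-formula},
\[
\frac{\pi_{N}^{\beta}(\bm{z})}{\pi_{N}^{\beta}(\bm{w})}=\frac{M_{N}(\bm{z})}{M_{N}(\bm{w})}\,e^{-\beta N(H(\bm{z})-H(\bm{w}))},\qquad M_{N}(\bm{x}):=\frac{N!}{(Nx_{1})!\cdots(Nx_{q})!}.
\]
Hence it suffices to show that the multinomial coefficient satisfies
\[
c^{-1}N^{-a}e^{-NS(\bm{x})}\le M_{N}(\bm{x})\le c\,N^{b}e^{-NS(\bm{x})}
\]
uniformly in $\bm{x}\in\Xi_{N}$, with $a+b\le q$. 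Recall $S(\bm{x})=\sum x_{k}\log x_{k}$, so that $\beta F_{\beta}=\beta H+S$. With these bounds the ratio is at most $c^{2}N^{a+b}e^{-\beta N(F_{\beta}(\bm{z})-F_{\beta}(\bm{w}))}$, which is exactly the claim.

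The two-sided multinomial estimate uses only the crude uniform Stirling bounds $\sqrt{2\pi n}(n/e)^{n}\le n!\le e\sqrt{n}(n/e)^{n}$ for $n\ge1$, together with $0!=1$. The $n=0$ case must be handled separately, because the asymptotic formula \eqref{eq:FbetaN-def} degenerates near $\partial\Xi$, and uniformity over all of $\Xi_{N}$, boundary included, is the only real issue. Write $n_{k}=Nx_{k}$.

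For the upper bound, apply the upper Stirling bound to $N!$ and the lower bound (or simply $n_{k}!\ge(n_{k}/e)^{n_{k}}$, which holds also for $n_{k}=0$) to each $n_{k}!$. The powers of $e$ cancel since $\sum n_{k}=N$, giving $M_{N}(\bm{x})\le e\sqrt{N}\prod_{k}(N/n_{k})^{n_{k}}=e\sqrt{N}e^{-NS(\bm{x})}$.

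For the lower bound, use $N!\ge(N/e)^{N}$ and $n_{k}!\le e\sqrt{n_{k}}(n_{k}/e)^{n_{k}}\le e\sqrt{N}(n_{k}/e)^{n_{k}}$ for $n_{k}\ge1$ (trivially also for $n_{k}=0$ after replacing the bound by $1$). This yields $M_{N}(\bm{x})\ge e^{-q}N^{-q/2}e^{-NS(\bm{x})}$.

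Combining, the polynomial factor is $e^{q+1}N^{(q+1)/2}\le e^{q+1}N^{q}$, since $q\ge2$ and $N\ge1$. This gives the lemma with $c_{1}=e^{q+1}$, independent of $N$, $\bm{z}$, and $\bm{w}$.

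No step is a genuine obstacle. The only care needed is to avoid the $1/N$ expansion $G_{\beta,N}$, which is uniform only on compacts of $\Xi^{\circ}$, and instead use the non-asymptotic Stirling inequalities, which are valid for every $\bm{x}\in\Xi_{N}$ including boundary points.
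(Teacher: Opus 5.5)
Your proof is correct and follows essentially the same route as the paper: cancel $Z_N^\beta$, then bound the multinomial coefficient above and below by $e^{-NS}$ times a polynomial factor using non-asymptotic Stirling inequalities. Zero coordinates are handled separately in both arguments; the paper restricts the products to those $k$ with $z_k\ge\frac{1}{N}$, while you use $0!=1$ and $0^0=1$. Your constants happen to give the slightly sharper factor $N^{(q+1)/2}$ instead of the paper's $N^{q-1/2}$, which makes no difference to the lemma.
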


\begin{proof}
Recalling \eqref{eq:piNbeta-formula},
\[
\pi_{N}^{\beta}(\bm{z})=\frac{e^{-\beta NH(\bm{z})}}{Z_{N}^{\beta}}\frac{N!}{(Nz_{1})!\cdots(Nz_{q})!}.
\]
Applying a rough Stirling-type estimate $1\le\frac{n!e^{n}}{n^{n}\sqrt{2\pi n}}\le2$
to $n=N$ and $n=Nz_{k}$ with $z_{k}\ge\frac{1}{N}$,
\[
\pi_{N}^{\beta}(\bm{z})\le\frac{e^{-\beta NH(\bm{z})}}{Z_{N}^{\beta}}\frac{2}{\prod_{k\in\llbracket1,q\rrbracket:\,z_{k}\ge\frac{1}{N}}z_{k}^{Nz_{k}+\frac{1}{2}}}\le\frac{2N^{\frac{q}{2}}e^{-\beta NF_{\beta}(\bm{z})}}{Z_{N}^{\beta}}.
\]
Similarly,
\[
\pi_{N}^{\beta}(\bm{w})\ge\frac{e^{-\beta NH(\bm{w})}}{Z_{N}^{\beta}}\frac{1}{(2\pi N)^{\frac{q-1}{2}}}\frac{1}{\prod_{k\in\llbracket1,q\rrbracket:\,w_{k}\ge\frac{1}{N}}\left(2w_{k}^{Nw_{k}+\frac{1}{2}}\right)}\ge\frac{e^{-\beta NF_{\beta}(\bm{w})}}{(2\pi N)^{\frac{q-1}{2}}2^{q}Z_{N}^{\beta}}.
\]
Combining the two displayed inequalities gives Lemma \ref{lem:piNbeta-rough}.
\end{proof}
Now applying Lemma \ref{lem:almost-desc-path} to any small $\epsilon>0$,
for large enough $N$, there exists a path $\varphi:\llbracket0,n\rrbracket\to\Xi_{N}$
from $\bm{x}$ to $\mathcal{F}_{N}^{\llbracket0,q\rrbracket}$ whose
energy is at most $F_{\beta}(\bm{x})+\epsilon$. By \eqref{eq:TP}
and the fact that $r_{N}(\bm{z},\bm{w})\ge N^{-1}e^{-\beta}$ for
any $\bm{z},\bm{w}\in\Xi_{N}$ such that $r_{N}(\bm{z},\bm{w})>0$,
\[
{\rm cap}_{N}\left(\bm{x},\mathcal{F}_{N}^{\llbracket0,q\rrbracket}\right)\ge\left(\sum_{i=0}^{n-1}\frac{1}{\pi_{N}^{\beta}(\varphi(i))r_{N}(\varphi(i),\varphi(i+1))}\right)^{-1}\ge\left(\sum_{i=0}^{n-1}\frac{Ne^{\beta}}{\pi_{N}^{\beta}(\varphi(i))}\right)^{-1}.
\]
By its definition, $F_{\beta}(\varphi(i))\le F_{\beta}(\bm{x})+\epsilon$
for all $i\in\llbracket0,n\rrbracket$, thus by Lemma \ref{lem:piNbeta-rough},
$\pi_{N}^{\beta}(\bm{x})\le c_{1}N^{q}e^{\beta N\epsilon}\pi_{N}^{\beta}(\varphi(i))$.
In addition, we may take $n<|\Xi_{N}|={N+q-1 \choose q}\le N^{q}$.
Thus,
\begin{equation}
{\rm cap}_{N}\left(\bm{x},\mathcal{F}_{N}^{\llbracket0,q\rrbracket}\right)\ge\left(N^{q}\frac{c_{1}N^{q}e^{\beta N\epsilon}\times Ne^{\beta}}{\pi_{N}^{\beta}(\bm{x})}\right)^{-1}=\frac{\pi_{N}^{\beta}(\bm{x})}{c_{1}e^{\beta}N^{2q+1}e^{\beta N\epsilon}}.\label{eq:pf3}
\end{equation}

For the numerator of \eqref{eq:pf2}, the estimate \eqref{eq:renewal}
gives, for $\bm{y}\in\Xi_{N}\setminus(\mathcal{F}_{N}^{\llbracket0,q\rrbracket}\cup\{\bm{x}\})$,
\begin{equation}
\mathfrak{h}_{\bm{x},\mathcal{F}_{N}^{\llbracket0,q\rrbracket}}(\bm{y})\le\frac{{\rm cap}_{N}(\bm{y},\bm{x})}{{\rm cap}_{N}\left(\bm{y},\mathcal{F}_{N}^{\llbracket0,q\rrbracket}\right)}.\label{eq:pf4}
\end{equation}
The denominator in the right-hand side of \eqref{eq:pf4} can be calculated
using \eqref{eq:pf3}. For the numerator we need:
\begin{lem}
\label{lem:cap-rough}For any $\bm{x}\in\Xi_{N}\setminus\mathcal{F}_{N}^{\llbracket0,q\rrbracket}$
and $\bm{y}\in\Xi_{N}\setminus(\mathcal{F}_{N}^{\llbracket0,q\rrbracket}\cup\{\bm{x}\})$,
\[
{\rm cap}_{N}(\bm{y},\bm{x})\le c_{2}\pi_{N}^{\beta}(\bm{x}),
\]
where $c_{2}>0$ does not depend on $N$.
\end{lem}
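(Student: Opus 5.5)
The plan is to use the Dirichlet principle with the simplest possible test function, the indicator of $\{\bm{x}\}$. Capacity is symmetric for a reversible chain, so ${\rm cap}_{N}(\bm{y},\bm{x})={\rm cap}_{N}(\bm{x},\bm{y})$. By the Dirichlet principle, this equals the infimum of the Dirichlet form
\[
\mathscr{D}_{N}(f)=\frac{1}{2}\sum_{\bm{z},\bm{w}\in\Xi_{N}}\pi_{N}^{\beta}(\bm{z})r_{N}(\bm{z},\bm{w})\bigl(f(\bm{w})-f(\bm{z})\bigr)^{2}
\]
over functions $f$ with $f(\bm{x})=1$ and $f(\bm{y})=0$. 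Since $\bm{y}\ne\bm{x}$, the function $f=\bm{1}_{\{\bm{x}\}}$ is admissible, so
\[
{\rm cap}_{N}(\bm{y},\bm{x})\le\mathscr{D}_{N}(\bm{1}_{\{\bm{x}\}})=\sum_{\bm{w}\ne\bm{x}}\pi_{N}^{\beta}(\bm{x})r_{N}(\bm{x},\bm{w}).
\]
Only edges touching $\bm{x}$ contribute. Each such edge appears twice in the symmetric sum, and the two appearances are equal by detailed balance $\pi_{N}^{\beta}(\bm{z})r_{N}(\bm{z},\bm{w})=\pi_{N}^{\beta}(\bm{w})r_{N}(\bm{w},\bm{z})$, which cancels the factor $\frac{1}{2}$.

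It then remains to bound the total jump rate out of $\bm{x}$ uniformly in $N$. By \eqref{eq:rN-value}, for $k\ne\ell$,
\[
r_{N}\Bigl(\bm{x},\bm{x}-\frac{\mathfrak{e}_{k}}{N}+\frac{\mathfrak{e}_{\ell}}{N}\Bigr)=x_{k}\exp\Bigl\{-\frac{\beta}{2}\Bigl(x_{k}-x_{\ell}-\frac{1}{N}\Bigr)\Bigr\}\le e^{\beta}.
\]
This holds because $x_{k}\le1$ and $x_{k}-x_{\ell}-\frac{1}{N}\ge-2$. There are at most $q(q-1)$ neighbours, so the sum is at most $q^{2}e^{\beta}\pi_{N}^{\beta}(\bm{x})$, and one can take $c_{2}=q^{2}e^{\beta}$.

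The only point needing care is the normalization convention for ${\rm cap}_{N}$ and $\mathscr{D}_{N}$ fixed in the appendix (the one referenced by \eqref{eq:magic-formula} and \eqref{eq:TP}). It should be checked there, together with the symmetry of capacity under reversibility. A different constant in the convention changes only $c_{2}$.

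The hypotheses $\bm{x},\bm{y}\notin\mathcal{F}_{N}^{\llbracket0,q\rrbracket}$ play no role beyond $\bm{x}\ne\bm{y}$. I therefore do not expect any real obstacle: the bound is crude but uniform in $N$.
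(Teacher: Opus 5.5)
Your proposal is correct and is essentially the paper's argument. The paper applies the Dirichlet principle \eqref{eq:DP} to $f=1-\bm{1}_{\{\bm{x}\}}$ directly for ${\rm cap}_{N}(\bm{y},\bm{x})$, which avoids your appeal to symmetry of capacity since $\mathscr{D}_{N}(1-f)=\mathscr{D}_{N}(f)$, and then bounds the at most $q(q-1)$ exit rates by $e^{\beta}$ exactly as you do; your explicit check that detailed balance cancels the factor $\frac{1}{2}$ is a detail the paper leaves implicit.
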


\begin{proof}
We apply \eqref{eq:DP} to the test function $f=1-\bm{1}_{\{\bm{x}\}}$.
Then,
\[
{\rm cap}_{N}(\bm{y},\bm{x})\le\mathscr{D}_{N}(1-\bm{1}_{\{\bm{x}\}})=\pi_{N}^{\beta}(\bm{x})\sum_{\bm{z}\in\Xi_{N}}r_{N}(\bm{x},\bm{z}).
\]
The number of $\bm{z}\in\Xi_{N}$ with $r_{N}(\bm{x},\bm{z})>0$ is
at most $q(q-1)$, and by \eqref{eq:rN-value}, $r_{N}(\bm{x},\bm{z})\le e^{\beta}$.
These observations prove the lemma.
\end{proof}
\begin{proof}[Proof of Lemma \ref{lem:rec1-1}]
 Fix $\epsilon>0$. Combining \eqref{eq:pf1} and \eqref{eq:pf2},
\begin{equation}
{\rm P}_{\bm{x}}^{N,\beta}\left[\mathcal{H}_{\mathcal{F}_{N}^{\llbracket0,q\rrbracket}}>\varrho_{N}\right]\le\frac{1}{\varrho_{N}}\frac{\sum_{\bm{y}\in\Xi_{N}}\pi_{N}^{\beta}(\bm{y})\mathfrak{h}_{\bm{x},\mathcal{F}_{N}^{\llbracket0,q\rrbracket}}(\bm{y})}{{\rm cap}_{N}\left(\bm{x},\mathcal{F}_{N}^{\llbracket0,q\rrbracket}\right)}.\label{eq:rec1-pf-1}
\end{equation}
The summation in the numerator may be restricted to $\bm{y}\in\Xi_{N}\setminus\mathcal{F}_{N}^{\llbracket0,q\rrbracket}$
since otherwise the equilibrium potential vanishes. For such $\bm{y}$,
by \eqref{eq:pf4}, Lemma \ref{lem:cap-rough}, and \eqref{eq:pf3},
\begin{equation}
\pi_{N}^{\beta}(\bm{y})\mathfrak{h}_{\bm{x},\mathcal{F}_{N}^{\llbracket0,q\rrbracket}}(\bm{y})\le\pi_{N}^{\beta}(\bm{y})\frac{c_{2}\pi_{N}^{\beta}(\bm{x})}{\pi_{N}^{\beta}(\bm{y})/(c_{1}e^{\beta}N^{2q+1}e^{\beta N\epsilon})}=c_{1}c_{2}e^{\beta}N^{2q+1}e^{\beta N\epsilon}\pi_{N}^{\beta}(\bm{x}).\label{eq:rec1-pf-2}
\end{equation}
Substituting \eqref{eq:rec1-pf-2} to \eqref{eq:rec1-pf-1}, via \eqref{eq:pf3}
applied on the denominator, gives
\[
{\rm P}_{\bm{x}}^{N,\beta}\left[\mathcal{H}_{\mathcal{F}_{N}^{\llbracket0,q\rrbracket}}>\varrho_{N}\right]\le\frac{1}{\varrho_{N}}\frac{|\Xi_{N}|\times c_{1}c_{2}e^{\beta}N^{2q+1}e^{\beta N\epsilon}\pi_{N}^{\beta}(\bm{x})}{\pi_{N}^{\beta}(\bm{x})/(c_{1}e^{\beta}N^{2q+1}e^{\beta N\epsilon})}\le\varrho_{N}^{-1}c_{1}^{2}c_{2}e^{2\beta}N^{5q+2}e^{2\beta N\epsilon}.
\]
Therefore, taking sufficiently small $\epsilon>0$ such that $2\beta\epsilon<D_{\beta}$,
we may choose $1\ll\varrho_{N}\ll\theta_{N}$ (cf. \eqref{eq:thetaN-CWP})
such that
\[
\sup_{\bm{x}\in\Xi_{N}}{\rm P}_{\bm{x}}^{N,\beta}\left[\mathcal{H}_{\mathcal{F}_{N}^{\llbracket0,q\rrbracket}}>\varrho_{N}\right]\ll1.
\]
This proves Lemma \ref{lem:rec1-1}.
\end{proof}
In the following subsection, we verify Lemma \ref{lem:almost-desc-path}
to complete the logic.

\subsection{\label{sec4.2}Proof of Lemma \ref{lem:almost-desc-path}}

First assume that $q=2$, the Curie--Weiss model. We say that a path
$\varphi$ \emph{descends} if $F_{\beta}(\varphi(i))>F_{\beta}(\varphi(i+1))$
for all $i$.
\begin{proof}[Proof of Lemma \ref{lem:almost-desc-path}: $q=2$]
 According to the graph shape of $F_{\beta}$ (See Figure \ref{fig1.1}),
as $\beta>\beta_{1}=2$, from any point $\bm{x}\in\Xi_{N}\setminus\mathcal{F}_{N}^{\llbracket1,2\rrbracket}$
there exists a trajectory in $\Xi$ to either ${\bf u}_{1}$ or ${\bf u}_{2}$
along which $F_{\beta}$ decreases. Following this trajectory from
$\bm{x}$ by a discrete path in $\Xi_{N}$, we arrive at a point which
is at most $\frac{1}{N}$ $L^{\infty}$-distance away from either
${\bf u}_{1}$ or ${\bf u}_{2}$, thus clearly an element of $\mathcal{F}_{N}^{\llbracket1,2\rrbracket}$
for large $N$. This is a descending path from $\bm{x}$ to $\mathcal{F}_{N}^{\llbracket1,2\rrbracket}$,
thus its energy is exactly $F_{\beta}(\bm{x})$, which concludes the
proof.
\end{proof}
In the remainder of Section \ref{sec4.2}, assume that $q\ge3$ and
fix $\delta>0$. The idea is to prove that from any point $\bm{x}\in\Xi_{N}\setminus\mathcal{F}_{N}^{\llbracket0,q\rrbracket}$,
there exists a path to another point $\bm{y}$ with $F_{\beta}(\bm{y})<F_{\beta}(\bm{x})$
whose energy is at most $F_{\beta}(\bm{x})+\delta$. Since $\Xi_{N}$
is finite, we may concatenate such paths to finally arrive at $\mathcal{F}_{N}^{\llbracket0,q\rrbracket}$.

We start with two useful lemmas. Define $\phi:(0,1]\to\mathbb{R}$
as
\begin{equation}
\phi(t):=t-\frac{1}{\beta}\log t.\label{eq:phi-def}
\end{equation}
Note that $\phi$ decreases on $(0,\frac{1}{\beta}]$, increases on
$[\frac{1}{\beta},1]$, $\phi(1)=1$, and $\phi(0+)=\infty$.
\begin{lem}
\label{lem:jump-bound-1}Given $\bm{x}\in\Xi_{N}$ and two distinct
indices $k,\ell\in\llbracket1,q\rrbracket$ such that $x_{k}>0$,
\[
F_{\beta}(\bm{x})-F_{\beta}\left(\bm{x}-\frac{\mathfrak{e}_{k}}{N}+\frac{\mathfrak{e}_{\ell}}{N}\right)\ge\frac{\phi\left(x_{\ell}+\frac{1}{N}\right)-\phi\left(x_{k}-\frac{1}{N}\right)-\frac{1}{N}}{N}.
\]
\end{lem}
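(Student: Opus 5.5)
The plan is a direct computation: split $F_{\beta}=H+\frac{1}{\beta}S$ and bound the two increments separately. I use the exact algebraic identity for the quadratic part $H$ and a one-sided convexity (tangent-line) bound for the entropy part $S$. Write $\bm{y}:=\bm{x}-\frac{\mathfrak{e}_{k}}{N}+\frac{\mathfrak{e}_{\ell}}{N}$; only the coordinates $k$ and $\ell$ change.

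\emph{Degenerate case.} If $x_{k}=\frac{1}{N}$, then $\phi(x_{k}-\frac{1}{N})=\phi(0+)=\infty$. The right-hand side is then $-\infty$ and the claim is trivial. So I assume $x_{k}-\frac{1}{N}>0$; then $x_{\ell}+\frac{1}{N}>0$ as well, and both $\phi$-values are finite.

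\emph{Energy part.} A direct expansion gives
\[
H(\bm{x})-H(\bm{y})=\frac{1}{2}\Big[\big(x_{k}-\tfrac{1}{N}\big)^{2}+\big(x_{\ell}+\tfrac{1}{N}\big)^{2}-x_{k}^{2}-x_{\ell}^{2}\Big]=\frac{x_{\ell}-x_{k}}{N}+\frac{1}{N^{2}}.
\]

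\emph{Entropy part.} Let $g(t):=t\log t$. This function is convex on $[0,1]$, with $g'(t)=\log t+1$ for $t>0$. The tangent-line inequality $g(b)-g(a)\ge g'(a)(b-a)$ is applied twice:
\[
g(x_{k})-g\big(x_{k}-\tfrac{1}{N}\big)\ge\frac{\log(x_{k}-\frac{1}{N})+1}{N},\qquad g(x_{\ell})-g\big(x_{\ell}+\tfrac{1}{N}\big)\ge-\frac{\log(x_{\ell}+\frac{1}{N})+1}{N}.
\]
The second bound remains valid when $x_{\ell}=0$, since $g$ is continuous at $0$. Adding these gives
\[
\frac{1}{\beta}\big(S(\bm{x})-S(\bm{y})\big)\ge\frac{1}{\beta N}\Big[\log\big(x_{k}-\tfrac{1}{N}\big)-\log\big(x_{\ell}+\tfrac{1}{N}\big)\Big].
\]

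\emph{Combining.} Rewrite $x_{\ell}-x_{k}+\frac{1}{N}=(x_{\ell}+\frac{1}{N})-(x_{k}-\frac{1}{N})-\frac{1}{N}$. Summing the energy and entropy parts then yields exactly
\[
\frac{1}{N}\Big[\phi\big(x_{\ell}+\tfrac{1}{N}\big)-\phi\big(x_{k}-\tfrac{1}{N}\big)-\tfrac{1}{N}\Big],
\]
using \eqref{eq:phi-def}. The only points needing care are the choice of evaluation points in the tangent-line bounds and the boundary cases $x_{k}=\frac{1}{N}$ or $x_{\ell}=0$. The convention $0\log0=0$ and $\phi(0+)=\infty$ handle these, so no step is a serious obstacle.
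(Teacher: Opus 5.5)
Your proof is correct and is essentially the paper's argument. The paper also computes the $H$-increment exactly, and then bounds the remaining entropy terms $a\log\frac{a}{a-1/N}$ and $b\log\frac{b}{b+1/N}$ (with $a=x_k$, $b=x_\ell$) from below by $\frac{1}{N}$ and $-\frac{1}{N}$ via $\log(1+t)\le t$. That is precisely your tangent-line inequality for the convex function $t\log t$ in different notation, and your explicit handling of the boundary cases $x_k=\frac{1}{N}$ and $x_\ell=0$ is a minor point the paper leaves implicit.
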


\begin{proof}
Let us write $a:=x_{k}$ and $b:=x_{\ell}$. By \eqref{eq:FbetaN-def},
\begin{equation}
F_{\beta}(\bm{x})-F_{\beta}\left(\bm{x}-\frac{\mathfrak{e}_{k}}{N}+\frac{\mathfrak{e}_{\ell}}{N}\right)=\frac{b-a+\frac{1}{N}}{N}+\frac{1}{\beta}\left(a\log\frac{a}{a-\frac{1}{N}}+b\log\frac{b}{b+\frac{1}{N}}+\frac{1}{N}\log\frac{a-\frac{1}{N}}{b+\frac{1}{N}}\right).\label{eq:diff-calc}
\end{equation}
Simplifying via \eqref{eq:phi-def}, the right-hand side equals
\[
-\frac{1}{N^{2}}+\frac{\phi\left(b+\frac{1}{N}\right)-\phi\left(a-\frac{1}{N}\right)}{N}+\frac{1}{\beta}\left(a\log\frac{1}{1-\frac{1}{Na}}+b\log\frac{1}{1+\frac{1}{Nb}}\right).
\]
Using $\log(1+t)\le t$, this is bounded from below by
\[
-\frac{1}{N^{2}}+\frac{\phi\left(b+\frac{1}{N}\right)-\phi\left(a-\frac{1}{N}\right)}{N}+\frac{1}{\beta}\left(\frac{a}{Na}-\frac{b}{Nb}\right)=-\frac{1}{N^{2}}+\frac{\phi\left(b+\frac{1}{N}\right)-\phi\left(a-\frac{1}{N}\right)}{N}.
\]
\end{proof}
\begin{center}
\begin{figure}
\begin{centering}
\begin{tikzpicture}
\begin{scope}[scale=0.8]
\draw[very thick] (-4,-4) rectangle (4,4); \draw (4.1,0) node[right]{$\Xi_N$};
\draw (-3.2,-3.2) rectangle (3.2,3.2); \draw (3.6,2.6) node{$\mathcal{O}_N^\epsilon$}; \draw (2.8,2.6) node{$\mathcal{I}_N^\epsilon$};
\draw (1,2.6) node{$\mathcal{M}_N^{\epsilon,\alpha}$};
\end{scope}

\begin{scope}[scale=1.2]
\fill[black!40!white] (0,0) circle (1); \draw (0,0) circle (1); \draw[thick] (-0.06,-0.06)--(0.06,0.06); \draw[thick] (0.06,-0.06)--(-0.06,0.06);
\foreach \i in {36,108,180,252,324} {
\fill[black!15!white] ({1.4*cos(\i)},{1.4*sin(\i)}) circle (0.4); \draw ({1.4*cos(\i)},{1.4*sin(\i)}) circle (0.4); \fill ({1.4*cos(\i)},{1.4*sin(\i)}) circle (0.05);
\draw[thick] ({0.925*cos(\i)},{0.925*sin(\i)})--({1.075*cos(\i)},{1.075*sin(\i)});
}
\draw ({1.4*cos(36)},{1.4*sin(36)}) node[above]{${\bf u}_k$};
\draw ({1*cos(36)},{1*sin(36)}) node[left]{${\bf v}_k$};
\draw (0,0) node[below]{$\bf e$};
\draw[teal,very thick,densely dotted,->] ({0.1*cos(36)},{0.1*sin(36)})--({1.3*cos(36)},{1.3*sin(36)});
\draw[thick] ({0.925*cos(36)},{0.925*sin(36)})--({1.075*cos(36)},{1.075*sin(36)});
\end{scope}

\fill(0.8,-2) circle (0.06);
\draw (0.8,-2) node[below]{$\bm{c}$};
\draw[teal,very thick,densely dotted,->] ({0.8+0.1*cos(-15)},{-2+0.1*sin(-15)})--({0.8+0.4*cos(-15)},{-2+0.4*sin(-15)});

\begin{scope}[shift={(8,0)},scale=0.8]
\draw[very thick] (-4,-4) rectangle (4,4);
\draw (-3.2,-3.2) rectangle (3.2,3.2);
\end{scope}

\begin{scope}[shift={(8,0)},scale=1.2]
\draw[thick] (-0.06,-0.06)--(0.06,0.06); \draw[thick] (0.06,-0.06)--(-0.06,0.06);
\foreach \i in {36,108,180,252,324} {
\fill[black!40!white] ({0.7/cos(54)*cos(\i)},{0.7/cos(54)*sin(\i)}) circle (0.7); \draw ({0.7/cos(54)*cos(\i)},{0.7/cos(54)*sin(\i)}) circle (0.7); \fill ({0.7/cos(54)*cos(\i)},{0.7/cos(54)*sin(\i)}) circle (0.05);
}
\foreach \i in {0,72,144,216,288} {
\draw[thick] ({0.7/tan(36)*cos(\i)+0.075*cos(\i+90)},{0.7/tan(36)*sin(\i)+0.075*sin(\i+90)})--({0.7/tan(36)*cos(\i)-0.075*cos(\i+90)},{0.7/tan(36)*sin(\i)-0.075*sin(\i+90)});
}
\draw ({0.7/sin(36)*cos(36)},{0.7/sin(36)*sin(36)}) node[above]{${\bf u}_k$};
\fill ({-0.15/sin(36)*cos(36)},{-0.15/sin(36)*sin(36)}) circle (0.05);
\draw ({-0.15/sin(36)*cos(36)+0.1},{-0.15/sin(36)*sin(36)}) node[above left]{${\bf v}_k$};
\draw ({0.7/tan(36)*cos(72)},{0.7/tan(36)*sin(72)}) node[above]{${\bf u}_{k,\ell}$};
\draw (0,0) node[below right]{$\bf e$};
\draw[teal,very thick,densely dotted,->] ({0.1*cos(36)},{0.1*sin(36)})--({(0.7/sin(36)-0.1)*cos(36)},{(0.7/sin(36)-0.1)*sin(36)});
\draw[teal,very thick,densely dotted,->] ({(-0.15/sin(36)-0.1)*cos(36)},{(-0.15/sin(36)-0.1)*sin(36)})--({(-0.15/sin(36)-0.3)*cos(36)},{(-0.15/sin(36)-0.3)*sin(36)});
\draw[teal,very thick,densely dotted,->] ({0.7/tan(36)*cos(72)+0.1*cos(-5)},{0.7/tan(36)*sin(72)+0.1*sin(-5)})--({0.7/tan(36)*cos(72)+0.4*cos(-5)},{0.7/tan(36)*sin(72)+0.4*sin(-5)});
\end{scope}

\begin{scope}[shift={(8,0)}]
\fill(0.8,-2) circle (0.06);
\draw (0.8,-2) node[below]{$\bm{c}$};
\draw[teal,very thick,densely dotted,->] ({0.8+0.1*cos(-15)},{-2+0.1*sin(-15)})--({0.8+0.4*cos(-15)},{-2+0.4*sin(-15)});
\end{scope}
\end{tikzpicture}
\par\end{centering}
\caption{\label{fig4.1}The cases of $\beta\in(\beta_{1},\beta_{2})$ (left)
and $\beta\in(q,\infty)$ (right) for $q\ge5$. From each critical
point ${\bf e}$, ${\bf v}_{k}$, ${\bf u}_{k,\ell}$ (if it exists),
or $\bm{c}\in\mathcal{C}_{4}$, the teal arrow represents the descending
trajectory constructed in Lemmas \ref{l3}, \ref{l4}, or \ref{l5}.}
\end{figure}
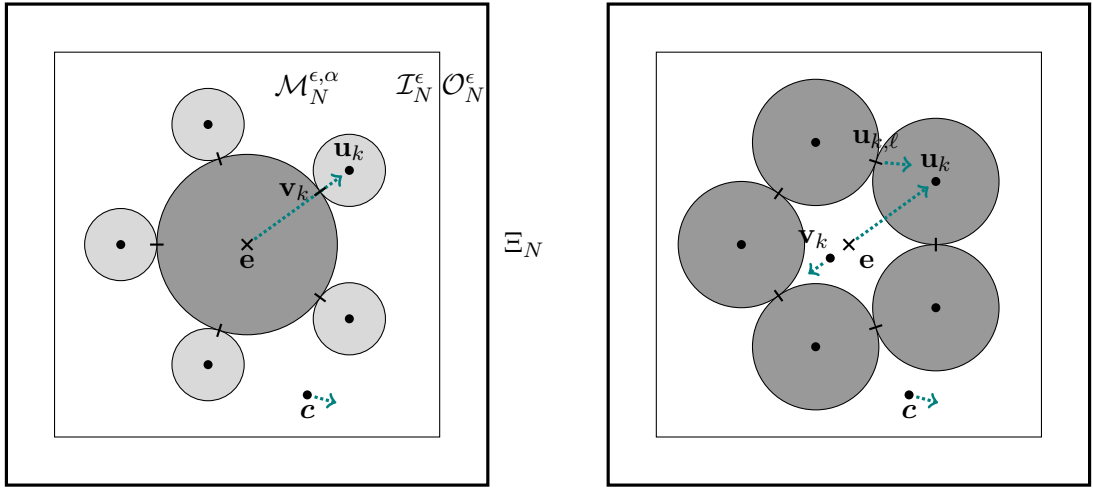
\par\end{center}

Now, we begin the decomposition of $\Xi$ and $\Xi_{N}$. See Figure
\ref{fig4.1} for an illustration.

Below, the constants $\epsilon,\alpha>0$ are chosen to be sufficiently
small in the sequel. Recall \eqref{eq:Xi-def} and define
\[
\mathcal{O}^{\epsilon}:=\{\bm{x}\in\Xi:x_{k}<\epsilon\enspace\text{for some}\enspace k\},\qquad\mathcal{I}^{\epsilon}:=\Xi\setminus\mathcal{O}^{\epsilon}=\{\bm{x}\in\Xi:x_{1},\dots,x_{q}\ge\epsilon\}.
\]
Then, define $\mathcal{O}_{N}^{\epsilon}:=\mathcal{O}^{\epsilon}\cap\Xi_{N}$
and $\mathcal{I}_{N}^{\epsilon}:=\mathcal{I}^{\epsilon}\cap\Xi_{N}$.
We start our analysis from $\mathcal{O}_{N}^{\epsilon}$.
\begin{lem}
\label{l1}For small $\epsilon>0$, the following statement holds
for all sufficiently large $N$. For every $\bm{x}\in\mathcal{O}_{N}^{\epsilon}$,
there exists $\bm{y}\in\Xi_{N}$ such that $r_{N}(\bm{x},\bm{y})>0$
and $F_{\beta}(\bm{x})>F_{\beta}(\bm{y})$.
\end{lem}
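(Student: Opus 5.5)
The plan is to use Lemma \ref{lem:jump-bound-1}. For $\bm{x}\in\mathcal{O}_{N}^{\epsilon}$ we want a move $\bm{y}=\bm{x}-\frac{\mathfrak{e}_{k}}{N}+\frac{\mathfrak{e}_{\ell}}{N}$ for which
\[
\phi\left(x_{\ell}+\tfrac{1}{N}\right)-\phi\left(x_{k}-\tfrac{1}{N}\right)>\tfrac{1}{N}.
\]
Any such move has $r_{N}(\bm{x},\bm{y})>0$ as long as $x_{k}\ge\frac{1}{N}$. We take $\ell$ to be an index with $x_{\ell}<\epsilon$, i.e. a small coordinate. We take $k$ to be an index with $x_{k}\ge\frac{1}{q}$, which exists since the coordinates sum to one. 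Moving a spin from the large coordinate to the small one should lower $F_{\beta}$, because the entropy term dominates near the boundary.

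The key estimate is a uniform gap. For $t\le\epsilon+\frac{1}{N}$ we have $\phi(t)\ge-\frac{1}{\beta}\log(2\epsilon)$ once $N$ is large, because $\phi(t)\ge-\frac{1}{\beta}\log t$ and $\log$ is increasing. On the other side, for $s:=x_{k}-\frac{1}{N}\in[\frac{1}{q}-\frac{1}{N},1]$, $\phi$ is bounded above on this compact interval away from $0$. Concretely, $\phi(s)\le1+\frac{1}{\beta}\log(2q)$ for large $N$.

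Hence the bracket in Lemma \ref{lem:jump-bound-1} is at least
\[
-\tfrac{1}{\beta}\log(2\epsilon)-1-\tfrac{1}{\beta}\log(2q)-\tfrac{1}{N}.
\]
This is strictly positive, in fact at least $1$, once $\epsilon<\frac{1}{4q}e^{-2\beta}$ and $N\ge1$. Multiplying by $\frac{1}{N}$ gives $F_{\beta}(\bm{x})-F_{\beta}(\bm{y})>0$.

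Several edge cases need checking.
\begin{itemize}
\item \emph{Boundary coordinates.} The case $x_{\ell}=0$ is allowed: $\phi(\frac{1}{N})$ is large and the lemma's derivation still holds with the convention $0\log0=0$. I will re-inspect \eqref{eq:diff-calc} with $b=0$, where the term $b\log\frac{b}{b+1/N}$ is simply $0$.
\item \emph{Distinct indices.} We need $k\ne\ell$. This is automatic, since $x_{k}\ge\frac{1}{q}>\epsilon>x_{\ell}$.
\item \emph{Small $x_{k}$.} If $x_{k}-\frac{1}{N}$ were tiny, $\phi$ there would be large. This is excluded because $x_{k}\ge\frac{1}{q}$.
\end{itemize}

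The main obstacle is only to make sure Lemma \ref{lem:jump-bound-1} applies uniformly, including at the boundary $x_{\ell}=0$ where $\log$ terms degenerate. If needed, I will verify the $b=0$ case directly from \eqref{eq:diff-calc}: the difference is
\[
\frac{-a+\frac{1}{N}}{N}+\frac{1}{\beta}\left(a\log\frac{a}{a-\frac{1}{N}}+\frac{1}{N}\log\left(Na-1\right)\right),
\]
which is of order $\frac{\log N}{\beta N}>0$ for large $N$.
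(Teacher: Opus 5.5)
Your proposal is correct and follows the paper's argument: pick $k$ with $x_k\ge\frac1q$ and $\ell$ with $x_\ell<\epsilon$, apply Lemma \ref{lem:jump-bound-1}, and show the $\phi$-gap is positive. The paper gets positivity from monotonicity of $\phi$ and the choice $\phi(\epsilon)>\phi(\frac1q)\vee\phi(1)$ with $\epsilon<\frac1\beta\wedge\frac1q$. Your bounds $\phi(t)\ge-\frac1\beta\log t$ and $\phi(s)\le 1-\frac1\beta\log s$ instead give an explicit $\epsilon$ and handle the $\pm\frac1N$ shifts uniformly, which the paper glosses over.

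One small arithmetic slip: with $\epsilon<\frac{1}{4q}e^{-2\beta}$ and $N\ge\frac1\epsilon\vee 2q$, the bracket is strictly larger than $1-\frac1N$, not at least $1$. This is still enough for strict positivity.
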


\begin{proof}
Let $\epsilon\in(0,\frac{1}{q})$ and fix $\bm{x}\in\mathcal{O}_{N}^{\epsilon}$.
Then, there exist $k,\ell\in\llbracket1,q\rrbracket$ such that $x_{k}>\frac{1}{q}$
(via the pigeonhole principle) and $x_{\ell}<\epsilon$. By Lemma
\ref{lem:jump-bound-1},
\[
F_{\beta}(\bm{x})-F_{\beta}\left(\bm{x}-\frac{\mathfrak{e}_{k}}{N}+\frac{\mathfrak{e}_{\ell}}{N}\right)\ge\frac{\phi\left(x_{\ell}+\frac{1}{N}\right)-\phi\left(x_{k}-\frac{1}{N}\right)-\frac{1}{N}}{N}.
\]
Let $\epsilon\in(0,\frac{1}{\beta}\wedge\frac{1}{q})$ such that $\phi(\epsilon)>\phi(\frac{1}{q})\vee\phi(1)$,
which is possible since $\phi(0+)=\infty$. Then since $x_{\ell}<\epsilon<\frac{1}{\beta}\wedge\frac{1}{q}\le\frac{1}{q}<x_{k}<1$,
$\phi(x_{\ell})>\phi(\epsilon)>\phi(\frac{1}{q})\vee\phi(1)>\phi(x_{k})$
so that the numerator in the right-hand side is positive for all sufficiently
large $N$. Thus, we may take $\bm{y}:=\bm{x}-\frac{\mathfrak{e}_{k}}{N}+\frac{\mathfrak{e}_{\ell}}{N}$.
\end{proof}
Next, we focus on the inner part $\mathcal{I}_{N}^{\epsilon}$. Recall
from Definition \ref{def:C-critical} that $\mathcal{C}$ is the set
of critical points of $F_{\beta}$. According to Lemma \ref{lem:critical-pts},
$\mathcal{C}$ is a finite set. Thus, we may choose $\epsilon>0$
small enough such that $\mathcal{C}$ is contained in the interior
of $\mathcal{I}^{\epsilon}$.

For $\alpha>0$ and $\bm{c}\in\mathcal{C}$, define
\[
\mathcal{N}_{\bm{c}}^{\alpha}:=\{\bm{y}\in\Xi:\|\bm{y}-\bm{c}\|_{L^{\infty}}<\alpha\}\qquad\text{and}\qquad\mathcal{N}^{\alpha}:=\bigcup_{\bm{c}\in\mathcal{C}}\mathcal{N}_{\bm{c}}^{\alpha},
\]
where $\|\cdot\|_{L^{\infty}}$ is the supremum norm. Here, $\alpha$
can be chosen small such that the sets $\mathcal{N}_{\bm{c}}^{\alpha},\bm{c}\in\mathcal{C}$
are disjoint and $\mathcal{N}^{\alpha}\subset\mathcal{I}^{\epsilon}$.
\begin{equation}
\text{the sets}\,\mathcal{N}_{\bm{c}}^{\alpha},\bm{c}\in\mathcal{C}\,\text{are disjoint and}\,\mathcal{N}^{\alpha}\subset\mathcal{I}^{\epsilon}.\label{eq:alpha}
\end{equation}
Write
\[
\mathcal{N}_{\bm{c},N}^{\alpha}:=\mathcal{N}_{\bm{c}}^{\alpha}\cap\Xi_{N},\qquad\mathcal{N}_{N}^{\alpha}:=\mathcal{N}^{\alpha}\cap\Xi_{N}=\bigcup_{\bm{c}\in\mathcal{C}}\mathcal{N}_{\bm{c},N}^{\alpha}.
\]
Recall \eqref{eq:FNk} and \eqref{eq:FN0}. For small enough $\alpha$,
we have $\mathcal{N}_{{\bf u}_{k},N}^{\alpha}\subset\mathcal{F}_{N}^{k}$
for each $k\in\llbracket1,q\rrbracket$ and $\mathcal{N}_{{\bf e},N}^{\alpha}\subset\mathcal{F}_{N}^{0}$
if $\beta\in(\beta_{1},q)$.

Let $\mathcal{M}^{\epsilon,\alpha}:=\mathcal{I}^{\epsilon}\setminus\mathcal{N}^{\alpha}$,
which is a compact set. Note that the map
\[
\bm{x}\mapsto\sum_{k=1}^{q-1}|\partial_{k}F_{\beta}(\bm{x})|^{2}
\]
is continuous on $\mathcal{M}^{\epsilon,\alpha}$ and does not vanish
since $\mathcal{C}\cap\mathcal{M}^{\epsilon,\alpha}=\varnothing$.
Thus, there exists a constant $c_{\epsilon,\alpha}>0$ such that
\begin{equation}
\sum_{k=1}^{q-1}|\partial_{k}F_{\beta}(\bm{x})|^{2}\ge c_{\epsilon,\alpha}\qquad\text{for all}\quad\bm{x}\in\mathcal{M}^{\epsilon,\alpha}.\label{eq:M-bdd-away}
\end{equation}
Let $\mathcal{M}_{N}^{\epsilon,\alpha}:=\mathcal{M}^{\epsilon,\alpha}\cap\Xi_{N}$.
From $\mathcal{M}_{N}^{\epsilon,\alpha}$, there always exists a jump
which lowers the energy:
\begin{lem}
\label{l2}For all sufficiently large $N$, given any $\bm{x}\in\mathcal{M}_{N}^{\epsilon,\alpha}$
there exists $\bm{y}\in\Xi_{N}$ with $r_{N}(\bm{x},\bm{y})>0$ such
that $F_{\beta}(\bm{x})>F_{\beta}(\bm{y})$.
\end{lem}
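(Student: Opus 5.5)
The plan is a first-order Taylor argument: on the compact set $\mathcal{M}^{\epsilon,\alpha}$ the gradient of $F_{\beta}$ is bounded away from zero by \eqref{eq:M-bdd-away}, so some single jump $\bm{x}\mapsto\bm{x}-\frac{\mathfrak{e}_{k}}{N}+\frac{\mathfrak{e}_{\ell}}{N}$ decreases $F_{\beta}$ by order $N^{-1}$, while the second-order error is $O(N^{-2})$ uniformly. In the coordinates of Notation \ref{nota:q-to-q-1}, the allowed increments are $\pm\frac{1}{N}\mathfrak{e}_{k}$ for $k\in\llbracket1,q-1\rrbracket$, corresponding to exchanging a spin between type $k$ and type $q$. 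Each is allowed since all coordinates are at least $\epsilon>0$ on $\mathcal{I}^{\epsilon}$, so the required rate $r_{N}$ in \eqref{eq:rN-value} is positive.

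Step 1: fix $\bm{x}\in\mathcal{M}_{N}^{\epsilon,\alpha}$. By \eqref{eq:M-bdd-away} there is $k\in\llbracket1,q-1\rrbracket$ with $|\partial_{k}F_{\beta}(\bm{x})|\ge\sqrt{c_{\epsilon,\alpha}/(q-1)}=:c'$. Let $s:=-\mathrm{sign}(\partial_{k}F_{\beta}(\bm{x}))$ and $\bm{y}:=\bm{x}+\frac{s}{N}\mathfrak{e}_{k}$. If $s=+1$ this moves a spin from type $q$ to type $k$, which needs $x_{q}\ge\epsilon>0$. If $s=-1$ it moves a spin from type $k$ to type $q$, which needs $x_{k}\ge\epsilon$. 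In either case $r_{N}(\bm{x},\bm{y})>0$.

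Step 2: Taylor expand along the segment $[\bm{x},\bm{y}]$:
\[
F_{\beta}(\bm{y})-F_{\beta}(\bm{x})=\frac{s}{N}\partial_{k}F_{\beta}(\bm{x})+\frac{1}{2N^{2}}\partial_{k}^{2}F_{\beta}(\bm{\xi})\le-\frac{c'}{N}+\frac{C_{\epsilon}}{N^{2}}.
\]
Here $\bm{\xi}$ lies on the segment and $C_{\epsilon}$ bounds the second derivatives of $F_{\beta}$ on $\mathcal{I}^{\epsilon/2}$. That bound is finite because $F_{\beta}=H+\beta^{-1}S$ is smooth there, with second derivatives involving only $1/x_{j}$ terms bounded by $2/\epsilon$. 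For $N>2/\epsilon$ the segment stays inside $\mathcal{I}^{\epsilon/2}$. Hence for $N>\max\{2/\epsilon,\,C_{\epsilon}/c'\}$ we get $F_{\beta}(\bm{y})<F_{\beta}(\bm{x})$, with thresholds independent of $\bm{x}$.

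The only delicate point is uniformity in $\bm{x}$. It comes from two facts: the lower bound $c_{\epsilon,\alpha}$ holds on the whole compact set $\mathcal{M}^{\epsilon,\alpha}$, and the Hessian bound holds on a slightly enlarged interior region. Since $\bm{y}$ need not lie in $\mathcal{M}^{\epsilon,\alpha}$, the bound must be taken on $\mathcal{I}^{\epsilon/2}$, which is why that region is used. No further obstacle is expected.
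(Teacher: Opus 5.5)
Your proof is correct and follows the paper's argument: you choose $k$ with $|\partial_{k}F_{\beta}(\bm{x})|\ge\sqrt{c_{\epsilon,\alpha}/(q-1)}$ and exchange a spin between types $k$ and $q$ in the descending direction. The only difference is how the discrete increment is controlled. The paper uses the explicit lower bound of Lemma \ref{lem:jump-bound-1}, together with $\partial_{k}F_{\beta}=\phi(x_{q})-\phi(x_{k})$ and uniform continuity of $\phi$ away from $0$. You instead use a second-order Taylor bound with the Hessian controlled on $\mathcal{I}^{\epsilon/2}$, which is equally valid, and you correctly handle the fact that $\bm{y}$ may leave $\mathcal{M}^{\epsilon,\alpha}$.
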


\begin{proof}
According to \eqref{eq:M-bdd-away}, there exists $k\in\llbracket1,q-1\rrbracket$
such that $|\partial_{k}F_{\beta}(\bm{x})|\ge\sqrt{\frac{c_{\epsilon,\alpha}}{q-1}}=:c_{\epsilon,\alpha}'$.
By \eqref{eq:Fbeta-diff},
\[
\partial_{k}F_{\beta}(\bm{x})=\phi(x_{q})-\phi(x_{k}).
\]
First assume that $\partial_{k}F_{\beta}(\bm{x})\ge c_{\epsilon,\alpha}'$.
By Lemma \ref{lem:jump-bound-1},
\[
F_{\beta}(\bm{x})-F_{\beta}\left(\bm{x}-\frac{\mathfrak{e}_{k}}{N}+\frac{\mathfrak{e}_{q}}{N}\right)\ge\frac{\phi\left(x_{q}+\frac{1}{N}\right)-\phi\left(x_{k}-\frac{1}{N}\right)-\frac{1}{N}}{N}.
\]
Since $\partial_{k}F_{\beta}(\bm{x})=\phi(x_{q})-\phi(x_{k})\ge c_{\epsilon,\alpha}'>0$
and $\phi$ is uniformly continuous on $\mathcal{M}^{\epsilon,\alpha}$,
the numerator in the right-hand side is strictly positive for all
large $N$. Thus, setting $\bm{y}:=\bm{x}-\frac{\mathfrak{e}_{k}}{N}+\frac{\mathfrak{e}_{q}}{N}$
proves the lemma. If $\partial_{k}F_{\beta}(\bm{x})\le-c_{\epsilon,\alpha}'$
then similarly setting $\bm{y}:=\bm{x}-\frac{\mathfrak{e}_{q}}{N}+\frac{\mathfrak{e}_{k}}{N}$
concludes the proof.
\end{proof}
We are left to handle $\mathcal{N}_{N}^{\alpha}$. Recall the decomposition
$\mathcal{C}=\mathcal{C}_{1}\cup\mathcal{C}_{2}\cup\mathcal{C}_{3}\cup\mathcal{C}_{4}$
from Definition \ref{def:critical-pts}. For $n\in\llbracket1,4\rrbracket$,
define
\[
\mathcal{N}_{n,N}^{\alpha}:=\bigcup_{\bm{c}\in\mathcal{C}_{n}}\mathcal{N}_{\bm{c},N}^{\alpha}\qquad\text{so that}\qquad\mathcal{N}_{N}^{\alpha}=\mathcal{N}_{1,N}^{\alpha}\cup\mathcal{N}_{2,N}^{\alpha}\cup\mathcal{N}_{3,N}^{\alpha}\cup\mathcal{N}_{4,N}^{\alpha}.
\]
Note that we have the following decomposition:
\begin{equation}
\Xi_{N}=\mathcal{O}_{N}^{\epsilon}\cup\mathcal{M}_{N}^{\epsilon,\alpha}\cup\mathcal{N}_{1,N}^{\alpha}\cup\mathcal{N}_{2,N}^{\alpha}\cup\mathcal{N}_{3,N}^{\alpha}\cup\mathcal{N}_{4,N}^{\alpha}.\label{eq:XiN-decomp}
\end{equation}
First, we deal with $\mathcal{N}_{1,N}^{\alpha}=\mathcal{N}_{{\bf e},N}^{\alpha}$.
Mind that $\mathcal{F}_{N}^{0}=\varnothing$ if and only if $\beta\ge q$.
\begin{lem}
\label{l3}Let $\beta\in[q,\infty)$. There exists $\alpha_{1}>0$
such that for all $\alpha\in(0,\alpha_{1})$, we have the following:
For sufficiently large $N$, from each $\bm{x}\in\mathcal{N}_{1,N}^{\alpha}$
there exists a path to some $\bm{y}\in\Xi_{N}$ such that $F_{\beta}(\bm{y})<\min_{\mathcal{N}_{1,N}^{\alpha}}F_{\beta}$
and its energy is at most $\max_{\mathcal{N}_{1,N}^{\alpha}}F_{\beta}$.
\end{lem}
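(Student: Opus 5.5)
The plan is to exploit that for $\beta\ge q$ the point ${\bf e}$ is not a local minimum. For $\beta>q$ it is a non-degenerate saddle or maximum; for $\beta=q$ it is degenerate but still not a minimum. We construct an explicit straight-line direction along which $F_{\beta}$ strictly decreases away from ${\bf e}$, follow it by a lattice path until we leave $\mathcal{N}_{{\bf e}}^{\alpha}$ at a point of strictly lower energy, and then discretize with a controlled error.

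\textbf{Step 1: choose the direction.} Take the ray toward ${\bf u}_{1}$: $\gamma(t):={\bf e}+t(\mathfrak{e}_{1}-{\bf e})$ for $t\ge0$, with $\mathfrak{e}_{1}$ the first unit vector in $\mathbb{R}^{q}$, so that $\gamma(t)_{1}=\frac{1}{q}+t\frac{q-1}{q}$ and $\gamma(t)_{k}=\frac{1-t}{q}$ for $k\ne1$. Along this ray we have $f(t):=F_{\beta}(\gamma(t))=-\frac12|\gamma(t)|^{2}+\frac1\beta S(\gamma(t))$. Expand at $t=0$: $f'(0)=0$, and
\[
f''(0)=\frac{(q-1)^{2}}{q}\Big(-1+\frac{q}{\beta}\Big)
\]
up to a positive factor. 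If $\beta>q$ then $f''(0)<0$, so there is $t_{1}>0$ with $f$ strictly decreasing on $[0,t_{1}]$. If $\beta=q$ the second derivative vanishes, and a third-order computation gives $f'''(0)<0$ (this is the known cubic term of the Potts free energy in the direction of ${\bf u}_{1}$). Hence $f$ is again strictly decreasing on some $[0,t_{1}]$. Choose $\alpha_{1}$ so small that $\gamma([0,t_{1}])$ exits $\mathcal{N}_{{\bf e}}^{\alpha}$ for every $\alpha<\alpha_{1}$; specifically, require $t_{1}\frac{q-1}{q}>2\alpha_{1}$. Then
\[
\min_{t\in[t_\alpha,t_{1}]}\Big(\min_{\overline{\mathcal{N}_{{\bf e}}^{\alpha}}}F_{\beta}-f(t)\Big)>0,
\]
where $t_{\alpha}$ is chosen with $\|\gamma(t_\alpha)-{\bf e}\|_\infty=2\alpha$. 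This inequality is the hard part, and I expect it to be the main obstacle: $\min_{\mathcal{N}^{\alpha}_{\bf e}}F_\beta$ is attained near the boundary of the box, possibly in other directions. To handle it, I would pick the direction of steepest descent in the quadratic (resp. cubic) approximation, so that $f(t_\alpha)\le F_\beta({\bf e})-c\alpha^{2}$ (resp. $-c\alpha^{3}$) with a constant $c$ exceeding the optimal decrease over the $\alpha$-box. Failing that, I would continue along the ray to a fixed $t_{1}$ independent of $\alpha$ (for instance up to the neighbourhood of ${\bf u}_{1}$, where $F_\beta$ is far lower, since for $\beta\ge q$ the trajectory from ${\bf e}$ toward ${\bf u}_1$ is descending by the structure of $\mathcal{C}$ from Appendix~\ref{secA}), and then shrink $\alpha_{1}$ so that $\min_{\mathcal{N}^\alpha_{\bf e}}F_\beta>f(t_1)+\kappa$ for some $\kappa>0$ by continuity.

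\textbf{Step 2: the path from $\bm x$ to the ray.} Given $\bm{x}\in\mathcal{N}_{1,N}^{\alpha}$, first move within the box $\mathcal{N}^{\alpha}_{{\bf e}}$ (which is convex in the coordinates), one spin at a time, to the lattice point nearest ${\bf e}$. All intermediate points lie in the box, except possibly for an $O(1/N)$ overshoot, so by uniform continuity their energy is at most $\max_{\mathcal{N}^{\alpha}_{1,N}}F_\beta+O(1/N)$. To avoid even this excess, order the moves monotonically coordinatewise so that every point stays in $\mathcal{N}_{{\bf e}}^\alpha\cap\Xi_N$.

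\textbf{Step 3: the path along the ray.} From the lattice point nearest ${\bf e}$, follow $\gamma$ by lattice steps, moving spins from types $k\ne1$ to type $1$ in round-robin fashion, so that the path stays within $L^\infty$-distance $q/N$ of $\gamma$. While the path is inside the box, its energy is bounded by the box maximum (checking membership as in Step 2). Outside the box, its energy is at most $f(t)+O(1/N)<F_\beta({\bf e})-c'\le\max F_\beta$. The endpoint $\bm y$ near $\gamma(t_1)$ satisfies $F_\beta(\bm y)\le f(t_1)+O(1/N)<\min_{\mathcal{N}^\alpha_{1,N}}F_\beta$ for large $N$ by the strict gap from Step 1, which gives the lemma.
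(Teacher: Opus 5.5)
Your proposal is correct and follows the paper's skeleton: a coordinatewise-monotone path inside the box to the lattice point ${\bf e}(N)$, then a round-robin lattice path shadowing the segment from ${\bf e}$ toward ${\bf u}_{1}$. The difference is in how descent along that segment is obtained. The paper shows that $F_{\beta}$ decreases along the \emph{whole} segment up to ${\bf u}_{1}$, using the sign analysis of $\psi(t)=\phi(t)-\phi(1-(q-1)t)$ in Lemma~\ref{lem:u1v1q}-(2); at $\beta=q$ this needs the extra check $\psi'(u_{1})<0$. It then uses the gap $F_{\beta}({\bf u}_{1})<F_{\beta}({\bf e})$. You only need descent on a fixed initial interval $[0,t_{1}]$, and your Taylor expansion gives this directly: $f''(0)<0$ for $\beta>q$, and $f'''(0)=-(q-1)(q-2)/\beta<0$ for $\beta=q$. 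Your route is therefore more local and self-contained. The paper's global version ends the path at ${\bf u}_{1}(N)$, deep inside $\mathcal{F}_{N}^{1}$, and shares its $\psi$-analysis with the ${\bf v}_{k}$ case of Lemma~\ref{l4}. The ``main obstacle'' you flag is not real. The strict inequality against $\min_{\mathcal{N}_{1,N}^{\alpha}}F_{\beta}$ is required only at the endpoint near $\gamma(t_{1})$, not on all of $[t_{\alpha},t_{1}]$. Because $t_{1}$ is fixed before $\alpha_{1}$, it follows by continuity from $f(t_{1})<F_{\beta}({\bf e})$. This is exactly your fallback, and the steepest-descent remedy can be dropped. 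Finally, your inside/outside-the-box split in Step 3 is the right way to bound the energy of the path. The paper's displayed bound $F_{\beta}({\bf e})+\frac{1}{N}\|\nabla F_{\beta}\|_{L^{\infty}(\mathcal{I}^{\epsilon})}\le\max_{\mathcal{N}_{1,N}^{\alpha}}F_{\beta}$ cannot hold as written when $\beta>q$, since ${\bf e}$ is then a strict local maximum. The lemma's conclusion is unaffected.
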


\begin{proof}
Take $\alpha_{1}>0$ small enough so that for all $\alpha\in(0,\alpha_{1})$,
\[
F_{\beta}({\bf u}_{1})+\frac{q-1}{N}\|\nabla F_{\beta}\|_{L^{\infty}(\mathcal{I}^{\epsilon})}<\min_{\mathcal{N}_{1,N}^{\alpha}}F_{\beta}.
\]
Starting from any $\bm{x}\in\mathcal{N}_{1,N}^{\alpha}$, there exists
a path inside $\mathcal{N}_{1,N}^{\alpha}$ from $\bm{x}$ to ${\bf e}(N)\in\mathcal{N}_{1,N}^{\alpha}$,
where ${\bf e}(N)$ is chosen to be any point in $\mathcal{N}_{1,N}^{\alpha}$
that satisfies $\|{\bf e}(N)-{\bf e}\|_{L^{\infty}}<\frac{1}{N}$.

Next, define $\Phi:[0,1]\to\mathbb{R}$ as $\Phi(t):=F_{\beta}({\bf e}+t({\bf u}_{1}-{\bf e}))$,
a function on the linear trajectory from ${\bf e}$ to ${\bf u}_{1}$.
Then by a simple calculation,
\[
\Phi'(t)=(q-1)\left(\frac{1}{q}-u_{1}\right)\left[\phi\left(u_{1}+t\left(\frac{1}{q}-u_{1}\right)\right)-\phi\left(1-(q-1)\left(u_{1}+t\left(\frac{1}{q}-u_{1}\right)\right)\right)\right].
\]
According to Lemma \ref{lem:u1v1q}-(2), $\Phi'(t)<0$ for all $t\in(0,1)$.
This implies that $\Phi$ is a descending trajectory (which is quite
natural looking at the energy landscape in Figure \ref{fig4.1}).

Now, we define a path from ${\bf e}(N)$ as follows. At each step,
jump in the direction of $\frac{1}{N}(\mathfrak{e}_{1}-\mathfrak{e}_{k})$
for each $k\in\llbracket2,q\rrbracket$ consecutively. Repeating this
step $\lfloor N(\frac{1}{q}-u_{1})\rfloor$ times, we arrive at
\[
{\bf u}_{1}(N):={\bf e}(N)+\left\lfloor N\left(\frac{1}{q}-u_{1}\right)\right\rfloor \left(\frac{q-1}{N}\mathfrak{e}_{1}-\frac{1}{N}\mathfrak{e}_{2}-\cdots-\frac{1}{N}\mathfrak{e}_{q}\right).
\]
Then,
\[
\|{\bf u}_{1}(N)-{\bf u}_{1}\|_{L^{\infty}}<\left\Vert \frac{q-1}{N}\mathfrak{e}_{1}-\frac{1}{N}\mathfrak{e}_{2}-\cdots-\frac{1}{N}\mathfrak{e}_{q}\right\Vert _{L^{\infty}}=\frac{q-1}{N}.
\]
In addition, every element of this path is at most $\frac{1}{N}$
$L^{\infty}$-distance away from the linear trajectory from ${\bf e}$
to ${\bf u}_{1}$. This gives us a path from ${\bf e}(N)$ to ${\bf u}_{1}(N)$
whose energy is at most $F_{\beta}({\bf e})+\frac{1}{N}\|\nabla F_{\beta}\|_{L^{\infty}(\mathcal{I}^{\epsilon})}$
and 
\[
F_{\beta}({\bf u}_{1}(N))\le F_{\beta}({\bf u}_{1})+\frac{q-1}{N}\|\nabla F_{\beta}\|_{L^{\infty}(\mathcal{I}^{\epsilon})}.
\]
Thus for all large $N$
\[
F_{\beta}({\bf u}_{1}(N))<\min_{\mathcal{N}_{1,N}^{\alpha}}F_{\beta}\qquad\text{and}\qquad F_{\beta}({\bf e})+\frac{1}{N}\|\nabla F_{\beta}\|_{L^{\infty}(\mathcal{I}^{\epsilon})}\le\max_{\mathcal{N}_{1,N}^{\alpha}}F_{\beta}.
\]
Concatenating the two paths as $\bm{x}\to{\bf e}(N)\to{\bf u}_{1}(N)$
and letting $\bm{y}:={\bf u}_{1}(N)$ finish the proof.
\end{proof}
We go forward. since $\mathcal{N}_{2,N}^{\alpha}\subset\mathcal{F}_{N}^{\llbracket1,q\rrbracket}$,
we proceed to $\mathcal{N}_{3,N}^{\alpha}$.
\begin{lem}
\label{l4}There exists $\alpha_{2}>0$ such that for all $\alpha\in(0,\alpha_{2})$,
we have the following: For sufficiently large $N$, from each $\bm{x}\in\mathcal{N}_{3,N}^{\alpha}$
there exists a path to some $\bm{y}\in\Xi_{N}$ such that $F_{\beta}(\bm{y})<\min_{\mathcal{N}_{3,N}^{\alpha}}F_{\beta}$
and its energy is bounded by $\max_{\mathcal{N}_{3,N}^{\alpha}}F_{\beta}$.
\end{lem}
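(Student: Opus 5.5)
We will follow the template of Lemma \ref{l3}, with the critical points in $\mathcal{C}_{3}$ in place of ${\bf e}$. From Definition \ref{def:critical-pts} we expect these to be the points ${\bf v}_{k}$ (and, when they exist, ${\bf u}_{k,\ell}$), which are saddle-type critical points. By symmetry under permutations of spins, it suffices to treat one representative $\bm{c}\in\mathcal{C}_{3}$, say ${\bf v}_{1}$.

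\emph{Step 1 (choice of $\alpha_{2}$).} We take $\alpha_{2}$ small so that $\max_{\mathcal{N}_{3}^{\alpha}}F_{\beta}-\min_{\mathcal{N}_{3}^{\alpha}}F_{\beta}$ is smaller than a fixed positive gap $g$, chosen in Step 3.

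\emph{Step 2 (move to the grid copy of the critical point).} From $\bm{x}\in\mathcal{N}_{\bm{c},N}^{\alpha}$ we want a lattice path to a point $\bm{c}(N)$ with $\|\bm{c}(N)-\bm{c}\|_{L^{\infty}}<\frac{1}{N}$. A path that stays inside $\mathcal{N}_{\bm{c},N}^{\alpha}$ has energy at most $\max_{\mathcal{N}_{3,N}^{\alpha}}F_{\beta}$. Such a path exists because the $L^{\infty}$ box is convex, so we can move coordinatewise.

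\emph{Step 3 (descending trajectory out of $\bm{c}$).} We need an explicit straight trajectory from $\bm{c}$ along which $F_{\beta}$ strictly decreases.
\begin{itemize}
\item For ${\bf v}_{1}$, which lies on the line between ${\bf e}$ and ${\bf u}_{1}$, we take the segment from ${\bf v}_{1}$ toward ${\bf u}_{1}$. As in Lemma \ref{l3}, $\Phi(t)=F_{\beta}({\bf v}_{1}+t({\bf u}_{1}-{\bf v}_{1}))$ has derivative a positive multiple of $\phi(\cdot)-\phi(\cdot)$ evaluated along the symmetric family $(s,\frac{1-s}{q-1},\dots)$. Its sign is controlled by the one-dimensional analysis of Lemma \ref{lem:u1v1q}: ${\bf v}_{1}$ and ${\bf u}_{1}$ are consecutive zeros of that function, so the derivative keeps one sign in between.
\item For ${\bf u}_{k,\ell}$, when it is a saddle of index one, we use the segment toward ${\bf u}_{k}$. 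Here we use the analogous two-parameter symmetric reduction, moving mass from coordinate $\ell$ to coordinate $k$, and check the sign of the derivative similarly.
\end{itemize}
We then set $g:=F_{\beta}(\bm{c})-F_{\beta}(\text{endpoint})>0$, where the endpoint is ${\bf u}_{1}$, or any fixed point of the segment strictly past $\bm{c}$. Only positivity of $g$ matters.

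\emph{Step 4 (discretize).} We discretize the segment exactly as in Lemma \ref{l3}, jumping repeatedly in the directions $\frac{1}{N}(\mathfrak{e}_{1}-\mathfrak{e}_{j})$ consecutively for $j\in\llbracket2,q\rrbracket$. The path stays within $O(\frac{1}{N})$ of the segment. Hence its energy is at most $F_{\beta}(\bm{c})+O(\frac{1}{N})\le\max_{\mathcal{N}_{3,N}^{\alpha}}F_{\beta}$ for large $N$, since the neighborhood contains points strictly above $F_{\beta}(\bm{c})$ because $\bm{c}$ is a saddle. The endpoint $\bm{y}$ has $F_{\beta}(\bm{y})\le F_{\beta}(\bm{c})-g+O(\frac{1}{N})<\min_{\mathcal{N}_{3,N}^{\alpha}}F_{\beta}$ by Step 1. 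Concatenating the paths of Steps 2 and 4 proves the lemma.

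The main obstacle is Step 3: verifying strict monotonicity of $F_{\beta}$ along the chosen straight trajectory from each $\mathcal{C}_{3}$ point for every $\beta$ range. For ${\bf v}_{1}$ we would invoke the explicit characterization in Appendix \ref{secA}, namely Lemma \ref{lem:u1v1q}. For ${\bf u}_{k,\ell}$, if a straight segment fails, we would fall back on following the negative-eigenvector direction for a short distance $t_{0}$. By the Taylor expansion, this lowers $F_{\beta}$ by about $ct_{0}^{2}$ while staying near $F_{\beta}(\bm{c})$ from above. That short segment already suffices, provided $\alpha_{2}$ is chosen after $t_{0}$.
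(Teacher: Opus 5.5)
Your Step 3 fails for ${\bf v}_k$ when $\beta\in(q,\infty)$, a regime covered by the lemma for every $q\ge3$. There \eqref{eq:vi-ineq} gives $u_1<\frac{1}{q}<v_1$. So $u_1$ and $v_1$ are \emph{not} consecutive zeros of $\psi(t)=\phi(t)-\phi(1-(q-1)t)$: the zero $\frac{1}{q}$, i.e.\ the point ${\bf e}$, lies between them. Lemma \ref{lem:u1v1q}-(2) only gives $\psi<0$ on $(u_1,\frac{1}{q})\cup(v_1,\frac{1}{q-1})$, and in fact $\psi>0$ on $(\frac{1}{q},v_1)$. Writing $f(s)=F_\beta(1-(q-1)s,s,\dots,s)$, one has $f'(s)=-(q-1)\psi(s)$. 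Hence $F_\beta$ \emph{increases} as you move from ${\bf v}_1$ toward ${\bf u}_1$. The segment climbs to ${\bf e}$, which is a local maximum for $\beta>q$, before it descends. Its energy is therefore about $F_\beta({\bf e})>F_\beta({\bf v}_1)$, which exceeds $\max_{\mathcal{N}_{3,N}^{\alpha}}F_\beta$ once $\alpha$ is small. Since also $\psi<0$ on $(v_1,\frac{1}{q-1})$, ${\bf v}_1$ is a local minimum of $F_\beta$ restricted to the whole symmetric line. So no straight descent along that line exists in either direction.

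The missing idea is to leave the symmetric line transversally, exchanging mass between two of the $q-1\ge2$ coordinates equal to $v_1$. For ${\bf v}_q$, take $\Phi(t)=F_\beta({\bf v}_q+\gamma t(\mathfrak{e}_2-\mathfrak{e}_1))$. Then $\Phi'(t)=\gamma(\phi(v_1-\gamma t)-\phi(v_1+\gamma t))<0$ for $t\in(0,1)$ with $\gamma=v_1-\frac{1}{\beta}$, because $v_1>\frac{1}{q}>\frac{1}{\beta}$ and $\phi$ is increasing on $[\frac{1}{\beta},1]$. The path is then discretized by single jumps in direction $\frac{1}{N}(\mathfrak{e}_2-\mathfrak{e}_1)$. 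This is exactly the paper's argument for $\beta>q$; for $\beta\in(\beta_1,q]$ your segment toward ${\bf u}_k$ is what the paper does.

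Your Taylor fallback would also work at ${\bf v}_q$, since the second derivative there in direction $\mathfrak{e}_2-\mathfrak{e}_1$ is $-2\phi'(v_1)<0$. But you invoked it only for ${\bf u}_{k,\ell}$ and asserted that the straight segment suffices for ${\bf v}_1$.

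Finally, by Definition \ref{def:critical-pts}, $\mathcal{C}_3=\{{\bf v}_1,\dots,{\bf v}_q\}$. The points ${\bf u}_{k,\ell}$ belong to $\mathcal{C}_4$ and are handled in Lemma \ref{l5}, so your second bullet is not needed here.
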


\begin{proof}
Recall that $\mathcal{C}_{3}=\{{\bf v}_{1},\dots,{\bf v}_{q}\}$.
Using the same logic as in Lemma \ref{l3}, it suffices to find for
each $k\in\llbracket1,q\rrbracket$ a linear descending trajectory
from ${\bf v}_{k}$. If $\beta\in(\beta_{1},q]$ then by a similar
argument as in Lemma \ref{l3}, $F_{\beta}$ decreases along the linear
trajectory from ${\bf v}_{k}$ to ${\bf u}_{k}$, so we are done.

Suppose that $\beta\in(q,\infty)$. Without loss of generality, let
$k=q$. Consider $\Phi:t\mapsto F_{\beta}({\bf v}_{q}+\gamma t(\mathfrak{e}_{2}-\mathfrak{e}_{1}))$,
$t\in[0,1]$, for small $\gamma>0$. Differentiating,
\begin{equation}
\Phi'(t)=-\gamma\left(\frac{1}{\beta}-\phi(v_{1}-\gamma t)\right)+\gamma\left(\frac{1}{\beta}-\phi(v_{1}+\gamma t)\right)=\gamma(\phi(v_{1}-\gamma t)-\phi(v_{1}+\gamma t)).\label{eq:l4}
\end{equation}
Since $v_{1}>\frac{1}{q}>\frac{1}{\beta}$ (cf. \eqref{eq:vi-ineq})
in this regime and $\phi$ increases on $(\frac{1}{\beta},1]$, the
right-hand side of \eqref{eq:l4} is negative for all $t\in(0,1)$
if $\gamma$ is chosen as $\gamma:=v_{1}-\frac{1}{\beta}$. This gives
a descending trajectory from ${\bf v}_{q}$ to ${\bf v}_{q}+\gamma(\mathfrak{e}_{2}-\mathfrak{e}_{1})$.
Taking $\alpha_{2}>0$ sufficiently small so that $F_{\beta}({\bf v}_{q}+\gamma(\mathfrak{e}_{2}-\mathfrak{e}_{1}))<\min_{\mathcal{N}_{3,N}^{\alpha}}F_{\beta}$
for all $\alpha\in(0,\alpha_{2})$ completes the proof.
\end{proof}
Finally, we handle $\mathcal{N}_{4,N}^{\alpha}$.
\begin{lem}
\label{l5}There exists $\alpha_{3}>0$ such that for all $\bm{c}\in\mathcal{C}_{4}$,
$\alpha\in(0,\alpha_{3})$, we have the following: For sufficiently
large $N$, from each $\bm{x}\in\mathcal{N}_{\bm{c},N}^{\alpha}$
there exists a path to some $\bm{y}\in\Xi_{N}$ such that $F_{\beta}(\bm{y})<\min_{\mathcal{N}_{\bm{c},N}^{\alpha}}F_{\beta}$
and its energy is bounded by $\max_{\mathcal{N}_{\bm{c},N}^{\alpha}}F_{\beta}$.
\end{lem}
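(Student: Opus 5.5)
The plan follows the template of Lemmas \ref{l3} and \ref{l4}. The argument has two parts. First, inside the small box $\mathcal{N}_{\bm{c},N}^{\alpha}$, connect $\bm{x}$ by a lattice path to a grid point $\bm{c}(N)$ with $\|\bm{c}(N)-\bm{c}\|_{L^{\infty}}<\frac{1}{N}$. Since the box is convex in the $L^{\infty}$ sense and single-coordinate exchanges $\pm\frac{1}{N}(\mathfrak{e}_{k}-\mathfrak{e}_{\ell})$ stay inside it, this path has energy at most $\max_{\mathcal{N}_{\bm{c},N}^{\alpha}}F_{\beta}$. Second, exhibit a \emph{linear descending trajectory} $t\mapsto\bm{c}+\gamma t\bm{w}$, $t\in[0,1]$, with a fixed direction $\bm{w}$ of the form $\mathfrak{e}_{m}-\mathfrak{e}_{n}$ (or a combination of such) and fixed $\gamma>0$, along which $F_{\beta}$ strictly decreases. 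Discretize it exactly as in Lemma \ref{l3}, following it within $L^{\infty}$-distance $O(\frac{1}{N})$. The resulting path has energy at most $F_{\beta}(\bm{c})+\frac{C}{N}\|\nabla F_{\beta}\|_{L^{\infty}(\mathcal{I}^{\epsilon})}\le\max_{\mathcal{N}_{\bm{c},N}^{\alpha}}F_{\beta}$ for large $N$, since the box contains points above $F_{\beta}(\bm{c})$ at distance $\asymp\alpha$ when $\bm{c}$ is not a local minimum. Its endpoint satisfies $F_{\beta}(\bm{y})\le F_{\beta}(\bm{c}+\gamma\bm{w})+O(\frac{1}{N})$. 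Finally choose $\alpha_{3}$ so small that $F_{\beta}(\bm{c}+\gamma\bm{w})<\min_{\mathcal{N}_{\bm{c}}^{\alpha}}F_{\beta}$ for all $\bm{c}\in\mathcal{C}_{4}$. This is possible because $\mathcal{C}_{4}$ is finite (Lemma \ref{lem:critical-pts}) and $\min_{\mathcal{N}_{\bm{c}}^{\alpha}}F_{\beta}\to F_{\beta}(\bm{c})$ as $\alpha\to0$.

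The substance is therefore the construction of $\bm{w}$ for each $\bm{c}\in\mathcal{C}_{4}$. I would use the characterization in Appendix \ref{secA}. Critical points satisfy $\phi(c_{1})=\cdots=\phi(c_{q})$ by \eqref{eq:Fbeta-diff}. Because $\phi$ is strictly decreasing on $(0,\frac{1}{\beta}]$ and strictly increasing on $[\frac{1}{\beta},1]$, each coordinate of $\bm{c}$ takes one of at most two values $a<\frac{1}{\beta}<b$. The points of $\mathcal{C}_{4}$ are those not of the types ${\bf e}$, ${\bf u}_{k}$, ${\bf v}_{k}$, ${\bf u}_{k,\ell}$. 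In particular, I expect that they have at least two coordinates equal to the larger value $b>\frac{1}{\beta}$, say $c_{m}=c_{n}=b$, and that this is what distinguishes them from ${\bf u}_{k}$. In that case take $\bm{w}=\mathfrak{e}_{n}-\mathfrak{e}_{m}$. The computation \eqref{eq:l4} gives
\[
\frac{{\rm d}}{{\rm d}t}F_{\beta}(\bm{c}+\gamma t(\mathfrak{e}_{n}-\mathfrak{e}_{m}))=\gamma\bigl(\phi(b-\gamma t)-\phi(b+\gamma t)\bigr)<0
\]
for $t\in(0,1]$, provided $\gamma\le b-\frac{1}{\beta}$ (and $b+\gamma\le1$). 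The reason is that $\phi$ is increasing on $[\frac{1}{\beta},1]$. This is exactly the mechanism of Lemma \ref{l4}, and it gives a strictly descending segment.

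The main obstacle is verifying from Appendix \ref{secA} that every $\bm{c}\in\mathcal{C}_{4}$ really has two coordinates strictly above $\frac{1}{\beta}$. If some element of $\mathcal{C}_{4}$ instead has a single large coordinate, this must be handled separately. Such a point would have the form $(b,a,\dots,a)$ up to permutation but not be ${\bf u}_{k}$ or ${\bf v}_{k}$; if such points are excluded by the appendix, there is nothing to do. Otherwise I would fall back on a second-order argument: a non-minimum critical point has a direction $\bm{w}$ with $\bm{w}^{\dagger}\nabla^{2}F_{\beta}(\bm{c})\bm{w}<0$ (using Lemma \ref{lem:critical-pts}/the appendix to rule out degenerate local minima in $\mathcal{C}_{4}$). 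Then $F_{\beta}(\bm{c}+s\bm{w})<F_{\beta}(\bm{c})$ for all small $s\ne0$, and I would pick $\gamma$ small enough that the segment is descending on $(0,1]$. The discretization step is unchanged, since it only needs a fixed segment with $F_{\beta}(\bm{c}+\gamma\bm{w})<F_{\beta}(\bm{c})$ and energy along it at most $F_{\beta}(\bm{c})$.
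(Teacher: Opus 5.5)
Your plan is the paper's proof. You connect $\bm x$ to $\bm c(N)$ inside the box, then follow the segment from $\bm c$ in direction $\mathfrak e_n-\mathfrak e_m$ with $c_m=c_n>\frac1\beta$ and $\gamma=c_m-\frac1\beta$, which descends by the computation \eqref{eq:l4}. Finally you shrink $\alpha_3$ using finiteness of $\mathcal C_4$.

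The step you left as an expectation is exactly Lemma \ref{lem:two-coordinates}, and it follows in two lines from Definition \ref{def:critical-pts}. Each $\bm c\in\mathcal C_4$ has $q-i$ coordinates equal to $t_i$ and $i$ coordinates equal to $\frac{1-(q-i)t_i}{i}$, with $2\le i\le\frac q2$, so both values occur at least twice. The two values are distinct because $\bm c\ne{\bf e}$. They have the same $\phi$-value by \eqref{eq:phi-const}, so exactly one of them exceeds $\frac1\beta$. A critical point with a single coordinate above $\frac1\beta$ forces $i=1$, i.e.\ ${\bf u}_k$ or ${\bf v}_k$. So the Hessian fallback can be dropped. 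As stated it would in any case need non-degeneracy of $\bm c$, since a critical point that is not a local minimum can still have a positive semidefinite Hessian; excluding degenerate local minima does not suffice.

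Two small corrections. First, ${\bf u}_{k,\ell}$ is not excluded from $\mathcal C_4$. It is the case $i=2$, $t_2=u_2$, and it must lie in $\mathcal C_4$ for \eqref{eq:XiN-decomp} to cover $\Xi_N$. Your argument handles it anyway, since its two coordinates $\frac{1-(q-2)u_2}{2}$ exceed $\frac1\beta$.

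Second, to get $F_\beta(\bm c)+O(\frac1N)\le\max_{\mathcal N^{\alpha}_{\bm c,N}}F_\beta$ you need $\bm c$ not to be a local \emph{maximum}; ``not a local minimum'' gives points below $F_\beta(\bm c)$, not above. This also follows from the multiplicity count. The value below $\frac1\beta$ occurs in at least two coordinates, and moving mass $s$ between two such coordinates raises $F_\beta$ by roughly $|\phi'(c_m)|s^2>0$, because $\phi$ decreases on $(0,\frac1\beta]$. So the box contains grid points with $F_\beta\ge F_\beta(\bm c)+c\alpha^2$ for large $N$.
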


\begin{proof}
As before, let us find for each $\bm{c}\in\mathcal{C}_{4}$ a linear
trajectory from $\bm{c}$ to another $\bm{y}$ such that $F_{\beta}(\bm{y})<F_{\beta}(\bm{c})$.

According to Lemma \ref{lem:two-coordinates}, for any critical point
in $\bm{c}=(c_{1},\dots,c_{q})\in\mathcal{C}_{4}$, there exist two
distinct indices $k,\ell\in\llbracket1,q\rrbracket$ such that $c_{k}=c_{\ell}>\frac{1}{\beta}$.
Thus as verified in \eqref{eq:l4}, the linear trajectory from $\bm{c}$
to $\bm{c}+\gamma(\mathfrak{e}_{\ell}-\mathfrak{e}_{k})$ where $\gamma=\gamma(\bm{c}):=c_{k}-\frac{1}{\beta}$
descends. Finally, taking $\alpha_{3}>0$ sufficiently small so that
$F_{\beta}(\bm{c}+\gamma(\mathfrak{e}_{\ell}-\mathfrak{e}_{k}))<\min_{\mathcal{N}_{\bm{c},N}^{\alpha}}F_{\beta}$
for all $\bm{c}\in\mathcal{C}_{4}$ and $\alpha\in(0,\alpha_{3})$
completes the proof.
\end{proof}
Now, we are ready to prove Lemma \ref{lem:almost-desc-path} for $q\ge3$.
\begin{proof}[Proof of Lemma \ref{lem:almost-desc-path}: $q\ge3$]
 Fix $\delta>0$. Observe that for any $\bm{x}\in\mathcal{N}_{\bm{c}}^{\alpha}$
where $\bm{c}\in\mathcal{C}$,
\[
|F_{\beta}(\bm{x})-F_{\beta}(\bm{c})|\le\alpha\|\nabla F_{\beta}\|_{L^{\infty}(\mathcal{I}^{\epsilon})}.
\]
Let $\alpha_{1}$, $\alpha_{2}$, and $\alpha_{3}$ be given in Lemmas
\eqref{l3}, \ref{l4}, and \ref{l5}. Since $\mathcal{I}^{\epsilon}$
is compact, we can take $\alpha\in(0,\alpha_{1}\wedge\alpha_{2}\wedge\alpha_{3})$
small enough such that $\max_{\mathcal{N}_{\bm{c},N}^{\alpha}}F_{\beta}-\min_{\mathcal{N}_{\bm{c},N}^{\alpha}}F_{\beta}<\delta$
for all $\bm{c}\in\mathcal{C}$. In this way, combining Lemmas \ref{l1},
\ref{l2}, \ref{l3}, \ref{l4}, and \ref{l5} constructs a path from
any $\bm{x}\in\Xi_{N}$ (cf. \eqref{eq:XiN-decomp}) to $\mathcal{F}_{N}^{\llbracket0,q\rrbracket}$
whose energy is at most $F_{\beta}(\bm{x})+\delta$. This concludes
the proof.
\end{proof}

\subsection{\label{sec4.3}Proof of Lemma \ref{lem:rec2}}

In this subsection, we assume throughout that $q\ge3$ and $\beta\in(\beta_{2},q)$
to prove Lemma \ref{lem:rec2}. In this specific case, there exists
a shallow valley $\mathcal{E}_{N}^{0}$ containing ${\bf e}$ which
is negligible in the deepest metastable time-scale $\theta_{N}$ but
still contributes an exponential slowdown to mixing because of its
stability. To handle this effect, we need to formulate another level
of metastable transitions regarding the escape from ${\bf e}$ to
the deeper well $\widehat{\mathcal{W}}_{1}$ that contains all $\mathcal{E}_{N}^{k}$
for $k\in\llbracket1,q\rrbracket$.

To define new metastable sets, we divide into two cases: $\beta\in(\beta_{2},\beta_{3}]$
and $\beta\in(\beta_{3},q)$. Refer again to Figure \ref{fig1.2}.
\begin{itemize}
\item $\beta\in(\beta_{2},\beta_{3}]$: There are $q+1$ connected components
$\mathcal{W}_{0},\mathcal{W}_{1},\dots,\mathcal{W}_{q}$ of $\{F_{\beta}<F_{\beta}({\bf v}_{1})\}$.
In this case, define
\[
\widehat{\mathcal{W}}_{k}:=\mathcal{W}_{k}\qquad\text{for}\quad k\in\llbracket0,q\rrbracket.
\]
\item $\beta\in(\beta_{3},q)$ (when $q\ge5$): As summarized in Case 4
of Section \ref{sec1.4}, there exist $2$ connected components of
$\{F_{\beta}<F_{\beta}({\bf v}_{1})\}$, $\mathcal{W}_{0}$ (which
contains ${\bf e}$) and $\widehat{\mathcal{W}}_{1}$ (which contains
${\bf u}_{1},\dots,{\bf u}_{q}$). Define $\widehat{\mathcal{W}}_{0}:=\mathcal{W}_{0}$.
See Figure \ref{fig4.2} for this case.
\end{itemize}
\begin{center}
\begin{figure}
\begin{centering}
\begin{tikzpicture}
\begin{scope}[scale=0.7]
\draw[thick] (0,4) sin (2,0) cos (3,2) sin (4,4) cos (4.5,3.5) sin (5,3) cos (5.5,3.5) sin (6,4) cos (7,2) sin (8,0) cos (10,4);
\fill[blue!50!white] (0.1,0) rectangle (5,1.6);
\fill[teal!50!white] (9.9,0) rectangle (5,1.6);
\fill[white,domain=0:2,variable=\x] (0,-0.1)--(0,4)--plot({\x},{4-4*sin(\x*45)})--(2,-0.1)--cycle;
\fill[white,domain=2:8,variable=\x] (2,-0.1)--plot({\x},{1+sin((\x-3.5)*60)})--(8,-0.1)--cycle;
\fill[white,domain=8:10,variable=\x] (8,-0.1)--(8,0)--plot({\x},{4-4*sin((\x-6)*45)})--(10,-0.1)--cycle;

\draw[thick] (0,4) sin (2,0);
\draw[thick,densely dashed] (2,0) cos (3,2) sin (4,4) cos (4.5,3.5) sin (5,3) cos (5.5,3.5) sin (6,4) cos (7,2) sin (8,0);
\draw[thick] (8,0) cos (10,4);
\draw[thick] (2,0) cos (3.5,1) sin (5,2) cos (6.5,1) sin (8,0);
\fill (5,3) circle (0.06); \draw (5,3-0.1) node[below]{$\bf e$};
\fill (2,0) circle (0.06); \draw (2,0-0.1) node[below]{${\bf u}_k$};
\fill (8,0) circle (0.06); \draw (8,0-0.1) node[below]{${\bf u}_\ell$};
\fill (4,4) circle (0.06); \draw (4,4+0.1) node[above]{${\bf v}_k$};
\fill (6,4) circle (0.06); \draw (6,4+0.1) node[above]{${\bf v}_\ell$};
\fill (5,2) circle (0.06); \draw (5,2-0.1) node[below]{${\bf u}_{k,\ell}$};

\draw[blue] (2,1.7) node[above]{$\mathcal{W}_k$};
\draw[teal] (8,1.7) node[above]{$\mathcal{W}_\ell$};
\end{scope}

\begin{scope}[shift={(8,0)},scale=0.7]
\draw[thick] (0,4) sin (2,0) cos (3,2) sin (4,4) cos (4.5,3.5) sin (5,3) cos (5.5,3.5) sin (6,4) cos (7,2) sin (8,0) cos (10,4);
\fill[red!50!white] (0.1,0) rectangle (9.9,3.6);
\fill[white,domain=0:2,variable=\x] (0,-0.1)--(0,4)--plot({\x},{4-4*sin(\x*45)})--(2,-0.1)--cycle;
\fill[white,domain=2:8,variable=\x] (2,-0.1)--(2,0)--plot({\x},{1+sin((\x-3.5)*60)})--(8,-0.1)--cycle;
\fill[white,domain=8:10,variable=\x] (8,-0.1)--(8,0)--plot({\x},{4-4*sin((\x-6)*45)})--(10,-0.1)--cycle;

\draw[thick] (0,4) sin (2,0);
\draw[thick,densely dashed] (2,0) cos (3,2) sin (4,4) cos (4.5,3.5) sin (5,3) cos (5.5,3.5) sin (6,4) cos (7,2) sin (8,0);
\draw[thick] (8,0) cos (10,4);
\draw[thick] (2,0) cos (3.5,1) sin (5,2) cos (6.5,1) sin (8,0);
\fill (5,3) circle (0.06); \draw (5,3-0.1) node[below]{$\bf e$};
\fill (2,0) circle (0.06); \draw (2,0-0.1) node[below]{${\bf u}_k$};
\fill (8,0) circle (0.06); \draw (8,0-0.1) node[below]{${\bf u}_\ell$};
\fill (4,4) circle (0.06); \draw (4,4+0.1) node[above]{${\bf v}_k$};
\fill (6,4) circle (0.06); \draw (6,4+0.1) node[above]{${\bf v}_\ell$};
\fill (5,2) circle (0.06); \draw (5,2-0.1) node[below]{${\bf u}_{k,\ell}$};

\draw[red] (8,3.7) node[above]{$\widehat{\mathcal{W}}_1$};
\end{scope}
\end{tikzpicture}
\par\end{centering}
\caption{\label{fig4.2}The case of $q\ge5$ and $\beta\in(\beta_{3},q)$.
The bigger well $\widehat{\mathcal{W}}_{1}$ contains $\mathcal{W}_{k}$
for all $k\in\llbracket1,q\rrbracket$.}
\end{figure}
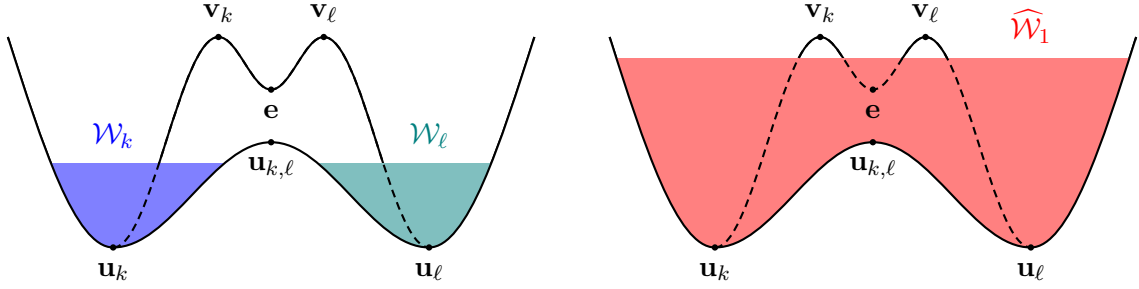
\par\end{center}

Let
\[
\widehat{\mathfrak{S}}_{\beta}:=\begin{cases}
\llbracket0,q\rrbracket & \text{if}\quad\beta\in(\beta_{2},\beta_{3}],\\
\{0,1\} & \text{if}\quad\beta\in(\beta_{3},q).
\end{cases}
\]
Recall the definition \eqref{eq:eta-def} of $\eta$. For $k\in\widehat{\mathfrak{S}}_{\beta}$,
let 
\begin{equation}
\widehat{\mathcal{E}}_{N}^{k}:=\Pi_{N}^{-1}\left(\widehat{\mathcal{W}}_{k}\cap\{F_{\beta}<F_{\beta}({\bf v}_{1})-\eta\}\cap\Xi_{N}\right),\label{eq:ENk-hat-def}
\end{equation}
define $\widehat{\mathcal{E}}_{N}:=\bigcup_{k\in\widehat{\mathfrak{S}}_{\beta}}\widehat{\mathcal{E}}_{N}^{k}$,
and
\[
\widehat{\mathcal{E}}_{N}^{A}:=\bigcup_{k\in A}\widehat{\mathcal{E}}_{N}^{k}\qquad\text{for}\quad A\subset\widehat{\mathfrak{S}}_{\beta}.
\]
Denote by $\widehat{D}_{\beta}$ the depth of $\widehat{\mathcal{W}}_{0}$:
\[
\widehat{D}_{\beta}:=F_{\beta}({\bf v}_{1})-F_{\beta}({\bf e}).
\]
Note that $\widehat{D}_{\beta}<D_{\beta}$ by \eqref{eq:depth-bigger}.
Let
\[
\widehat{\theta}_{N}=\widehat{\theta}_{N}^{\beta}:=2\pi Ne^{N\widehat{D}_{\beta}},
\]
such that $\widehat{\theta}_{N}\ll\theta_{N}$ (cf. \eqref{eq:thetaN-CWP}).
Let $\{\widehat{\mathfrak{X}}_{\beta}(t)\}_{t\ge0}$ be the Markov
chain in $\widehat{\mathfrak{S}}_{\beta}$ with jump rates $\widehat{\mathfrak{r}}_{\beta}:\widehat{\mathfrak{S}}_{\beta}\times\widehat{\mathfrak{S}}_{\beta}\to[0,\infty)$
defined as (cf. \eqref{eq:omega-def} and \eqref{eq:nu-def})
\[
\widehat{\mathfrak{r}}_{\beta}(k,\ell):=\begin{cases}
\frac{\omega}{\nu}\bm{1}_{\{k=0\}} & \text{if}\quad\beta\in(\beta_{2},\beta_{3}],\\
\frac{q\omega}{\nu}\bm{1}_{\{k=0\}} & \text{if}\quad\beta\in(\beta_{3},q).
\end{cases}
\]
Denote by $\{\widetilde{\widehat{\bm{\sigma}}}_{N}^{\beta}(t)\}_{t\ge0}$
its trace process in $\widehat{\mathcal{E}}_{N}$. Define $\widehat{\Psi}_{N}:\widehat{\mathcal{E}}_{N}\to\widehat{\mathfrak{S}}_{\beta}$
by
\[
\widehat{\Psi}_{N}(\sigma):=\sum_{k\in\widehat{\mathfrak{S}}_{\beta}}k\bm{1}_{\left\{ \sigma\in\widehat{\mathcal{E}}_{N}^{k}\right\} }.
\]
The following theorem is due to \cite[Theorem 4.5]{Lee22}.
\begin{thm}
\label{thm:meta-0}Let $q\ge3$, $\beta\in(\beta_{2},q)$, and fix
$k\in\widehat{\mathfrak{S}}_{\beta}$.
\begin{enumerate}
\item For any $\sigma_{N}\in\widehat{\mathcal{E}}_{N}^{k}$, the law of
$\{\widehat{\Psi}_{N}(\widetilde{\widehat{\bm{\sigma}}}_{N}^{\beta}(\widehat{\theta}_{N}t))\}_{t\ge0}$
starting from $\sigma_{N}$ converges to the law of $\{\widehat{\mathfrak{X}}_{\beta}(t)\}_{t\ge0}$
starting from $k$ as $N\to\infty$.
\item For any $T>0$,
\[
\lim_{N\to\infty}\sup_{\sigma\in\widehat{\mathcal{E}}_{N}^{k}}\mathbb{E}_{\sigma}^{N,\beta}\left[\int_{0}^{T}\bm{1}_{\left\{ \bm{\sigma}_{N}^{\beta}\left(\widehat{\theta}_{N}t\right)\in\Omega_{N}\setminus\widehat{\mathcal{E}}_{N}\right\} }\,{\rm d}t\right]=0.
\]
\end{enumerate}
\end{thm}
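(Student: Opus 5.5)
The plan is to obtain Theorem \ref{thm:meta-0} from the metastability analysis of \cite{Lee22} rather than redo it. First I reduce to the proportions chain $\bm{S}_{N}^{\beta}$ of \eqref{eq:proportions-chain}. Each $\widehat{\mathcal{E}}_{N}^{k}$ in \eqref{eq:ENk-hat-def} is a full preimage $\Pi_{N}^{-1}(\widehat{\mathcal{F}}_{N}^{k})$ with $\widehat{\mathcal{F}}_{N}^{k}:=\widehat{\mathcal{W}}_{k}\cap\{F_{\beta}<F_{\beta}({\bf v}_{1})-\eta\}\cap\Xi_{N}$. Hence:
\begin{itemize}
\item the trace of $\bm{\sigma}_{N}^{\beta}$ on $\widehat{\mathcal{E}}_{N}$ projects under $\Pi_{N}$ to the trace of $\bm{S}_{N}^{\beta}$ on $\widehat{\mathcal{F}}_{N}:=\bigcup_{k}\widehat{\mathcal{F}}_{N}^{k}$;
\item $\widehat{\Psi}_{N}$ factors through $\Pi_{N}$;
\item the occupation time of $\Omega_{N}\setminus\widehat{\mathcal{E}}_{N}$ equals that of $\Xi_{N}\setminus\widehat{\mathcal{F}}_{N}$.
\end{itemize}
So both items reduce to statements for $\bm{S}_{N}^{\beta}$, uniformly in the starting point $\Pi_{N}(\sigma_{N})$, as in the remark after Theorem \ref{thm:CWP-CD}.

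For item (1) I use the Beltr\'an--Landim martingale approach \cite{BL10,BL15}. Convergence of the order process follows once two things hold. The first is the mean-jump-rate asymptotics
\[
\widehat{\theta}_{N}\,\widetilde{r}_{N}(k,\ell)\to\widehat{\mathfrak{r}}_{\beta}(k,\ell),
\]
where $\widetilde{r}_{N}$ is the mean jump rate of the trace between the $\widehat{\mathcal{F}}_{N}^{k}$. The second is the local condition that, starting anywhere in a valley, the process visits a fixed point of that valley before leaving it, with probability tending to one.

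The rates come from the identity $\widetilde{r}_{N}(k,\ell)\pi_{N}^{\beta}(\widehat{\mathcal{F}}_{N}^{k})=\tfrac12[{\rm cap}(k,\breve{k})+{\rm cap}(\ell,\breve{\ell})-{\rm cap}(\{k,\ell\},\breve{\{k,\ell\}})]$, so I need two inputs:
\begin{itemize}
\item \emph{Measures.} Laplace asymptotics from \eqref{eq:piNbeta-formula} give $\pi_{N}^{\beta}(\widehat{\mathcal{F}}_{N}^{0})\simeq\nu\,(2\pi N)^{\frac{q-1}{2}}Z^{-1}\cdots e^{-\beta NF_{\beta}({\bf e})}$, with $\nu$ as in \eqref{eq:nu-def}.
\item \emph{Capacities.} Eyring--Kramers asymptotics of the capacities between the wells, for a reversible random walk on $\Xi_{N}$ with diffusion matrix $\mathbb{A}$ as in \eqref{eq:Akl-def}.
\end{itemize}
For the capacities, the upper bound uses the Dirichlet principle with a test function built from the one-dimensional error-function profile across each saddle ${\bf v}_{k}$, taken along the direction of the negative eigenvector of $(\nabla^{2}F_{\beta})({\bf v}_{1})\mathbb{A}({\bf v}_{1})^{\dagger}$. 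The lower bound uses the Thomson principle with a matching unit flow, as in \cite{LS18}. The saddle contributes $\omega$ per ${\bf v}_{k}$; no other saddle at height $\le F_{\beta}({\bf v}_{1})$ separates $\widehat{\mathcal{W}}_{0}$ from the rest, by the landscape description in Section \ref{sec1.4}. This yields rate $\omega/\nu$ to each $k\in\llbracket1,q\rrbracket$ when $\beta\in(\beta_{2},\beta_{3}]$, and, after summing the $q$ saddles, rate $q\omega/\nu$ into the single well $\widehat{\mathcal{W}}_{1}$ when $\beta\in(\beta_{3},q)$. Since the $\mathcal{W}_{k}$, $k\ge1$, are deeper, their exit rates on scale $\widehat{\theta}_{N}$ vanish, which gives $\widehat{\mathfrak{r}}_{\beta}(k,\cdot)=0$ for $k\ne0$. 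The local condition follows because ${\rm cap}(\bm{x},\bm{y})$ for $\bm{x},\bm{y}$ in the same valley is bounded below by a path argument as in \eqref{eq:pf3}, and is exponentially larger than the escape capacity.

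For item (2) I bound $\mathbb{E}[\int_{0}^{T}\bm{1}_{\Delta}]$ by comparing with stationarity: starting from the valley equilibrium, the expected occupation time is $T\,\pi_{N}^{\beta}(\Delta\cap\widehat{\mathcal{W}}_{k}\text{-region})/\pi_{N}^{\beta}(\widehat{\mathcal{F}}_{N}^{k})$, which is exponentially small because $F_{\beta}\ge F_{\beta}({\bf v}_{1})-\eta$ there. I then pass from an arbitrary start to equilibrium by the local mixing within the valley. I expect the main obstacle to be the sharp capacity estimate in the case $\beta\in(\beta_{3},q)$, where $\widehat{\mathcal{W}}_{1}$ is a non-convex union of deep wells with lower internal saddles ${\bf u}_{k,\ell}$. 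There the test function must be near-constant on all of $\widehat{\mathcal{W}}_{1}$, and I would handle this by taking it constant on $\{F_{\beta}<F_{\beta}({\bf v}_{1})-\eta\}$ away from the ${\bf v}_{k}$-neighborhoods, exactly as in \cite[Theorem 4.5]{Lee22}.
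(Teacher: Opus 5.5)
The paper does not prove this statement itself; it quotes \cite[Theorem 4.5]{Lee22}, and the transfer between $\bm{\sigma}_{N}^{\beta}$ and the proportions chain is exactly your first paragraph. Your reduction and your outline of item (1) are consistent with that citation.

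The argument you supply yourself for item (2) has a step that fails: ``pass from an arbitrary start to equilibrium by the local mixing within the valley.'' Take $q\ge5$, $\beta\in(\beta_{3},q)$ and $k=1$. Then the valley $\widehat{\mathcal{E}}_{N}^{1}$ contains all $q$ deep wells $\mathcal{W}_{1},\dots,\mathcal{W}_{q}$, which communicate only across the saddles ${\bf u}_{k,\ell}$. By symmetry, $\mu_{N}^{\widehat{\mathcal{E}}_{N}^{1}}$ gives mass close to $1/q$ to each $\mathcal{E}_{N}^{j}$. A process started near ${\bf u}_{1}$, however, does not leave $\mathcal{W}_{1}$ before time $T\widehat{\theta}_{N}$, with probability tending to one. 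This follows from Lemma \ref{lem:prob-cap} with the test function $\bm{1}_{\mathcal{W}_{1}\cap\Xi_{N}}$ in \eqref{eq:DP}, combined with $D_{\beta}>\widehat{D}_{\beta}$ from Lemma \ref{lem:depth-bigger}. Hence the reflected dynamics in $\widehat{\mathcal{E}}_{N}^{1}$ relaxes only on the scale $\theta_{N}\gg\widehat{\theta}_{N}$. On the whole window $[0,T\widehat{\theta}_{N}]$ its law stays at total variation distance about $1-1/q$ from the valley equilibrium. The failure is temporal only: your capacity-ratio condition for item (1) still holds, since it only needs paths below $F_{\beta}({\bf v}_{1})-\eta$, and $F_{\beta}({\bf u}_{1,2})\le F_{\beta}({\bf v}_{1})-2\eta$ by \eqref{eq:eta-def}.

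The repair is to avoid valley equilibrium altogether.
\begin{enumerate}
\item Use the magic-formula and Thomson/Dirichlet argument of Lemma \ref{lem:rec1-1}, as in Theorem \ref{thm:CWP-M}-(1), with $\epsilon$ small enough that the bound $N^{5q+2}e^{2\beta N\epsilon}$ is $\ll\widehat{\theta}_{N}$. This shows that the proportions chain started anywhere in $\widehat{\mathcal{F}}_{N}^{k}$ hits $\{F_{\beta}<F_{\beta}({\bf e})+\epsilon\}$ within time $o(\widehat{\theta}_{N})$, with probability tending to one.
\item From any such point $\bm{y}$, stationarity bounds the expected accelerated occupation time of $\Xi_{N}\setminus\widehat{\mathcal{F}}_{N}$ on $[0,T]$ by $T\pi_{N}^{\beta}(\Xi_{N}\setminus\widehat{\mathcal{F}}_{N})/\pi_{N}^{\beta}(\bm{y})$. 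By Lemma \ref{lem:piNbeta-rough} this is at most $c_{1}TN^{2q}e^{-\beta N(F_{\beta}({\bf v}_{1})-\eta-F_{\beta}({\bf e})-\epsilon)}$, which is exponentially small by \eqref{eq:eta-small}.
\item The strong Markov property then gives item (2) uniformly in the starting configuration.
\end{enumerate}

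A further correction: this stationarity bound must involve all of $\Xi_{N}\setminus\widehat{\mathcal{F}}_{N}$, not only the part of $\Delta$ near $\widehat{\mathcal{W}}_{k}$ as you wrote. Starting from $\widehat{\mathcal{E}}_{N}^{0}$, the process leaves $\widehat{\mathcal{W}}_{0}$ at rate of order one on the scale $\widehat{\theta}_{N}$, so your restriction is not justified. It does no damage only because $\Xi_{N}\setminus\widehat{\mathcal{F}}_{N}\subset\{F_{\beta}\ge F_{\beta}({\bf v}_{1})-\eta\}$ globally.
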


We need an additional lemma.
\begin{lem}
\label{lem:descend-to-min}Assume $q\ge5$ and $\beta\in(\beta_{3},q)$.
There exists $\varrho_{N}\ll\theta_{N}$ such that
\[
\lim_{N\to\infty}\sup_{\sigma\in\widehat{\mathcal{E}}_{N}^{1}}\mathbb{P}_{\sigma}^{N,\beta}[\mathcal{H}_{\mathcal{E}_{N}}>\varrho_{N}]=0.
\]
\end{lem}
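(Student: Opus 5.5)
We will run the argument of Section \ref{sec4.1} again, for the proportions chain $\bm{S}_{N}^{\beta}$, with the starting set restricted to $\widehat{\mathcal{F}}_{N}^{1}:=\Pi_{N}(\widehat{\mathcal{E}}_{N}^{1})=\widehat{\mathcal{W}}_{1}\cap\{F_{\beta}<F_{\beta}({\bf v}_{1})-\eta\}\cap\Xi_{N}$ and the target $\mathcal{F}_{N}^{\llbracket1,q\rrbracket}$. The statement then reads
\[
\lim_{N\to\infty}\sup_{\bm{x}\in\widehat{\mathcal{F}}_{N}^{1}}{\rm P}_{\bm{x}}^{N,\beta}\left[\mathcal{H}_{\mathcal{F}_{N}^{\llbracket1,q\rrbracket}}>\varrho_{N}\right]=0 .
\]
A direct use of the formula \eqref{eq:pf2} for ${\rm E}_{\bm{x}}[\mathcal{H}_{\mathcal{F}_{N}^{\llbracket1,q\rrbracket}}]$ fails, because the numerator would then be summed over all of $\Xi_{N}$, including the well $\mathcal{W}_{0}$. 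To prevent this, we kill the chain before it leaves a suitable region. Let $\mathcal{G}:=\widehat{\mathcal{W}}_{1}\cap\{F_{\beta}<F_{\beta}({\bf v}_{1})-\eta/2\}$, set $\mathcal{G}_{N}:=\mathcal{G}\cap\Xi_{N}$, and consider the hitting time of $\mathcal{F}_{N}^{\llbracket1,q\rrbracket}\cup(\Xi_{N}\setminus\mathcal{G}_{N})$. For the bound we split
\[
{\rm P}_{\bm{x}}\left[\mathcal{H}_{\mathcal{F}_{N}^{\llbracket1,q\rrbracket}}>\varrho_{N}\right]\le{\rm P}_{\bm{x}}\left[\mathcal{H}_{\Xi_{N}\setminus\mathcal{G}_{N}}<\mathcal{H}_{\mathcal{F}_{N}^{\llbracket1,q\rrbracket}}\right]+\frac{1}{\varrho_{N}}{\rm E}_{\bm{x}}\left[\mathcal{H}_{\mathcal{F}_{N}^{\llbracket1,q\rrbracket}\cup(\Xi_{N}\setminus\mathcal{G}_{N})}\right].
\]

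\textbf{Second term.} We apply \eqref{eq:pf2} with target $\mathcal{F}_{N}^{\llbracket1,q\rrbracket}\cup(\Xi_{N}\setminus\mathcal{G}_{N})$. The numerator is then summed only over $\bm{y}\in\mathcal{G}_{N}$. Both the denominator ${\rm cap}_{N}(\bm{x},\cdot)$ and each ${\rm cap}_{N}(\bm{y},\cdot)$ in \eqref{eq:pf4} are bounded below exactly as in \eqref{eq:pf3}. This needs a version of Lemma \ref{lem:almost-desc-path}: from every $\bm{y}\in\mathcal{G}_{N}$ there is a path to $\mathcal{F}_{N}^{\llbracket1,q\rrbracket}$ with energy at most $F_{\beta}(\bm{y})+\epsilon$. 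To get it, we concatenate the local moves of Lemmas \ref{l1}, \ref{l2}, \ref{l4}, and \ref{l5}. Each of these either lowers $F_{\beta}$ or exits a critical neighbourhood at energy within $\epsilon$. Inside $\widehat{\mathcal{W}}_{1}$ below level $F_{\beta}({\bf v}_{1})-\eta/2$ the only critical points are ${\bf u}_{k}$, ${\bf u}_{k,\ell}$, and points of $\mathcal{C}_{4}$; by \eqref{eq:eta-def} none of them lies near ${\bf v}_{k}$ or ${\bf e}$. Since the energy never exceeds $F_{\beta}(\bm{y})+\epsilon<F_{\beta}({\bf v}_{1})$, the path stays in $\widehat{\mathcal{W}}_{1}$. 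It ends in some $\mathcal{N}_{{\bf u}_{k},N}^{\alpha}\subset\mathcal{F}_{N}^{k}$ or passes through the target earlier. Lemma \ref{lem:cap-rough} gives the numerator bound, and together these yield ${\rm E}_{\bm{x}}[\cdots]\le CN^{5q+2}e^{2\beta N\epsilon}$. We take $\epsilon$ with $2\beta\epsilon<D_{\beta}$ and $\varrho_{N}:=N^{5q+3}e^{2\beta N\epsilon}\ll\theta_{N}$.

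\textbf{First term.} We use the renewal-type bound
\[
{\rm P}_{\bm{x}}\left[\mathcal{H}_{\Xi_{N}\setminus\mathcal{G}_{N}}<\mathcal{H}_{\mathcal{F}_{N}^{\llbracket1,q\rrbracket}}\right]\le\frac{{\rm cap}_{N}(\bm{x},\Xi_{N}\setminus\mathcal{G}_{N})}{{\rm cap}_{N}(\bm{x},\mathcal{F}_{N}^{\llbracket1,q\rrbracket})}.
\]
\begin{itemize}
\item For the numerator, the Dirichlet principle \eqref{eq:DP} with test function $\bm{1}_{\Xi_{N}\setminus\mathcal{G}_{N}}$ gives a bound of $\mathrm{poly}(N)\,e^{-\beta N(F_{\beta}({\bf v}_{1})-\eta/2)}/Z_{N}^{\beta}$, by Lemma \ref{lem:piNbeta-rough}.
\item For the denominator, \eqref{eq:pf3} gives a lower bound of $\mathrm{poly}(N)^{-1}e^{-\beta N\epsilon}\pi_{N}^{\beta}(\bm{x})\ge\mathrm{poly}(N)^{-1}e^{-\beta N(F_{\beta}({\bf v}_{1})-\eta+\epsilon)}/Z_{N}^{\beta}$.
\end{itemize}
The ratio is therefore at most $\mathrm{poly}(N)e^{-\beta N(\eta/2-\epsilon)}\to0$ once $\epsilon<\eta/2$. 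This holds uniformly over $\bm{x}\in\widehat{\mathcal{F}}_{N}^{1}$.

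\textbf{Main obstacle.} The delicate step is the restricted path lemma: we must check that the descending moves of Lemmas \ref{l1}--\ref{l5}, started anywhere in $\mathcal{G}$, never leave $\widehat{\mathcal{W}}_{1}$. The energy bound forces this, because $\widehat{\mathcal{W}}_{1}$ is a connected component of $\{F_{\beta}<F_{\beta}({\bf v}_{1})\}$ and each step changes $F_{\beta}$ by only $O(1/N)$. One also needs the boundary region $\mathcal{O}_{N}^{\epsilon}$ to be handled inside $\widehat{\mathcal{W}}_{1}$; Lemma \ref{l1} works verbatim there.
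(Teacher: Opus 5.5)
Your proof is correct. It uses the same ingredients as the paper's: almost-descending paths with the Thomson principle \eqref{eq:TP}, a Dirichlet bound across an energy cut, the renewal estimate \eqref{eq:renewal}, and the magic formula. It splits the event differently, though.

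The paper writes $\{\mathcal{H}_{\mathcal{E}_N}>\varrho_N\}\subset\{\mathcal{H}_{\mathcal{E}_N^{\llbracket0,q\rrbracket}}>\varrho_N\}\cup\{\mathcal{H}_{\mathcal{E}_N^0}<\mathcal{H}_{\mathcal{E}_N}\}$. It disposes of the first event by quoting Lemma \ref{lem:rec1} as a black box. It handles the second by a single renewal estimate whose numerator ${\rm cap}_N(\bm x,\mathcal{F}_N^0)$ is bounded by the Dirichlet form of $1-\bm 1_{\mathcal{W}_0\cap\Xi_N}$.

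You instead kill the chain on leaving $\mathcal{G}_N$. Your bad event is therefore exiting a sublevel set of $\widehat{\mathcal{W}}_1$, rather than reaching the deep part of $\mathcal{W}_0$. You must then re-run the magic-formula computation of Lemma \ref{lem:rec1-1} with target $\mathcal{F}_N^{\llbracket1,q\rrbracket}\cup(\Xi_N\setminus\mathcal{G}_N)$.

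The paper's route is shorter because it recycles Lemma \ref{lem:rec1}. Yours is self-contained and keeps the mean-hitting-time estimate localized in $\widehat{\mathcal{W}}_1$. The direct computation you set aside is not hopeless, but it picks up a factor of order $e^{\beta N\widehat{D}_\beta}$ from $\mathcal{W}_0$ and would force a comparison of well depths. The paper's decomposition avoids this just as your killing does. The barrier margins ($\eta/2-\epsilon$ for you, $\eta/2$ for the paper) are equally good.

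The step you flag as the main obstacle is in fact immediate, and you need not re-concatenate Lemmas \ref{l1}--\ref{l5}.
\begin{itemize}
\item For the denominator ${\rm cap}_N(\bm x,\mathcal{F}_N^{\llbracket1,q\rrbracket})$ of your first term, take the path supplied by Lemma \ref{lem:almost-desc-path} itself. Its energy is at most $F_\beta(\bm x)+\epsilon<F_\beta(\mathbf{v}_1)$ and its steps have size $O(1/N)$. So it cannot cross from $\widehat{\mathcal{W}}_1$ into $\mathcal{W}_0\supset\mathcal{F}_N^0$, and it must end in $\mathcal{F}_N^{\llbracket1,q\rrbracket}$. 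This is exactly the observation the paper makes for its own denominator.
\item For the capacities in your second term, no confinement is needed at all. The target contains $\Xi_N\setminus\mathcal{G}_N$, so you can simply truncate that path at its first entrance into the target.
\item In the Dirichlet bound across the boundary of $\mathcal{G}_N$, put the weight on the outer endpoint $\bm z\notin\mathcal{G}_N$ via reversibility. Then $F_\beta(\bm z)\ge F_\beta(\mathbf{v}_1)-\eta/2$ for large $N$. Otherwise $\bm z$, being within $O(1/N)$ of a point of $\mathcal{G}$, would itself lie in $\widehat{\mathcal{W}}_1$ and hence in $\mathcal{G}$.
\end{itemize}
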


\begin{proof}
Recall that $\mathfrak{S}_{\beta}=\llbracket1,q\rrbracket$ thus $\mathcal{E}_{N}=\mathcal{E}_{N}^{\llbracket1,q\rrbracket}$
in this regime. By Lemma \ref{lem:rec1}, there exists $\varrho_{N}\ll\theta_{N}$
such that
\[
\lim_{N\to\infty}\sup_{\sigma\in\widehat{\mathcal{E}}_{N}^{1}}\mathbb{P}_{\sigma}^{N,\beta}\left[\mathcal{H}_{\mathcal{E}_{N}^{\llbracket0,q\rrbracket}}>\varrho_{N}\right]=0.
\]
Thus to prove the lemma, it suffices to prove that
\[
\lim_{N\to\infty}\sup_{\sigma\in\widehat{\mathcal{E}}_{N}^{1}}\mathbb{P}_{\sigma}^{N,\beta}\left[\mathcal{H}_{\mathcal{E}_{N}^{0}}<\mathcal{H}_{\mathcal{E}_{N}}\right]=0,
\]
or equivalently, for the proportions chain,
\begin{equation}
\lim_{N\to\infty}\sup_{\bm{x}\in\widehat{\mathcal{F}}_{N}^{1}}{\rm P}_{\bm{x}}^{N,\beta}\left[\mathcal{H}_{\mathcal{F}_{N}^{0}}<\mathcal{H}_{\mathcal{F}_{N}}\right]=0,\label{eq:descend-prop-chain}
\end{equation}
where $\widehat{\mathcal{F}}_{N}^{k}:=\Pi_{N}(\widehat{\mathcal{E}}_{N}^{k})$
for $k\in\widehat{\mathfrak{S}}_{\beta}$. By \eqref{eq:renewal},
for each $\bm{x}\in\widehat{\mathcal{F}}_{N}^{1}$,
\begin{equation}
{\rm P}_{\bm{x}}^{N,\beta}\left[\mathcal{H}_{\mathcal{F}_{N}^{0}}<\mathcal{H}_{\mathcal{F}_{N}}\right]\le\frac{{\rm cap}_{N}\left(\bm{x},\mathcal{F}_{N}^{0}\right)}{{\rm cap}_{N}(\bm{x},\mathcal{F}_{N})}.\label{eq:descend-1}
\end{equation}
For the denominator in \eqref{eq:descend-1}, Lemma \ref{lem:almost-desc-path}
with $\delta=\frac{\eta}{2}>0$ implies that, for all large $N$,
there exists a path from $\bm{x}$ to $\mathcal{F}_{N}^{\llbracket0,q\rrbracket}$
whose energy is at most $F_{\beta}(\bm{x})+\frac{\eta}{2}$. By definition,
this path cannot arrive at $\mathcal{F}_{N}^{0}$ since $\Phi_{\beta}(\bm{x},\mathcal{F}_{N}^{0})=F_{\beta}({\bf v}_{1})>F_{\beta}(\bm{x})+\eta$
(see \eqref{eq:ENk-hat-def}). Thus, applying the Thomson principle
\eqref{eq:TP} to this path as done in \eqref{eq:pf3},
\begin{equation}
{\rm cap}_{N}(\bm{x},\mathcal{F}_{N})\ge\frac{\pi_{N}^{\beta}(\bm{x})}{c_{1}e^{\beta}N^{2q+1}e^{\frac{\beta N\eta}{2}}}.\label{eq:descend-2}
\end{equation}
Next, consider the numerator in \eqref{eq:descend-1}. Apply the Dirichlet
principle \eqref{eq:DP} to $f:=1-{\bf 1}_{\mathcal{W}_{0}\cap\Xi_{N}}$
which clearly satisfies $f(\bm{x})=1$ and $f=0$ on $\mathcal{F}_{N}^{0}=\widehat{\mathcal{F}}_{N}^{0}$.
Then,
\begin{equation}
{\rm cap}_{N}\left(\bm{x},\mathcal{F}_{N}^{0}\right)\le\mathscr{D}_{N}\left(1-{\bf 1}_{\mathcal{W}_{0}\cap\Xi_{N}}\right)=\sum_{\bm{y}\in\mathcal{W}_{0}\cap\Xi_{N}}\sum_{\bm{z}\notin\mathcal{W}_{0}\cap\Xi_{N}}\pi_{N}^{\beta}(\bm{y})r_{N}(\bm{y},\bm{z}).\label{eq:DP-appl-1}
\end{equation}
Since $\mathcal{W}_{0}$ is a connected component of $\{F_{\beta}<F_{\beta}({\bf v}_{1})\}$,
for all such pairs $(\bm{y},\bm{z})$ with $r_{N}(\bm{y},\bm{z})>0$
we have $F_{\beta}(\bm{z})\ge F_{\beta}({\bf v}_{1})-\frac{c}{N}>F_{\beta}(\bm{x})+\eta-\frac{c}{N}$,
where $c>0$ does not depend on $N$. Thus by Lemma \ref{lem:piNbeta-rough}
and the fact that $r_{N}(\bm{y},\bm{z})\le e^{\beta}$, the right-hand
side of \eqref{eq:DP-appl-1} is bounded by
\begin{equation}
\sum_{\bm{y}\in\mathcal{W}_{0}\cap\Xi_{N}}\sum_{\bm{z}\notin\mathcal{W}_{0}\cap\Xi_{N}}c_{1}N^{q}e^{-\beta N\eta}e^{\beta c}\pi_{N}^{\beta}(\bm{x})\times e^{\beta}\le c_{1}e^{\beta(c+1)}N^{3q}e^{-\beta\eta N}\pi_{N}^{\beta}(\bm{x}),\label{eq:DP-appl-2}
\end{equation}
where we have also used the fact that $|\mathcal{F}_{N}^{0}|,|\Xi_{N}\setminus\mathcal{F}_{N}^{0}|<|\Xi_{N}|\le N^{q}$.
Combining \eqref{eq:descend-1}, \eqref{eq:descend-2}, \eqref{eq:DP-appl-1},
and \eqref{eq:DP-appl-2},
\[
{\rm P}_{\bm{x}}^{N,\beta}\left[\mathcal{H}_{\mathcal{F}_{N}^{0}}<\mathcal{H}_{\mathcal{F}_{N}}\right]\le c_{1}^{2}e^{\beta(c+2)}N^{5q+1}e^{-\frac{\beta\eta N}{2}}\ll1.
\]
Taking the supremum over all $\bm{x}\in\widehat{\mathcal{F}}_{N}^{1}$
proves \eqref{eq:descend-prop-chain} and thus the lemma.
\end{proof}
Now, we are ready to prove Lemma \ref{lem:rec2}.
\begin{proof}[Proof of Lemma \ref{lem:rec2}]
 Fix $q\ge3$, $\beta\in(\beta_{2},q)$, and $\sigma\in\mathcal{E}_{N}^{0}$.
Let ${\bf P}_{\sigma}^{N,\beta}$ and $\widehat{{\bf P}}_{0}^{\beta}$
be the laws of $\{\widehat{\Psi}_{N}(\widetilde{\widehat{\bm{\sigma}}}_{N}^{\beta}(\widehat{\theta}_{N}t))\}_{t\ge0}$
and $\{\widehat{\mathfrak{X}}_{\beta}(t)\}_{t\ge0}$ starting from
$\sigma$ and $0$, respectively. By Theorem \ref{thm:meta-0}, ${\bf P}_{\sigma}^{N,\beta}$
converges weakly to $\widehat{{\bf P}}_{0}^{\beta}$ in the Skorokhod
topology. Take any $\varrho_{N}$ such that $\widehat{\theta}_{N}\ll\varrho_{N}\ll\theta_{N}$.
Since $\{\mathcal{H}_{\widehat{\mathfrak{S}}_{\beta}\setminus\{0\}}\ge s\}$
is closed in the Skorokhod topology for all $s>0$,
\begin{equation}
\begin{aligned}\limsup_{N\to\infty}\sup_{\sigma\in\mathcal{E}_{N}^{0}}\mathbb{P}_{\sigma}^{N,\beta}\left[\mathcal{H}_{\widehat{\mathcal{E}}_{N}\setminus\widehat{\mathcal{E}}_{N}^{0}}\ge\varrho_{N}\right] & \le\limsup_{N\to\infty}\sup_{\sigma\in\mathcal{E}_{N}^{0}}\mathbb{P}_{\sigma}^{N,\beta}\left[\mathcal{H}_{\widehat{\mathcal{E}}_{N}\setminus\widehat{\mathcal{E}}_{N}^{0}}\ge\widehat{\theta}_{N}s\right]\\
 & \le\limsup_{N\to\infty}\sup_{\sigma\in\mathcal{E}_{N}^{0}}{\bf P}_{\sigma}^{N,\beta}\left[\mathcal{H}_{\widehat{\mathfrak{S}}_{\beta}\setminus\{0\}}\ge s\right]\le\widehat{{\bf P}}_{0}^{\beta}\left[\mathcal{H}_{\widehat{\mathfrak{S}}_{\beta}\setminus\{0\}}\ge s\right].
\end{aligned}
\label{eq:rec2}
\end{equation}
Since $0$ is a transient state of $\{\widehat{\mathfrak{X}}_{\beta}(t)\}_{t\ge0}$,
\[
\lim_{s\to\infty}\widehat{{\bf P}}_{0}^{\beta}\left[\mathcal{H}_{\widehat{\mathfrak{S}}_{\beta}\setminus\{0\}}\ge s\right]=0.
\]
Since \eqref{eq:rec2} holds for all $s>0$,
\begin{equation}
\limsup_{N\to\infty}\sup_{\sigma\in\mathcal{E}_{N}^{0}}\mathbb{P}_{\sigma}^{N,\beta}\left[\mathcal{H}_{\widehat{\mathcal{E}}_{N}\setminus\widehat{\mathcal{E}}_{N}^{0}}\ge\varrho_{N}\right]=0.\label{eq:rec2-2}
\end{equation}
If $\beta\in(\beta_{2},\beta_{3}]$, then $\widehat{\mathcal{E}}_{N}\setminus\widehat{\mathcal{E}}_{N}^{0}=\widehat{\mathcal{E}}_{N}^{\llbracket1,q\rrbracket}=\mathcal{E}_{N}$,
so that the last display proves Lemma \ref{lem:rec2}. Thus, it remains
to prove it in the case of $q\ge5$ and $\beta\in(\beta_{3},q)$,
in which case $\widehat{\mathcal{E}}_{N}\setminus\widehat{\mathcal{E}}_{N}^{0}=\widehat{\mathcal{E}}_{N}^{1}$.
Lemma \ref{lem:descend-to-min} implies that
\begin{equation}
\lim_{N\to\infty}\sup_{\sigma'\in\widehat{\mathcal{E}}_{N}^{1}}\mathbb{P}_{\sigma'}^{N,\beta}[\mathcal{H}_{\mathcal{E}_{N}}>\varrho_{N}']=0,\label{eq:rec2-3}
\end{equation}
for some $\varrho_{N}'\ll\theta_{N}$. The two displays \eqref{eq:rec2-2}
and \eqref{eq:rec2-3} conclude the proof via the strong Markov property
and the fact that $\varrho_{N}+\varrho_{N}'\ll\theta_{N}$.
\end{proof}

\section{\label{sec5}Proof of Theorem \ref{thm:CWP-M}}

In this section, we prove Theorem \ref{thm:CWP-M}. For each $k\in\mathfrak{S}_{\beta}$,
define $\mathcal{B}_{N}^{k}\subset\mathcal{E}_{N}^{k}$ as
\begin{equation}
\mathcal{B}_{N}^{k}:=\Pi_{N}^{-1}(\mathcal{W}_{k}\cap\{F_{\beta}<F_{\beta}({\bf u}_{k})+\epsilon\}\cap\Xi_{N}),\label{eq:BNk-def}
\end{equation}
where $\epsilon>0$ is a small constant to be specified in the sequel.
Then, write
\[
\mathcal{A}_{N}^{k}:=\Pi_{N}\left(\mathcal{B}_{N}^{k}\right)=\mathcal{W}_{k}\cap\{F_{\beta}<F_{\beta}({\bf u}_{k})+\epsilon\}\cap\Xi_{N}.
\]
We start with the first part.
\begin{proof}[Proof of Theorem \ref{thm:CWP-M}-(1)]
This part is almost identical to Lemma \ref{lem:descend-to-min}.
Namely, the same logic of proving Lemma \ref{lem:rec1-1} allows us
to prove here that
\[
\lim_{N\to\infty}\sup_{\bm{x}\in\mathcal{F}_{N}^{k}}{\rm P}_{\bm{x}}^{N,\beta}\left[\mathcal{H}_{\mathcal{A}_{N}^{\llbracket0,q\rrbracket}}>\delta\theta_{N}\right]=0.
\]
Thus, it suffices to prove that
\[
\lim_{N\to\infty}\sup_{\bm{x}\in\mathcal{F}_{N}^{k}}{\rm P}_{\bm{x}}^{N,\beta}\left[\mathcal{H}_{\mathcal{A}_{N}^{\llbracket0,q\rrbracket}\setminus\mathcal{A}_{N}^{k}}<\mathcal{H}_{\mathcal{A}_{N}^{k}}\right]=0.
\]
The rest of the proof uses the same steps as those in the proof of
\eqref{eq:descend-prop-chain}. The tedious details are omitted.
\end{proof}
Next, we deal with the second part of Theorem \ref{thm:CWP-M}.
\begin{proof}[Proof of Theorem \ref{thm:CWP-M}-(2)]
 We apply Lemma \ref{lem:prob-cap} with $\bm{x}\in\mathcal{A}_{N}^{k}$,
$B=\Xi_{N}\setminus\mathcal{F}_{N}^{k}$, and $\gamma^{-1}=2\varrho_{N}>0$.
Then,
\begin{equation}
{\rm P}_{\bm{x}}^{N,\beta}\left[\mathcal{H}_{\Xi_{N}\setminus\mathcal{F}_{N}^{k}}\le2\varrho_{N}\right]^{2}\le2e^{2}\varrho_{N}\frac{{\rm cap}_{N}\left(\bm{x},\Xi_{N}\setminus\mathcal{F}_{N}^{k}\right)}{\pi_{N}^{\beta}(\bm{x})}.\label{eq:part-2-1}
\end{equation}
For the capacity in the numerator, applying the Dirichlet principle
\eqref{eq:DP} with $f:=1-\bm{1}_{\Xi_{N}\setminus\mathcal{F}_{N}^{k}}$
gives
\[
{\rm cap}_{N}\left(\bm{x},\Xi_{N}\setminus\mathcal{F}_{N}^{k}\right)\le\sum_{\bm{y}\in\mathcal{F}_{N}^{k}}\sum_{\bm{z}\in\Xi_{N}\setminus\mathcal{F}_{N}^{k}}\pi_{N}^{\beta}(\bm{z})r_{N}(\bm{z},\bm{y}).
\]
Taking $\epsilon\in(0,\eta)$ such that $F_{\beta}({\bf u}_{1})+5\epsilon<F_{\beta}({\bf v}_{1})-\eta$,
we have $F_{\beta}(\bm{x})<F_{\beta}({\bf v}_{1})-\eta-4\epsilon$.
Thus, we obtain
\begin{equation}
{\rm cap}_{N}\left(\bm{x},\Xi_{N}\setminus\mathcal{F}_{N}^{k}\right)\le c_{1}e^{\beta(c+1)}N^{3q}e^{-4\beta\epsilon N}\pi_{N}^{\beta}(\bm{x}),\label{eq:part-2-2}
\end{equation}
as done in \eqref{eq:DP-appl-1} and \eqref{eq:DP-appl-2}. Combining
\eqref{eq:part-2-1} and \eqref{eq:part-2-2},
\[
{\rm P}_{\bm{x}}^{N,\beta}\left[\mathcal{H}_{\Xi_{N}\setminus\mathcal{F}_{N}^{k}}\le2\varrho_{N}\right]^{2}\le2e^{2}c_{1}e^{\beta(c+1)}N^{3q}e^{-4\beta N\epsilon}\varrho_{N}.
\]
Therefore, taking $\varrho_{N}:=e^{3\beta N\epsilon}$ concludes the
proof.
\end{proof}
All it remains is to prove part (3) of Theorem \ref{thm:CWP-M} for
$\varrho_{N}=e^{3\beta N\epsilon}$.
\begin{proof}[Proof of Theorem \ref{thm:CWP-M}-(3)]
 By the definition of mixing time, to prove part (3) it suffices
to prove that
\[
\sup_{k\in\mathfrak{S}_{\beta}}T_{\frac{1}{N}}^{{\rm mix}}\left(\bm{\sigma}_{N}^{\mathcal{E}_{N}^{k}}\right)\le\varrho_{N}.
\]
By \cite[Theorem 20.6]{LPW17},
\[
T_{\frac{1}{N}}^{{\rm mix}}\left(\bm{\sigma}_{N}^{\mathcal{E}_{N}^{k}}\right)\le cT^{{\rm rel}}\left(\bm{\sigma}_{N}^{\mathcal{E}_{N}^{k}}\right)\log\frac{N}{\min_{\sigma\in\mathcal{E}_{N}^{k}}\mu_{N}^{\mathcal{E}_{N}^{k}}(\sigma)}.
\]
Here, $T^{{\rm rel}}(\bm{\sigma}_{N}^{\mathcal{E}_{N}^{k}})$ denotes
the \emph{relaxation time} (cf. \cite[Section 12.2]{LPW17}) of the
dynamics $\bm{\sigma}_{N}^{\mathcal{E}_{N}^{k}}$ and $c>0$ is a
constant depending on the uniform bound of the holding rate of $\bm{\sigma}_{N}^{\mathcal{E}_{N}^{k}}$,
which does not depend on $N$. Note that $-\frac{N}{2}\le\mathbb{H}_{N}(\cdot)\le-\frac{N}{2q}$.
Thus,
\[
\mu_{N}^{\mathcal{E}_{N}^{k}}(\sigma)=\frac{\mu_{N}^{\beta}(\sigma)}{\sum_{\xi\in\mathcal{E}_{N}^{k}}\mu_{N}^{\beta}(\xi)}\ge\frac{e^{\frac{\beta N}{2q}}}{N^{q}e^{\frac{\beta N}{2}}}\gg Ne^{-\frac{\beta N}{2}},
\]
where we used $|\mathcal{E}_{N}^{k}|<|\Xi_{N}|\le N^{q}$. Thus, it
remains to prove for each $k\in\mathfrak{S}_{\beta}$ that
\[
T^{{\rm rel}}\left(\bm{\sigma}_{N}^{\mathcal{E}_{N}^{k}}\right)\ll\frac{\varrho_{N}}{N}.
\]
A global bound from \cite[Theorem 1.1 and Section 7]{Her23} implies
that
\[
T^{{\rm rel}}\left(\bm{\sigma}_{N}^{\mathcal{E}_{N}^{k}}\right)\le c'\max_{\mathcal{A}\subsetneq\mathcal{E}_{N}^{k}:\,\mu_{N}^{\mathcal{E}_{N}^{k}}(\mathcal{A})\ge\frac{1}{2}}\overline{\mathbb{E}}_{\mu_{N}^{\mathcal{E}_{N}^{k}\setminus\mathcal{A}}}^{N,\beta}[\mathcal{H}_{\mathcal{A}}],
\]
where $c'>0$ is a universal constant and $\mu_{N}^{\mathcal{E}_{N}^{k}\setminus\mathcal{A}}$
is the measure $\mu_{N}^{\beta}$ conditioned on $\mathcal{E}_{N}^{k}\setminus\mathcal{A}$.
Here, $\overline{\mathbb{E}}_{\cdot}^{N,\beta}$ denotes the expectation
with respect to the law of the reflected dynamics $\bm{\sigma}_{N}^{\mathcal{E}_{N}^{k}}$.
By an elementary property of expectations,
\[
\overline{\mathbb{E}}_{\mu_{N}^{\mathcal{E}_{N}^{k}\setminus\mathcal{A}}}^{N,\beta}[\mathcal{H}_{\mathcal{A}}]\le\max_{\sigma\in\mathcal{E}_{N}^{k}\setminus\mathcal{A}}\overline{\mathbb{E}}_{\sigma}^{N,\beta}[\mathcal{H}_{\mathcal{A}}]\le\max_{\sigma\in\mathcal{E}_{N}^{k}}\overline{\mathbb{E}}_{\sigma}^{N,\beta}[\mathcal{H}_{\mathcal{A}}].
\]
Note that $\mu_{N}^{\mathcal{E}_{N}^{k}}(\mathcal{B}_{N}^{k})\simeq1$,
thus any $\mathcal{A}\subsetneq\mathcal{E}_{N}^{k}$ with $\mu_{N}^{\mathcal{E}_{N}^{k}}(\mathcal{A})\ge\frac{1}{2}$
satisfies $\mathcal{A}\cap\mathcal{B}_{N}^{k}\ne\varnothing$ for
all large $N$. Hence,
\[
\max_{\sigma\in\mathcal{E}_{N}^{k}}\overline{\mathbb{E}}_{\sigma}^{N,\beta}[\mathcal{H}_{\mathcal{A}}]\le\max_{\sigma\in\mathcal{E}_{N}^{k}}\overline{\mathbb{E}}_{\sigma}^{N,\beta}[\mathcal{H}_{\mathcal{A}\cap\mathcal{B}_{N}^{k}}]\le\max_{\sigma\in\mathcal{E}_{N}^{k}}\max_{\xi\in\mathcal{A}\cap\mathcal{B}_{N}^{k}}\overline{\mathbb{E}}_{\sigma}^{N,\beta}[\mathcal{H}_{\xi}]\le\max_{\sigma\in\mathcal{E}_{N}^{k}}\max_{\xi\in\mathcal{B}_{N}^{k}}\overline{\mathbb{E}}_{\sigma}^{N,\beta}[\mathcal{H}_{\xi}].
\]
Thus, all that remains is to prove that
\[
\max_{\sigma\in\mathcal{E}_{N}^{k}}\max_{\xi\in\mathcal{B}_{N}^{k}}\overline{\mathbb{E}}_{\sigma}^{N,\beta}[\mathcal{H}_{\xi}]\ll\frac{\varrho_{N}}{N},
\]
or equivalently,
\[
\max_{\bm{x}\in\mathcal{F}_{N}^{k}}\max_{\bm{y}\in\mathcal{A}_{N}^{k}}\overline{{\rm E}}_{\bm{x}}^{N,\beta}[\mathcal{H}_{\bm{y}}]\ll\frac{\varrho_{N}}{N},
\]
where $\overline{{\rm E}}_{\bm{x}}^{N,\beta}$ denotes the expectation
with respect to the law of the reflected proportions chain in $\mathcal{F}_{N}^{k}$.
The proof of the final displayed asymptotics is identical to the proof
of Lemma \ref{lem:rec1-1}. Namely, we apply \eqref{eq:magic-formula}
to obtain
\[
\overline{{\rm E}}_{\bm{x}}^{N,\beta}[\mathcal{H}_{\bm{y}}]=\frac{\sum_{\bm{z}\in\mathcal{F}_{N}^{k}}\pi_{N}^{\mathcal{F}_{N}^{k}}(\bm{z})\overline{\mathfrak{h}}_{\bm{x},\bm{y}}(\bm{z})}{\overline{{\rm cap}}_{N}(\bm{x},\bm{y})},
\]
where $\overline{\mathfrak{h}}_{\bm{x},\bm{y}}$ and $\overline{{\rm cap}}_{N}(\bm{x},\bm{y})$
are the equilibrium potential and capacity, respectively, defined
for the reflected proportions chain. For the denominator, we construct
a path from $\bm{x}$ to $\bm{y}$ inside $\mathcal{F}_{N}^{k}$ whose
energy is at most $F_{\beta}(\bm{x})+\epsilon$ via Lemma \ref{lem:descend-to-min},
and apply the Thomson principle \eqref{eq:TP}. For the numerator,
we apply \eqref{eq:renewal} to get
\[
\overline{\mathfrak{h}}_{\bm{x},\bm{y}}(\bm{z})\le\frac{\overline{{\rm cap}}_{N}(\bm{z},\bm{x})}{\overline{{\rm cap}}_{N}(\bm{z},\bm{y})},
\]
and apply two principles \eqref{eq:DP} and \eqref{eq:TP} to the
numerator and denominator, respectively. Then, we obtain that
\[
\max_{\bm{x}\in\mathcal{F}_{N}^{k}}\max_{\bm{y}\in\mathcal{A}_{N}^{k}}\overline{{\rm E}}_{\bm{x}}^{N,\beta}[\mathcal{H}_{\bm{y}}]\le CN^{5q+2}e^{2\beta N\epsilon}\ll\frac{\varrho_{N}}{N},
\]
where $C>0$ does not depend on $N$. We omit the repetition of technical
details.
\end{proof}
\begin{acknowledgement*}
The authors would like to thank Instituto Superior T\'ecnico (Lisbon)
for their warm hospitality during their stay in February 2026, during
which the manuscript was completed. SK has been supported by the Basic
Science Research Program through the National Research Foundation
of Korea funded by the Ministry of Science and ICT (RS-2025-00518980),
the Yonsei University Research Fund of 2025 (2025-22-0133), and the
POSCO Science Fellowship of POSCO TJ Park Foundation.
\end{acknowledgement*}

\appendix

\section{\label{secA}Energy Landscape of the CWP Model}

In Appendix \ref{secA}, we put together the energy landscape of the
CWP model. Assume throughout that $q\ge3$. Many of the results are
taken from \cite{Lee22} and are stated without proof.

Recall from \eqref{eq:phi-def} that $\phi:(0,1]\to\mathbb{R}$ is
given as $\phi(t)=t-\frac{1}{\beta}\log t$. In the language of Notation
\ref{nota:q-to-q-1}, an elementary calculation yields that
\begin{equation}
\partial_{k}F_{\beta}(\bm{x})=\phi(x_{q})-\phi(x_{k})\qquad\text{for}\quad k\in\llbracket1,q-1\rrbracket.\label{eq:Fbeta-diff}
\end{equation}
Thus, $\bm{x}\in\Xi$ is a critical point of $F_{\beta}$ if and only
if
\begin{equation}
\phi(x_{k})=\phi(x_{\ell})\qquad\text{for all}\quad k,\ell\in\llbracket1,q\rrbracket.\label{eq:phi-const}
\end{equation}
Since $\phi(t)=c$ has at most two solutions in $(0,1)$, any critical
point of $F_{\beta}$ is either the equiproportional vector ${\bf e}$
defined in \eqref{eq:e-def}, or of the form
\[
\left(\overbrace{t,\dots,t}^{q-i},\overbrace{\frac{1-(q-i)t}{i},\dots,\frac{1-(q-i)t}{i}}^{i}\right)
\]
or its permutation, where $i\in\llbracket1,\frac{q}{2}\rrbracket$
and $t\ne\frac{1}{q}$. This element satisfies \eqref{eq:phi-const}
if and only if $g_{i}(t)=\beta$ where $g_{i}:(0,\frac{1}{q-i})\to\mathbb{R}$
is defined as
\begin{equation}
g_{i}(t):=\begin{cases}
\frac{i}{1-qt}\log\frac{1-(q-i)t}{it} & \text{if}\quad t\ne\frac{1}{q},\\
q & \text{if}\quad t=\frac{1}{q}.
\end{cases}\label{eq:gi-def}
\end{equation}
Note that $g_{i}$ is continuous. It is elementary to verify that
(see \cite[Lemma 6.1]{Lee22})
\begin{itemize}
\item $g_{i}$ has a unique minimum, denoted by $m_{i}\in(0,\frac{1}{q})$;
\item $g_{i}$ decreases on $(0,m_{i}]$ and increases on $[m_{i},\frac{1}{q-i})$
with $g_{i}(0+)=g_{i}(\frac{1}{q-i}-)=\infty$;
\item if $\beta>g_{i}(m_{i})=:\beta_{s,i}$, the equation $g_{i}(t)=\beta$
has two solutions $u_{i}(\beta)=u_{i}<v_{i}=v_{i}(\beta)$ such that
$u_{i}\in(0,m_{i})$ and $v_{i}\in(m_{i},\frac{1}{q-i})$;
\item $\beta\mapsto u_{i}(\beta)$ decreases on $(\beta_{s,i},\infty)$
and $\beta\mapsto v_{i}(\beta)$ increases on $(\beta_{s,i},\infty)$.
Moreover, $v_{i}(q)=\frac{1}{q}$;
\end{itemize}
Since $u_{i}<m_{i}<\frac{1}{q}$, we have $u_{i}<\frac{1-(q-i)u_{i}}{i}$,
thus according to the graph of $\phi$,
\begin{equation}
u_{i}<\frac{1}{\beta}<\frac{1-(q-i)u_{i}}{i}.\label{eq:ui-ineq}
\end{equation}
Similarly, since $\beta\mapsto v_{i}(\beta)$ increases and $v_{i}(q)=\frac{1}{q}$,
\begin{equation}
\begin{cases}
v_{i}<\frac{1}{\beta}<\frac{1-(q-i)v_{i}}{i} & \text{if}\quad\beta\in(\beta_{s,i},q),\\
v_{i}=\frac{1}{\beta}=\frac{1-(q-i)v_{i}}{i} & \text{if}\quad\beta=q,\\
v_{i}>\frac{1}{\beta}>\frac{1-(q-i)v_{i}}{i} & \text{if}\quad\beta\in(q,\infty).
\end{cases}\label{eq:vi-ineq}
\end{equation}
Gathering the discussions above, we arrive at the following result.
Recall from Definition \ref{def:C-critical} that $\mathcal{C}$ denotes
the set of critical points of $F_{\beta}$.
\begin{lem}
\label{lem:critical-pts}We have $\bm{c}\in\mathcal{C}$ if and only
if $\bm{c}$ is a permutation of one of the following points: ${\bf e}$
(cf. \eqref{eq:e-def}), or for some $t_{i}\in\{u_{i},v_{i}\}$ with
$i\in\llbracket1,\frac{q}{2}\rrbracket$,
\[
\left(\overbrace{t_{i},\dots,t_{i}}^{q-i},\overbrace{\frac{1-(q-i)t_{i}}{i},\dots,\frac{1-(q-i)t_{i}}{i}}^{i}\right).
\]
\end{lem}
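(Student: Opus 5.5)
The approach is to characterize the critical points directly through the gradient formula \eqref{eq:Fbeta-diff}. The equivalence \eqref{eq:phi-const} reduces the problem to a one-variable question about level sets of $\phi$. With Notation \ref{nota:q-to-q-1}, $\nabla F_{\beta}(\bm{x})=0$ at an interior point $\bm{x}\in\Xi^{\circ}$ holds iff $\phi(x_k)=\phi(x_q)$ for all $k\le q-1$, i.e., iff all $\phi(x_k)$ coincide.

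\emph{Step 1: at most two distinct coordinates.} Since $\phi'(t)=1-\frac{1}{\beta t}$, $\phi$ is strictly decreasing on $(0,\frac1\beta]$ and strictly increasing on $[\frac1\beta,1]$. Hence each level set $\{\phi=c\}\cap(0,1)$ has at most two points. So a critical point has coordinates taking at most two distinct values $t\neq s$.

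\emph{Step 2: normalize the configuration.} If only one value occurs, then $\bm{x}={\bf e}$. Otherwise, up to permutation, $q-i$ coordinates equal $t$ and $i$ equal $s=\frac{1-(q-i)t}{i}$. We may choose labels so that $i\le q-i$, i.e. $i\in\llbracket1,\frac q2\rrbracket$; this is a relabeling of which value is called $t$.

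\emph{Step 3: the equation $g_i(t)=\beta$.} The condition $\phi(t)=\phi(s)$ with $t\ne s$ reads $t-s=\frac1\beta\log\frac ts$. Since $t-s=\frac{qt-1}{i}$, this rearranges to
\[
\beta=\frac{i}{1-qt}\log\frac{1-(q-i)t}{it}=g_i(t),
\]
which is exactly \eqref{eq:gi-def}; here $t\ne s$ is equivalent to $t\ne\frac1q$.

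\emph{Step 4: solutions of $g_i(t)=\beta$.} By the listed properties of $g_i$ (from \cite[Lemma 6.1]{Lee22}), $g_i$ is strictly decreasing on $(0,m_i]$ and strictly increasing on $[m_i,\frac{1}{q-i})$. So the solutions of $g_i(t)=\beta$ are precisely $u_i,v_i$ when $\beta>\beta_{s,i}$, the single point $m_i$ when $\beta=\beta_{s,i}$, and none otherwise. We interpret $t_i\in\{u_i,v_i\}$ as the solutions that exist, with $u_i=v_i=m_i$ in the degenerate case and the set empty below $\beta_{s,i}$. The value $t=\frac1q$ must be excluded, because it corresponds to ${\bf e}$; this happens exactly when $\beta=q$ and $v_i=\frac1q$, and that case is already covered by ${\bf e}$.

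\emph{Step 5: converse direction.} Any permutation of such a point satisfies \eqref{eq:phi-const}. Its coordinates are positive: $t_i>0$, and $t_i<\frac1{q-i}$ gives $1-(q-i)t_i>0$. Hence the point lies in $\Xi^{\circ}$ and belongs to $\mathcal{C}$. Finiteness of $\mathcal{C}$ is then immediate.

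The main obstacle is not analytic. It is the bookkeeping in Step 2, namely that restricting to $i\le\frac q2$ loses nothing: the swap $t\leftrightarrow s$ corresponds to $i\leftrightarrow q-i$, so every two-valued critical point is counted. The only other delicate point is the boundary cases $\beta=\beta_{s,i}$ and $\beta=q$ in Step 4.
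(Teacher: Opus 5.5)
Your proposal is correct and follows essentially the same route as the paper. Via \eqref{eq:Fbeta-diff}, criticality reduces to \eqref{eq:phi-const}; the shape of $\phi$ forces at most two distinct coordinate values; and the condition $\phi(t)=\phi\bigl(\frac{1-(q-i)t}{i}\bigr)$ with $t\ne\frac1q$ is rewritten as $g_i(t)=\beta$, whose solutions are read off from the monotonicity of $g_i$ cited from \cite[Lemma 6.1]{Lee22}. Your explicit handling of the boundary cases $\beta=\beta_{s,i}$ (single solution $m_i$) and $\beta=q$ (where the solution $\frac1q$ simply reproduces ${\bf e}$), together with the positivity check in the converse direction, makes precise points the paper leaves implicit.
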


Now, we classify the critical points. For $\beta>\beta_{s,1}$ and
$k\in\llbracket1,q\rrbracket$, define
\begin{equation}
\begin{aligned}{\bf u}_{k}={\bf u}_{k}(\beta) & :=(u_{1},\dots,1-(q-1)u_{1},\dots,u_{1}),\\
{\bf v}_{k}={\bf v}_{k}(\beta) & :=(v_{1},\dots,1-(q-1)v_{1},\dots,v_{1}),
\end{aligned}
\label{eq:ukvk-def}
\end{equation}
whose $k$-th components are $1-(q-1)u_{1}$ or $1-(q-1)v_{1}$ and
the others are all $u_{1}$ or $v_{1}$. For $\beta>\beta_{s,2}$
and $k,\ell\in\llbracket1,q\rrbracket$ with $k\ne\ell$, define
\begin{equation}
{\bf u}_{k,\ell}={\bf u}_{k,\ell}(\beta):=\left(u_{2},\dots,\frac{1-(q-2)u_{2}}{2},\dots,\frac{1-(q-2)u_{2}}{2},\dots,u_{2}\right),\label{eq:ukl-def}
\end{equation}
whose $k$-th and $\ell$-th components are $\frac{1-(q-2)u_{2}}{2}$
and the others are all $u_{2}$.
\begin{defn}
\label{def:critical-pts}Define $\mathcal{C}_{1}:=\{{\bf e}\}$, $\mathcal{C}_{2}:=\{{\bf u}_{1},\dots,{\bf u}_{q}\}$,
$\mathcal{C}_{3}:=\{{\bf v}_{1},\dots,{\bf v}_{q}\}$, and
\[
\mathcal{C}_{4}:=\bigcup_{i\in\left\llbracket 2,\frac{q}{2}\right\rrbracket }\bigcup_{t_{i}\in\{u_{i},v_{i}\}}\left\{ \text{permutations of}\enspace\left(t_{i},\dots,t_{i},\frac{1-(q-i)t_{i}}{i},\dots,\frac{1-(q-i)t_{i}}{i}\right)\right\} \setminus\{{\bf e}\}.
\]
By Lemma \ref{lem:critical-pts}, we have a decomposition $\mathcal{C}=\mathcal{C}_{1}\cup\mathcal{C}_{2}\cup\mathcal{C}_{3}\cup\mathcal{C}_{4}$.
Note that $\mathcal{C}_{4}\ne\varnothing$ only if $q\ge4$.
\end{defn}

For the critical points in $\mathcal{C}_{4}$, the following property
will be exploited in Lemma \ref{l5}.
\begin{lem}
\label{lem:two-coordinates}For any $\bm{x}\in\mathcal{C}_{4}$, there
exist two different indices $k,\ell\in\llbracket1,q\rrbracket$ such
that $x_{k},x_{\ell}>\frac{1}{\beta}$.
\end{lem}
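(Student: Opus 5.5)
The plan is to use the explicit description of $\mathcal{C}_{4}$ from Definition \ref{def:critical-pts}. Let $\bm{x}\in\mathcal{C}_{4}$. It is a permutation of
\[
\Bigl(t_{i},\dots,t_{i},s_{i},\dots,s_{i}\Bigr),\qquad s_{i}:=\frac{1-(q-i)t_{i}}{i},
\]
with $q-i$ copies of $t_{i}$ and $i$ copies of $s_{i}$, where $i\in\llbracket2,\frac{q}{2}\rrbracket$ and $t_{i}\in\{u_{i},v_{i}\}$. The key observation is that $i\ge2$ and $q-i\ge i\ge2$. Hence \emph{both} values $t_{i}$ and $s_{i}$ occur at least twice among the coordinates. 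It therefore suffices to show that at least one of $t_{i},s_{i}$ is strictly larger than $\frac{1}{\beta}$. Any two coordinates carrying that value then serve as $k,\ell$.

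We split by the value of $t_{i}$.

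If $t_{i}=u_{i}$, then \eqref{eq:ui-ineq} gives $s_{i}>\frac{1}{\beta}$ directly, and this value appears $i\ge2$ times.

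If $t_{i}=v_{i}$, we use \eqref{eq:vi-ineq}. For $\beta\in(\beta_{s,i},q)$ we get $s_{i}>\frac{1}{\beta}$, again appearing $i\ge2$ times. For $\beta>q$ we get $v_{i}>\frac{1}{\beta}$, appearing $q-i\ge2$ times.

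The remaining case is $\beta=q$ with $t_{i}=v_{i}$. Here $v_{i}=s_{i}=\frac{1}{q}$, so $\bm{x}={\bf e}$. This point is excluded from $\mathcal{C}_{4}$ by definition, so the case does not occur. The same remark covers any degenerate situation with $t_{i}=\frac{1}{q}$.

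There is no real obstacle. The only point needing care is to check that the strict inequalities in \eqref{eq:ui-ineq}--\eqref{eq:vi-ineq} cover every admissible $\beta$, and that the equality case is exactly the excluded point ${\bf e}$.
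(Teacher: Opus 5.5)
Your proof is correct and follows essentially the same route as the paper: both values $t_i$ and $\frac{1-(q-i)t_i}{i}$ appear at least twice because $i\ge2$ and $q-i\ge2$, and \eqref{eq:ui-ineq}--\eqref{eq:vi-ineq} show that one of them exceeds $\frac{1}{\beta}$. The case $\beta=q$, $t_i=v_i$ gives ${\bf e}$, which $\mathcal{C}_{4}$ excludes by definition; the paper disposes of this case in a footnote exactly as you do.
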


\begin{proof}
May assume that $q\ge4$. Take $i\in\llbracket2,\frac{q}{2}\rrbracket$
and $t_{i}\in\{u_{i},v_{i}\}$ when $\beta\ne q$, or\footnote{Note that since $v_{i}=\frac{1}{q}$ when $\beta=q$, the critical
point corresponding to $v_{i}$ is ${\bf e}$, which belongs to $\mathcal{C}_{1}$.} $t_{i}=u_{i}$ when $\beta=q$. By \eqref{eq:ui-ineq} and \eqref{eq:vi-ineq},
exactly one of $t_{i}$ or $\frac{1-(q-i)t_{i}}{i}$ is bigger than
$\frac{1}{\beta}$. As both of them appear at least twice in the coordinates
since $i\ge2$ and $q-i\ge2$, the result follows.
\end{proof}
Finally, we define the critical (inverse) temperatures. Define $\beta_{c}=\beta_{c}(q)$
as (cf. \cite[display (3.3)]{CET05})
\[
\beta_{c}:=\frac{2(q-1)}{q-2}\log(q-1).
\]
By \cite[Lemmas 6.4 and 6.5]{Lee22}, we have the following comparison:
\begin{itemize}
\item for $q=3$, $\beta_{s,1}<\beta_{c}<q$;
\item for $q=4$, $\beta_{s,1}<\beta_{c}<\beta_{s,2}=q$;
\item for $q\ge5$, $\beta_{s,1}<\beta_{c}<\beta_{s,2}<q$.
\end{itemize}
By \cite[Lemma 6.6]{Lee22},
\begin{equation}
\begin{aligned}F_{\beta}({\bf u}_{1})=\cdots=F_{\beta}({\bf u}_{q})>F_{\beta}({\bf e}) & \qquad\text{for}\quad\beta\in(\beta_{s,1},\beta_{c}),\\
F_{\beta}({\bf u}_{1})=\cdots=F_{\beta}({\bf u}_{q})=F_{\beta}({\bf e}) & \qquad\text{for}\quad\beta=\beta_{c},\\
F_{\beta}({\bf u}_{1})=\cdots=F_{\beta}({\bf u}_{q})<F_{\beta}({\bf e}) & \qquad\text{for}\quad\beta\in(\beta_{c},\infty).
\end{aligned}
\label{eq:beta-c}
\end{equation}
By \cite[Lemma 6.7]{Lee22}, for $q\ge5$, there exists one more critical
temperature $\beta_{m}\in(\beta_{s,2},q)$ such that
\begin{equation}
\begin{aligned}F_{\beta}({\bf u}_{1,2})=\cdots=F_{\beta}({\bf u}_{q-1,q})>F_{\beta}({\bf v}_{1})=\cdots=F_{\beta}({\bf v}_{q}) & \qquad\text{for}\quad\beta\in(\beta_{s,2},\beta_{m}),\\
F_{\beta}({\bf u}_{1,2})=\cdots=F_{\beta}({\bf u}_{q-1,q})=F_{\beta}({\bf v}_{1})=\cdots=F_{\beta}({\bf v}_{q}) & \qquad\text{for}\quad\beta=\beta_{m},\\
F_{\beta}({\bf u}_{1,2})=\cdots=F_{\beta}({\bf u}_{q-1,q})<F_{\beta}({\bf v}_{1})=\cdots=F_{\beta}({\bf v}_{q}) & \qquad\text{for}\quad\beta\in(\beta_{m},q).
\end{aligned}
\label{eq:beta-m}
\end{equation}

\begin{defn}
\label{def:cri-temp}For $q\ge3$, define the critical temperatures
as
\[
\beta_{1}:=\beta_{s,1}\qquad\text{and}\qquad\beta_{2}:=\beta_{c}.
\]
For $q\in\{3,4\}$, define $\beta_{3}:=q$. For $q\ge5$, define $\beta_{3}:=\beta_{m}$.
\end{defn}

Next, we prove \eqref{eq:depth-bigger}. Again refer to Figure \ref{fig1.2}.
\begin{lem}
\label{lem:depth-bigger}Suppose that $q\ge5$ and $\beta\in(\beta_{3},q)$.
Then, the depth of $\mathcal{W}_{k}$, $k\in\llbracket1,q\rrbracket$
is strictly bigger than the depth of $\mathcal{W}_{0}$:
\[
F_{\beta}({\bf u}_{1,2})-F_{\beta}({\bf u}_{1})>F_{\beta}({\bf v}_{1})-F_{\beta}({\bf e}).
\]
\end{lem}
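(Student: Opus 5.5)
The plan is to combine the equality of the two depth-defining heights at $\beta=\beta_{3}$ with a monotonicity argument in $\beta$ on $[\beta_{3},q)$.

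\emph{Reformulation and the endpoint.} Set
\[
\Delta(\beta):=\beta\Big[\big(F_{\beta}({\bf u}_{1,2})-F_{\beta}({\bf u}_{1})\big)-\big(F_{\beta}({\bf v}_{1})-F_{\beta}({\bf e})\big)\Big].
\]
The claim is $\Delta(\beta)>0$ on $(\beta_{3},q)$.

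At $\beta=\beta_{3}=\beta_{m}$, \eqref{eq:beta-m} gives $F_{\beta}({\bf u}_{1,2})=F_{\beta}({\bf v}_{1})$. Hence $\Delta(\beta_{3})=\beta_{3}\big(F_{\beta_{3}}({\bf e})-F_{\beta_{3}}({\bf u}_{1})\big)$. This is strictly positive by \eqref{eq:beta-c}, since $\beta_{3}>\beta_{s,2}>\beta_{c}$.

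It therefore suffices to show that $\Delta$ is continuous on $[\beta_{3},q)$ and nondecreasing there.

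\emph{Differentiating via the envelope principle.} Note that $\beta F_{\beta}(\bm{x})=\beta H(\bm{x})+S(\bm{x})$. The functions $u_{1}(\beta),u_{2}(\beta),v_{1}(\beta)$ are $C^{1}$ on the relevant range. This follows from the implicit function theorem applied to $g_{i}(t)=\beta$, since $g_{i}'\ne0$ away from $m_{i}$ and these solutions stay away from $m_{i}$ for $\beta>\beta_{s,i}$. Consequently ${\bf u}_{1},{\bf u}_{1,2},{\bf v}_{1}$ are $C^{1}$ curves of critical points, and ${\bf e}$ is constant.

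For any such curve $\bm{c}(\beta)$, the chain rule together with $\nabla F_{\beta}(\bm{c}(\beta))=0$ gives
\[
\frac{d}{d\beta}\big[\beta F_{\beta}(\bm{c}(\beta))\big]=H(\bm{c}(\beta)).
\]
Therefore
\[
\Delta'(\beta)=\tfrac12\Big[\big(|{\bf u}_{1}|^{2}-|{\bf u}_{1,2}|^{2}\big)+\big(|{\bf v}_{1}|^{2}-|{\bf e}|^{2}\big)\Big].
\]
The second bracket is nonnegative, because ${\bf e}$ minimizes $|\cdot|^{2}$ on $\Xi$.

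\emph{Main obstacle.} The remaining task is to show $|{\bf u}_{1}|^{2}\ge|{\bf u}_{1,2}|^{2}$ for $\beta\in(\beta_{3},q)$. In coordinates this reads
\[
(1-(q-1)u_{1})^{2}+(q-1)u_{1}^{2}\;\ge\;2\Big(\tfrac{1-(q-2)u_{2}}{2}\Big)^{2}+(q-2)u_{2}^{2}.
\]
Intuitively it holds because ${\bf u}_{1}$ concentrates mass on one spin, whereas ${\bf u}_{1,2}$ splits it over two.

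\begin{itemize}
\item I would first use \eqref{eq:ui-ineq}, which gives $u_{i}<\frac1\beta<\frac1q$. Together with the crude bounds $|{\bf u}_{1}|^{2}\ge(1-(q-1)u_{1})^{2}$ and $|{\bf u}_{1,2}|^{2}\le\frac12+(q-2)u_{2}^{2}$, this reduces the claim to a one-variable inequality. That inequality should hold once $u_{1}$ is small. Smallness of $u_{1}$ should follow from $g_{1}(u_{1})=\beta>\beta_{s,2}$ and the explicit form \eqref{eq:gi-def}, which bounds $u_{1}$ by $\frac{1}{q-1}e^{-c\beta}$-type quantities.
\item If this direct estimate is too lossy, the fallback avoids the sign of $\Delta'$ altogether. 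On $(\beta_{3},q)$ the two depths vary in opposite directions, so I would compare each depth separately with its value at $\beta_{3}$ rather than their difference:
\begin{itemize}
\item For $\beta(F_{\beta}({\bf v}_{1})-F_{\beta}({\bf e}))$, the derivative is $-\frac12(|{\bf v}_{1}|^{2}-\frac1q)\le0$. Since this quantity is positive, the depth $F_{\beta}({\bf v}_{1})-F_{\beta}({\bf e})$ itself is nonincreasing in $\beta$.
\item I would then show that $F_{\beta}({\bf u}_{1,2})-F_{\beta}({\bf u}_{1})$ is nondecreasing, by the same envelope computation.
\end{itemize}
Chaining these with the strict inequality at $\beta_{3}$ concludes. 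The inequality $|{\bf u}_{1}|>|{\bf u}_{1,2}|$, or a variant of it weighted by $\beta$, remains the key estimate in both routes.
\end{itemize}
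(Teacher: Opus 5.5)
Your reduction is correct and takes a different route from the paper. You differentiate $\beta(D_\beta-\widehat D_\beta)$, so the envelope identity produces energy terms $\frac12(|{\bf u}_1|^2-|{\bf u}_{1,2}|^2)+\frac12(|{\bf v}_1|^2-\frac1q)$. The paper differentiates $D_\beta-\widehat D_\beta$ itself and gets entropy terms: its $k_1(u_1)$, $k_2(u_2)$, $k_1(v_1)$ are exactly $S({\bf u}_1)$, $S({\bf u}_{1,2})$, $S({\bf v}_1)$. Your endpoint value at $\beta_3$ and the ${\bf v}_1$ versus ${\bf e}$ term are handled correctly.

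There is, however, a genuine gap at the step you yourself call the main obstacle: $|{\bf u}_1|^2\ge|{\bf u}_{1,2}|^2$ is never proved. The smallness of $u_1$ is only asserted (``$\frac{1}{q-1}e^{-c\beta}$-type''), with no threshold and no derivation. From $g_1(u_1)=\beta$ you only get $u_1<e^{-\beta(1-qu_1)}$, which says nothing without a lower bound on $1-qu_1$. Two smaller points:
\begin{itemize}
\item \eqref{eq:ui-ineq} does not give $u_i<\frac1q$, since $\beta<q$ means $\frac1\beta>\frac1q$. The bound $u_i<\frac1q$ comes instead from $u_i<m_i<\frac1q$.
\item Your fallback is misdirected. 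The envelope computation gives monotonicity of $\beta D_\beta$, not of $D_\beta$. Monotonicity of $D_\beta$ needs $S({\bf u}_1)\ge S({\bf u}_{1,2})$, which is the paper's entropy estimate, not a variant of $|{\bf u}_1|>|{\bf u}_{1,2}|$.
\end{itemize}

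The missing ingredient is $u_1<u_2$, which the paper derives from $g_1<g_2$ on $(0,\frac1q)$: the relation $g_1(u_2)<g_2(u_2)=\beta=g_1(u_1)$ forces $u_2\in(u_1,v_1)$. With it, write $|{\bf u}_1|^2=f_1(u_1)$ and $|{\bf u}_{1,2}|^2=f_2(u_2)$, where $f_1(u)=(1-(q-1)u)^2+(q-1)u^2$ and $f_2(u)=\frac12(1-(q-2)u)^2+(q-2)u^2$. Two facts finish it: $f_1'(u)=2(q-1)(qu-1)<0$ on $(0,\frac1q)$, and $f_1(u)-f_2(u)=\frac12(1-qu)^2\ge0$. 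Hence $|{\bf u}_1|^2=f_1(u_1)>f_1(u_2)\ge f_2(u_2)=|{\bf u}_{1,2}|^2$.

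Alternatively, $u_1(\beta)<u_1(\beta_c)=\frac{1}{q(q-1)}$ gives $|{\bf u}_1|^2>(1-\frac1q)^2>\frac12\ge|{\bf u}_{1,2}|^2$, the last step by convexity of $f_2$ on $[0,\frac1q]$. This version requires checking that $\frac{1}{q(q-1)}$ lies on the decreasing branch of $g_1$.

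With this step filled in, your argument is complete and self-contained. Its final one-variable comparison is an explicit perfect square, simpler than the paper's derivative analysis of $k_1-k_2$.
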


\begin{proof}
Recall from \eqref{eq:Dbeta-def} that $D_{\beta}=F_{\beta}({\bf u}_{1,2})-F_{\beta}({\bf u}_{1})$.
Define, for $\beta\in[\beta_{3},q]$,
\[
\widehat{D}_{\beta}:=F_{\beta}({\bf v}_{1})-F_{\beta}({\bf e}).
\]
Our objective is to prove that $D_{\beta}-\widehat{D}_{\beta}>0$
for $\beta\in(\beta_{3},q)$. Note that if $\beta=\beta_{3}$ then
by \eqref{eq:beta-c} and \eqref{eq:beta-m},
\[
D_{\beta_{3}}-\widehat{D}_{\beta_{3}}=(F_{\beta_{3}}({\bf u}_{1,2})-F_{\beta_{3}}({\bf u}_{1}))-(F_{\beta_{3}}({\bf v}_{1})-F_{\beta_{3}}({\bf e}))=F_{\beta_{3}}({\bf e})-F_{\beta_{3}}({\bf u}_{1})>0.
\]
Therefore, it suffices to prove that
\begin{equation}
\frac{\partial D_{\beta}}{\partial\beta}-\frac{\partial\widehat{D}_{\beta}}{\partial\beta}>0\qquad\text{for}\quad\beta\in(\beta_{3},q).\label{eq:depth-WTS}
\end{equation}
First, from \cite[display (8.5)]{Lee22} we infer that
\begin{equation}
\beta^{2}\left(\frac{\partial D_{\beta}}{\partial\beta}-\frac{\partial\widehat{D}_{\beta}}{\partial\beta}\right)=k_{1}(u_{1})-k_{2}(u_{2})+k_{1}(v_{1})-\log\frac{1}{q},\label{eq:depth-1}
\end{equation}
where for $i\in\{1,2\}$,
\[
k_{i}(t):=(1-(q-i)t)\log\frac{1-(q-i)t}{it}+\log t\qquad\text{for}\quad t\in\left(0,\frac{1}{q-i}\right).
\]
By \cite[display (8.7)]{Lee22}, $k_{i}'(t)<0$ for $t\in(0,\frac{1}{q})$.
Since $0<u_{i}<m_{i}<v_{i}<\frac{1}{q}$ if $\beta\in(\beta_{3},q)$,
\begin{equation}
k_{1}(v_{1})>k_{1}\left(\frac{1}{q}\right)=\log\frac{1}{q}.\label{eq:depth-2}
\end{equation}
Next, we claim that
\begin{equation}
u_{1}<u_{2}.\label{eq:u1u2}
\end{equation}
Indeed, by the first display in the proof of \cite[Lemma 6.4]{Lee22},
$g_{1}<g_{2}$ on $(0,\frac{1}{q})$, thus
\[
g_{1}(u_{1})=\beta=g_{2}(u_{2})>g_{1}(u_{2}).
\]
By the definition of $u_{1}$, the inequality $g_{1}(u_{1})>g_{1}(u_{2})$
automatically implies that $u_{2}\in(u_{1},v_{1})$, which proves
\eqref{eq:u1u2}. This, along with the decreasing property of $k_{2}$,
gives
\begin{equation}
k_{2}(u_{2})<k_{2}(u_{1}).\label{eq:depth-3}
\end{equation}
Finally, we claim that $k_{1}(u_{1})\ge k_{2}(u_{1})$, which proves
\eqref{eq:depth-WTS} when combined with \eqref{eq:depth-1}, \eqref{eq:depth-2},
and \eqref{eq:depth-3}. Thus, we are left to prove that
\[
k_{1}(t)\ge k_{2}(t)\qquad\text{for all}\quad t\in\left(0,\frac{1}{q}\right).
\]
Letting $k_{1,2}(t):=k_{1}(t)-k_{2}(t)$, it suffices to prove that
\begin{equation}
k_{1,2}\left(\frac{1}{q}\right)=0\qquad\text{and}\qquad k_{1,2}'(t)\le0\qquad\text{for all}\quad t\in\left(0,\frac{1}{q}\right).\label{eq:depth-WTS-2}
\end{equation}
Clearly, $k_{1,2}(\frac{1}{q})=k_{1}(\frac{1}{q})-k_{2}(\frac{1}{q})=0$.
In addition,
\[
k_{1,2}'(t)=-(q-2)\log2+\log\frac{t}{1-(q-1)t}+(q-2)\log\frac{1-(q-2)t}{1-(q-1)t}.
\]
From this formula it is obvious that $k_{1,2}'(t)$ increases as $t$
increases. Thus,
\[
k_{1,2}'(t)\le k_{1,2}'\left(\frac{1}{q}\right)=-(q-2)\log2+(q-2)\log\frac{2/q}{1/q}=0,
\]
which finishes the proof of \eqref{eq:depth-WTS-2} and thus of the
lemma.
\end{proof}
The following property for $\phi$ is used in Section \ref{sec4.2}.
\begin{lem}
\label{lem:u1v1q}
\begin{enumerate}
\item If $\beta\in(\beta_{1},q)$, then $\phi(t)<\phi(1-(q-1)t)$ for all
$t\in(u_{1},v_{1})$.
\item If $\beta\in[q,\infty)$, then $\phi(t)<\phi(1-(q-1)t)$ for all $t\in(u_{1},\frac{1}{q})\cup(v_{1},\frac{1}{q-1})$.
\end{enumerate}
\end{lem}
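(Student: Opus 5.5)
We reduce the claim to the sign of an explicit one-variable function. Write $\psi(t):=\phi(1-(q-1)t)-\phi(t)$ for $t\in(0,\frac{1}{q-1})$. We must show $\psi>0$ on $(u_{1},v_{1})$ when $\beta\in(\beta_{1},q)$, and on $(u_{1},\frac{1}{q})\cup(v_{1},\frac{1}{q-1})$ when $\beta\ge q$. The zeros of $\psi$ are exactly the $t$ for which $(t,\dots,t,1-(q-1)t)$ satisfies \eqref{eq:phi-const}. We do not use the critical-point classification for this; we redo the computation directly.

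\textbf{Step 1: locate the zeros.} For $t\neq\frac1q$, the equation $\phi(1-(q-1)t)=\phi(t)$ reads
\[
(1-qt)=\frac1\beta\log\frac{1-(q-1)t}{t}.
\]
Since $1-qt\neq0$, this is equivalent to $g_{1}(t)=\beta$ by \eqref{eq:gi-def} with $i=1$. In addition, $\psi(\frac1q)=0$ trivially. By the listed properties of $g_{1}$ (unique minimum $m_{1}$, monotone on each side, $\beta>\beta_{1}=\beta_{s,1}$), the zeros of $\psi$ in $(0,\frac1{q-1})$ are exactly $u_{1}$, $v_{1}$ and $\frac1q$. When $\beta=q$, we have $v_{1}=\frac1q$, so there are only two zeros. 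By \eqref{eq:vi-ineq}, $v_{1}<\frac1q$ for $\beta<q$ and $v_{1}>\frac1q$ for $\beta>q$.

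\textbf{Step 2: determine the sign on each interval.} Because $\psi$ is continuous, its sign is constant between consecutive zeros, so it suffices to determine it once per interval. We also track the sign of $g_{1}-\beta$, using
\[
\psi(t)=(1-qt)\Bigl(\tfrac{g_1(t)}{\beta}-1\Bigr)\cdot c
\]
for $t\neq\frac1q$, with a positive factor $c$. Concretely, $\psi(t)=\frac1\beta\log\frac{t}{1-(q-1)t}+(1-qt)$, and $\log\frac{1-(q-1)t}{t}=\frac{g_1(t)(1-qt)}{1}$ for $t\ne\frac1q$. Hence
\[
\psi(t)=\frac{(1-qt)}{\beta}\bigl(\beta-g_{1}(t)\bigr).
\]
This gives the sign at once:
\begin{itemize}
\item On $(u_{1},v_{1})$, $g_{1}<\beta$ by the unimodal shape of $g_{1}$. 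For $t<\frac1q$ we have $1-qt>0$, so $\psi>0$. This covers case (1) whenever $v_{1}<\frac1q$, i.e. $\beta<q$, and also the part $(u_{1},\frac1q)$ of case (2), since for $\beta\ge q$ we have $\frac1q\le v_1$.
\item On $(v_{1},\frac1{q-1})$ with $\beta\ge q$, we have $t>v_1\ge\frac1q$, so $1-qt<0$, and $g_{1}>\beta$. Therefore $\psi>0$ again. If $\beta=q$, the point $t=\frac1q$ itself is excluded.
\end{itemize}

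\textbf{Step 3: check the small points.} We verify the identity in Step 2 and the domain bookkeeping: $g_1$ is defined on $(0,\frac1{q-1})$, and $1-(q-1)t>0$ there. The only genuinely delicate point is the ordering $u_1<m_1<\frac1q$ versus $v_1$ relative to $\frac1q$. This ordering is supplied by the listed properties together with \eqref{eq:vi-ineq}, so we expect no real obstacle; the whole proof reduces to the factorization $\psi=\frac{1-qt}{\beta}(\beta-g_1)$.
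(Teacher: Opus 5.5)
Your proof is correct, but it follows a different route from the paper's.

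\textbf{The paper's route.} The paper sets $\psi(t)=\phi(t)-\phi(1-(q-1)t)$ and computes $\psi'(t)=q-\frac{1}{\beta t(1-(q-1)t)}$, which changes sign at most twice. It then combines this with the boundary behaviour $\psi(0+)=+\infty$, $\psi(\frac{1}{q-1}-)=-\infty$ and the known zeros $u_1,v_1,\frac1q$ to read off the sign pattern. At $\beta=q$ two zeros merge ($v_1=\frac1q$), so it needs an extra check: $\psi'(u_1)<0$, via $g_1'(u_1)<0$, and $\psi'(\frac1q)=0$.

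\textbf{Your route.} Your identity
\[
\phi(1-(q-1)t)-\phi(t)=\frac{1-qt}{\beta}\bigl(\beta-g_1(t)\bigr)
\]
turns the sign of $\psi$ at each point directly into the sign of $\beta-g_1(t)$. The unimodal shape of $g_1$, which the paper also uses to locate the zeros, then settles every interval at once. This includes the degenerate case $\beta=q$, with no separate derivative computation. The comparison of $v_1$ with $\frac1q$ does follow from the outer inequalities in \eqref{eq:vi-ineq}, i.e.\ $v_1\lessgtr 1-(q-1)v_1$. Once you have the factorization, Step 1 and the constant-sign-between-zeros remark are superfluous.

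\textbf{One slip to fix.} The preliminary display $\psi(t)=(1-qt)\bigl(\frac{g_1(t)}{\beta}-1\bigr)\cdot c$ with $c>0$ has the wrong sign. It contradicts the correct identity you derive immediately afterwards, so delete it.
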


\begin{proof}
Define $\psi(t):=\phi(t)-\phi(1-(q-1)t)$ for $t\in(0,\frac{1}{q-1})$.
Note that $\psi(0+)=\infty$ and $\psi(\frac{1}{q-1}-)=-\infty$.
In addition,
\begin{equation}
\psi'(t)=\phi'(t)+(q-1)\phi'(1-(q-1)t)=q-\frac{1}{\beta t(1-(q-1)t)}.\label{eq:psi'}
\end{equation}
This means that the sign of $\psi'$ changes at most two times in
$(0,\frac{1}{q-1})$.

Observe from \eqref{eq:phi-const} and Lemma \ref{lem:critical-pts}
that $\psi(u_{1})=\psi(v_{1})=\psi(\frac{1}{q})=0$. If $\beta\in(\beta_{1},q)$,
then $u_{1}<v_{1}<\frac{1}{q}$ are three zeros of $\psi$. This automatically
implies that $\psi<0$ on $(u_{1},v_{1})$, thus part (1) is verified.

If $\beta\in(q,\infty)$, then $u_{1}<\frac{1}{q}<v_{1}$ are three
zeros of $\psi$, thus similarly part (2) follows as well.

If $\beta=q$, then since $v_{1}=\frac{1}{q}$, to conclude the proof
it additionally requires that $\psi'(u_{1})<0$ and $\psi'(\frac{1}{q})=0$.
For the first statement, we may infer from \eqref{eq:gi-def} that
$g_{1}'(u_{1})<0$, which is equivalent to
\[
\frac{1}{1-qu_{1}}\left(\frac{q}{1-qu_{1}}\log\frac{1-(q-1)u_{1}}{u_{1}}-\frac{1}{u_{1}(1-(q-1)u_{1})}\right)<0.
\]
Substituting $g_{1}(u_{1})=\beta$ here, we get
\[
\frac{1}{1-qu_{1}}\left(\beta q-\frac{1}{u_{1}(1-(q-1)u_{1})}\right)<0.
\]
Since $u_{1}<\frac{1}{q}$, via \eqref{eq:psi'}, this gives us $\psi'(u_{1})<0$.
Finally, $\psi'(\frac{1}{q})=0$ follows easily from \eqref{eq:psi'}
and the fact that $\beta=q$.
\end{proof}

\section{\label{secB}Total Variation Distance}

In Appendix \ref{secB}, we review general results on the total variation
distance derived in \cite[Section 5]{Lee25}. The first lemma is from
\cite[Lemma 5.1]{Lee25}. Mind that its proof works even if the space
changes as $N$ varies.
\begin{lem}
\label{lem:TV-1}Suppose that $n\ge1$. Let $(E_{N})_{N\ge1}$ be
a sequence of finite sets and let $(\pi_{N})_{N\ge1}$ be a sequence
of measures on $E_{N}$. Let $A_{N}^{i}\subset E_{N}$, $i\in\llbracket1,n\rrbracket$,
be disjoint subsets. Suppose that the following limits exist:
\[
\nu_{i}:=\lim_{N\to\infty}\pi_{N}\left(A_{N}^{i}\right)\in[0,1)\qquad\text{for each}\quad i\in\llbracket1,n\rrbracket.
\]
Suppose further that $\sum_{i=1}^{n}\nu_{i}=1$. Then for any $a_{i}$,
$i\in\llbracket1,n\rrbracket$ such that $a_{i}\ge0$ and $\sum_{i=1}^{n}a_{i}=1$,
\[
\lim_{N\to\infty}d_{{\rm TV}}\left(\sum_{i=1}^{n}a_{i}\pi_{N}^{A_{N}^{i}},\pi_{N}\right)=\frac{1}{2}\sum_{i=1}^{n}|a_{i}-\nu_{i}|,
\]
where $\pi_{N}^{A_{N}^{i}}$ is the conditioned measure of $\pi_{N}$
on $A_{N}^{i}$.
\end{lem}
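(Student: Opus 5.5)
We must prove Lemma \ref{lem:TV-1}. The plan is to compute the total variation distance directly as half the $\ell^{1}$ norm and split the sum over $E_{N}$ into the pieces $A_{N}^{i}$ and the complement $R_{N}:=E_{N}\setminus\bigcup_{i}A_{N}^{i}$. On $A_{N}^{i}$, the mixture $\sum_{j}a_{j}\pi_{N}^{A_{N}^{j}}$ coincides with $a_{i}\pi_{N}^{A_{N}^{i}}$, since the sets are disjoint. On $R_{N}$ the mixture vanishes.

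For the pieces, for $x\in A_{N}^{i}$ we have
\[
\left|a_{i}\pi_{N}^{A_{N}^{i}}(x)-\pi_{N}(x)\right|=\pi_{N}(x)\left|\frac{a_{i}}{\pi_{N}(A_{N}^{i})}-1\right|,
\]
where we interpret the expression suitably if $\pi_{N}(A_{N}^{i})=0$. Summing over $x\in A_{N}^{i}$ gives exactly $|a_{i}-\pi_{N}(A_{N}^{i})|$. On $R_{N}$, the contribution is $\pi_{N}(R_{N})=1-\sum_{i}\pi_{N}(A_{N}^{i})$, which tends to $1-\sum_{i}\nu_{i}=0$. Hence
\[
d_{{\rm TV}}\left(\sum_{i=1}^{n}a_{i}\pi_{N}^{A_{N}^{i}},\pi_{N}\right)=\frac{1}{2}\sum_{i=1}^{n}\left|a_{i}-\pi_{N}(A_{N}^{i})\right|+\frac{1}{2}\pi_{N}(R_{N}),
\]
and letting $N\to\infty$ with $\pi_{N}(A_{N}^{i})\to\nu_{i}$ yields the claim.

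The only subtle point is degeneracy: the conditioned measure $\pi_{N}^{A_{N}^{i}}$ is undefined when $\pi_{N}(A_{N}^{i})=0$. If $\nu_{i}>0$, this cannot happen for large $N$. If $\nu_{i}=0$, we must adopt a convention. Any choice of probability measure on $A_{N}^{i}$ (for instance, if $A_{N}^{i}$ is empty we need $a_{i}=0$, or we drop the term) contributes at most $a_{i}+\pi_{N}(A_{N}^{i})$ to the $\ell^{1}$ sum, which still converges to $|a_{i}-\nu_{i}|=a_{i}$ by the same computation. Finally, we note that the hypothesis $\nu_{i}<1$ is not actually needed in this argument.
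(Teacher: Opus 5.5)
Your argument is correct and complete. The paper gives no proof of its own here: it defers to \cite[Lemma 5.1]{Lee25}, adding only that the argument survives when the state space depends on $N$. Your computation makes that remark transparent. You use implicitly that $\pi_N$ is a probability measure, both for the $\ell^{1}$ formula for $d_{\rm TV}$ and for $\pi_N(R_N)=1-\sum_i\pi_N(A_N^i)$; this is the intended reading and holds in the application to $\mu_N$. Under it, for each fixed $N$ you obtain the exact identity
\[
2\,d_{\rm TV}\Bigl(\sum_{i=1}^{n}a_i\pi_N^{A_N^i},\pi_N\Bigr)=\sum_{i=1}^{n}\bigl|a_i-\pi_N(A_N^i)\bigr|+\pi_N(R_N),
\]
so nothing about $E_N$ being fixed is ever needed.

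In the degenerate case, stating only the upper bound $a_i+\pi_N(A_N^i)$ is not enough for a limit. Pair it with the lower bound $|a_i-\pi_N(A_N^i)|$, which is immediate from the triangle inequality. In fact, when $\pi_N(A_N^i)=0$ the contribution on $A_N^i$ is exactly $a_i$ for any choice of probability measure on $A_N^i$, so the identity above holds verbatim.

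Your closing observation that the hypothesis $\nu_i<1$ is superfluous is worth keeping. Proposition \ref{prop:TV-TV2} applies this lemma with $\nu_k=\pi_\beta(k)$. For $q\ge3$ and $\beta\in(\beta_1,\beta_2)$ one has $\pi_\beta=\delta_0$, i.e.\ $\nu_0=1$. The lemma as stated excludes that case, but your proof covers it.
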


Suppose that $({\bf Q}_{t})_{t\ge0}$ is a probability semigroup defined
in a finite set $E$. The following is from \cite[Lemma 5.2]{Lee25}.
\begin{lem}
\label{lem:TV-2}Let $\mu,\nu$ be two probability measures on $E$.
Then for any $t\ge0$, $d_{{\rm TV}}(\mu,\nu)\ge d_{{\rm TV}}(\mu\mathbf{Q}_{t},\nu\mathbf{Q}_{t})$.
\end{lem}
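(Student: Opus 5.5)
The plan is to use the coupling characterization of total variation distance together with the Chapman--Kolmogorov property of the semigroup. Fix $t\ge0$ and two probability measures $\mu,\nu$ on $E$. Write $\mathbf{Q}_t(x,y)$ for the transition kernel, so that $(\mu\mathbf{Q}_t)(y)=\sum_{x\in E}\mu(x)\mathbf{Q}_t(x,y)$. The proof is then a direct computation using the $\ell^1$ formula $d_{\rm TV}(\mu,\nu)=\frac12\sum_{x}|\mu(x)-\nu(x)|$ recalled in Section \ref{sec1.1}.

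The key step is the estimate
\[
\sum_{y\in E}\bigl|(\mu\mathbf{Q}_t)(y)-(\nu\mathbf{Q}_t)(y)\bigr|=\sum_{y\in E}\Bigl|\sum_{x\in E}(\mu(x)-\nu(x))\mathbf{Q}_t(x,y)\Bigr|\le\sum_{x\in E}|\mu(x)-\nu(x)|\sum_{y\in E}\mathbf{Q}_t(x,y).
\]
This follows from the triangle inequality, the nonnegativity of $\mathbf{Q}_t(x,y)$, and an exchange of the finite sums. Since $\mathbf{Q}_t$ is a probability semigroup, each row satisfies $\sum_{y}\mathbf{Q}_t(x,y)=1$. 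Dividing by $2$ then gives $d_{\rm TV}(\mu\mathbf{Q}_t,\nu\mathbf{Q}_t)\le d_{\rm TV}(\mu,\nu)$.

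An alternative route goes through the supremum formulation. For $A\subset E$, set $f=\mathbf{Q}_t\bm{1}_A$, which takes values in $[0,1]$. Then $|\mu\mathbf{Q}_t(A)-\nu\mathbf{Q}_t(A)|=|\mu(f)-\nu(f)|$, and the quantity $\sup_{0\le f\le1}|\mu(f)-\nu(f)|$ equals $d_{\rm TV}(\mu,\nu)$ because the supremum is attained at $f=\bm{1}_{\{\mu>\nu\}}$. I would present the first computation because it is self-contained.

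There is no real obstacle here. The only points needing care are that the state space is finite, so the sums can be exchanged freely, and that the kernel is stochastic rather than substochastic. Stochasticity holds because $(\mathbf{Q}_t)$ is assumed to be a probability semigroup.
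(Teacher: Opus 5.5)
Your proof is correct. The $\ell^1$ computation uses the triangle inequality, the nonnegativity of $\mathbf{Q}_t(x,y)$, an exchange of finite sums and $\sum_y\mathbf{Q}_t(x,y)=1$, and this is a complete argument; your alternative via $\sup_{0\le f\le 1}|\mu(f)-\nu(f)|=d_{\rm TV}(\mu,\nu)$ with $f=\mathbf{Q}_t\bm{1}_A$ is equally valid. The paper gives no proof of its own and simply imports the statement from \cite[Lemma 5.2]{Lee25}, so your argument supplies a self-contained proof; the only fix needed is to drop the opening sentence, which announces a coupling and Chapman--Kolmogorov argument that you never use.
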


The following final result is from \cite[Lemma 5.3]{Lee25}.
\begin{lem}
\label{lem:TV-3}Suppose that $({\bf Q}_{t})_{t\ge0}$ induces only
one irreducible class. Let $\mu\ne\nu$ be probability measures on
$E$. Then for any $t>0$,
\[
d_{{\rm TV}}(\mu,\nu)>d_{{\rm TV}}(\mu\mathbf{Q}_{t},\nu\mathbf{Q}_{t}).
\]
In particular, if $\nu$ is the stationary distribution,
\[
d_{{\rm TV}}(\mu,\nu)>d_{{\rm TV}}(\mu\mathbf{Q}_{t},\nu).
\]
\end{lem}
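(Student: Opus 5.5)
Write $f:=\mu-\nu$ and compare the mass that $f^{+}\mathbf{Q}_{t}$ and $f^{-}\mathbf{Q}_{t}$ put on a common state. Here $f$ is a nonzero signed measure on $E$ with total mass zero. Its Jordan decomposition $f=f^{+}-f^{-}$ consists of two nonnegative, nonzero measures with $f^{+}(E)=f^{-}(E)=d_{{\rm TV}}(\mu,\nu)=:d>0$. Since $\mathbf{Q}_{t}$ is a stochastic matrix,
\[
d_{{\rm TV}}(\mu\mathbf{Q}_{t},\nu\mathbf{Q}_{t})=\frac{1}{2}\sum_{y\in E}\bigl|(f^{+}\mathbf{Q}_{t})(y)-(f^{-}\mathbf{Q}_{t})(y)\bigr|\le\frac{1}{2}\sum_{y\in E}\bigl((f^{+}\mathbf{Q}_{t})(y)+(f^{-}\mathbf{Q}_{t})(y)\bigr)=d .
\]
This recovers Lemma \ref{lem:TV-2}. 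Moreover, equality holds if and only if $f^{+}\mathbf{Q}_{t}$ and $f^{-}\mathbf{Q}_{t}$ have disjoint supports, because $|a-b|<a+b$ whenever $a,b>0$. So the whole task is to exhibit one state $y$ at which both measures are positive.

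The key step is a positivity property of the continuous-time semigroup. I will read the hypothesis that $(\mathbf{Q}_{t})$ "induces only one irreducible class" as: there is a unique closed communicating class $C\subset E$, and every state leads to $C$. (This is the reading under which a unique stationary distribution exists; otherwise the statement fails, e.g.\ for two absorbing states.) For a continuous-time chain on a finite set, $\mathbf{Q}_{t}=e^{tL}$ with generator $L$. If $y$ is reachable from $x$ through a chain of positive rates, then $\mathbf{Q}_{t}(x,y)>0$ for every $t>0$. This follows by expanding $e^{tL}=e^{-\lambda t}e^{t(L+\lambda I)}$ with $\lambda$ large enough that $L+\lambda I$ has nonnegative entries, and keeping the term of the power series corresponding to the path from $x$ to $y$. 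Since every $x$ leads to $C$, and $C$ is communicating, every $y\in C$ is reachable from every $x\in E$. Hence $\mathbf{Q}_{t}(x,y)>0$ for all $x\in E$, $y\in C$ and $t>0$. Note that no aperiodicity issue arises, precisely because time is continuous; this is the point where $t>0$ is used.

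Now fix any $y\in C$. Then $(f^{+}\mathbf{Q}_{t})(y)=\sum_{x}f^{+}(x)\mathbf{Q}_{t}(x,y)>0$, since $f^{+}\ne0$ and every summand with $f^{+}(x)>0$ is strictly positive. The same argument gives $(f^{-}\mathbf{Q}_{t})(y)>0$. Therefore the triangle inequality above is strict at $y$, which yields $d_{{\rm TV}}(\mu\mathbf{Q}_{t},\nu\mathbf{Q}_{t})<d_{{\rm TV}}(\mu,\nu)$.

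The second claim follows by taking $\nu$ stationary, so that $\nu\mathbf{Q}_{t}=\nu$. The only delicate point is the interpretation of the irreducibility hypothesis and the positivity lemma $\mathbf{Q}_{t}(x,y)>0$; once those are in place, the rest is routine.
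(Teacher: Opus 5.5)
Your proof is correct and complete. The paper does not prove this lemma itself; it imports the statement from \cite[Lemma 5.3]{Lee25}. Your argument is the standard one and supplies exactly the details the citation leaves implicit: you split $\mu-\nu=f^{+}-f^{-}$, and you observe that the bound $d_{{\rm TV}}(\mu\mathbf{Q}_{t},\nu\mathbf{Q}_{t})\le d_{{\rm TV}}(\mu,\nu)$ is strict as soon as $f^{+}\mathbf{Q}_{t}$ and $f^{-}\mathbf{Q}_{t}$ charge a common state. You then obtain such a state from $\mathbf{Q}_{t}(x,y)>0$ for all $x\in E$, $y\in C$, $t>0$.

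Your reading of the hypothesis as ``a unique closed class, transient states allowed'' is the one the paper needs. The lemma is applied in Lemma \ref{lem:TV-strict-decay} only through uniqueness of the stationary distribution. Moreover, for $q\ge3$ and $\beta\in(\beta_{1},\beta_{2})$, the limit chain $\mathfrak{X}_{\beta}$ has $0$ absorbing and $1,\dots,q$ transient, so a proof assuming full irreducibility would not cover that case.
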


\section{\label{secC}Cyclic Decomposition of CWP Glauber Dynamics}

In Appendix \ref{secC}, we present the cyclic decomposition of the
proportions chain $\{\bm{S}_{N}^{\beta}(t)\}_{t\ge0}$ defined in
\eqref{eq:proportions-chain}. Let $\mathscr{L}_{N}$ be its infinitesimal
generator. As explained in Section \ref{sec3}, the model reduction
ingredient (Theorem \ref{thm:CWP-CD}) was proved in \cite{LS16,Lee22}
and its fundamental idea is to understand the proportions chain as
a (weighted) cyclic random walk on a potential field in the terminology
of \cite{LS18}. We briefly summarize it below.

Recall $F_{\beta,N}:\Xi_{N}\to\mathbb{R}$ from \eqref{eq:FbetaN-def}
and define a transition rate function $a_{\beta,N}:\Xi_{N}\times\Xi_{N}\to\mathbb{R}$
as
\[
a_{\beta,N}\left(\bm{x},\bm{x}-\frac{\mathfrak{e}_{k}}{N}+\frac{\mathfrak{e}_{\ell}}{N}\right):=e^{-\frac{N\beta}{2}\left[F_{\beta,N}\left(\bm{x}-\frac{\mathfrak{e}_{k}}{N}+\frac{\mathfrak{e}_{\ell}}{N}\right)-F_{\beta,N}(\bm{x})\right]}.
\]
The rate $a_{\beta,N}$ generates a cyclic random walk on each $2$-cycle
$\bm{x}\to\bm{x}-\frac{\mathfrak{e}_{k}}{N}+\frac{\mathfrak{e}_{\ell}}{N}\to\bm{x}$
on the potential field $\beta F_{\beta,N}$ (see \cite[eq. (2.2)]{LS18}).
In this sense, for $1\le k<\ell\le q$ and $\bm{x}\in\Xi_{N}$, let
$\mathscr{L}_{N,\bm{x}}^{k,\ell}$ be the infinitesimal generator
of this cyclic random walk defined by only two rates
\[
\begin{cases}
\bm{x}\to\bm{x}-\frac{\mathfrak{e}_{k}}{N}+\frac{\mathfrak{e}_{\ell}}{N} & \text{with rate}\quad a_{\beta,N}\left(\bm{x},\bm{x}-\frac{\mathfrak{e}_{k}}{N}+\frac{\mathfrak{e}_{\ell}}{N}\right),\\
\bm{x}-\frac{\mathfrak{e}_{k}}{N}+\frac{\mathfrak{e}_{\ell}}{N}\to\bm{x} & \text{with rate}\quad a_{\beta,N}\left(\bm{x}-\frac{\mathfrak{e}_{k}}{N}+\frac{\mathfrak{e}_{\ell}}{N},\bm{x}\right).
\end{cases}
\]
A simple computation via \eqref{eq:piNbeta-formula} and \eqref{eq:rN-value}
shows
\begin{equation}
\frac{r_{N}\left(\bm{x},\bm{x}-\frac{\mathfrak{e}_{k}}{N}+\frac{\mathfrak{e}_{\ell}}{N}\right)}{a_{\beta,N}\left(\bm{x},\bm{x}-\frac{\mathfrak{e}_{k}}{N}+\frac{\mathfrak{e}_{\ell}}{N}\right)}=\frac{r_{N}\left(\bm{x}-\frac{\mathfrak{e}_{k}}{N}+\frac{\mathfrak{e}_{\ell}}{N},\bm{x}\right)}{a_{\beta,N}\left(\bm{x}-\frac{\mathfrak{e}_{k}}{N}+\frac{\mathfrak{e}_{\ell}}{N},\bm{x}\right)}=\sqrt{x_{k}\left(x_{\ell}+\frac{1}{N}\right)}.\label{eq:dec-cond}
\end{equation}
Thus, we have a cyclic decomposition
\begin{equation}
\mathscr{L}_{N}=\sum_{1\le k<\ell\le q}\sum_{\bm{x}\in\Xi_{N}}w_{N}^{k,\ell}(\bm{x})\mathscr{L}_{N,\bm{x}}^{k,\ell},\label{eq:cyc-dec}
\end{equation}
where $w_{N}^{k,\ell}:\Xi\to\mathbb{R}$ is defined as
\[
w_{N}^{k,\ell}(\bm{x}):=\sqrt{x_{k}\left(x_{\ell}+\frac{1}{N}\right)}.
\]
It is obvious that $w_{N}^{k,\ell}$ is uniformly Lipschitz and converges
uniformly to $w^{k,\ell}(\bm{x}):=\sqrt{x_{k}x_{\ell}}$ on every
compact subset of $\Xi^{\circ}$. Thus, the proportions chain for
the original Glauber dynamics falls into the class of cyclic random
walks on potential fields in the sense of \cite[Remarks 2.10 and 2.11]{LS18}.
Note that the cyclic decomposition \eqref{eq:cyc-dec} is possible
due to the first equality in \eqref{eq:dec-cond}.

From this perspective, we verify Remark \ref{rem:other}. To this
end, it suffices to prove that \eqref{eq:dec-cond} holds for the
proportions chains corresponding to the heat-bath and Metropolis dynamics.
First for the heat-bath dynamics given in \eqref{eq:rate-HB-def},
the corresponding proportions chain via $\Phi_{N}:\Omega_{N}\to\Xi_{N}$
has its transition rate $r_{N}^{{\rm HB}}:\Xi_{N}\times\Xi_{N}\to\mathbb{R}$
given as
\[
r_{N}^{{\rm HB}}\left(\bm{x},\bm{x}-\frac{\mathfrak{e}_{k}}{N}+\frac{\mathfrak{e}_{\ell}}{N}\right)=\frac{x_{k}e^{-\beta NH\left(\bm{x}-\frac{\mathfrak{e}_{k}}{N}+\frac{\mathfrak{e}_{\ell}}{N}\right)}}{\sum_{m=1}^{q}e^{-\beta NH\left(\bm{x}-\frac{\mathfrak{e}_{k}}{N}+\frac{\mathfrak{e}_{m}}{N}\right)}}.
\]
Then, one can verify that
\[
\frac{r_{N}^{{\rm HB}}\left(\bm{x},\bm{x}-\frac{\mathfrak{e}_{k}}{N}+\frac{\mathfrak{e}_{\ell}}{N}\right)}{a_{\beta,N}\left(\bm{x},\bm{x}-\frac{\mathfrak{e}_{k}}{N}+\frac{\mathfrak{e}_{\ell}}{N}\right)}=\sqrt{x_{k}\left(x_{\ell}+\frac{1}{N}\right)}\frac{e^{\frac{\beta}{2}\left(x_{k}+x_{\ell}-\frac{1}{N}\right)}}{\sum_{m=1}^{q}e^{\beta x_{m}}}=\frac{r_{N}^{{\rm HB}}\left(\bm{x}-\frac{\mathfrak{e}_{k}}{N}+\frac{\mathfrak{e}_{\ell}}{N},\bm{x}\right)}{a_{\beta,N}\left(\bm{x}-\frac{\mathfrak{e}_{k}}{N}+\frac{\mathfrak{e}_{\ell}}{N},\bm{x}\right)},
\]
such that \eqref{eq:dec-cond} holds. Thus, the proportions chain
for the heat-bath Glauber dynamics can be written as
\[
\mathscr{L}_{N}^{{\rm HB}}=\sum_{1\le k<\ell\le q}\sum_{\bm{x}\in\Xi_{N}}w_{N}^{{\rm HB},k,\ell}(\bm{x})\mathscr{L}_{N,\bm{x}}^{k,\ell}
\]
where 
\[
w_{N}^{{\rm HB},k,\ell}(\bm{x}):=\sqrt{x_{k}\left(x_{\ell}+\frac{1}{N}\right)}\frac{e^{\frac{\beta}{2}\left(x_{k}+x_{\ell}-\frac{1}{N}\right)}}{\sum_{m=1}^{q}e^{\beta x_{m}}}.
\]
This indicates that Theorem \ref{thm:main} holds for the heat-bath
dynamics as well, provided that in \eqref{eq:Akl-def} the term $\sqrt{x_{k}x_{\ell}}$
is replaced by 
\[
w^{{\rm HB},k,\ell}(\bm{x}):=\sqrt{x_{k}x_{\ell}}\frac{e^{\frac{\beta}{2}(x_{k}+x_{\ell})}}{\sum_{m=1}^{q}e^{\beta x_{m}}}.
\]

Next, consider the Metropolis dynamics given in \eqref{eq:rate-MP-def}.
Its proportions chain rate $r_{N}^{{\rm MH}}:\Xi_{N}\times\Xi_{N}\to\mathbb{R}$
becomes
\[
r_{N}^{{\rm MH}}\left(\bm{x},\bm{x}-\frac{\mathfrak{e}_{k}}{N}+\frac{\mathfrak{e}_{\ell}}{N}\right)=x_{k}\exp\left\{ -N\beta\left[H\left(\bm{x}-\frac{\mathfrak{e}_{k}}{N}+\frac{\mathfrak{e}_{\ell}}{N}\right)-H(\bm{x})\right]_{+}\right\} .
\]
Then, we have
\[
\frac{r_{N}^{{\rm HB}}\left(\bm{x},\bm{x}-\frac{\mathfrak{e}_{k}}{N}+\frac{\mathfrak{e}_{\ell}}{N}\right)}{a_{\beta,N}\left(\bm{x},\bm{x}-\frac{\mathfrak{e}_{k}}{N}+\frac{\mathfrak{e}_{\ell}}{N}\right)}=\sqrt{x_{k}\left(x_{\ell}+\frac{1}{N}\right)}e^{-\frac{\beta}{2}\left|x_{k}-x_{\ell}-N^{-1}\right|}=\frac{r_{N}^{{\rm HB}}\left(\bm{x}-\frac{\mathfrak{e}_{k}}{N}+\frac{\mathfrak{e}_{\ell}}{N},\bm{x}\right)}{a_{\beta,N}\left(\bm{x}-\frac{\mathfrak{e}_{k}}{N}+\frac{\mathfrak{e}_{\ell}}{N},\bm{x}\right)},
\]
so \eqref{eq:dec-cond} holds again. This gives us
\[
\mathscr{L}_{N}^{{\rm MH}}=\sum_{1\le k<\ell\le q}\sum_{\bm{x}\in\Xi_{N}}w_{N}^{{\rm MH},k,\ell}(\bm{x})\mathscr{L}_{N,\bm{x}}^{k,\ell}
\]
where 
\[
w_{N}^{{\rm MH},k,\ell}(\bm{x}):=\sqrt{x_{k}\left(x_{\ell}+\frac{1}{N}\right)}e^{-\frac{\beta}{2}\left|x_{k}-x_{\ell}-N^{-1}\right|}.
\]
Therefore, Theorem \ref{thm:main} holds for the Metropolis dynamics
where in \eqref{eq:Akl-def} the term $\sqrt{x_{k}x_{\ell}}$ should
be replaced by $w^{{\rm MH},k,\ell}(\bm{x}):=\sqrt{x_{k}x_{\ell}}e^{-\frac{\beta}{2}|x_{k}-x_{\ell}|}$.
Note that the smoothness condition of each limit weight function $w^{k,\ell}$
in \cite[Remark 2.11]{LS18} can be dropped; a uniform convergence
condition on compact subsets of $\Xi^{\circ}$ suffices.

\section{\label{secD}Potential Theory}

In Appendix \ref{secD}, we review a few basic concepts and results
in potential theory. Consider an irreducible, reversible Markov chain
$\{\bm{S}(t)\}_{t\ge0}$ on a finite state space $\Xi$ with transition
rate $r(\cdot,\cdot)$. Denote by $\pi$ its unique stationary distribution.

Given two nonempty disjoint subsets $A,B\subset\Xi$, the \emph{equilibrium
potential} $\mathfrak{h}_{A,B}:\Xi\to[0,1]$ between $A$ and $B$
is defined as
\[
\mathfrak{h}_{A,B}(\bm{x}):={\rm P}_{\bm{x}}[\mathcal{H}_{A}<\mathcal{H}_{B}],
\]
where ${\rm P}_{\bm{x}}$ is the law starting from $\bm{x}\in\Xi$.
For any given function $f:\Xi\to\mathbb{R}$, its \emph{Dirichlet
form} $\mathscr{D}(f)$ is given as
\[
\mathscr{D}(f):=\frac{1}{2}\sum_{\bm{x},\bm{y}\in\Xi}\pi(\bm{x})r(\bm{x},\bm{y})(f(\bm{y})-f(\bm{x}))^{2}.
\]
Then, the \emph{capacity} ${\rm cap}(A,B)$ between $A$ and $B$
is defined as ${\rm cap}(A,B):=\mathscr{D}(\mathfrak{h}_{A,B})$.

The \emph{Dirichlet principle} (cf. \cite[Theorem 7.33]{BdH15}) implies
that for any functions $f:\Xi\to\mathbb{R}$ such that $f\equiv1$
on $A$ and $f\equiv0$ on $B$,
\begin{equation}
{\rm cap}(A,B)\le\mathscr{D}(f).\label{eq:DP}
\end{equation}
A sequence $\varphi:\llbracket0,n\rrbracket\to\Xi$ is a \emph{path}
from $\varphi(0)$ to $\varphi(n)$ if $r(\varphi(i),\varphi(i+1))>0$
for all $i\in\llbracket0,n-1\rrbracket$. The \emph{Thomson principle}
(cf. \cite[Theorem 7.37]{BdH15}) implies that for any path $\varphi$
from $A$ to $B$,
\begin{equation}
{\rm cap}(A,B)\ge\left(\sum_{i=0}^{n-1}\frac{1}{\pi(\varphi(i))r(\varphi(i),\varphi(i+1))}\right)^{-1}.\label{eq:TP}
\end{equation}
Inserting suitable test objects to \eqref{eq:DP} or \eqref{eq:TP},
we are able to obtain appropriate upper/lower bounds for the capacity.

A renewal estimate (cf. \cite[Lemma 8.4]{BdH15}) implies that
\begin{equation}
\mathfrak{h}_{A,B}(\bm{x})\le\frac{{\rm cap}(\bm{x},A)}{{\rm cap}(\bm{x},B)}.\label{eq:renewal}
\end{equation}
The so-called magic formula (cf. \cite[Corollary 7.30]{BdH15} or
\cite[Proposition 6.10]{BL10}) allows us to estimate the mean hitting
time via capacity:
\begin{equation}
{\rm E}_{\bm{x}}[\mathcal{H}_{B}]=\frac{\sum_{\bm{y}\in\Xi}\pi(\bm{y})\mathfrak{h}_{\bm{x},B}(\bm{y})}{{\rm cap}(\bm{x},B)}\qquad\text{for}\quad\bm{x}\notin B,\label{eq:magic-formula}
\end{equation}
where ${\rm E}_{\bm{x}}$ denotes the expectation with respect to
${\rm P}_{\bm{x}}$. 

The following lemma will be used in Section \ref{sec5}.
\begin{lem}
\label{lem:prob-cap}For any disjoint $A,B\subset\Xi$, a probability
measure $\nu$ concentrated on $A$, and $\gamma>0$,
\[
{\rm P}_{\nu}\left[\mathcal{H}_{B}\le\gamma^{-1}\right]^{2}\le\frac{e^{2}}{\gamma^{-1}}{\rm E}_{\pi}\left[\frac{\nu^{2}}{\pi^{2}}\right]{\rm cap}(A,B).
\]
In particular, if $A=\{\bm{x}\}$ is a singleton, then
\[
{\rm P}_{\bm{x}}\left[\mathcal{H}_{B}\le\gamma^{-1}\right]^{2}\le\frac{e^{2}}{\gamma^{-1}}\frac{{\rm cap}(\bm{x},B)}{\pi(\bm{x})}.
\]
\end{lem}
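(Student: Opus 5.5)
The plan is a resolvent (Laplace-transform) argument combined with a duality identity between the resolvent solution and the equilibrium potential $\mathfrak{h}_{A,B}$. The conclusion we aim for is ${\rm P}_{\nu}[\mathcal{H}_{B}\le\gamma^{-1}]^{2}\le e^{2}\gamma^{-1}{\rm E}_{\pi}[\nu^{2}/\pi^{2}]\,{\rm cap}(A,B)$. This is the form in which the lemma is used in the proof of Theorem \ref{thm:CWP-M}-(2), where $\gamma^{-1}=2\varrho_{N}$ produces the factor $2e^{2}\varrho_{N}$. I therefore read the factor $\frac{e^{2}}{\gamma^{-1}}$ in the statement as $e^{2}\gamma^{-1}$; the argument below proves that form, not the literal one with $e^{2}\gamma$.

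\emph{Step 1 (from probability to resolvent).} Set $w(\bm{x}):={\rm E}_{\bm{x}}[e^{-\gamma\mathcal{H}_{B}}]$. On $\{\mathcal{H}_{B}\le\gamma^{-1}\}$ we have $e^{-\gamma\mathcal{H}_{B}}\ge e^{-1}$, so
\[
{\rm P}_{\nu}[\mathcal{H}_{B}\le\gamma^{-1}]\le e\sum_{\bm{x}\in A}\nu(\bm{x})w(\bm{x}).
\]
Writing $\nu(\bm{x})=\pi(\bm{x})\frac{\nu(\bm{x})}{\pi(\bm{x})}$ and applying Cauchy--Schwarz in $L^{2}(\pi)$ restricted to $A$ (where $\nu$ lives) gives
\[
{\rm P}_{\nu}[\mathcal{H}_{B}\le\gamma^{-1}]^{2}\le e^{2}\,{\rm E}_{\pi}\left[\frac{\nu^{2}}{\pi^{2}}\right]\sum_{\bm{x}\in A}\pi(\bm{x})w(\bm{x})^{2}.
\]
Since $0\le w\le1$, the last sum is at most $\sum_{A}\pi w$. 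It therefore suffices to show $\gamma\sum_{\bm{x}\in A}\pi(\bm{x})w(\bm{x})\le{\rm cap}(A,B)$.

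\emph{Step 2 (duality identity, the key step).} Let $L$ be the generator and $h:=\mathfrak{h}_{A,B}$. By the Markov property, $w$ solves $(\gamma-L)w=0$ on $\Xi\setminus B$ with $w=1$ on $B$. Also $h$ is harmonic off $A\cup B$, with $h=1$ on $A$ and $h=0$ on $B$. Since $(\gamma-L)w$ is supported on $B$, where $h$ vanishes, we get $\langle(\gamma-L)w,h\rangle_{\pi}=0$. Reversibility then yields
\[
\gamma\langle w,h\rangle_{\pi}=\langle w,Lh\rangle_{\pi}=\sum_{\bm{x}\in A}\pi w\,Lh+\sum_{\bm{x}\in B}\pi\,Lh.
\]
The standard facts $Lh\le0$ on $A$, $Lh\ge0$ on $B$, and $\sum_{A}\pi(-Lh)=\sum_{B}\pi Lh={\rm cap}(A,B)$ (from $\mathscr{D}(h)=-\langle h,Lh\rangle_{\pi}$ and $\langle 1,Lh\rangle_{\pi}=0$) then give
\[
\gamma\langle w,h\rangle_{\pi}={\rm cap}(A,B)-\sum_{A}\pi w\,|Lh|\le{\rm cap}(A,B).
\]
Because $h=1$ on $A$ and $w,h\ge0$, we have $\gamma\sum_{A}\pi w\le\gamma\langle w,h\rangle_{\pi}\le{\rm cap}(A,B)$, which is what Step 1 requires.

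\emph{Step 3 (conclusion and singleton case).} Combining Steps 1 and 2 yields the general bound. For $A=\{\bm{x}\}$ and $\nu=\delta_{\bm{x}}$ we have ${\rm E}_{\pi}[\nu^{2}/\pi^{2}]=1/\pi(\bm{x})$, which gives the second display (again with factor $e^{2}\gamma^{-1}$). The only delicate point is the sign bookkeeping in Step 2: it must use $w\le1$ on $A$ and $w=1$ on $B$ to drop the $A$-term with the correct sign. Everything else is routine.
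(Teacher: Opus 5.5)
Your proof is correct, and your reading of the constant is the right one. Taken literally, $\frac{e^{2}}{\gamma^{-1}}=e^{2}\gamma$ makes the statement false. For the two-state chain with unit rates, $A=\{\bm{x}\}$ and $B=\{\bm{y}\}$, one has ${\rm cap}(\bm{x},\bm{y})/\pi(\bm{x})=1$, so the literal bound reads $(1-e^{-1/\gamma})^{2}\le e^{2}\gamma$, which fails already at $\gamma=0.1$. The proof of Theorem \ref{thm:CWP-M}-(2) in fact uses the factor $2e^{2}\varrho_{N}=e^{2}\gamma^{-1}$, which is the form you prove.

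Your route differs from the paper's in how the key capacity bound is obtained. The paper gives no argument of its own. It quotes \cite[Proposition 8.4]{LMS23}, which rests on the $\gamma$-enlarged process of \cite{BL15}: a copy $\Xi^{\star}$ of the state space, reached from each $\bm{x}$ at rate $\gamma$.

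In that language, your $w(\bm{x})={\rm E}_{\bm{x}}[e^{-\gamma\mathcal{H}_{B}}]$ is exactly the probability that the enlarged chain hits $B$ before $\Xi^{\star}$. So your Step 1 is the same reduction as in the cited argument: replace the deterministic horizon by an exponential time, then use Cauchy--Schwarz in $L^{2}(\pi)$ to isolate ${\rm E}_{\pi}[\nu^{2}/\pi^{2}]$. The cited argument then controls this probability through the enlarged chain and its potential theory on the doubled state space.

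You instead stay on $\Xi$. You pair the resolvent equation $(\gamma-L)w=0$ off $B$ with $\mathfrak{h}_{A,B}$ and use self-adjointness, which gives the exact identity $\gamma\langle w,\mathfrak{h}_{A,B}\rangle_{\pi}={\rm cap}(A,B)-\sum_{A}\pi w\,|L\mathfrak{h}_{A,B}|$.

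What your route buys:
\begin{itemize}
\item It is short and self-contained, using only the facts already collected in Appendix \ref{secD}.
\item It makes the constant $e^{2}\gamma^{-1}$ transparent.
\item It extends to non-reversible chains: pair with the adjoint chain's equilibrium potential instead, and use that a chain and its adjoint have the same capacities.
\end{itemize}
The paper's citation has the advantage of being an off-the-shelf statement from the general enlarged-process and resolvent framework.
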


\begin{proof}
This is a special case of \cite[Proposition 8.4]{LMS23} where the
set $\mathcal{E}_{N}^{x}$ therein is chosen to be $A$ and $\breve{\mathcal{E}}_{N}^{x}$
is chosen to be $B$. The idea is to use the concept of the so-called
\emph{$\gamma$-enlarged process} \cite{BL15}; we refer to \cite[Corollary 4.2]{BL15}
for more detail.
\end{proof}

\end{document}